\newcounter{sarrow}
\newtheorem{theorem}{Theorem}[section]
\newtheorem{lemma}[theorem]{Lemma}
\newtheorem{proposition}[theorem]{Proposition}
\newtheorem{conjecture}[theorem]{Conjecture}
\theoremstyle{definition}
\newtheorem{definition}[theorem]{Definition}
\newtheorem{example}[theorem]{Example}
\newtheorem{construction}[theorem]{Construction}
\newtheorem{corollary}[theorem]{Corollary}
\newtheorem{convention}[theorem]{Convention}
\newtheorem{construction--theorem}[theorem]{Construction--Theorem}
\theoremstyle{remark}
\newtheorem{remark}[theorem]{Remark}
\newtheorem{warning}[theorem]{Warning}
\numberwithin{equation}{section}
\DeclareMathOperator{\End}{End}
\newcommand{\bbR}{\mathbb{R}}
\newcommand{\Z}{\mathbb{Z}}
\newcommand{\C}{\mathbb{C}}
\newcommand{\N}{\mathbb{N}}
\newcommand{\p}{\partial}
\newcommand{\fr}{\mathfrak}
\newcommand{\cM}{\mathcal{M}}
\newcommand{\cA}{\mathcal{A}}
\newcommand{\cL}{\mathcal{L}}
\newcommand{\bbC}{\mathbb{C}}
\newcommand*{\Scale}[2][4]{\scalebox{#1}{$#2$}}
\newcommand{\fg}{\mathfrak{g}}
\newcommand{\ft}{\mathfrak{t}}
\DeclareMathOperator{\Ker}{Ker}
\DeclareMathOperator{\Hom}{Hom}
\DeclareMathOperator{\Aut}{Aut}
\DeclareMathOperator{\id}{Id}
\DeclareMathOperator{\acts}{\rotatebox[origin=c]{-90}{$\circlearrowright$}}
\newcommand{\R}{\mathbb R}
\newcommand{\bbZ}{\mathbb Z}
\newcommand{\bbN}{\mathbb N}
\newcommand{\bbP}{\mathbb P}
\newcommand{\fu}{\mathfrak u}
\newcommand{\cC}{\mathcal C}
\newcommand{\cE}{\mathcal E}
\newcommand{\cH}{\mathcal H}
\newcommand{\cJ}{\mathcal J}
\newcommand{\cK}{\mathcal K}
\newcommand{\cN}{\mathcal N}
\newcommand{\cS}{\mathcal S}
\newcommand{\cW}{\mathcal W}
\DeclareFontFamily{U}{BOONDOX-calo}{\skewchar\font=45 }
\DeclareFontShape{U}{BOONDOX-calo}{m}{n}{<-> s*[1.05] BOONDOX-r-calo}{}
\DeclareFontShape{U}{BOONDOX-calo}{b}{n}{<-> s*[1.05] BOONDOX-b-calo}{}
\DeclareMathAlphabet{\mathcalboondox}{U}{BOONDOX-calo}{m}{n}
\newcommand{\bbA}{\mathbb A}
\newcommand{\bbG}{\mathbb G}
\let\save@mathaccent\mathaccent
\newcommand*\if@single[3]{%
	\setbox0\hbox{${\mathaccent"0362{#1}}^H$}%
	\setbox2\hbox{${\mathaccent"0362{\kern0pt#1}}^H$}%
	\ifdim\ht0=\ht2 #3\else #2\fi
}
\newcommand*\rel@kern[1]{\kern#1\dimexpr\macc@kerna}
\newcommand*\widebar[1]{\@ifnextchar^{{\wide@bar{#1}{0}}}{\wide@bar{#1}{1}}}
\newcommand*\wide@bar[2]{\if@single{#1}{\wide@bar@{#1}{#2}{1}}{\wide@bar@{#1}{#2}{2}}}
\newcommand*\wide@bar@[3]{%
	\begingroup
	\def\mathaccent##1##2{%
		\let\mathaccent\save@mathaccent
		\if#32 \let\macc@nucleus\first@char \fi
		\setbox\z@\hbox{$\macc@style{\macc@nucleus}_{}$}%
		\setbox\tw@\hbox{$\macc@style{\macc@nucleus}{}_{}$}%
		\dimen@\wd\tw@
		\advance\dimen@-\wd\z@
		\divide\dimen@ 3
		\@tempdima\wd\tw@
		\advance\@tempdima-\scriptspace
		\divide\@tempdima 10
		\advance\dimen@-\@tempdima
		\ifdim\dimen@>\z@ \dimen@0pt\fi
		\rel@kern{0.6}\kern-\dimen@
		\if#31
		\overline{\rel@kern{-0.6}\kern\dimen@\macc@nucleus\rel@kern{0.4}\kern\dimen@}%
		\advance\dimen@0.4\dimexpr\macc@kerna
		\let\final@kern#2%
		\ifdim\dimen@<\z@ \let\final@kern1\fi
		\if\final@kern1 \kern-\dimen@\fi
		\else
		\overline{\rel@kern{-0.6}\kern\dimen@#1}%
		\fi
	}%
	\macc@depth\@ne
	\let\math@bgroup\@empty \let\math@egroup\macc@set@skewchar
	\mathsurround\z@ \frozen@everymath{\mathgroup\macc@group\relax}%
	\macc@set@skewchar\relax
	\let\mathaccentV\macc@nested@a
	\if#31
	\macc@nested@a\relax111{#1}%
	\else
	\def\gobble@till@marker##1\endmarker{}%
	\futurelet\first@char\gobble@till@marker#1\endmarker
	\ifcat\noexpand\first@char A\else
	\def\first@char{}%
	\fi
	\macc@nested@a\relax111{\first@char}%
	\fi
	\endgroup
}
\tikzset{
  closed/.style = {decoration = {markings, mark = at position 0.5 with { \node[transform shape, xscale = .8, yscale=.4] {/}; } }, postaction = {decorate} },
  open/.style = {decoration = {markings, mark = at position 0.5 with { \node[transform shape, scale = .7] {$\circ$}; } }, postaction = {decorate} }
}
\DeclareMathOperator{\ad}{ad}
\DeclareMathOperator{\Ad}{Ad}
\DeclareMathOperator{\Coker}{Coker}
\DeclareMathOperator{\Conv}{Conv}
\DeclareMathOperator{\Fun}{Fun}
\DeclareMathOperator{\Loc}{Loc}
\DeclareMathOperator{\LocVect}{LocVect}
\DeclareMathOperator{\Map}{Map}
\DeclareMathOperator{\Span}{Span}
\DeclareMathOperator{\nonab}{nonab}
\DeclareMathOperator{\pt}{pt}
\DeclareMathOperator{\reglue}{reglue}
\begin{document}

\title{Spectral Networks and Non-Abelianization}

\author{Matei Ionita \and Benedict Morrissey}
\maketitle

\begin{abstract}
    We generalize the non-abelianization of \cite{gaiotto2013spectral} from the case of $SL(n)$ and $GL(n)$ to arbitrary reductive algebraic groups.  This gives a map between a moduli space of certain $N$-shifted weakly $W$-equivariant $T$-local systems on an open subset of a cameral cover $\tilde{X}\rightarrow X$ to the moduli space of $G$-local systems on a punctured Riemann surface $X$.  For classical groups, we give interpretations of these moduli spaces using spectral covers.
    
    Non-abelianization uses a set of lines on the Riemann surface $X$ called a spectral network, defined using a point in the Hitchin base.  We show that these lines are related to trajectories of quadratic differentials on quotients of $\tilde{X}$.  We use this to describe some of the generic behaviour of lines in a spectral network.
\end{abstract}

\tableofcontents

\section{Introduction}


Non-abelianization was introduced in \cite{gaiotto2013spectral} as a way to describe conjecturally holomorphic symplectic\footnote{On each holomorphic symplectic leaf, using the modification that specifies certain reductions of structure to a Borel as outlined in Section \ref{subsubsec: Borel structures arbitrary G}.} ``co-ordinate charts'' on $\Loc_{G}(X)$, here roughly meaning complex algebraic maps $Y\rightarrow \Loc_{G}(X)$, where $G=SL(n)$ or $G=GL(n)$, $X$ is a non-compact Riemann surface, and $Y$ is complex analytically a $Z(G)$-gerbe over $(\bbC^{*})^{m}$.   The precise spaces $Y$ used are moduli space of $\bbG_{m}$-local systems on a connected finite cover $\overline{X}\rightarrow X$, with punctures, and other additional data, where $\bbG_{m}$ denotes the multiplicative group.  The Riemann--Hilbert correspondence gives that these moduli spaces are complex analytically isomorphic to a $Z(G)$ gerbe over $(\bbC^{*})^{m}$ for some $m$.  This paper generalizes this construction to arbitrary reductive algebraic groups $G$.

Non-abelianization for $GL(n)$ can be seen as a two step procedure.  In the first of these steps we have an $n:1$ cover $\pi: \overline{X}\rightarrow X$ with ramification locus $R_{\rho}\subset \overline{X}$ and branch locus $P\subset X$.  We  pushforward the restriction to $\overline{X}\backslash \pi^{-1}(P)$ of a local system of one dimensional vector spaces on $\overline{X}\backslash R_{\rho}$ to gain a local system $\mathbf{E}$ of $n$ dimensional vector spaces on $X\backslash P$.  The covers $\overline{X}\rightarrow X$ used are the \emph{spectral curves} associated to a point in the Hitchin base.

In the second of these steps we modify this local system on $X\backslash P$ along a set of real codimension one loci $\cW\subset X$ (called a \emph{spectral network}), to produce a new local system on $X\backslash P$, which in fact extends to a local system on $X$.  More precisely, the modification works by specifying a locally constant section of $A_{\cW}\in \Aut(\mathbf{E}|_{\cW\backslash J})$ (where $J\subset \cW$ are the intersections points of multiple loci of $\cW$).  We then ``cut'' $\mathbf{E}$ along $\cW$, and re-identify both sides using the automorphism $A_{\cW}$, as is rigorously described in Definition \ref{defn: regluing map}.  Consider the following example:

\begin{example}[Modifying local systems on $S^{1}$.]
\label{ex: modifying local systems on S1}
Assume we are given an $n$-dimensional local system $\mathbf{E}$ on $S^{1}$ (the analogue of $X$ in this example), a point $s\in S^{1}$ (the analogue of $\cW$), and an automorphism $\rho\in \Aut(\mathbf{E}|_{s})$. The point $s \in S^1$ determines an isomorphism $[0,1] / \{\{0\}\sim \{1\}\} \cong S^1$, which identifies $\{0\}$ with $s$. Denote by $\pi$ the composition of this isomorphism with the natural quotient map $[0,1]\rightarrow [0,1]/\{\{0\}\sim \{1\}\}$. We can produce a new local system by  taking $\pi^{*}\mathbf{E}$ (informally we referred to this as ``cutting'' along $s$) and then identifying (``gluing'') $\pi^{*}\mathbf{E}|_{0}$, and $\pi^{*}\mathbf{E}|_{1}$ by the map $\rho:\pi^{*}\mathbf{E}|_{0}\rightarrow \pi^{*}\mathbf{E}|_{1}$. This gives a new local system \[\pi^{*}\mathbf{E}/\{\pi^{*}\mathbf{E}|_{0} \sim_{\rho}\pi^{*}\mathbf{E}|_{1}\}\rightarrow [0,1]/\{\{0\}\sim \{1\}\} \cong S^{1}.\]

An example in this setting of the types of local systems on $S^{1}$ we will consider follows.  Let $p: S^{1}\rightarrow S^{1}$ be a double cover.  Let $\cL\rightarrow S^{1}$ be a one dimensional local system on $S^{1}$.  We then have a rank two local system $\mathbf{E}:=p_{*}\cL$ on $S^{1}$. 

Let $a_{1}, a_{2}$ be the two preimages of $s\in S^{1}$ under $p$.  A basis of $\mathbf{E}|_{s}=\cL|_{a_{1}}\oplus \cL|_{a_{2}}$ given by a basis of $\cL|_{a_{1}}$ and a basis of $\cL|_{a_{2}}$ identifies the monodromy of $\mathbf{E}$ with an off diagonal matrix.  Hence the monodromy of $\mathbf{E}$ is not trivial and hence $\mathbf{E}$ does not extend to a local system on the disc $D$ (with $\partial D=S^{1}$).  We can use the type of modification above to modify $\mathbf{E}$ to produce a local system that does extend to the disc $D$.
\end{example}



Doing this for families of one dimensional local systems on a sufficiently nice spectral curve $\overline{X}\backslash R_{\rho}$ (with some conditions on the monodromy around $R_{\rho}$) gives a map between a moduli space of such local systems and the moduli space of $GL(n)$ local systems on $X$. A minor modification of the above allows one to work with $SL(n)$ local systems.  In the $SL(2)$ case (or the $SL(2,\bbR)$ case \cite{fenyes2015dynamical}) these coordinates\footnote{There are multiple results along these lines.  In \cite{fenyes2015dynamical} Fenyes considers twisted $SL(2,\bbR)$ local systems, in the sense of local systems on the unit tangent bundle of a \emph{compact} Riemann surface with monodromy $-Id$ around the unit circle in each tangent fiber.  These specify $PGL(2,\bbR)$ local systems on the Riemann surface.  Fenyes identifies the abelianization coordinates, that is to say the monodromy of the $\bbG_{m}(\bbR)$-local systems, in this setting with Thurston's shear--bend coordinates.  In \cite{hollands2016spectral} (building on \cite{gaiotto2013wallHitchin}) it is shown that for appropriate spectral networks on a \emph{non-compact} Riemann surface the abelianization coordinates for $SL(2,\bbC)$ corresponding to a certain set of paths on the spectral cover give the pullback of the Fock--Goncharov coordinates (\cite{fock2006moduli}) for the moduli of $PGL(2,\bbC)$-local systems to the moduli of $SL(2,\bbC)$ local systems.  These paths do not in general generate the abelianization of the fundamental group of the spectral cover, and hence they will generally give strictly less information than the $SL(2,\bbC)$ Fock--Goncharov coordinates.   The paper \cite{hollands2016spectral} uses a version of nonabelianization that assigns a reduction of structure to a Borel at the boundary.  See Section \ref{subsubsec: Borel structures arbitrary G} for how to assign these reductions of structure to a Borel for certain types of spectral network and arbitrary reductive algebraic groups $G$.} recover Fock--Goncharov coordinates (respectively Thurston's shear--bend coordinates) \cite{gaiotto2013wallHitchin, fenyes2015dynamical, hollands2016spectral, fock2006moduli}. Rigorous details for the construction in the $SL(2)$ case can be found in any of \cite{fenyes2015dynamical, hollands2016spectral, nikolaev19abelianization}.   

The \emph{spectral network} has a definition\footnote{At the physical level of rigour.} in terms of a 2d-4d wall crossing problem, which we review in Section \ref{subsec: introduction 2d-4d wall crossing and non-abelianization}. However, we instead use a combinatorial definition as in \cite{gaiotto2013spectral} of what we call a \emph{basic abstract spectral network}, where we impose some conditions on the behaviour of lines in the network.  We also use an iterative construction\footnote{It is \emph{not} currently clear that this agrees with networks defined via the 2d-4d wall crossing problem beyond the rank two case, and simple examples.  Even in these cases at least some of this agreement remains at a physical level of rigour.  In particular the 2d-4d wall crossing problem has not yet been defined rigorously.} as in \cite{gaiotto2013spectral} that associates a basic spectral network to some points in the Hitchin base.  We exclude points that give us a dense network $\cW\subset X$, (in the sense of Definition \ref{defin:basic_abstract} (1)), or fail to satisfy certain other properties as explained in Sections \ref{subsec: abstract cameral networks} and \ref{subsec:wkb_cameral}. In the $SL(2,\mathbb{R})$ case, Fenyes \cite{fenyes2015dynamical} explains how to deal with dense networks.  Spectral networks have previously appeared in the WKB analysis of differential equations under the name of \emph{Stokes Graphs}, see e.g. \cite{iwaki2015exact, iwaki2014exact, bnr1982new, takei2017wkb, aoki2008virtual, aoki2005virtual, honda2015virtual, aoki2001exact}.   The construction of $\cW$ proceeds iteratively, at each stage adding trajectories of vector fields which are locally non-canonically\footnote{The precise statement is that these trajectories are the images of trajectories on a cameral cover $\tilde{X}$ which are canonically associated to a root.} associated to roots of $GL(n)$. The added trajectories start at points where the trajectories of previous iteration intersect as shown in Figure \ref{fig:new_stokes}.  We can perform non-abelianization both with the iteratively constructed networks and with the basic abstract spectral networks of Definition \ref{def:spectral_net}, of which the networks of \cite{gaiotto2014spectralsnake} are an important example (see Remark \ref{rem:snakes}) which produce some of the Fock--Goncharov coordinates for $G=SL(n)$.

We now explain why these new curves starting at intersection points are necessary.  Consider two curves in $\cW$, and assume that we associated to them automorphisms $A_{\cW,1}, A_{\cW,2}$. Assume, moreover, that the curves intersect in a point $J$. If $A_{\cW,1}$ and $A_{\cW,2}$ do not commute, gluing as in Example \ref{ex: modifying local systems on S1} would introduce monodromy around the point $J$. To address this problem, we add additional lines\footnote{For $GL(n)$ or $SL(n)$ we add a single new line.} starting at $J$, together with automorphisms associated to these lines, such that the monodromy of the reglued system around $J$ is the identity, as originally done in \cite{bnr1982new}.  In the setting of \emph{Stokes Graphs} the presence of these new lines is explained in terms of deforming integration contours in \cite{aoki2001exact}.  We consider more general possible intersections in Section \ref{sec: Stokes Data}.





In this paper we generalize non-abelianization to an arbitrary reductive algebraic group $G$.  Table \ref{tab: GL(n) vs G} shows some of the replacements we use (these are explained in subsequent sections of the introduction).  Many of these analogues essentially come from the literature on Hitchin systems.  We provide further details in the remainder of this introduction.

We also prove some results about the behaviour of spectral networks arising from points in the Hitchin base, that are not only novel in our setting but also novel for $G=GL(n)$ and $G=SL(n)$.  We review these in Section \ref{subsec: Intro spectral and cameral networks}.  In Section \ref{subsec: introduction 2d-4d wall crossing and non-abelianization} of the introduction we provide a brief review of the physical origin of spectral networks and the non-abelianization map.  This link to BPS states provides another motivation for this work.

\begin{table}
\label{tab: GL(n) vs G}
\begin{tabular}
{ m{6cm} | m{6cm} }
  $GL(n)$ & Arbitrary reductive algebraic group $G$
  \\ \hline
  Spectral cover $\overline{X}\rightarrow X$, an $n:1$ cover. 
  & 
  Cameral cover $\tilde{X}\rightarrow X$; in the cases we consider, a $W$-cover over a dense open $U\subset X$.  
  \\ \hline
  $n$-dimensional local system on $X$ 
  & 
  $G$-local system on $X$  
  \\ \hline
  Hitchin base for $GL(n)$,  
  & 
  Hitchin base for $G$,\\ $\cA:=\oplus_{i=1}^{n}\Gamma(X, \cL^{\otimes i})$ & $\cA:=\Gamma(X, (\cL\times_{\bbG_{m}}\mathfrak{t})\sslash W)$
  \\ \hline
  One dimensional local systems on $\overline{X}\backslash R_{\rho}$, with a condition on the monodromy.
  & 
  $N$-local system $\cE_{N}\rightarrow X\backslash P$, such that $\cE_{N}/T\cong \tilde{X}|_{X\backslash P}$, with a condition on the monodromy.
\end{tabular}
\caption{$GL(n)$ and arbitrary reductive algebraic groups $G$.}
\end{table}

\subsection{Cameral Covers and $N$-local systems}

\emph{Cameral covers} were introduced by \cite{donagi1993decomposition, scognamillo1998elememtary, faltings1993stable} for considering the Hitchin system for arbitrary reductive algebraic groups. Given a reduced unramified $GL(n)$ spectral cover $\overline{X}\rightarrow X$, we can produce a cameral cover as corresponding to the sheaf of sections $\tilde{X}:=\mathcal{H}om^{invertible}_{/X}(X\times \{1,...,n\}, \overline{X})$, that is to say the sheaf of invertible homomorphisms between the sheaves of sets over $X$ given by $\overline{X}$ and $X\times \{1,...,n\}$. That is the sheets of the cameral cover parameterize labellings of the branches of the spectral cover by $1,...,n$. Away from the branch locus the cameral cover is equivalent to a principal $S_{n}$-bundle, with the right $S_{n}$ action coming from precomposition.  This relationship between (unramified) spectral and cameral covers is entirely analogous to the relationship between vector bundles and principal $GL(n)$-bundles.  There is a notion of cameral cover \cite{faltings1993stable, donagi1993decomposition, scognamillo1998elememtary} $\tilde{X}\rightarrow X$ for any reductive algebraic group $G$, and away from the ramification points these correspond to principal $W$-bundles for $W$ the Weyl group of $G$.  See Section \ref{subsec: Hitchin base} for more details.

A one dimensional local system on $\overline{X}\backslash R_{\rho}$ (the complement of the ramification divisor in the spectral cover) can be seen as describing a $N:=N_{GL(n)}(T)$-local system on $X\backslash P$ (where $P$ is the branch locus of $\overline{X}\rightarrow X$), such that the associated $W$-local system $\cE_{N}/T$ is isomorphic to the cameral cover associated to $\overline{X}\backslash \pi^{-1}(P)$ (removing the ramification locus has the effect of restricting the cameral cover to $X\backslash P$).  There is a restriction on which local systems can be described in this way.  This together with one additional restriction can be described via a condition on the monodromy around $P$ which we call the $S$-monodromy condition, and make precise in Section \ref{subsec: alpha monodromy}. This is analogous to the relationship between the spectral and cameral description of Hitchin fibers considered in \cite{donagi1993decomposition, donagi1995spectral, donagi2002gerbe}. The precise link between the S-monodromy condition, and the $SL(n)$ analogue of one dimensional local systems on the spectral cover, with ramification points removed, is provided in Proposition \ref{proposition : Reason for alpha monodromy condition}.

Following \cite{donagi2002gerbe} (which treats bundles, rather than local systems) we can also describe these $N$-local systems in terms of certain $T$-local systems on $\tilde{X}\backslash R$ ($R$ the ramification locus of $\tilde{X}\rightarrow X$) with some additional structure, which we call $N$-shifted weakly $W$-equivariant $T$-local systems.  A straightforward modification of the unramified case of \cite{donagi2002gerbe}, together with consideration of the $S$-monodromy condition shows:


\begin{theorem}[Theorems \ref{thm:flat_DG}, \ref{theorem: connected component of LocT-1}.  The part of this not involving the $S$-monodromy condition is a minor modification of the unramified case of results of Donagi--Gaitsgory in \cite{donagi2002gerbe}.]
\label{thm: flat DG intro}
Consider the unramified map $\pi : \tilde X\backslash R \to X \backslash P$, where we have removed the branch points and preimages of branch points from a cameral cover $
\tilde{X}\rightarrow X$. Then there is an equivalence of categories between:
\begin{enumerate}
\item Weakly $W$-equivariant, $N$-shifted $T$-local systems $\cL$ on $\tilde X \setminus R$ (see Definition \ref{defn: N shifted Weakly W equivariant});
\item $N$-local systems $\cE$ on $X \backslash P$, equipped with an isomorphism $\cE/T \cong \tilde X \setminus R$. of $W$-bundles on $X\backslash P$.
\end{enumerate}
This gives an isomorphism of the corresponding moduli functors (see Definitions \ref{defn: Moduli N shifted weakly W equivariant}, \ref{defin: N local systems corresponding to a cameral cover}):
\begin{equation}
    \label{eq: Map of Moduli spaces_intro}
   \Loc_{N}^{\tilde{X}\backslash R}(X\backslash P) \cong \Loc_{T}^{N}(\tilde{X}\backslash R)
\end{equation}

For $X$ non-compact this fits into a commutative diagram
\[
\begin{tikzcd}
\Loc_{T}^{N, S}(\tilde{X}\backslash R) \arrow{r} \arrow{d}{\cong} &  \Loc_{T}^{N}(\tilde{X}\backslash R) \arrow{d}{\cong}\\
\Loc_{N}^{\tilde{X}, S}(X \backslash P) \arrow{r} & \Loc^{\tilde{X}}_{N}(X\backslash P)
\end{tikzcd}
\]
where $\Loc_{T}^{N, S}(\tilde{X}\backslash R)$ and $\Loc_{N}^{\tilde{X}, S}(X\backslash P)$ are the moduli spaces of $N$-shifted weakly $W$-equivariant $T$-local systems on $\tilde{X}\backslash R$, and of $N$-local systems on $X\backslash P$ corresponding to $\tilde{X}\backslash R$ (respectively), which satisfy the $S$-monodromy condition (see Definitions \ref{alpha monodromy T bundle side}, \ref{moduli alpha monodromy T bundle side}, \ref{def:alpha_monodromy_N}, and \ref{definition: Moduli N bundles S monodromy}).  This condition corresponds to a restriction on the monodromy around either $P\subset X$.
\end{theorem}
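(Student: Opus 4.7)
The plan is to reduce the first part to a direct application of the unramified case of Donagi--Gaitsgory's equivalence, and then to separately verify that the $S$-monodromy condition is preserved under the equivalence. Since we have deleted the ramification divisor $R$ (resp.\ the branch locus $P$), the map $\pi \colon \tilde X \setminus R \to X \setminus P$ is an \'etale $W$-cover, so we are genuinely in the setting of the unramified case of \cite{donagi2002gerbe}. The only subtlety beyond that reference is that we are working with local systems rather than bundles, but the argument is formal in the structure group and transports without change.

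First I would construct the two functors explicitly. Starting from an $N$-local system $\cE$ on $X\setminus P$ together with the isomorphism $\cE/T \cong \tilde X \setminus R$ of $W$-bundles, the quotient map $\cE \to \cE/T$ exhibits $\cE \to \tilde X\setminus R$ as a $T$-torsor; equivalently, the pullback $\pi^{*}\cE$ has a canonical $T$-reduction determined by the tautological section of $\pi^{*}(\cE/T)$ arising from the identification $\tilde X\setminus R = \cE/T$. Writing this $T$-reduction as $\cL$, the Galois $W$-action on $\tilde X\setminus R$ lifts to an action on $\pi^{*}\cE$; combined with the conjugation action of $W = N/T$ on $T$ inside $N$, this produces the weakly $W$-equivariant $N$-shifted structure on $\cL$. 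In the opposite direction, given such an $\cL$, the extension of structure group from $T$ to $N$ yields an $N$-local system $\cL\times^{T} N$ on $\tilde X\setminus R$ which the weak $W$-equivariance data equips with descent data along $\pi$; the descended $N$-local system is the desired $\cE$, and the identification $\cE/T \cong \tilde X \setminus R$ comes for free from the construction. Checking that the two composites are naturally isomorphic to the identity is a routine computation with the extension $1 \to T \to N \to W \to 1$.

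Having established the equivalence of categories and the corresponding isomorphism on moduli in \eqref{eq: Map of Moduli spaces_intro}, I would next promote it to the stated commutative square. Since the horizontal arrows are just the inclusions of substacks cut out by the $S$-monodromy condition, it suffices to verify that the equivalence matches up the two versions of the condition pointwise on local monodromy. Concretely, the $S$-monodromy condition on the $N$-side restricts the conjugacy class of monodromy of $\cE$ around each point of $P$, while on the $T$-side it restricts the monodromy of $\cL$ around each point of $R$. Under the equivalence above, local monodromy of $\cE$ around a point $p \in P$ is computed by pullback along a small loop, and after going to the cameral cover translates into the monodromy of $\cL$ around the points of $\pi^{-1}(p)\cap R$ together with the $W$-equivariance datum relating the various preimages. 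Comparing definitions, the two monodromy conditions correspond verbatim via this translation, producing the square.

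The main obstacle is bookkeeping rather than substance: making sure the weak $W$-equivariance data match what one obtains from descent in both directions, and that the $S$-monodromy condition, which is phrased slightly differently on each side, really does transport cleanly through the equivalence at each puncture, including at points where the stabilizer in $W$ is nontrivial. This is purely a local check at the punctures, carried out in the references to Definitions \ref{alpha monodromy T bundle side} and \ref{def:alpha_monodromy_N}, and once those definitions are aligned the commutative diagram follows immediately from the equivalence on the right-hand column.
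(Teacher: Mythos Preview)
Your treatment of the equivalence of categories is essentially the paper's, though the inverse direction in the paper is more direct: rather than forming $\cL \times^{T} N$ on the cover and invoking descent along $\pi$, it observes that the $N$-shifted structure is \emph{by definition} a homomorphism $N \to \Aut_{X\setminus P}(\cL)$, which immediately makes $\mathrm{Tot}(\cL) \to X\setminus P$ into an $N$-torsor with $\mathrm{Tot}(\cL)/T \cong \tilde X\setminus R$. No descent argument is needed.

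Your handling of the $S$-monodromy square, however, misreads the logic of the definitions. The $T$-side $S$-monodromy condition (Definitions \ref{alpha monodromy T bundle side} and \ref{moduli alpha monodromy T bundle side}) is \emph{defined} to be the image of the $N$-side condition under the equivalence just established, so the commutative square is a tautology and your proposed ``local check at the punctures'' is not what is actually at stake. The substantive content---and the reason the statement cites Theorem \ref{theorem: connected component of LocT-1}---is that $\Loc_T^{N,S}$ is a full connected component of the simpler stack $\Loc_T^{N,-1}$ cut out by requiring monodromy $I_{\alpha}(-\mathrm{Id})$ around each $r\in R$. That requires two ingredients you do not supply: first, that $\coprod_{\alpha\in\Lambda_p} n_\alpha T_\alpha$ is open and closed inside $\{n\in N : n^2 = I_\alpha(-\mathrm{Id})\}$ modulo $N$-conjugation (Lemma \ref{lemma: connected component of N/N}); and second, that $\Loc_N^{\tilde X, S}(X\setminus P)$ is itself connected, which is where the non-compactness hypothesis $\deg(D)>0$ enters essentially (Lemma \ref{lemma: N local systems with S monodromy connected}).
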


We use the equivalence of Equation \ref{eq: Map of Moduli spaces_intro} to see that $\Loc_{T}^{N}(\tilde{X}\backslash R)$ is a stack. 


\subsection{Spectral and Cameral Networks}
\label{subsec: Intro spectral and cameral networks}

In this section we revisit the construction of a spectral network (see Construction \ref{constr:wkb_cameral}) from a point in the Hitchin base.  Our construction restricts to that of of Longhi and Park \cite{longhi2016ade} in the cases of $SL(n)$, $SO(2n)$, $E6$ and $E7$, and agrees with that of Gaiotto--Moore--Neitzke for $SL(n)$ or $GL(n)$ \cite{gaiotto2013spectral}, upon choices of local trivializations of the cameral/spectral cover as discussed in Remark \ref{rem:root_noncanonical}.  A point $a$ in the Hitchin base for a compact Riemann surface $X^{c}$ corresponds to a map $X^{c}\xrightarrow{a} \ft_{\cL}\sslash W$ for a line bundle $\cL$ on $X^{c}$ (see Section \ref{subsec: Hitchin base}). The cameral cover associated to $a$ is then defined as the pullback 

\[
\begin{tikzcd}
\tilde{X}^{c} \arrow{r}\arrow{d} & \ft_{\cL}\arrow{d}\\
X^{c} \arrow{r}{a}  & \ft_{\cL}\sslash W.
\end{tikzcd}
\]

Let $X=X^{c}\backslash D$ for a reduced divisor $D$, and $\cL=K_{X^{c}}(D)$.  For each root $\alpha$ we get a real oriented projective vector field $V_{\alpha}$ on $\tilde{X}\backslash R :=\tilde{X}^{c}\times_{X^{c}}X\backslash P$, by taking the vectors that pair with the covector field $\tilde{X}^{c}\rightarrow \ft_{K_{X^{c}}(D)}\xrightarrow{\alpha} K_{X^{c}}(D)$ to produce a positive real number. Furthermore, this assignment of real projective vector fields is $W$-equivariant with respect to the action of $W$ on both $\tilde{X}$ and on roots, in the sense that $w_{*}V_{\alpha}=V_{w(\alpha)}$.

Picking an appropriate point in the Hitchin base, we then iteratively define the \emph{cameral network} $\tilde{\cW}\subset \tilde{X}$ (See Construction \ref{constr:wkb_cameral} and Definition \ref{def:wkb_cameral}) by first drawing trajectories of some of these vector fields $V_{\alpha}$ starting at ramification points.  We call these trajectories Stokes curves or Stokes lines. We then draw new Stokes lines coming from the intersection points of previously drawn trajectories.  More precisely when two or more lines intersect, providing that some conditions are satisfied, we draw trajectories of the vector field $V_{\alpha}$, starting at the intersection point, for each root that is a non-negative integral linear combination of the labels of the intersecting lines.  A new phenomenon that occurs beyond the simply laced case is that we can add multiple new lines at an intersection point of two lines. 

The subset $\tilde{\cW}\subset \tilde{X}$ is invariant under the action of $W$, and hence descends to a subset $\cW\subset X$ which we call a \emph{basic spectral network} (see Definition \ref{def:spectral_net}) if it satisfies certain conditions.  It is not clear which points in the Hitchin base $\tilde{\cW}$ satisfy these conditions, and as such provide a basic spectral network\footnote{In the sense we get a basic abstract cameral network in the sense of Definition \ref{defin:basic_abstract}, and hence an abstract spectral network as in Definition \ref{def:spectral_net}.}; see the discussion around Conjecture \ref{conjecture: open dense set}.  However there are many examples of such networks \cite{gaiotto2013spectral, longhi2016ade, longhiloom}, and some partial results in this direction describing the generic behaviour of individual lines in spectral networks.   We call the networks arising from a point in the Hitchin base WKB networks.

We now outline some results we do have about the behaviour of spectral networks.  We can describe the trajectories on $\tilde{X}$ corresponding to the root $\alpha$ as the pullback of the trajectories of a quadratic differential $\omega_{\alpha}$ (see Construction \ref{construction: Quadratic form alpha}) on $\tilde{X}/s_{\alpha}$.  This allows us to use classical results about trajectories of quadratic differentials to understand individual lines in the cameral network and in the spectral network.

We introduce a restriction on the residues of $a\in \Gamma(X^{c}, \ft_{K_{X^{c}}(D)})$ at the divisor $D$ which we call condition R (Definition \ref{definition: Condition R}).  This corresponds to restricting to an open dense set in the Hitchin base.  We prove the following proposition about the behaviour of lines in the spectral/cameral network near $D$ by reducing it to a result about the trajectories of a quadratic differentials near poles.

\begin{proposition}[Proposition \ref{prop: open sets around D}]
Assume that we start with a point $a$ in the Hitchin base corresponding to a cameral cover $\tilde{X}_{a}\xrightarrow{\pi} X$, which satisfies condition R. Then, for each $d\in D$, there exists an open disc $B_{d}$, with $d\in B_d \subset X^{c}$, such that any line $\ell$ produced in the iterative construction of the cameral network $\tilde{\cW}$ which enters $\pi^{-1}(B_d)$, or is produced as a new Stokes curve inside $\pi^{-1}(B_d)$, 
will not leave $\pi^{-1}(B_d)$.
\end{proposition}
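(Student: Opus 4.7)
The plan is to reduce the statement to the classical theory of trajectories of quadratic differentials near their poles, exploiting the identification (alluded to between the definitions of $V_{\alpha}$ and $\omega_{\alpha}$) of Stokes lines on $\tilde X$ labelled by a root $\alpha$ with horizontal trajectories of $\omega_{\alpha}$ on the quotient $\tilde X/s_{\alpha}$. Under condition R, the quadratic differentials $\omega_{\alpha}$ acquire at the preimages of each $d\in D$ poles of order at least two with residues controlled by the value of $a$ at $d$. Classical Strebel theory then produces for each root $\alpha$ an open disc $B_d^{\alpha}\subset X^c$ around $d$ and an associated neighbourhood $\pi^{-1}(B_d^{\alpha})\subset \tilde X^c$ where every horizontal trajectory of $\omega_{\alpha}$ that enters the neighbourhood spirals into (or heads straight towards) the polar preimage and never leaves.

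First, I would make precise how condition R forces $\omega_{\alpha}$ to have a pole of the appropriate type at each preimage of $d$ for every root $\alpha$, using that $\omega_{\alpha}$ is essentially $\alpha\cdot a$ paired with itself in an invariant way and therefore inherits its polar order from that of $a$. Next, for a fixed root $\alpha$, I would cite or adapt a standard local description (the classical sink/source picture for higher-order poles, or the logarithmic spiral picture for order-two poles, depending on the residue) to produce $B_d^{\alpha}$ as above; the orientation on $V_{\alpha}$ ensures that the lines are chased by the flow into $\pi^{-1}(B_d^{\alpha})$ rather than expelled from it. Because there are only finitely many roots, I would then set $B_d:=\bigcap_{\alpha}B_d^{\alpha}$ and shrink $B_d$ further, if necessary, to ensure it is disjoint from all other punctures, from all ramification points of $\tilde X\to X$ other than those over $d$, and from any initial Stokes curves emerging from ramification points other than those over $d$.

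The main obstacle is to handle the secondary lines produced iteratively at intersection points that lie inside $\pi^{-1}(B_d)$: these are labelled by non-negative integral combinations $\beta=\sum_{i}n_i\alpha_i$ of the labels of the intersecting lines, and one must verify that their trajectories also remain trapped in $\pi^{-1}(B_d)$. The key observation is that the quadratic differential $\omega_{\beta}$ is built from the same covector $a$ pulled back to $\tilde X^c$ and therefore inherits its polar behaviour at preimages of $d$ from condition R, so the same local model applies to $\omega_{\beta}$; however, $\beta$ need not be a root, so one cannot invoke the cameral structure directly and instead must redo the local analysis using only the positivity condition defining $V_{\beta}$, checking that the non-negativity of the coefficients $n_i$ prevents the flow from reversing inside $\pi^{-1}(B_d)$. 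The finiteness of the set of $\beta$ that can arise up to bounded height inside a fixed compact locus, together with the fact that after finitely many iterations no new root labels appear from intersections inside $\pi^{-1}(B_d)$, would be required to close the argument.

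Finally, I would check that lines which enter $\pi^{-1}(B_d)$ from outside and those which are born at an interior intersection point are handled uniformly: in both cases, once a line lies in $\pi^{-1}(B_d)$ its forward trajectory is governed by the local model and cannot cross $\partial\pi^{-1}(B_d)$ outward, because at the boundary the oriented vector field $V_{\alpha}$ (or $V_{\beta}$) points inward by construction of $B_d$. This gives the claimed trapping property and concludes the proof.
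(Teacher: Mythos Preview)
Your overall strategy matches the paper's: reduce to the behaviour of trajectories of the quadratic differentials $\omega_{\alpha}$ near an order-two pole, use condition~R to ensure the logarithmic-spiral picture, and intersect over the finitely many roots. Two points need correction.

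First, your ``main obstacle'' is based on a misconception: every label produced at a joint \emph{is} a root. By definition, new Stokes curves are labelled by elements of $\Conv^{\N}_{\alpha_1,\dots,\alpha_n}\subset\Phi$, so each $\beta$ already appears among the finitely many roots you intersected over to form $B_d$, and $\omega_{\beta}$ is one of the quadratic differentials you have already controlled. There is no need to analyse non-root labels, and the worry about ``finitely many $\beta$ up to bounded height'' evaporates.

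Second, your handling of orientation has a genuine gap. The claim that ``$V_{\alpha}$ points inward at $\partial B_d$ by construction'' cannot hold for all roots simultaneously: $V_{\alpha}$ and $V_{-\alpha}$ have opposite orientations, so exactly one of them points inward (precisely those with $\mathrm{Re}\,\alpha(\tilde r_d)<0$). The unoriented trajectory lemma only tells you that \emph{one} direction of each trajectory stays in $B_d$ and converges to $d$; you must argue that the \emph{forward} direction of the Stokes curve is that one. For curves entering from outside this is automatic, but for a curve born at a joint inside $B_d$ it is not. The paper closes this by an induction on the total order on joints (from acyclicity), together with Lemma~\ref{lem:2d_sector}: at a joint the tangent vector of a new Stokes curve labelled $\beta$ is a positive linear combination of the outgoing tangents of the incoming curves. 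Since, by the inductive hypothesis, those tangents point toward $d$, so does $v_{\beta}$, and hence the forward direction of the new curve is the one trapped in $B_d$. Your ``non-negativity of the coefficients $n_i$'' is precisely this convexity statement, but you should formulate it as a pointwise statement about tangent vectors at the joint (as in Lemma~\ref{lem:2d_sector}) rather than as an asymptotic residue argument, and make the induction on joints explicit.
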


This means that an analogous statement holds for lines $\ell$ in the associated spectral network $\cW$.

This allows us to consider the ``cutting and gluing'' procedure on $X\backslash \cup_{d\in D}B_{d}$ rather than on $X$.  This allows us to deal with a complication arising where infinitely many new Stokes lines are produced\footnote{However we do not know in what generality we can avoid the complication of there being infinitely many Stokes lines in $X\backslash \cup_{d\in D}B_{d}$.} in $B_{d}$ as shown in Figure \ref{fig:joints_accumulate} and discussed in Remark \ref{remark: infinitely many new Stokes curves}.


The second result we prove by this method is Proposition \ref{prop: intro SF locus} (Corollary \ref{corollary: behaviour on SF locus}) below; which is a classification of the lines appearing in a cameral network (and hence also the associated spectral network) associated to a point in an open dense\footnote{When $\text{deg}(D)>2$.} subset $\cA_{SF,0}^{\diamondsuit}$ of the Hitchin base (defined in Definition \ref{def: saddle free}).  We do this by reducing it to a result classifying trajectories of quadratic differentials which we recall in Definition \ref{definition: types trajectories}, Proposition \ref{prop: no recurrent or closed} and Proposition \ref{prop: SF dense in each S1 orbit}.  This classification ensures that the individual trajectories of a spectral network associated to $a\in \cA_{SF,0}^{\diamondsuit}$ behave well.

\begin{proposition}[Corollary \ref{corollary: behaviour on SF locus}]
\label{prop: intro SF locus}
For $a\in \cA_{SF,0}^{\diamondsuit}$ (which is a dense open set of the Hitchin base for a connected compact Riemann surface $X^{c}$, the line bundle $K_{X^{c}}(D)$, and $\text{deg}(D)>2$) the image in $X^{c}$ of a Stokes curve labelled by $\alpha$ (for any root $\alpha$) in the WKB construction (\ref{constr:wkb_cameral}) starting with $a$ has one of the following behaviours:
\begin{itemize}
    \item It is a primary Stokes curve, starts at a branch point (of $\tilde{X}_{a}\rightarrow X$), and tends to a point in $D$.
    \item It starts at a joint (an intersection of Stokes curves) and tends to a point in $D$.
    \item It starts at a joint and tends to a branch point.
\end{itemize}
\end{proposition}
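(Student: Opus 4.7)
The plan is to reduce the classification to a corresponding classification of trajectories of the quadratic differentials $\omega_\alpha$ on the quotient $\tilde X/s_\alpha$ supplied by Construction \ref{construction: Quadratic form alpha}, then translate the resulting endpoint information back to Stokes curves on $\tilde X$ and $X$.

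First I would use the identification of Stokes curves labelled by $\alpha$ on $\tilde X$ with pullbacks of horizontal trajectories of $\omega_\alpha$. Because this is a local matching of oriented real directions with a meromorphic quadratic differential, critical points of $\omega_\alpha$ pull back to ramification points of $\tilde X \to X$, and poles of $\omega_\alpha$ pull back to preimages in $\tilde X$ of points of $D$. For $a\in \cA_{SF,0}^{\diamondsuit}$, Proposition \ref{prop: no recurrent or closed} guarantees that $\omega_\alpha$ has neither closed nor recurrent trajectories. Hence every trajectory of $\omega_\alpha$, and therefore every Stokes curve in $\tilde X$ labelled by $\alpha$, has the property that it (when extended maximally forward and backward) admits two well-defined limits, each of which is a critical point or a pole.

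Second I would organise the possible endpoints. The forward limit of a trajectory of $\omega_\alpha$ is either a critical point or a pole; condition R together with Proposition \ref{prop: open sets around D} forces each pole-type endpoint to lie in one of the preassigned discs $\pi^{-1}(B_d)$ and to tend to a point of $\pi^{-1}(D)$, while critical-point endpoints lie over branch points of $\tilde X \to X$. Pushing forward along $\pi$, the endpoints of a Stokes line in $\cW$ are thus either points of $D$ or branch points of $\tilde X\to X$. This already narrows us to a small list of start/end pairs: (branch, branch), (branch, $D$), ($D$, branch) and ($D$, $D$), possibly interrupted by joints along the way.

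Third I would rule out the pairs that do not appear. A primary Stokes curve starts, by construction, at a branch point; the SF condition $\cA_{SF,0}^{\diamondsuit}$ (together with Proposition \ref{prop: SF dense in each S1 orbit}) is designed to prevent any critical-to-critical trajectory, i.e.\ no saddle trajectories occur. Hence a primary curve must end in $D$, giving case~(1). For a secondary curve, the start is a joint by definition; following it in the forward direction the argument above gives either an endpoint in $D$ (case~(2)) or a branch point (case~(3)). The combinations ($D$, $D$) and the remaining (branch, branch) configurations are excluded by the saddle-free hypothesis and by the fact that primary curves only flow away from branch points.

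The main obstacle is the second step: one has to show that the trajectory classification on the quotient $\tilde X/s_\alpha$ genuinely lifts to the cameral cover in a way that is compatible both with the $W$-equivariance of the assignment $\alpha \mapsto V_\alpha$ and with the recursive definition of joints and secondary Stokes lines. In particular, one must verify that a Stokes line that is born at a joint truly corresponds to a trajectory segment of $\omega_\alpha$ on $\tilde X/s_\alpha$ with its initial condition at the image of the joint, so that the quadratic-differential classification applies uniformly to primary and secondary curves. Once this compatibility is in place, the remainder of the argument is a bookkeeping translation of Proposition \ref{prop: no recurrent or closed} and Proposition \ref{prop: open sets around D} into the language of cameral/spectral networks.
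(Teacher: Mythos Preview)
Your strategy---reduce each Stokes curve to a trajectory of $\omega_\alpha$ on $\tilde X_\alpha = \tilde X/s_\alpha$ and classify the endpoints---is exactly the paper's approach. Two points need fixing.

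First, Proposition~\ref{prop: no recurrent or closed} has hypotheses you never verify: $\omega_\alpha$ must have at least one finite and at least one infinite critical point. The paper checks this explicitly (Lemma~\ref{lemma: omega has finite critical point} for the finite critical point, using $\deg(D)>2$; the condition $a\in\cA_0^\diamondsuit$ guarantees genuine order-$2$ poles over $D$, hence infinite critical points). Without this check, ``no closed or recurrent trajectories'' is not justified.

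Second, your middle step invokes condition~R and Proposition~\ref{prop: open sets around D}, but condition~R is not part of the hypothesis $a\in\cA_{SF,0}^\diamondsuit$, and the step is in any case superfluous: once each trajectory is known to be separating or generic (Proposition~\ref{proposition : types of trajectories} combined with saddle-freeness and the absence of closed/recurrent trajectories), its limits are already identified as finite or infinite critical points of $\omega_\alpha$, which pull back to ramification points of $\pi$ and preimages of $D$ respectively. The paper's proof omits this detour entirely.
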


Note that if $a$ generates a spectral network, the spectral network consists of the images in $X^{c}$ of Stokes curves on $\tilde{X}^{c}$.

\subsection{Non-abelianization}

In this section we complete our overview of the non-abelianization map.  To complete our ``cutting'' and ``gluing'' procedure along a basic spectral network  (Construction \ref{thm:bulk_map_exists}), we need to define the automorphisms with which we reglue. These are defined iteratively in Construction \ref{construction of swkb}.

 For a fixed local system $\cE_{N}\rightarrow X\backslash P$ we define the automorphisms for the primary Stokes lines so that the monodromy around each point $p\in P$ will be set to zero after the cutting and regluing procedure.  We then iteratively define the automorphisms for all Stokes lines, by requiring that the monodromy around each joint $J\in \cW\subset X$ will be set to zero by the cutting and regluing procedure.  In the iterative step, at an intersection of Stokes lines we need a map from the automorphisms associated to the incoming lines, to the automorphisms associated to the outgoing lines.  The following Theorem (Theorem \ref{thm:assignment_stokes_intersection}) essentially provides such a map:
 
 \begin{theorem}[Theorem \ref{thm:assignment_stokes_intersection}]
Consider an intersection of distinct lines each labelled by a root, such that the labels form a convex set $C_{in}$ (see Definition \ref{def: convex set of roots}) and no two lines are labelled by the same root.  Let $U_{\gamma}$ denote the root subgroup associated to a root $\gamma$.  There is a unique morphism of schemes:
\begin{align*}
\prod_{\gamma \in C_{in}} U_{\gamma} 
&\xrightarrow{s} 
\prod_{\gamma \in C_{out}} U_{\gamma}
\\
(u_\gamma)_{\gamma \in C_{in}} 
&\xmapsto{s}
(u'_\gamma)_{\gamma \in C_{out}}
\end{align*}
where $C_{out}$ is the set of roots that are non-negative integral combinations of $C_{in}$, such that for any $(u_\gamma)_{\gamma \in C_{in}} $ the product of all Stokes factors ($u_{\gamma}^{\pm 1}$ and $(u_{\gamma}')^{\pm 1}$) on all lines intersected as one moves in a circle around the intersection point is the identity (see Theorem \ref{thm:assignment_stokes_intersection} for a more precise statement, which includes how the orientation determines if one uses $u_{\gamma}$ or $u_{\gamma}^{-1}$).
\end{theorem}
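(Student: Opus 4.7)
The plan is to reduce this to the unique factorization theorem for unipotent subgroups of $G$ generated by a closed convex set of roots. Recall that if $C$ is a set of positive roots that is closed under non-negative integral combinations (when such combinations land in the root system) and contained in some open half-space, then the root subgroups $\{U_\gamma\}_{\gamma \in C}$ generate a unipotent algebraic subgroup $U_C \subseteq G$, and for any fixed ordering of $C$ the product map $\prod_{\gamma \in C} U_\gamma \to U_C$ is an isomorphism of schemes. This is classical, see e.g.\ Borel or Springer.

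First I would verify the setup: the convexity of $C_{in}$ (together with the condition on labels coming from the spectral-network construction) places $C_{in}$ in an open half-space of $\ft_\bbR^*$, so the same holds for the set $C_{out}$ of all non-negative integral combinations of elements of $C_{in}$ that are roots. In particular $C_{out}$ is itself a closed convex set of roots in an open half-space, and the unipotent group $U_{C_{out}}$ contains every $U_\gamma$ for $\gamma \in C_{in}$. The cyclic ordering on a small circle around the joint picks out, by restricting to the angular positions of the incoming and outgoing lines respectively, a total ordering $\gamma_1^{in}, \ldots, \gamma_k^{in}$ of $C_{in}$ and a total ordering $\gamma_1^{out}, \ldots, \gamma_m^{out}$ of $C_{out}$, and the orientation data on each line determines a sign $\varepsilon_\gamma \in \{\pm 1\}$ so that traversing the circle contributes the factor $u_\gamma^{\varepsilon_\gamma}$ or $(u'_\gamma)^{\varepsilon_\gamma}$.

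The constraint that the full ordered product of Stokes factors around the joint equals the identity can then be rewritten as the equality
\[
\prod_{i=1}^{m} (u'_{\gamma_i^{out}})^{\varepsilon_{\gamma_i^{out}}} \;=\; \Bigl(\prod_{i=1}^{k} u_{\gamma_i^{in}}^{\varepsilon_{\gamma_i^{in}}}\Bigr)^{-1}
\]
inside $U_{C_{out}}$. The right hand side is manifestly a morphism of schemes $\prod_{\gamma \in C_{in}} U_\gamma \to U_{C_{out}}$. Applying the inverse of the factorization isomorphism $\prod_{\gamma \in C_{out}} U_\gamma \xrightarrow{\sim} U_{C_{out}}$ in the ordering $\gamma_1^{out}, \ldots, \gamma_m^{out}$ (with the signs $\varepsilon$ absorbed by the scheme automorphism $u \mapsto u^{\pm 1}$ of each $U_\gamma$) produces the unique morphism $s$ and establishes the theorem; the uniqueness is immediate because the factorization map is an isomorphism on the nose, not just on points.

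The step I expect to be the main obstacle is matching the \emph{ordering} determined by the angular positions on the outgoing side with an ordering that is admissible for unique factorization. For a convex set contained in a half-space, every ordering works for the factorization theorem, so once I confirm that $C_{out}$ is contained in an open half-space, there is nothing to check; but verifying convexity carefully, and checking that every outgoing root really appears as a non-negative integer combination of incoming roots (so that the angular ordering is realized inside $C_{out}$), is where the bookkeeping becomes delicate. Chevalley's commutator formula enters only implicitly, via the statement that the multiplication map on the product of root subgroups is a scheme isomorphism onto $U_{C_{out}}$ for closed convex $C_{out}$.
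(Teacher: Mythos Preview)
Your argument covers only the special case in which the incoming and outgoing rays occupy disjoint arcs of the circle, which is exactly Corollary \ref{cor:stokes_sector} in the paper, not the full Theorem \ref{thm:assignment_stokes_intersection}. The step where you rewrite the identity constraint as
\[
\prod_{i=1}^{m} (u'_{\gamma_i^{out}})^{\varepsilon_{\gamma_i^{out}}}
\;=\;
\Bigl(\prod_{i=1}^{k} u_{\gamma_i^{in}}^{\varepsilon_{\gamma_i^{in}}}\Bigr)^{-1}
\]
is only legitimate if, in the cyclic order around the joint, all incoming rays are contiguous and all outgoing rays are contiguous. In a general 2D scattering diagram the incoming and outgoing rays may be interspersed (see Example \ref{eg:A3_joint} and Figure \ref{fig:A3_joint}); the clockwise product then has the shape
$u_{\alpha_1}^{-1}\, u'_{\gamma_1}\, u_{\alpha_2}^{-1}\, u'_{\gamma_2}\cdots = \id$,
and because $U_{C_{out}}$ is non-abelian you cannot simply collect the known and unknown factors on separate sides.

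The paper deals with this by an induction over the faces $\Delta$ of the cone on $C_{out}$: projecting to a face (Lemma \ref{lem:projection_to_face}) kills all interior factors and reduces to a smaller problem, so one first determines $u'_\gamma$ for $\gamma$ on the extremal rays, then on $2$-dimensional faces, and so on. At each stage the remaining unknowns $u'_\gamma$ for $\gamma\in\Delta^{int}$ are moved next to one another by ``twisted transpositions'' (applications of Proposition \ref{prop:mult_iso} to adjacent pairs), which introduce extra factors in $U_{\Delta^{int}}$ that are already expressible in terms of previously determined data; one then solves a single equation of the form $\prod_{\gamma\in\Delta^{int}} u'_\gamma\cdot(\text{known}) = (\text{known})$ using the factorization isomorphism. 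Your unique-factorization input is the right tool, but it has to be applied after this untangling, not in place of it.
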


The map $s$ gives a map from the automorphisms on the incoming intersecting Stokes lines (identified via a trivialization with elements of some root subgroup $U_{\gamma}\subset G$) to the automorphisms on the Stokes lines leaving the intersection.  We show this map does not depend on the trivialization chosen in Lemma \ref{lem:assignment_stokes_newlines_equivariant}.

This gives our main result: the definition of a nonabelianization map from appropriate $N$-local systems on $X\backslash P$, to $G$-local systems on $X$:

\begin{construction}
[Construction \ref{thm:bulk_map_exists}]
The data of a basic abstract spectral network\footnote{See Definition \ref{def:spectral_net}.  In particular this includes the case of a basic WKB spectral network, as in Definitions \ref{def:wkb_cameral} and \ref{def:spectral_net}, associated to a point of the Hitchin base.} determines a morphism of algebraic stacks:
\[
\begin{tikzcd}
\Loc_N^{\tilde X \backslash R,S}(X\backslash P)
\arrow{r}{\nonab} &
\Loc_G(X).
\end{tikzcd}
\]
Here $\Loc_N^{\tilde X\backslash R ,S}(X\backslash P)$ is the moduli space of $N$-local systems on  $X\backslash P$ corresponding to a given cameral cover $\tilde{X}\backslash R$, together with a restriction on the monodromy around the points $P$ (see Section \ref{subsec: alpha monodromy}).  These local systems are an analogue of $\bbG_{m}$-local systems on the spectral cover, as described in Table \ref{tab: GL(n) vs G}.
\end{construction}

Composing this morphism with the identification of Theorem \ref{thm: flat DG intro} gives a map

\[
\Loc_{T}^{N, S}(\tilde{X}\backslash R)\rightarrow \Loc_G(X)
\]
from the moduli space of $N$-shifted weakly $W$-equivariant $T$-local systems on $\tilde{X}\backslash P$ satisfying the $S$-monodromy condition to the moduli of $G$-local systems on $X$.  This allows definition of complex analytic coordinates on $\Loc_{G}(X)$ by taking the monodromy of the $T$-local systems.

This map agrees with that defined by Gaiotto, Moore and Neitzke in \cite{gaiotto2013spectral} for $G=GL(n)$ or $G=SL(n)$,  as will be discussed in Section \ref{subsec: intro Spectral decomposition for representations}.

Proving whether or not this map has the properties conjectured in \cite{gaiotto2013spectral} for the case $G=SL(n)$ remains open for $n>2$.  In particular \cite{gaiotto2013spectral} conjectures that for $G=SL(n)$, when restricted to (holomorphic) symplectic leaves, a minor modification\footnote{This modification is that of assigning Borel structures near $D$, as is outlined in Section \ref{subsubsec: Borel structures arbitrary G} under appropriate conditions on the spectral network.} of this map is symplectic, and is an isomorphism on the zeroth cohomology of the tangent complex.


\subsection{Spectral Decomposition for Representations}
\label{subsec: intro Spectral decomposition for representations}

Given a representation $\rho:G\rightarrow GL(V)$ we can associate to a point in the Hitchin base (for $G$), a spectral cover $\overline{X}_{\rho}\rightarrow X$ \cite{donagi1993decomposition}.  There is a version of the non-abelianization construction in the case where $\rho$ is minuscule, and $G$ is simply laced via the path detour rules of \cite{gaiotto2013spectral, longhi2016ade}. 

We also provide explicit descriptions of the moduli space $\Loc_N^{\tilde X,S}(X)$ in terms of one dimensional local systems on the spectral cover $\overline{X}_{\rho}$ in the case where $\rho$ is the defining representation of one of the classical groups $GL(n)$, $SL(n)$, $Sp(2n)$, $SO(2n)$, or $SO(2n+1)$. 

\subsubsection{Minuscule Representations}

In the case where $\rho$ is minuscule, and $G$ is simply laced, a description of the image under $\rho$ of the Stokes factors is given by the path detour rules of \cite{gaiotto2013spectral, longhi2016ade}. The path detour rules describe the image in terms of parallel transport in the $\bbG_{m}$-local system on the spectral cover associated to an appropriate $N$-local system by $\rho$. In particular, there is a set of lifts $\overline{\gamma_{i}}$, of a loop around $p\in X$, such that the Stokes factors of initial\footnote{Those Stokes lines starting at a branch point of the cover $\tilde{X}\rightarrow X$.} Stokes lines can be identified as the exponential of the sums of the parallel transport operators along these paths, as is described in Construction \ref{construction: exponential path rule non abelianization}.   Alternatively the Stokes factors can be described as the sum of a automorphisms given by parallel transport along certain paths.

As this exponential can be defined for arbitrary local systems on the spectral cover associated to $\rho$ (and a fixed cameral cover), the above association of Stokes data gives a variant of non-abelianization (Construction \ref{construction: exponential path rule non abelianization})

\[\Loc_{\bbG_{m}}^{-1}(\overline{X}_{\rho}^{ne}\backslash R_{\rho})\xrightarrow{nonab_{PD}}  \Loc_{GL(V)}(X),\]

where $\overline{X}^{ne}_{\rho}\rightarrow X$ is a modification of the spectral cover called the non-embedded spectral cover (Definition \ref{defn: non-embedded spectral cover}), $R_{\rho}$ is it's ramification locus, and $\Loc_{\bbG_{m}}^{-1}(\overline{X}^{ne}_{\rho}\backslash R_{\rho})$ is the moduli space of $\bbG_{m}$-local systems on $\overline{X}^{ne}_{\rho}\backslash R_{\rho}$ with the property that they have monodromy $-1$ around each ramification point $r\in R_{\rho}$.

This version of non-abelianization is compatible with that of in Construction \ref{thm:bulk_map_exists}, as shown by the following theorem:

\begin{theorem}[Theorem \ref{conj: exp path rules minuscule}]
Let $G$ be a simply laced reductive algebraic group and $\rho:G\rightarrow GL(V)$ a minuscule representation.  The following diagram commutes:
\begin{equation}
\label{eq:nonab_pathrule_commute_intro}
\begin{tikzcd}
\Loc_{N}^{\tilde{X}\backslash R, S}(X\backslash P) \arrow{r}{Sp^{S}_{\rho}} \arrow{d}{\nonab} &  \Loc_{\bbG_{m}}^{-1}(\overline{X}_{\rho}\backslash R_{\rho}) \arrow{d}{\nonab_{PD}}\\
\Loc_{G}(X)\arrow{r}{\rho} & \Loc_{GL(V)}(X),
\end{tikzcd}
\end{equation}
where $Sp^{S}_{\rho}$ is a modification of the map of Construction \ref{construction: associated local system of vector bundles on spectral}.

\end{theorem}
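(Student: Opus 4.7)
The plan is to reduce commutativity of the square to an identity of Stokes factors, line by line, in the iterative construction of both non-abelianization maps. Both $\rho \circ \nonab$ and $\nonab_{PD} \circ Sp_\rho^S$ produce a $GL(V)$-local system on $X$ by the same cut-and-reglue procedure on the spectral network $\cW \subset X$; only the automorphisms attached to each line of $\cW$ differ a priori. So it suffices to show that, for any $\cE_N \in \Loc_N^{\tilde X \backslash R, S}(X \backslash P)$ and any Stokes line $\ell \subset \cW$ labelled by a root $\alpha$, the image under $\rho$ of the Stokes factor $u_\alpha \in U_\alpha$ produced by Construction \ref{construction of swkb} agrees with the automorphism of $\rho_*\cE_N|_\ell$ prescribed by the path detour rule applied to $Sp_\rho^S(\cE_N)$.

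First I would handle the primary Stokes lines. These emanate from a branch point $r \in R$, and $u_\alpha$ is uniquely characterized by the condition that the monodromy of the reglued local system around $r$ be trivial. For minuscule $\rho$, the root subgroup $U_\alpha$ acts on $V$ by a single nontrivial shift between two weight spaces $V_\lambda$ and $V_{s_\alpha(\lambda)}$; via $\rho$, this shift corresponds to the exchange of the two sheets of $\overline{X}_\rho$ coming together at $r$. The path detour rule attaches to such a primary line precisely the parallel transport of $Sp_\rho^S(\cE_N)$ along the detour path that encircles $r$ on the spectral cover, and this is once again characterized by the vanishing of monodromy around $r$. Hence $\rho(u_\alpha)$ and the detour operator agree on the nose.

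Next I would run an induction over the iterative construction, ordering joints by the number of generations needed to reach them. At a joint $J$ with incoming root labels forming a convex set $C_{in}$, Construction \ref{construction of swkb} produces outgoing Stokes factors via the unique map $s$ of Theorem \ref{thm:assignment_stokes_intersection}, whose defining property is that the product of all Stokes factors around $J$ is trivial. Applying $\rho$ to this identity gives a corresponding identity in $GL(V)$. On the other side, the path detour rule at $J$ is characterized by the analogous condition on the spectral cover: parallel transport around $J$, summed over all relevant path detours, must equal the identity. For minuscule $\rho$ each $\rho(u_\gamma)$ shifts weights by at most one, so the exponential in Construction \ref{construction: exponential path rule non abelianization} truncates and the equality of products around $J$ reduces to a finite combinatorial identity per weight of $V$ — namely that the detour paths on $\overline{X}_\rho$ corresponding to weights $\lambda$ and $\lambda - \gamma$ are exactly those produced by $\rho$ applied to the Stokes factor on the line labelled $\gamma$. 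By uniqueness in Theorem \ref{thm:assignment_stokes_intersection}, matching the incoming factors under $\rho$ forces matching of the outgoing factors.

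The main obstacle is the inductive step at joints: verifying that the abstract Stokes combinatorics of convex sets of roots in $G$ intertwine correctly with the path-detour combinatorics on $\overline{X}_\rho$. The minuscule hypothesis is essential here, since it reduces each root-subgroup action to a length-one weight shift, collapsing potential higher-order commutator contributions and making the pentagon-type relations at a joint equivalent to a single relation in $GL(V)$ per weight. The simply laced hypothesis ensures that only one new line is produced per root in $C_{out}$, matching the one-to-one correspondence between weight-space shifts and sheets of $\overline{X}_\rho$ at the joint. With these reductions, the inductive step becomes a direct unwinding of the exponential path rule against the uniqueness characterization of $s$, and the diagram \eqref{eq:nonab_pathrule_commute_intro} commutes.
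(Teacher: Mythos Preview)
Your overall strategy matches the paper's: reduce commutativity to equality of Stokes factors line by line, and handle joints by the uniqueness clause of Theorem~\ref{thm:assignment_stokes_intersection}. That part is fine and is exactly what the paper does (invoking Lemma~\ref{lem:assignment_stokes_newlines_equivariant}).

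The gap is in your treatment of the \emph{primary} Stokes lines. You argue that both the $s_{WKB}$ factor and the path-detour factor are ``characterized by the vanishing of monodromy around $r$'' and therefore agree. But neither factor is uniquely characterized this way: at a branch point there are \emph{three} primary lines, and the single monodromy equation $n_\alpha t_\alpha\, u_\alpha u_{-\alpha} u_\alpha = \id$ (Equation~\ref{eq:monodromy_cancels}) does not determine the three unknowns in $U_\alpha \times U_{-\alpha} \times U_\alpha$. Both sides are in fact defined by explicit formulas --- Definition~\ref{def: monodromy to Stokes factors} on the $s_{WKB}$ side and Equation~\ref{eq: path detour Stokes Factors} on the $s_{PD}$ side --- and the content of the proof is to check these formulas agree under $\rho$. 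The paper does this via Lemma~\ref{lemma:  Properties of sl2 representations}: restricting $\rho$ along the $SL(2)$-triple $I_\alpha$ decomposes $V$ into $1$- and $2$-dimensional irreducibles, which lets one read off $\rho(n_\alpha^{-1}) = \rho(-e_\alpha) + \rho(-e_{-\alpha}) + \mathrm{Id}_{\ker\alpha}$ and hence identify $\sum d_{i,j}$ with $\rho(-e_\alpha)$, yielding $S_{PD} = \exp(\sum d_{i,j}) = \rho(\exp(-e_\alpha))$. Your sketch gestures at this (``shifts weights by at most one'') but does not invoke the $SL(2)$ reduction or perform the identification; without it the primary-line step does not close. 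A smaller inaccuracy: for a general minuscule $\rho$ a root $\alpha$ can move \emph{several} pairs of weight spaces (e.g.\ the defining representation of $SO(2n)$), not ``a single nontrivial shift'', so the picture of one sheet-exchange on $\overline X_\rho$ is too restrictive.
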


For $G=GL(n)$, $G=SL(n)$, or $G=SO(2n)$ and the defining representation we give a more precise relationship between the maps $nonab$ and $nonab_{PD}$ in Section \ref{subsec: equivalence for GL(n) and SL(n)}.  For $GL(n)$ this statement is that there is a commutative diagram (see Equation \ref{eq:nonab_pathrule_identify_GL(n)}):

\begin{equation}
\begin{tikzcd}
\Loc_{N}^{\tilde{X}\backslash R, S}(X\backslash P) \arrow{rr}{\cong} \arrow{rd}{nonab} & & \Loc_{\bbG_{m}}^{-1}(\overline{X}_{\rho}^{ne}\backslash R_{\rho}) \arrow{ld}{nonab_{PD}}\\
 & \Loc_{GL(n)}(X). &
\end{tikzcd}
\end{equation}


\subsubsection{Spectral Data and Classical Groups}







  For the classical groups $GL(n)$, $SL(n)$, $SO(2n)$, $Sp(2n)$, and $SO(2n+1)$, we define in Section \ref{subsec: explicit descrtions for defining representations of classical groups} moduli spaces of $\bbG_{m}$-local systems (or equivalently local systems of one dimensional vector spaces) on the spectral curve, together with additional data, which are isomorphic to the moduli spaces of $N$-local systems $\Loc_{N}^{\tilde{X}\backslash R, S}(X\backslash P)$ considered in non-abelianization.

For $GL(n)$ and $SL(n)$, these are given in the following proposition:

\begin{proposition}[Spectral descriptions, see Propositions \ref{prop: spectral cameral GL(n)}, \ref{prop: SL(n) without alpha momodromy}, \ref{proposition : Reason for alpha monodromy condition}, \ref{prop: alpha monodromy GL(n)}]
For the groups $GL(n)$ and $SL(n)$, the spaces $\Loc_{N}^{\tilde{X}\backslash R}(X\backslash P)$ and $\Loc_{N}^{\tilde{X}\backslash R, S}(X\backslash P)$ are isomorphic to those given in table \ref{tab: Classical Groups Spectral descriptions}.

Here $\pi_{\rho}:\overline{X}\rightarrow X$ is the associated spectral cover, and $R_{\rho}$ and $P$ are the ramification and branch divisors respectively.  The map $\Loc_{\bbG_{m}}(\overline{X}\backslash R_{\rho})\rightarrow (\bbG_{m}/\bbG_{m})^{\# R_{\rho}}$ comes from restricting to loops around points in $R_{\rho}$.

\begin{table}[h]
\tiny
    \centering
    \begin{tabular}{c|c|c}
    Group     &  $\Loc_{N}^{\tilde{X}\backslash R}(X\backslash P)$ &  $\Loc_{N}^{\tilde{X}\backslash R, S}(X\backslash P)$ \\ \hline
       $GL(n)$  & $\Loc_{\bbG_{m}}\big(\overline{X}\backslash \pi_{\rho}^{-1}(P)\big)$ & $\Loc_{\bbG_{m}}(\overline{X}\backslash R_{\rho})\times_{(\bbG_{m}/\bbG_{m})^{\# R_{\rho}}}(\{-1\}/\bbG_{m})^{\# R_{\rho}}$\\
       $SL(n)$ & $\Loc_{\bbG_{m}}\big(\overline{X}\backslash \pi_{\rho}^{-1}(P)\big)\times_{\Loc_{\bbG_{m}}(X\backslash P)}\{\underline{\bbC}_{X\backslash P}\}$ & $\Loc_{\bbG_{m}}(\overline{X}\backslash R_{\rho})\times_{\Loc_{\bbG_{m}}(X\backslash P)}\{\underline{\bbC}_{X\backslash P}\}$ \\
    \end{tabular}
        \caption{Spectral descriptions of moduli spaces}
    \label{tab: Classical Groups Spectral descriptions}
\end{table}

\end{proposition}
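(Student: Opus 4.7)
The plan is to combine Theorem \ref{thm: flat DG intro} with an explicit analysis of weakly $W$-equivariant $T$-local systems for $G = GL(n)$ and $SL(n)$. That theorem gives an equivalence $\Loc_{N}^{\tilde X\backslash R}(X\backslash P) \cong \Loc_T^N(\tilde X\backslash R)$ and a corresponding equivalence for the $S$-monodromy versions, so it suffices to identify $\Loc_T^N(\tilde X\backslash R)$ and $\Loc_T^{N,S}(\tilde X\backslash R)$ with the table entries.

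First I would handle $GL(n)$ without $S$-monodromy. Here $T = \mathbb{G}_m^n$ and $W = S_n$ acts by permutation, so a $T$-local system on $\tilde X\backslash R$ is equivalent to an $n$-tuple $(\cL_1,\ldots,\cL_n)$ of $\mathbb{G}_m$-local systems, and a weak $W$-equivariant structure is a compatible family of isomorphisms $\phi_w \colon w^*(\cL_1,\ldots,\cL_n) \xrightarrow{\sim} (\cL_{w^{-1}(1)},\ldots,\cL_{w^{-1}(n)})$. Let $S_{n-1}\le S_n$ denote the stabilizer of $1$, so that $\overline X\backslash \pi_\rho^{-1}(P) = (\tilde X\backslash R)/S_{n-1}$. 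Since $S_{n-1}$ acts trivially on the first $\mathbb{G}_m$-factor of $T$, the component $\cL_1$ inherits an honest $S_{n-1}$-equivariant structure and descends to a $\mathbb{G}_m$-local system $\cL$ on $\overline X\backslash \pi_\rho^{-1}(P)$. Weak equivariance then determines the remaining components via $\cL_i \cong \sigma_i^*\cL$ for any coset representative $\sigma_i \in S_n/S_{n-1}$ with $\sigma_i(1) = i$, and conversely any $\cL$ produces such a tuple, giving an equivalence of groupoids $\Loc_T^N(\tilde X\backslash R) \simeq \Loc_{\mathbb{G}_m}(\overline X\backslash \pi_\rho^{-1}(P))$.

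For $SL(n)$ the torus is the kernel of $\det \colon \mathbb{G}_m^n \to \mathbb{G}_m$, so the previous data must additionally come with a trivialization of $\bigotimes_i \cL_i$. Because the product character is $W$-invariant, $\bigotimes_i \cL_i$ descends to $\mathrm{Nm}(\cL) \in \Loc_{\mathbb{G}_m}(X\backslash P)$, and imposing a trivialization cuts out the claimed fiber product with $\{\underline{\mathbb{C}}_{X\backslash P}\}$. To install the $S$-monodromy condition I would appeal to Section \ref{subsec: alpha monodromy} (compare Proposition \ref{proposition : Reason for alpha monodromy condition}): at each preimage $r$ of a branch point with local reflection $s_\alpha$, $\alpha = e_i - e_j$, the condition requires the monodromy of the character $\alpha$ to equal $-1$, i.e.\ that the monodromy of $\cL_i \cL_j^{-1}$ around a small loop through $r$ be $-1$. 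The divisor $R_\rho$ consists exactly of those preimages where $s_\alpha \notin S_{n-1}$; at such a point a small loop on $\overline X\backslash R_\rho$ lifts to a path on $\tilde X$ joining a sheet labelled $i$ to one labelled $j$, so the monodromy of the descended $\cL$ around $r$ is $-1$ precisely when the $S$-monodromy condition holds. At preimages of branch points lying in $\pi_\rho^{-1}(P)\backslash R_\rho$ the local reflection lies in $S_{n-1}$ and $\cL$ extends across such points automatically, which explains why the $S$-monodromy entry is defined over $\overline X\backslash R_\rho$ whereas the entry without $S$-monodromy requires $\overline X\backslash \pi_\rho^{-1}(P)$.

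The main obstacle is this last step: verifying that the $N$-shift together with weak $W$-equivariance on $\tilde X$ produces precisely the scalar $-1$ in the monodromy of the descended line bundle $\cL$ on $\overline X$ around each $r\in R_\rho$, and checking carefully that the fiber product over $(\mathbb{G}_m/\mathbb{G}_m)^{\#R_\rho}$ with $(\{-1\}/\mathbb{G}_m)^{\#R_\rho}$ correctly encodes this as a moduli problem rather than as a pointwise equality. Combining this identification with the determinant/norm condition produces the $SL(n)$ entry with $S$-monodromy and completes the proof.
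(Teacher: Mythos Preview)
Your route through Theorem~\ref{thm: flat DG intro} and the $T$-bundle side differs from the paper's, which works directly on the $N$-side via Construction~\ref{construction: associated local system of vector bundles on spectral}: it sends $\cE_N$ to $\cE_N \times_N (\coprod_\omega V_\omega)$ on the spectral cover and builds the inverse $\cL \mapsto \oplus_i \pi_i^*\cL$ explicitly, using for $GL(n)$ that $1 \to T \to N \to W \to 1$ splits, and for $SL(n)$ bootstrapping via $N_{SL(n)} \hookrightarrow N_{GL(n)}$. Your descent of $\cL_1$ along $\tilde X \to \tilde X/S_{n-1}$ is workable for the first column of the table, but you must be explicit that it is the $N$-shift (not mere weak equivariance) that supplies the $S_{n-1}$-equivariant structure on $\cL_1$, and that for $GL(n)$ this coincides with honest $W$-equivariance exactly because the sequence splits.

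Your $S$-monodromy analysis has a concrete error and a structural gap. The error: if the $N$-monodromy around $p$ lies in $n_\alpha T_\alpha$, the $T$-monodromy around the corresponding $r\in R$ on $\tilde X$ is $n_p^2 = I_\alpha(-\id)$, and applying $\alpha = e_i - e_j$ to $I_\alpha(-\id)$ gives $(-1)(-1)^{-1} = +1$, not $-1$; equivalently $\cL_i\otimes\cL_j^{-1}$ has trivial monodromy around $r$. The gap: the condition $n_p^2 = I_\alpha(-\id)$ (Definition~\ref{def: locT-1}) only forces $t_k^2 = 1$ for $k\neq i,j$, not $t_k = 1$, so on the $T$-side the $S$-monodromy locus is only a connected component (Theorem~\ref{theorem: connected component of LocT-1}), and your argument does not single it out. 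The paper sidesteps this by staying on the $N$-side: writing the monodromy in a spectral basis as $\left(\begin{smallmatrix} 0 & a \\ -1/a & 0 \end{smallmatrix}\right)\oplus \id_{n-2}$, the $\id_{n-2}$ block is exactly the statement that $\cL$ extends across the $n-2$ unramified preimages of $p$ (this is what replaces $\pi_\rho^{-1}(P)$ by $R_\rho$), and squaring the $2\times 2$ block gives the $-1$ monodromy at the single ramified preimage. The $GL(n)$ entry then follows from the observation that $n_\alpha T_\alpha$ inside $N_{GL(2)}$ is precisely the set of off-diagonal elements squaring to $-\id$ (Proposition~\ref{prop: alpha monodromy GL(n)}).
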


\subsection{2d-4d Wall crossing and Non-abelianization}
\label{subsec: introduction 2d-4d wall crossing and non-abelianization}

This section describes our understanding of how spectral networks arose in the series of works \cite{gaiotto2012wall2d4d, gaiotto2013spectral, gaiotto2013wallHitchin, gaiotto2010four, gaiotto2013framed}.  It is not required for the rest of the paper.  We stress that much of this work remains at the physical level of rigour, and we recommend that the interested reader consults the original references, or the references for mathematicians \cite{neitzke2014notes, neitzkemetric}.

The paper \cite{gaiotto2010four} considers the wall crossing of BPS states in an $\mathcal{N}=2$, $d=4$ field theory. In this context they interpret the Kontsevich--Soibelman wall crossing formula\footnote{Strictly speaking the wall crossing formula of \cite{gaiotto2010four} is slightly different to the wall crossing formula of Kontsevich--Soibelman for the formal torus algebra.  Mathematically this corresponds to how \cite{filippini2017stability} adds an extra parameter to the isomonodromy problem of \cite{bridgeland2012stability}, and that while Kontsevich--Soibelman work with a formal torus algebra, \cite{gaiotto2010four} does not work formally.  See the papers \cite{bridgeland2019geometry, bridgeland2019riemann, barbieri2019quantized} for more about working non-formally.} of \cite{kontsevich2010cohomological, kontsevich2008stability, joyce2012theory} as a consistency condition for a set of ``coordinates" $\cM_{\zeta} \supset U \xrightarrow{\chi_{\zeta, a}} (\mathbb{C}^{\times})^{n}$,  where $\cM_{\zeta}$ is the fiber over $\zeta\in \bbP^{1}$ of the twistor space associated to the hyperk\"{a}hler 3d Coulomb branch of the field theory compactified on $S^{1}$.  These coordinate charts are associated to generic points $a$ in the 4d Coulomb branch $\cA$, and a twistor parameter $\zeta\in \bbP^{1}\backslash \{0.\infty\}$.  We recall that there is the structure of an integrable system on the map  $\cM\rightarrow \cA$ \cite{seiberg1994monopoles, seiberg1994electric} (see also \cite{donagi1996supersymmetric}).  The coordinate charts are meant to respect the holomorphic symplectic structure of the 3d Coulomb branch, in the sense explained in the second and third to last paragraphs of this section.  Recall that the Coulomb branch has a hyperk{\"a}hler structure because at the low energy limit the theory produced by compactification on $S^{1}$ is a $3d$ $\cN=4$ $\sigma$-model with the Coulomb branch as target.  Analysis in \cite{alvarez1981geometrical} shows that the targets of a 3d $\cN=4$ $\sigma$-models must be hyperk{\"a}hler.  There are differential equations describing how these coordinate charts vary as we move in the base $\cA$, and as we vary $\zeta$.  

Consider the case of a field theory of class $S$ produced by compactifying the $6d$ $(0,2)$ superconformal field theory on the Riemann surface $X$ \cite{gaiotto2012n}.  Then $\cM\rightarrow \cA$ is the Hitchin fibration for the curve $X$ and a group $G$ of type $ADE$ \cite{gaiotto2013wallHitchin}.  The twistor space is then the Deligne twistor space (e.g. \cite[\S 4]{simpson1996hodge}) which is the moduli space of $\zeta$-connections.

The equations describing variation of the coordinate charts\footnote{These coordinate charts are coordinate charts for a coarse or rigidified moduli space.} are analogous to the $tt^{*}$-equations of \cite{dubrovin1993geometry}, in particular having an irregular singularity and thus exhibiting Stokes phenomena around $\zeta=0$.  If one ignores potential mathematical difficulties coming from the infinite dimensionality of various spaces one can construct \cite{gaiotto2010four} a family of solutions via an integrodifferential equation, which is analogous to the sectorial solutions of the $tt^{*}$-equations in \cite{dubrovin1993geometry}.  These solutions will be defined on sectors of the form $\{\zeta=re^{i\theta}| \theta_{1} < \theta < \theta_{2}\}$, and the solutions on two adjacent sectors differ on the common boundary by the action of a Stokes factor which is a bimeromorphic map $(\bbC^{\times})^{n}\dashrightarrow (\bbC^{\times})^{n}$.  These Stokes phenomena provide one reason that the coordinate charts for $\zeta\in \bbP^{1}\backslash 0$ do not provide a map on twistor sections, and thus these coordinate charts are not hyperk\"{a}hler.  However \cite{gaiotto2010four} describes how to reconstruct the hyperk\"{a}hler metric using these coordinates, and furthermore describes this metric as a modification of the semiflat metric on the Hitchin moduli space by ``quantum corrections."  This leads to predictions about the asymptotic (in the Hitchin base) difference between the semiflat metric and the hyperk\"{a}hler metric, versions of which are proved in \cite{mazzeo2019asymptotic, dumas2019asymptotics, fredrickson2020asymptotic, fredrickson2020exponential}.  Furthermore for $G=SL(2)$ the paper \cite{ott2020higgs} finds an asymptotic equivalence between semiflat coordinate charts and the coordinate charts described above, described as shear--bend coordinates.  The paper \cite{dumas2020opers} contains some numerical evidence for the conjecture that one can construct the hyperk\"{a}hler structure on the Hitchin moduli space in this way, and the conjectures regarding the holomorphic symplectic nature of these coordinates for $SL(2)$ and $SL(3)$.

Interpreting the Kontsevich--Soibelman wall crossing formula in terms of Stokes factors of an isomonodromic connection appears in a mathematically rigorous fashion in \cite{bridgeland2012stability}, see also \cite{filippini2017stability, bridgeland2019riemann, bridgeland2019geometry, barbieri2019quantized}.  In particular \cite{bridgeland2019riemann, bridgeland2019geometry, barbieri2019quantized} consider isomonodromy problems that are non-formal, in the sense they work with the torus algebra\footnote{Quantized in the case of \cite{barbieri2019quantized}.} rather than the formal torus algebra.  The papers \cite{bridgeland2020monodromy, allegretti2019stability} are particularly relevant as they consider the isomonodromy problems related to $PGL_{2}$-local systems.

The paper \cite{gaiotto2012wall2d4d} considers the situation where we have a 4d field theory as above, together with a family of 2d surface operators $S_{m}$, for $m \in M.$  This produces a family of vector bundles $E \rightarrow M\times \cM_{\zeta}$.  Restricting to $\zeta=1$ the resultant vector bundle $E$ has a flat connection relative to $\cM_{1}$.

In this case wall crossing occurs for families of sections $\cM\rightarrow E_{m}$ associated to points of $(a,m)\in \cA\times M$.  Again \cite{gaiotto2012wall2d4d} interprets this in terms of an isomonodromy problem which describes such families of sections.  

Consider now theories of class S.  For a given surface operator of the $6d$ theory, and a point $x\in X$ we get a surface operator in the 4d theory on a manifold $N$ by considering the theory on $N\times X$,  with a surface operator $S\times \{x\}\subset N\times X$ for $S\subset N$ a surface of dimension 2.  In type $A$ there is a canonical surface defect of the 6d theory \cite{alday2010loop, gaiotto2012n}, giving a family of surface defects of the 4d theory parametrized by $X$ such that the associated family of vector bundles on $\cM_{\zeta}\times X$ is the tautological bundle \cite[\S 7]{gaiotto2012wall2d4d}.  See \cite{longhi2016ade, longhi2017ade} for analysis of canonical surface defects in types $D$ and $E$. 

This gives a new way to construct charts $Y \rightarrow \cM_{1}=\Loc_{G}(X)$, for $Y$ complex analytically a $Z(G)$ gerbe over $(\bbC^{\times})^{m}$.  Namely for $a\in \cA$ we consider the 2d-4d wall crossing restricted to $\{a\}\times X\hookrightarrow \cA\times X$, and to $\zeta=1$.  To describe a map $Y \rightarrow \Loc_{G}(X)$ it is equivalent by the universal property to describe a $G$-bundle $E\rightarrow X\times Y$ which is a local system in the $X$-directions.  Let $\cW_{a}\subset \{a\}\times X$ be the restriction of the 2d-4d walls to $\{a\}\times X$.  We can describe the local system $E\rightarrow (X\backslash \cW_{a})\times Y$ as an $N=N_{G}(T)$-local system\footnote{With fixed associated $W$-bundle.}, which is ``glued'' together along $\cW$ to give a $G$-local system.  For $SL(n)$ this procedure gives the chart as a map from a moduli space of certain $N$-local systems (interpreted as a moduli of certain one dimensional local systems on a connected spectral cover $\overline{X}_{a}$ and playing the role of $Y$ in the above discussion) to $\Loc_{G}(X)$.  In \cite{gaiotto2013spectral} a network $\cW$ on $X$ with which to perform the above ``regluing'' procedure is defined iteratively; this procedure is conjectured to give the restriction of the 2d-4d wall crossing problem's walls to $\{a\}\times X\subset \cA\times X$.  A modification of these maps is conjectured to be holomorphic symplectic\footnote{Namely the modification that also gives a reduction of structure to a Borel around each point $d\in D\subset X^{c}$.  See Section \ref{subsubsec: Borel structures arbitrary G} for how to perform this modification.} when restricted to holomorphic symplectic leaves.  As mentioned earlier this is known in the case of $G=SL(2)$ by the identification with Fock--Goncharov coordinates \cite{gaiotto2013wallHitchin, fenyes2015dynamical, hollands2016spectral}. 

These are related to the original coordinates of \cite{gaiotto2010four} by the conjecture that, roughly, $\chi_{1,a}$ is left inverse to the coordinate chart constructed above.  A precise conjecture is that the composition of the above map with $\chi_{1,a}$ is equal to the rigidification $Y\rightarrow (\bbC^{\times})^{m}$.

In the case $G=SL(2)$ this procedure is considered without using the framework of 2d-4d wall crossing in \cite{gaiotto2013wallHitchin} by considering the asymptotics of $\zeta$-connections as $\zeta \rightarrow 0$ at all points $x\in X$.




\subsection{Outline}

In Section \ref{sec: Moduli spaces background} we review the moduli of local systems, spectral covers and cameral covers. We also provide a brief overview of the Hitchin moduli space, even though this is strictly speaking unnecessary for the sequel.

In Section \ref{sect:prelim} we derive various identities in Lie algebras and Lie groups that are necessary for the paper, and describe 2d scattering diagrams (Definition \ref{def:scattering_diagram}).  The 2d scattering diagrams describe locally what happens where Stokes lines in cameral or spectral networks intersect, including how to assign Stokes factors to the new Stokes lines in Theorem \ref{thm:assignment_stokes_intersection}.

In Section \ref{sec: Cameral Networks} we introduce basic spectral and cameral networks for arbitrary reductive algebraic groups $G$.  In contrast to preceding work, we do not use trivializations of the spectral or cameral cover.  We show that each lines in the spectral network is the image of a trajectory of a quadratic differential on a cover $\tilde{X}/s_{\alpha}\rightarrow X$ (for some root $\alpha$).  We use this to describe what WKB spectral networks look like near $d\in D$ in Section \ref{subsec: trajectories near D} for spectral networks corresponding to a point in an open dense subset of the Hitchin base that we denote $\cA^{\diamondsuit}_{R}$.  We also show that for spectral networks corresponding to a point in an open dense subset $\cA_{SF,0}^{\diamondsuit}$ of the Hitchin base every line in the spectral network is of one of three types specified in Corollary \ref{corollary: behaviour on SF locus}.  We note in particular that each of these types of line has a finite set of limit points.

In Section \ref{sec: flat Donagi Gaitsgory} we introduce various moduli spaces of $N$-local systems.  We also describe these in terms of $T$-local systems with additional structure on the cameral cover.  We define the non-abelianization map from $N$-local systems on $X\backslash P$, with fixed associated $W$ local system, and satisfying the $S$-monodromy condition to $G$-local systems on $X$.  From the definition it is manifest that the nonabelianization map is an algebraic map between the relevant moduli spaces.

In Section \ref{sec: Spectral Decomposition} we reinterpret our results on nonabelianization in terms of spectral covers associated to various representations.  In Section \ref{sec: exponential path rules} we explain for groups with simply laced Lie algebras and minuscule representations the relation between the non-abelianization map and the path detour rules of \cite{gaiotto2013spectral, longhi2016ade}. 
For the classical groups we provide explicit descriptions in \ref{subsec: explicit descrtions for defining representations of classical groups} of the various  moduli space of $N$-local systems introduced in Section \ref{sec: flat Donagi Gaitsgory} in terms of moduli spaces of one dimensional local systems on the spectral cover associated to the defining representation.

In appendix \ref{sect:planar} we provide explicit computations for the Stokes factors assigned to outgoing Stokes lines from an intersection where Stokes lines labelled\footnote{When one fixes a branch of the cameral cover.} by roots in a real 2-dimensional subspace of $\ft$ intersect.  In the $GL(n)$ case this corresponds to the Cecotti--Vafa wall crossing formula of \cite{cecotti1993classification}.

\subsection{Acknowledgements}

We would like to thank R. Donagi and T. Pantev for extensive discussions.  We would like to thank A. Fenyes, J. Hilburn, S. Lee, A. Neitzke for discussions on this subject, and P. Longhi for answering some questions on \cite{longhi2016ade}.

We would like to thank A. Neitzke for comments on Section \ref{subsec: introduction 2d-4d wall crossing and non-abelianization}, and R. Donagi for reviewing the results of the paper with us.

The authors were partially supported by NSF grants DMS 2001673, and 1901876, and by Simons HMS Collaboration grant \#347070, and \#390287.

Some of this material also appears in the authors' theses \cite{ionita2020spectral, morrissey2020nonabelianization}.

\subsection{Conventions, and Notation}

 We use a non-standard convention for Chevalley bases introduced in Definition \ref{defin:chevalley_basis}.

When working with the moduli of local systems we work in the setting of derived Artin stacks over the complex numbers. In particular all pushforwards, fiber products, and mapping stacks should be understood to be in this setting.  Having said that we do not use this in a particularly serious way, see Remark \ref{remark: where derived geometry is used} for a precise overview of where this is necessary.

While we are mostly working within the setting of algebraic geometry, we frequently consider the Hitchin base $\cA$ and the curve $X$ as complex manifolds, particularly in Section \ref{sec: Cameral Networks}.  Sometimes we treat the curve $X$ as a purely topological object.  We hope it is clear from context when this is happening.

 Our notion of a basic spectral network does not cover all cases of interest, as noted in Remark \ref{remark: What we miss}.  As there is not a rigorous definition covering all networks of interest, when we use the words spectral network we are either being informal, referencing basic abstract spectral networks, or referencing basic WKB spectral networks.  All rigorous statements involving spectral networks, will be about basic abstract spectral networks, or basic WKB spectral networks in the sense of Definition \ref{def:spectral_net}.
 
\subsubsection{Riemann Surfaces, Covers and Spectral networks}

\begin{itemize}
    \item $X$ -- a non-compact Riemann surface, $X=X^{c}\backslash D$, for $X^{c}$ a compact Riemann surface, and $D\subset X^{c}$ a reduced divisor.
    \item $X^{c}$ -- Compact Riemann Surface.
    \item $\ft_{\cL}:=\cL\times_{\bbG_{m}}\ft$, for $\cL$ a line bundle on $X^{c}$ or $X$.
    \item $\cA$ denotes the Hitchin base, see Definition \ref{def: Hitchin base}.
    \item $\cA^{\diamondsuit}\subset \cA$ is the subset of the Hitchin base corresponding to points which define a smooth cameral cover, see Definition \ref{definition:  cA diamondsuit} and Proposition \ref{prop: diamond suit implies smooth}.
    \item $\cA_{R}^{\diamondsuit}$ is the subset of $\cA^{\diamondsuit}$ corresponding to points satisfying condition R, see Definition \ref{definition: Condition R}.
    \item $\cA_{SF}^{\diamondsuit}$ and $\cA_{SF,0}^{\diamondsuit}$ are subsets $\cA^{\diamondsuit}$ of the Hitchin base consisting of saddle-free points, see Definition \ref{def: saddle free}.
   \item $\pi: \tilde{X}\rightarrow X$ -- a cameral cover of $X$ which is the restriction of a cameral cover $\tilde{X}^{c}\rightarrow X^{c}$, see Definition \ref{def: cameral cover}.
    \item For $\pi: \tilde{X}\rightarrow X$ a cameral cover, we denote by $R\subset \tilde{X}$ the ramification points, $P\subset X$ the branch points.
    \item $\chi_{\alpha}$ is a meromorphic one form on $\tilde{X}^{c}$ defined in Construction \ref{def:form_root}.
    \item $V_{\alpha}$ is an oriented projective vector field on $\tilde{X}\backslash R$ defined in Construction \ref{constr:wkb_cameral} part \ref{item: initialization}.
    \item $\omega_{\alpha}$ is a meromorphic quadratic form on $\tilde{X}_{\alpha}^{c}:=\tilde{X}^{c}\sslash <s_{\alpha}>$ defined in Construction \ref{construction: Quadratic form alpha}.
    \item $\pi: \overline{X}\rightarrow X$ is a spectral cover/curve, see Definition \ref{defn: spectral cover}.
    \item $X^{\circ}:= ReBl_{P}(X)$ is the oriented real blow up of $X$ along $P$.
    \item $\tilde{X}^{\circ}\rightarrow X^{\circ}$, $\tilde{X}^{\circ}$ is the oriented real blow up of $\tilde{X}$ along $R$.
    \item $X^{\circ '}=X^{\circ}\backslash \cup_{d\in D} B_{d}$, where $d\in B_{d}\subset X^{\circ}$ is an open set contracting to $\{d\}$ defined in Section \ref{subsec: trajectories near D}.
    \item $\tilde{X}^{\circ '}:=\tilde{X^{\circ}}\times_{X^{\circ}}X^{\circ '}$.
    \item $S^{1}_{p}\subset X^{\circ}$ is the preimage of $p\in P$ under $X^{\circ}\rightarrow X$.
    \item $x_{p}\in S^{1}_{p}$ is a choice of point. 
    \item $\gamma_{p}$ is a path in $S^{1}_{p}$ corresponding to a generator of its fundamental group, in the direction opposite to that specified by the orientation of $S^{1}_{p}$ (which is induced by the orientation of $X$).
    \item $\Lambda_{p}$ -- the set of roots $\alpha$ such that some trivialization of $\tilde{X}^{\circ}$ at $x_{p}$ identifies the monodromy of $\tilde{X}^{\circ}$ around $\gamma_{p}$ with $s_{\alpha}$.
    \item $\pi: \overline{X}^{ne}_{\rho}\rightarrow X$ the non-embedded spectral cover of $X$ associated to a representation $\rho$ of $G$, see Definition \ref{defn: non-embedded spectral cover}.  There are also variants $\pi: \overline{X}^{ne, \circ_{P}}_{\rho}\rightarrow X^{\circ}$, and $\pi: \overline{X}^{ne, \circ_{R}}_{\rho}\rightarrow X^{\circ}$ (see Definition \ref{definition: blown up spectral covers}).
    \item $\overline{X}^{ne, \circ '}_{\rho}:=\overline{X}^{ne, \circ}_{\rho}\times_{X^{\circ}}X^{\circ '}$. 
    \item $\tilde{\cW}\subset \tilde{X}^{\circ'}$ is a basic cameral network, see Definition \ref{defin:basic_abstract} and Construction \ref{constr:wkb_cameral}.
    \item $\cW\subset X^{\circ'}$ is a basic spectral network as in Definition \ref{def:spectral_net}.
        \item $\tilde{\cJ}$ is the set of joints of a cameral network $\cW$.  Defined inside Definition \ref{defin:basic_abstract}.
    \item $\cJ$ is the set of joints of a spectral network $\cW$.  Defined inside Construction \ref{defin:basic_abstract}.
    \item $\ell \subset \cW$ -- is a Stokes line (or Stokes curve) in either a cameral or a spectral network.
    \item $c \subset \cW\backslash \cJ$ -- is a connected component of $\cW\backslash \cJ$.
\end{itemize}

\subsubsection{Algebraic Groups}
\begin{itemize}
    \item $G$ is a reductive algebraic group over $\C$.
    \item $T\subset G$ is a maximal torus.
    \item $T_{\alpha}\hookrightarrow T$ is a subgroup of the maximal torus introduced  immediately after Equation \ref{equation: Orthogonal decomposition of t}.
    \item $N=N_{G}(T)$ is the normalizer of a maximal torus.
    \item $n_{\alpha}\in N$ is an element of $N_{G}(T)$ defined in Equation \ref{eq:preferred_lift}.
    \item $W$ is the Weyl group of a group $G$.
    \item $1_{W}$ is the identity of $W$.
    \item $q:N\rightarrow W$ is the map given by quotienting by $T$.
    \item $\fg=\text{Lie}(G)$, $\ft=\text{Lie}(T)$ are the Lie algebras of $G$ and $T$ respectively.  We sometimes denote $\ft$ by $\ft_{G}$.
    \item $\ft^{\vee}$ or $\ft_{G}^{\vee}$ -- denote the dual of $\ft$.
    \item $\Phi\subset \ft^{\vee}$ is the root system of $G$.
    \item $\Lambda_{char}$ or $\Lambda_{char, G}$ denote the character lattice of $G$.
    \item $\bbG_{m}$ - is the multiplicative group.
    \item $i_{\alpha}:\mathfrak{sl}(2):\rightarrow \fg$, $I_{\alpha}:SL(2)\rightarrow G$, denote the $\mathfrak{sl}(2)$-triple (and $SL(2)$-triple) associated to $\alpha$ see discussion around Diagram \ref{diag:exp}.
    \item $\fu_{\alpha_{1},...,\alpha_{j}}\subset \fg$ is a Lie subalgebra defined in Definition \ref{definition: poyhedral nilpotent lie algebra}.
    \item $\fu_{\Delta}$ -- is a  Lie algebra defined in Definition \ref{definition: poyhedral nilpotent lie algebra}.
    \item $\Conv^\N_{\gamma_1, \dots, \gamma_k}$ -- is the restricted convex hull of the set of roots $\{\gamma_1, \dots, \gamma_k\}$.  This consists of roots which can be written as linear combinations, with coefficients in $\N\cup \{0\}$, of the set $\{\gamma_1, \dots, \gamma_k\}$. Defined in Definition \ref{def:convex_hulls}.
    \item $\overrightarrow{\prod}_{s \in S}$, $\overleftarrow{\prod}_{s \in S}$ -- For a totally ordered index set $S$, these denote the products in which factors appear in increasing or decreasing order of their index, respectively.
    \item $[X/G]$ denotes the stack quotient of a space $X$ by $G$.
    \item $X\sslash G$ denotes the GIT quotient of a space $X$ by $G$.
\end{itemize}

\subsubsection{Moduli of Local systems}

\begin{itemize}
    \item $\Loc_{G}(X)$ -- Moduli stack of Betti $G$-local systems on $X$.
    \item $\Loc_{N}^{\tilde{X}^{\circ}}(X^{\circ})$ -- defined as $\Loc_{N}(X^{\circ})\times_{\Loc_{W}(X^{\circ})}\{\tilde{X}^{\circ}\}$ in Definition \ref{defin: N local systems corresponding to a cameral cover}, the moduli of $N$-local systems on $X^{\circ}$ corresponding to the cameral cover $\tilde{X}^{\circ}\rightarrow X^{\circ}$.
    \item $\Loc_{N}^{\tilde{X}^{\circ}, S}(X^{\circ})$.  The moduli stack of $N$-local systems on $X^{\circ}$, corresponding to the cameral cover $\tilde{X}^{\circ}$, satisfying the S-monodromy condition.  See Definitions \ref{def:alpha_monodromy_N}, \ref{definition: Moduli N bundles S monodromy}.  The space $\Loc_{N}^{\tilde{X}^{\circ'}, S}(X^{\circ'})$ is isomorphic to this.
    \item $\Loc_{T}^{N}(\tilde{X}^{\circ})$ is the moduli functor of $N$-shifted weakly $W$-equivariant $T$-local systems on $\tilde{X}^{\circ}$.  See Definitions \ref{defn: N shifted Weakly W equivariant}, \ref{defn: Moduli N shifted weakly W equivariant}.
    \item $\Loc_{T}^{N,S}(\tilde{X}^{\circ})$ is the moduli stack of $N$-shifted weakly $W$-equivariant local systems on $\tilde{X}^{\circ}$ satisfying the S-monodromy condition.  See Definitions \ref{alpha monodromy T bundle side}, \ref{moduli alpha monodromy T bundle side}.
    \item $\Aut_{\cW, G}(X^{\circ '})$ is the moduli stack of $G$-local systems on $X^{\circ '}$, together with a locally constant section of the sheaf of automorphisms on each connected component of $\cW\backslash \cJ$, see Definition \ref{defn: stack G bundles with automorphisms}.
    \item $\Aut_{\cW, N, G}^{\tilde{X}^{\circ'}}(X^{\circ'}):=\Loc_{N}^{\tilde{X}^{\circ'}}(X^{\circ'})\times_{\Loc_{G}(X^{\circ'})}\Aut_{\cW, G}(X^{\circ'})$ is defined in Definition \ref{def: Regluing stack with N structure}.
    \item $reglue: \Aut_{\cW, G}(X^{\circ '})\rightarrow \Loc_{G}(X^{\circ'} \backslash \cJ)$ is the map defined in Definition \ref{defn: regluing map}.
    \item $s^{WKB}: \Loc_{N}^{\tilde{X}^{\circ '}, S}(X^{\circ '})\rightarrow \Aut_{\cW, G}(X^{\circ '})$ is defined in Construction \ref{construction of swkb}
    \item $s_{PD}: \Loc_{\bbG_{m}}(\overline{X}_{\rho}^{ne,\circ_{R}})\rightarrow \Aut_{\cW, GL(V)}(X^{\circ '})$  is defined inside Construction \ref{construction: exponential path rule non abelianization}.
    \item $nonab: \Loc_{N}^{\tilde{X}^{\circ}, S}(X^{\circ})\rightarrow \Loc_{G}(X)$ is the map defined in Construction \ref{thm:bulk_map_exists}.
    \item $\Loc_{\bbG_{m}}^{-1}(\overline{X}_{\rho}^{ne,\circ})$ is a moduli space of $\bbG_{m}$-local systems on the spectral cover $\overline{X}_{\rho}^{ne,\circ}$ satisfying a restriction on their monodromy, as defined inside Construction \ref{construction: exponential path rule non abelianization}.
    \item $nonab_{PD}: \Loc_{\bbG_{m}}^{-1}(\overline{X}_{\rho}^{ne,\circ})\rightarrow \Loc_{GL(V)}(X)$ is defined in Construction \ref{construction: exponential path rule non abelianization}.
\end{itemize}






\section{The Moduli Space of Local Systems, and the Hitchin Base}
\label{sec: Moduli spaces background}

In this section we briefly review the moduli space of local systems, the Hitchin base, and the Hitchin moduli space (the third of these is not strictly speaking necessary for the remainder of the paper though it is important motivationally and for the underlying physics outlined in section \ref{subsec: introduction 2d-4d wall crossing and non-abelianization}).

\subsection{Moduli of Local Systems}

Let $X$ be a (not necessarily compact) Riemann surface, such that $X=X^{c}\backslash D$, for a compact Riemann surface $X^{c}$ and a reduced divisor $D$.  We only stipulate that $D$ is reduced because this is the situation we will consider, in order to simplify the discussion in Sections \ref{subsec: trajectories near D} and \ref{subsection: denseness of individual trajectories}.

\begin{definition}[Local System]
A \textbf{$G$-local system} on $X$ is a locally constant sheaf $E$, of schemes\footnote{Or otherwise depending on the category you are working in.} with $G$-action, such that $E_{x}$ is a $G$-torsor for all $x\in X$.
\end{definition}

A closely related object is a representations of the fundamental group.

Let $E$ be a $G$-local system on $X$.  If we fix a point $x\in X$, and a trivialization of $E_{x}$, the holonomy of the local system gives a representation 
\[\pi_{1}(X,x)\xrightarrow{\rho_{E}} \Aut(E_{x})\cong G.\]
  Varying the point $x\in X$, and the trivialization $E_{x}\cong G$ changes the representation by conjugation.

Conversely given a representation $\pi_{1}(X,x)\xrightarrow{\rho} G$, we can define a local system $E_{\rho}$, by taking the trivial local system $G\times X^{un}\rightarrow X^{un}$ on the universal cover $X^{un}\rightarrow X$ and quotienting by the diagonal action of $\pi_{1}(X,x)$ to get a local system
\[E_{\rho}:=G\times X^{un}/\pi_{1}(X,x)\rightarrow X^{un}/\pi_{1}(X,x)\cong X.\]
This local system is equipped with a framing at $x$.

The relation between local systems and representations of the fundamental group gives a simple way to describe the underived moduli stack of local systems $\Loc_{G}(X)_{cl}$.

Let $g$ be the genus of $X^{c}$ and $d=\text{deg}(D)$. There is then a choice of generators (canonical cycles) $\alpha_{1}, \beta_{1},...,\alpha_{g},\beta_{g}, \gamma_{1},...,\gamma_{d}$ of $\pi_1(X)$, such that we have an isomorphism:
\begin{equation}
\label{equation: Relation fundamental group}
\pi_{1}(X)=
\left\langle 
\alpha_{1}, \beta_{1},...,\alpha_{g},\beta_{g}, \gamma_{1},...,\gamma_{d}
\left|
\prod_{i=1}^{g}[\alpha_{i},\beta_{i}]\prod_{i=1}^{d}\gamma_{i}=Id
\right.\right\rangle.
\end{equation}

We then have that group homomorphisms from $\pi_{1}(X)$ to $G$ are parameterized by an affine algebraic variety, namely
\[\Hom^{grp}(\pi_{1}(X), G)\hookrightarrow G^{2g+d}\ni (x_{\alpha_{1}},x_{\beta_{1}},...,x_{\alpha_{g}}, x_{\beta_{g}}, x_{\gamma_{1}},..,x_{\gamma_{d}})\]
is cut out by the equation $\prod_{i=1}^{g}[x_{\alpha_{i}},x_{\beta_{i}}]\prod_{j=1}^{d}x_{\gamma_{j}}=Id$.

This motivates the following definition:

\begin{definition}
The \textbf{underived moduli stack of local systems} $\Loc_{G}(X)_{cl}$, for $G$ an affine algebraic group, is given by the stacky quotient:
\[\Loc_{G}(X)_{cl}:=[\Hom^{grp}(\pi_{1}(X), G)/G],\]
where $G$ acts by conjugation.  
\end{definition}

\begin{remark}
Taking the GIT quotient rather than the stacky quotient gives the character variety.
\end{remark}

\begin{example}
Let $G=T$ be commutative.  We then have that 
\[\Loc_{T}(X)_{cl}=[T^{2g+d}/T]=BT\times T^{2g+d}.\] 
\end{example}

\begin{example}
Let $W$ be a finite group (for example a Weyl group).  We then have that 
\[\Loc_{W}(X)_{cl}=[\Hom(\pi_{1}(X),W)/W]\] 
has a discrete set of connected components.  Each component is of the form $B\Gamma$, for $\Gamma\subset W$ the finite group that preserves each element in the image of $\pi_{1}(X)$ under conjugation.
\end{example}

\begin{definition}
The \textbf{Betti stack} associated to $X$ is the derived stack $X_{B}\in \Fun(dSch^{op}, \cS)$ (here $\cS$ is the $\infty$-category of spaces or Kan complexes), which is the constant functor taking any derived scheme to a Kan complex corresponding to the topological space $X$ in the classical topology.
\end{definition}

\begin{definition}
For $G$ an affine algebraic group, the \textbf{derived moduli stack of local systems} is given by 
\[\Loc_{G}(X):=\Hom_{dSt}(X_{B}, BG),\]
where $\Hom_{dSt}(-,-)$ is the derived mapping stack  (See \S19.1 of \cite{lurie20spectral}, and \S 2.2.6.3 of \cite{toen2008homotopical}).
\end{definition}

See e.g. \cite{toen2004hag, pantev2018poisson} for more details.
 
\begin{remark}
Recall that there is a truncation map $t_{0}:dSt\rightarrow St$ from derived stacks, to stacks in the underived sense.  It is shown in Theorem 5.1 of \cite{toen2004hag} that \[t_{0}(\Loc_{G}(X))=\Loc_{G}(X)_{cl}.\]
\end{remark}

\begin{example}
A simple example of the difference between derived and underived moduli spaces of local systems is given by considering the moduli space of local systems on the sphere:
We have that $\Loc_{G}(\bbP^{1})_{cl}\cong BG$, while $\Loc_{G}(\bbP^{1})\cong Spec( Sym^{\bullet}(\fg^{\vee}[1]))/G$.
\end{example}

\begin{remark}[On the use of Derived Geometry]
\label{remark: where derived geometry is used}
We stress that we do not seriously interact with derived geometry, and the reader should feel free to initially ignore this subtlety.  In particular large parts of this paper do not require derived geometry, as summarized below.

We use the derived viewpoint on the moduli of local systems in the definition of the moduli stack of $N$-shifted weakly $W$-equivariant $T$-local systems on the cameral cover (Definition \ref{defn: Moduli N shifted weakly W equivariant}), and hence also in the moduli stack of these which satisfy the S-monodromy condition (Definition \ref{moduli alpha monodromy T bundle side}).  We also use derived geometry to define the stacks $\Aut_{\cW, G}(X^{\circ '})$ (Definition \ref{defn: stack G bundles with automorphisms}) and $\Aut_{\cW, N, G}^{\tilde{X}^{\circ'}}(X^{\circ'})$ (Definition \ref{def: Regluing stack with N structure}).  Our construction of the non-abelianization map factors through the stack $\Aut_{\cW, G}(X^{\circ '})$, however we described the map in this way for convenience, and one could describe the map without using derived geometry.  The description we use makes it manifest that non-abelianization gives a map of stacks.  The derived viewpoint will be needed in future work.  
\end{remark}

\subsection{Hitchin Base}
\label{subsec: Hitchin base}

In this section we review well known facts about the Hitchin base following e.g. \cite{hitchin1987stable, ngo2010lemme, donagi1993decomposition}.  In this section we use the compact Riemann surface $X^{c}$.  We fix a principal $\bbG_{m}$-bundle $\cL \rightarrow X^{c}$. 

Denote $\ft_{\cL}:=\cL\times_{\bbG_{m}}\ft$.

\begin{definition}[Hitchin base]
\label{def: Hitchin base}
We define the \textbf{Hitchin base} $\cA_{G}$ (which we denote by $\cA$ if no confusion will arise) associated to a reductive algebraic group $G$, a compact Riemann surface $X^{c}$, and a line bundle $\cL$ as;
\[
\cA_G:=\Gamma(X^{c}, \ft_{\cL}\sslash W),
\]
where $\ft_{\cL}\sslash W$ refers to the GIT quotient.
\end{definition}

For classical groups we can give more explicit descriptions of the above:

\begin{example}
For $G=GL(n)$, we have an identification $\bbC[\ft]^{W}\cong \bbC[\bbA^{n}]^{S_{n}}$. This algebra is generated by the elementary symmetric polynomials in $n$ variables.  We hence have an identification:
\[\cA_{GL(n)}\cong \oplus_{i=1}^{n}\Gamma(X, \cL^{\otimes i}).\]
\end{example}

\begin{definition}[Cameral Cover]
\label{def: cameral cover}
Let $a\in \cA$ be a point of the Hitchin base.  We define the \textbf{Cameral cover} of $X$ associated to the point $a\in \cA$ as the pullback scheme:
\begin{equation}
\label{eq:def_cameral}
\begin{tikzcd}
\tilde{X}^{c}_{a} \arrow{r}{\tilde{a}}\arrow{d}{\pi} & \ft_{\cL}\arrow{d}\\
X^{c} \arrow{r}{a} & \ft_{\cL}\sslash W
\end{tikzcd}
\end{equation}

We denote the map $\tilde{X}_{a}^{c}\rightarrow \ft_{\cL}$ by $\tilde{a}$.
\end{definition}

The $W$-action on $\ft_{\cL}$ (as a scheme over $\ft_{\cL}\sslash W$) induces a $W$-action on $\tilde{X}^{c}_{a}$, which preserves the fibers of $\tilde{X}_{a}^{c}\rightarrow X^{c}$.  

\begin{remark}
Let $\ft^{reg}\subset \ft$ be the regular elements of $\ft$.  This is the complement of the root hyperplanes in $\ft$.  As this is preserved under the $\bbG_{m}$ and $W$ actions we can define $\ft^{reg}_{\cL}:=\cL\times_{\bbG_{m}}\ft^{reg}$, and it's GIT quotient by $W$.

Let $P\subset X^{c}$ be the complement of the preimage of the subset $\ft^{reg}_{\cL}\sslash W\hookrightarrow \ft_{\cL}\sslash W$.  Then $P$ is the branch locus of the cover $\tilde{X}^{c}\rightarrow X^{c}$, and $\tilde{X}^{c}\backslash \pi^{-1}(P)\rightarrow X^{c}\backslash P$ is a principal $W$-bundle.
\end{remark}

Cameral covers were introduced in \cite{donagi2002gerbe, faltings1993stable, scognamillo1998elememtary} in order to describe the fibers of the Hitchin moduli space for arbitrary reductive algebraic groups $G$.  For classical groups we can instead use a simpler curve called the spectral cover.

\begin{definition}[Spectral Curve/Cover]
\label{defn: spectral cover}
Let $G\hookrightarrow GL(V)$ be a group together with an embedding into a linear group $GL(V)$.

This induces a map $\ft_{G}\rightarrow \ft_{GL(V)}$, a map $\ft_{G}\sslash W_{G}\rightarrow \ft_{GL(V)}\sslash W_{GL(V)}$
and hence a map 
\[\cA_{G}\xrightarrow{i} \cA_{GL(V)}\cong \oplus_{i=1}^{\text{dim}(V)}\Gamma(X, \cL^{\otimes i})\]
by postcomposing with the map $\ft_{G}\sslash W_{G}\rightarrow \ft_{GL(V)}\sslash W_{GL(V)}$.

We define the \textbf{spectral curve} $\overline{X}_{a}^{c}$ corresponding to a point:
\[
a=(a_{1}(x),...,a_{\text{dim}(V)}(x))\in \oplus_{i=1}^{\text{dim}(V)}\Gamma(X, \cL^{\otimes i})\cong \cA_{GL(V)}
\]
as the zero locus (ie. intersection with the zero section) of the polynomial
\[\lambda^{\text{dim}(V)}+a_{1}(x)\lambda^{\text{dim}(V)-1}+a_{2}(x)\lambda^{\text{dim}(V)-2}+...+a_{dim(V)}(x)\in \Gamma(\text{Tot}(\cL), \pi^{*}\cL),\]
where $\pi:\text{Tot}(\cL)\rightarrow X^{c}$ is the projection, and $\lambda\in \Gamma(\text{Tot}(\cL), \pi^{*}\cL)$ is the tautological section.

For $a\in \cA_{G}$ we define the spectral curve $\overline{X}_{a}^{c}:=\overline{X}_{i(a)}^{c}$.
\end{definition}

For $G=GL(n)$, we can informally see the sheets of the cameral cover as corresponding to labellings of the sheets of the spectral cover. More formally, pick a morphism $S_{n-1}\hookrightarrow S_{n}$, corresponding to permutations which fix a chosen object. We then have an identification: 
\[\overline{X}_{a}^{c}\cong \tilde{X}_{a}^{c}/S_{n-1}.\]

For unramified\footnote{See \S 9 of \cite{donagi2002gerbe} for a general version.} spectral curves we can reverse this as follows:

\[\tilde{X}_{a}^{c}\hookrightarrow \Hom_{/X^{c}}(\{1,...,n\}\times X^{c}, \overline{X}_{a}^{c}),\]

where we are considering both sides as sheaves of sets over $X^{c}$.  The image corresponds to the subsheaf of morphisms such that at each $\bbC$-point $x\hookrightarrow X^{c}$ the map $\{1,...,n\}\rightarrow \{x\}\times_{X}X^{c}_{a}$ is an isomorphism. This map gives an isomorphism between $\tilde{X}_{a}^{c}$ and this subsheaf.

\begin{figure}[h]
    \centering
    \includegraphics[width = \textwidth]{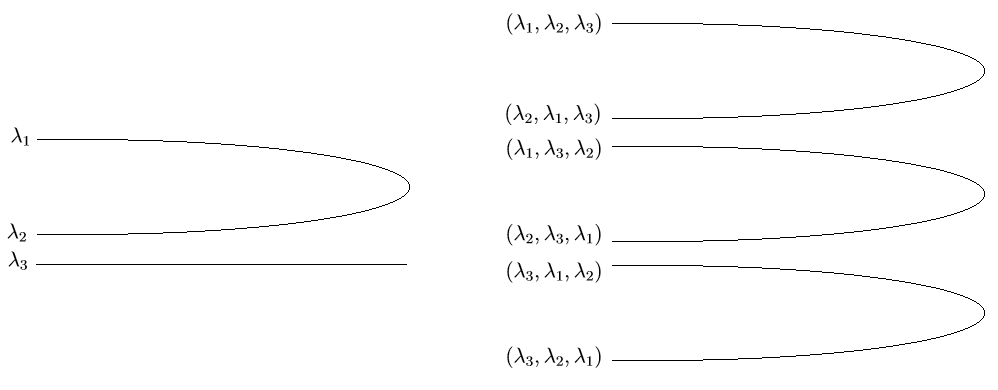}
    \caption{Real points of the spectral cover (left) and the cameral cover (right) in a neighborhood of a ramification point of order 2, for $G = GL(3)$.  Note that the set of real points in a neighbourhood of is not canonically defined, and depends on additional choices.}
    \label{fig:spectral_vs_cameral}
\end{figure}

\begin{definition}
We define the divisor $D_{H}\subset \ft_{\cL}\sslash W$ by $D_{H}:=\left(\cL\times_{\bbG_{m}}\left(\cup H_{\alpha}\right)\right)\sslash W$, where we take the union over all root hyperplanes $H_{\alpha}$.
\end{definition}

\begin{definition}
\label{definition:  cA diamondsuit}
We define the Zariski open $\cA^{\diamondsuit}\subset \cA$ as being the set of sections $a\in \Gamma(X^{c}, \ft_{\cL}\sslash W)$ that intersect the $D_{H}$ transversely, that is to say $a$ intersects the non-singular part of $D_{H}$ transversely.
\end{definition}

\begin{proposition}
\label{proposition: A diamond non empty}
Let $\cL=K_{X^{c}}(D)$ for a reduced divisor $D$, $deg(D)> 2$ .  Then $\cA^{\diamondsuit}\subset \cA$ is non-empty.
\end{proposition}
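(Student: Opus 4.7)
The plan is to prove this by a Bertini-style genericity argument, showing that a generic section in $\cA$ meets $D_H$ only along its smooth locus and does so transversely. First, using the Chevalley isomorphism $\C[\ft]^W \cong \C[p_1,\dots,p_r]$ with homogeneous generators of degrees $d_1, \dots, d_r$ (on which $\bbG_m$ acts with weight $d_i$), I would identify $\ft_{\cL}\sslash W$ with the total space of the vector bundle $E := \bigoplus_{i=1}^r \cL^{\otimes d_i}$ over $X^{c}$, so that $\cA \cong \Gamma(X^{c}, E)$ and points $a \in \cA$ correspond to tuples $(a_1,\dots,a_r)$ with $a_i \in \Gamma(X^{c}, \cL^{\otimes d_i})$.

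Second, the hypothesis $\deg(D) > 2$ gives $\deg(\cL) = 2g-2+\deg(D) \geq 2g+1$, hence $\deg\bigl(\cL^{\otimes d_i}(-2x)\bigr) \geq 2g-1$ for every $x \in X^{c}$ and every $d_i \geq 1$. By Serre duality this forces $H^1\bigl(X^{c}, \cL^{\otimes d_i}(-2x)\bigr) = 0$, so the restriction maps $\Gamma(X^{c}, \cL^{\otimes d_i}) \twoheadrightarrow \Gamma\bigl(\cL^{\otimes d_i}|_{2x}\bigr)$ are surjective. In other words, global sections of $E$ separate $1$-jets at every point, which is equivalent to saying the evaluation map
\[
\mathrm{ev} : X^{c} \times \cA \longrightarrow \mathrm{Tot}(E), \qquad (x,a) \mapsto a(x),
\]
is a submersion.

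Third, I would invoke the classical fact that the discriminant divisor $D_H \subset \ft_{\cL}\sslash W$ has singular locus $D_H^{sing}$ of codimension at least two in $\ft_{\cL}\sslash W$; fiberwise over $X^{c}$ this follows from the standard codimension-two description of the singular locus of the reflection-group discriminant, which reduces to the images of pairwise intersections of root hyperplanes $H_\alpha \cap H_\beta \subset \ft$ together with points where two sheets of the map $\ft \to \ft\sslash W$ collide with higher order. Consequently $\mathrm{ev}^{-1}(D_H^{sing})$ has codimension $\geq 2$ in $X^{c}\times \cA$, so its image under the second projection is a proper closed subvariety; its complement $U_1 \subset \cA$ is a nonempty Zariski-open set parameterizing sections disjoint from $D_H^{sing}$. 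Since $\mathrm{ev}$ is a submersion and $D_H^{sm}$ is smooth, the preimage $\mathrm{ev}^{-1}(D_H^{sm})$ is smooth, and a standard generic-smoothness argument applied to the projection $\mathrm{ev}^{-1}(D_H^{sm}) \to \cA$ produces a nonempty open $U_2 \subset \cA$ of sections intersecting $D_H^{sm}$ transversely. Then $U_1 \cap U_2 \subset \cA^{\diamondsuit}$ is nonempty, proving the claim.

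The main obstacle I anticipate is the careful treatment of $D_H^{sing}$: one must verify that all components have codimension $\geq 2$, ruling out the possibility of a non-reduced codimension-one contribution. This can be handled by pulling back along the finite map $\ft_{\cL} \to \ft_{\cL}\sslash W$, where the preimage of $D_H$ is the union of the root hyperplane subbundles, whose pairwise intersections and non-transversal self-intersections are manifestly of codimension two.
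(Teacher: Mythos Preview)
Your approach is correct and is essentially the standard one: the paper does not give its own argument but simply cites Ng\^o, \emph{Le lemme fondamental}, Proposition~4.6.1, remarking that the degree condition amounts to $\cL$ being very ample. What you have written is a sketch of precisely that Bertini-type argument, with the key numerical input $\deg(\cL)=2g-2+\deg(D)\ge 2g+1$ being exactly the very-ampleness threshold. So you have reconstructed the content behind the citation.

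One small correction: the claim that ``global sections separate $1$-jets'' is \emph{equivalent} to $\mathrm{ev}$ being a submersion is not accurate. Since $\mathrm{ev}$ is a map over $X^{c}$, its differential in the base direction is the identity, and submersivity only requires surjectivity of the restriction $\Gamma(X^{c},E)\to E_{x}$ for each $x$, i.e.\ $0$-jet surjectivity (equivalently $H^{1}(E(-x))=0$). Your $1$-jet condition is strictly stronger, but since it implies the $0$-jet condition, the argument is unaffected; and in any case $\deg(\cL)\ge 2g+1$ gives both. For the Bertini step itself (generic smoothness of $\mathrm{ev}^{-1}(D_{H}^{sm})\to\cA$), only the weaker condition is needed.

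Your anticipated obstacle --- checking $\operatorname{codim} D_{H}^{sing}\ge 2$ --- is the only point requiring care, and your proposed verification by pulling back along $\ft_{\cL}\to\ft_{\cL}\sslash W$ is the right one: upstairs the discriminant is the union of root-hyperplane subbundles, whose singular locus (pairwise intersections and their $W$-translates) is visibly codimension~$2$.
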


This is a special case of e.g. Proposition 4.6.1 of \cite{ngo2010lemme}.  The requirement  on degree comes down to needing $\cL$ to be very ample.


\begin{proposition}[See \cite{faltings1993stable} or Lemme 4.6.3 of \cite{ngo2010lemme} for a proof.]
\label{prop: diamond suit implies smooth}
If $a\in \cA^{\diamondsuit}$, then the associated cameral cover $\tilde{X}_{a}$ is smooth.  Conversely if $\tilde{X}_{a}$ is smooth then $a\in \cA^{\diamondsuit}$.
\end{proposition}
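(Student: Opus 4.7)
I would prove both directions by reducing to a purely local statement at a point $p \in X^{c}$ with $a(p) \in D_{H}$. Away from such points, the map $\ft_{\cL} \to \ft_{\cL} \sslash W$ is a $W$-torsor (since $\ft^{\mathrm{reg}}_{\cL} \to \ft^{\mathrm{reg}}_{\cL}\sslash W$ is étale), so the pullback $\tilde{X}^{c}_{a}$ is étale over $X^{c}$ and hence automatically smooth. The content of the proposition thus lies at the branch locus.

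\textbf{Local model.} At such a point $p$, choose a local trivialization of $\cL$ so that $\ft_{\cL} \cong \Delta \times \ft$ on a small disk $\Delta$ around $p$, and pick a lift $\tilde{t} \in \ft$ of $a(p)$. Let $W_{\tilde t} \subset W$ be its stabilizer. By Chevalley's restriction theorem together with the Luna slice theorem for the finite group action of $W$, the germ of $\ft_{\cL} \sslash W$ at $a(p)$ is analytically isomorphic to the germ of $\ft / W_{\tilde t}$ at $0$, and correspondingly the germ of $\tilde{X}^{c}_{a}$ at the point over $p$ is analytically isomorphic to the germ at the origin of the fiber product $\Delta \times_{\ft / W_{\tilde t}} \ft$. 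Since $W_{\tilde t}$ is generated by the reflections $s_{\alpha}$ for the roots $\alpha$ vanishing on $\tilde t$, the smooth locus of $D_{H}$ corresponds precisely to those $\tilde t$ lying on exactly one root hyperplane. I would split into two cases.

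\textbf{Case 1: one root hyperplane.} If $W_{\tilde t} = \langle s_{\alpha}\rangle$, choose linear coordinates $x_{1}, \dots, x_{n}$ on $\ft$ with $\alpha = x_{1}$, so $s_{\alpha}$ negates $x_{1}$ and $\ft/W_{\tilde t}$ has coordinate ring $\bbC[x_{1}^{2}, x_{2}, \dots, x_{n}]$; the discriminant there is cut out by $x_{1}^{2}=0$. Writing $a$ in local coordinates as $(b(z), c_{2}(z), \dots, c_{n}(z))$ on $\Delta$, the cameral cover is defined in $\Delta \times \bbA^{1}$ by the single equation $x_{1}^{2} = b(z)$, which is smooth at $(p,0)$ if and only if $b'(p)\neq 0$. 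This is precisely the transversality of $a$ to $D_{H}$ at $a(p)$, proving both implications in this case.

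\textbf{Case 2: two or more root hyperplanes.} If $W_{\tilde t}$ contains reflections $s_{\alpha_{1}}, s_{\alpha_{2}}$ for linearly independent roots $\alpha_{1}, \alpha_{2}$, then $a(p)$ lies in the singular locus of $D_{H}$, so by definition $a\notin \cA^{\diamondsuit}$. I would then show that $\tilde{X}^{c}_{a}$ must be singular over $p$: the scheme-theoretic preimage of $a(p)$ in $\ft$ contains all $W$-translates of $\tilde t$, which form a finite set with ideal generated by products $\prod \alpha_{i}$; intersecting with the image of any one-dimensional arc $a$ forces the pullback to be either non-reduced or to contain several branches meeting at a single point in $\tilde X_a^c$. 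A dimension count of the Zariski tangent space at the preimage of $p$, using that a $W_{\tilde t}$-cover branched over the union of at least two hyperplanes never smooths out, then gives the desired singularity.

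\textbf{Main obstacle.} The only subtle step is Case 2: making precise the claim that a branched $W_{\tilde t}$-cover over a curve, whose branch locus is the pullback of a reducible (or non-reduced) divisor at a singular point, necessarily has a singular total space. Once this is handled using the explicit description of $\bbC[\ft]^{W_{\tilde t}}$ and a Zariski tangent space computation, Case 1 furnishes both directions and the proposition follows.
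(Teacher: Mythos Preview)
The paper does not give its own proof of this proposition; it simply cites Faltings and Ng\^o. So there is no ``paper's proof'' to compare against, and your outline is in fact the standard argument one finds in those references.

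Your Case~1 is clean and correct. Your Case~2 reaches the right conclusion via the right mechanism (a Zariski tangent space count), but the exposition preceding it is muddled: the remark about ``ideal generated by products $\prod \alpha_i$'' and ``several branches meeting at a single point'' is not quite right and not needed. The clean version of Case~2 is simply this: the differential at $0$ of the quotient map $\ft \to \ft/W_{\tilde t}$ has image $\ft^{W_{\tilde t}}$, hence kernel of dimension at least $2$ whenever $W_{\tilde t}$ contains reflections in two independent roots. The tangent space to the fiber product $\Delta \times_{\ft/W_{\tilde t}} \ft$ at $(p,0)$ then contains $\{0\}\times \ker(d\pi)$, so has dimension $\ge 2$, while the fiber product itself is a curve. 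That is the entire argument; you can drop the discussion of orbit ideals and branches.
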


\begin{remark}
The spectral cover can be smooth without the cameral cover being smooth.  An example is given by a spectral curve mapping to the curve $X$ that locally looks like the projection from the curve $y^{3}=x$ to the $x$-coordinate.
\end{remark}

By definition we have that if $a\in \cA^{\diamondsuit}$, then $\tilde{a}(\tilde{X})\cap (\cL\times_{\bbG_{m}}(H_{\alpha}\cap H_{\beta}))=\emptyset$ for $\alpha\neq \beta$.  

\subsection{Hitchin Moduli Space}

Strictly speaking, this section is completely unnecessary for this paper.  However, it is important motivationally, and to some of the conjectures \cite{gaiotto2013wallHitchin, gaiotto2012wall2d4d} in this subject.

In this section we have a compact Riemann surface $X^{c}$ (which we consider as an algebraic curve), a reductive algebraic group $G$ and a line bundle $\cL$ .

\begin{definition}
An ($\cL$-twisted) Higgs $G$-bundle on $X^{c}$ is a pair $(E, \varphi)$, where $E$ is a principal $G$-bundle on $X$, and $\varphi\in \Gamma(X^{c}, \ad(E)\otimes \cL)$ is called a Higgs field.
\end{definition}

\begin{example}
\label{example:  GL(n) Higgs Fields}
For $G=GL(n)$ this is equivalent to a pair $(\mathbf{E}, \varphi)$, where $\mathbf{E}\rightarrow X^{c}$ is a \emph{vector bundle} on $X^{c}$, and $\varphi\in \Gamma(X^{c}, \End(\mathbf{E})\otimes \cL)$.
\end{example}

The Hitchin moduli space is the (Artin) stack which represents the moduli problem for Higgs bundles on $X$. More formally:

\begin{definition}[Hitchin Moduli Space]
The Hitchin moduli space is the moduli space of sections, or equivalently the  mapping stacks:
\[ \cM_{H}(X,G,\cL):= \Gamma(X, [\fg_{\cL}/G])=\Map_{/X}(X, [\fg_{\cL}/G]).\]

Here $[\fg_{\cL}/G]$ refers to the stack quotient, and $\fg_{\cL}:=\cL\times_{\bbG_{m}}\fg$.
\end{definition}

\begin{remark}
For $\cL\cong K_{X^{c}}$, Serre duality or coherent duality, together with an identification $\fg\cong \fg^{\vee}$, gives an identification $\cM_{H}(X,G,L)\cong T^{*}Bun_{G}(X)$, of the Hitchin moduli space with the cotangent bundle\footnote{In the case where $G$ is not semisimple, we interpret the cotangent bundle and the mapping stack in the derived sense. Alternatively, one could rigidify with respect to $Z(G)$, or use the framework of good and very good stacks as in \cite{beilinson1991quantization}.} of the moduli space of $G$-bundles on $X$.
\end{remark}

This moduli space is endowed with the structure of an integrable system \cite{hitchin1987self} (see also chapter 2 of \cite{beilinson1991quantization} ).

\begin{definition}[Hitchin Fibration]
The \textbf{Hitchin fibration} is the map induced by post-composition with the map $[\fg_{\cL}/G]\rightarrow \fg_{\cL}\sslash G\cong \ft\sslash W$ from the stacky quotient to the GIT quotient:
\[\cM_{H}=\Gamma(X, [\fg_{\cL}/G])\xrightarrow{h} \Gamma(X, \fg_{\cL}\sslash G)\cong \Gamma(X, \ft_{\cL}\sslash W)=: \cA\]
\end{definition}

\begin{example}[Hitchin fibration for $GL(n)$]
In the case of $GL(n)$, using the description of $GL(n)$-Higgs bundles of Example \ref{example: GL(n) Higgs Fields} we have that the Hitchin fibration takes a Higgs $GL(n)$-bundle $(\mathbf{E}, \varphi)$ to the characteristic polynomial of $\varphi$.

The coefficients of the characteristic polynomial are the elementary symmetric functions.
\end{example}

We can describe the fibers of the Hitchin fibration using the machinery of spectral and cameral covers.  For $G=GL(n)$ informally we can see the Hitchin base as describing the eigenvalues of the Higgs field, so the fiber must describe the eigenlines (or coeigenlines).  A description of the (co)eigenlines should correspond to an object like a line bundle on the spectral cover, which we can now see as describing the eigenvalues of a Higgs field.

More formally:

\begin{proposition}[Abelianization of $GL(n)$ Hitchin fibers \cite{hitchin1987stable, beauville1989spectral}]
Let $a\in \cA_{GL(n)}$ be a point of the Hitchin base such that the associated spectral cover $\overline{X}_{a}$ is smooth

There is an isomorphism 
\[h^{-1}(a)\cong Pic(\overline{X}_{a}),\]
where $Pic(X_{a})$ is the Picard stack.
\end{proposition}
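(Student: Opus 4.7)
The plan is to exhibit the isomorphism as arising from a spectral correspondence that identifies Higgs bundles with characteristic polynomial $a$ with torsion-free rank one sheaves (i.e., line bundles, by smoothness) on the spectral cover. I would construct explicit functors in both directions and then verify they are mutually inverse in families.

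\textbf{From $\Pic(\overline{X}_a)$ to $h^{-1}(a)$.} Let $\pi : \overline{X}_a \to X^c$ denote the (finite, flat, degree $n$) projection; flatness holds because both curves are smooth. Given a line bundle $\cM$ on $\overline{X}_a$, define $\mathbf{E} := \pi_* \cM$. Since $\pi$ is finite flat of degree $n$, $\mathbf{E}$ is locally free of rank $n$ on $X^c$. The defining equation of $\overline{X}_a \hookrightarrow \mathrm{Tot}(\cL)$ in Definition~\ref{defn: spectral cover} shows that the tautological section $\lambda \in \Gamma(\mathrm{Tot}(\cL), p^* \cL)$ restricts to a section of $\pi^* \cL$ on $\overline{X}_a$, i.e.\ to a morphism $\cM \to \cM \otimes \pi^* \cL$. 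Pushing forward and using the projection formula $\pi_*(\cM \otimes \pi^* \cL) \cong \mathbf{E} \otimes \cL$ produces a Higgs field $\varphi : \mathbf{E} \to \mathbf{E} \otimes \cL$. By construction, $\varphi$ satisfies the defining polynomial of $\overline{X}_a$, so its characteristic polynomial is $a$ and $(\mathbf{E}, \varphi) \in h^{-1}(a)$.

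\textbf{From $h^{-1}(a)$ to $\Pic(\overline{X}_a)$.} Conversely, given a Higgs bundle $(\mathbf{E}, \varphi)$ with characteristic polynomial $a$, the action of $\varphi$ turns $\mathbf{E}$ into a module over the sheaf of $\cO_{X^c}$-algebras
\begin{equation*}
\cO_{X^c}[\lambda] \;:=\; \bigoplus_{i \ge 0} \cL^{-i},
\end{equation*}
which is nothing but $p_* \cO_{\mathrm{Tot}(\cL)}$. The Cayley--Hamilton theorem, together with the hypothesis that the characteristic polynomial is $a$, shows that this action factors through the quotient $\pi_* \cO_{\overline{X}_a}$. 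Hence $\mathbf{E}$ canonically arises as $\pi_* \cM$ for a coherent sheaf $\cM$ on $\overline{X}_a$. The smoothness of $\overline{X}_a$ is essential here: on a smooth curve, any coherent sheaf $\cM$ whose pushforward is locally free of rank equal to $\deg \pi$ must be torsion-free of generic rank one, hence a line bundle. (Generic rank one follows from the fact that, generically, the eigenvalues of $\varphi$ are distinct so the $n$ branches of $\overline{X}_a$ each carry a one-dimensional eigenspace.)

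\textbf{Verifying inverse equivalence and promoting to stacks.} The two constructions are manifestly inverse: starting from $\cM$, the module structure on $\pi_*\cM$ induced by $\varphi = $ multiplication by $\lambda$ recovers $\cM$ as a sheaf on $\overline{X}_a$; starting from $(\mathbf{E}, \varphi)$, writing $\mathbf{E} = \pi_* \cM$ and letting $\varphi$ be multiplication by $\lambda$ reconstructs the original Higgs field. Both constructions are functorial and compatible with arbitrary base change over the point $a$, because $\pi$ being finite flat makes $\pi_*$ exact and commuting with base change, and the spectral construction of $\cM$ from $(\mathbf{E}, \varphi)$ is described by a universal quotient. Packaging these functors on the level of groupoids of families yields the claimed isomorphism of stacks $h^{-1}(a) \cong \Pic(\overline{X}_a)$.

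\textbf{Main obstacle.} The crucial (and the only nontrivial) step is proving that the sheaf $\cM$ obtained from a Higgs bundle is actually a line bundle rather than merely a rank one torsion-free sheaf with embedded components or higher-rank jumps at ramification. This is precisely where the smoothness of $\overline{X}_a$ enters: on a smooth curve, torsion-free coherent sheaves of generic rank one are automatically invertible, and the generic rank can be computed by restricting to the \'etale locus of $\pi$, where $\mathbf{E}$ splits as a direct sum of the $n$ eigenline bundles of $\varphi$.
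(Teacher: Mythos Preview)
Your proposal is correct and follows essentially the same spectral correspondence as the paper, which simply records the two mutually inverse maps and defers to \cite{hitchin1987stable, beauville1989spectral} for details. The only cosmetic difference is that the paper writes the inverse functor as $(\mathbf{E},\varphi)\mapsto \Coker\big(\pi^{*}\mathbf{E}\xrightarrow{\lambda-\varphi}\pi^{*}\mathbf{E}\big)\otimes(\pi^{*}\cL)^{-1}$, whereas you describe it via the $\pi_{*}\cO_{\overline{X}_{a}}$-module structure coming from Cayley--Hamilton; these are standard equivalent formulations of the BNR construction.
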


The identification is given by the following pair of maps, which are inverses to each other:
\[h^{-1}(a)\rightarrow Pic(\overline{X}_{a})\]
\[(E, \varphi)\mapsto \Coker(\pi^{*}E\xrightarrow{\lambda - \varphi} \pi^{*}E)\otimes (\pi^{*}\cL)^{-1}\]
and 
\[Pic(\overline{X}_{a})\rightarrow h^{-1}(a)\]
\[\cL\mapsto (\pi_{*}\cL, \pi_{*}\lambda).\]

\begin{remark}
This can be generalized to reduced, and ultimately to arbitrary spectral curves \cite{beauville1989spectral, schaub1998courbes}.  This requires replacing line bundles with rank one torsion free sheaves (and in the non-reduced case generalizing the notion of rank).
\end{remark}

\begin{remark}
By \cite{donagi2002gerbe} there is a similar statement for other groups $G$.  Namely for a smooth cameral cover there is an isomorphism between the Hitchin fiber, and the moduli of certain $T$-bundles on the cameral cover equipped with additional structure.  This motivates the definitions of the moduli spaces we consider in section \ref{sec: flat Donagi Gaitsgory}.
\end{remark}




\section{Lie Theory and Stokes Data}
\label{sect:prelim}

This section contains some Lie theoretic notation and constructions over the complex numbers, and describes the local picture of cameral networks occurring when Stokes lines intersect. Section \ref{subsect:chevalley} contains background material with no novel ideas, and it can be safely skipped until one encounters back-references to it.  Warning \ref{warn:sl2_triples} outlines a non-standard sign convention for Chevalley bases, which we use throughout the paper. Section \ref{sec: Stokes Data} introduces 2D scattering diagrams, and contains results about how to assign unipotent automorphisms to lines in these diagrams.  These 2D scattering diagrams describe the local picture of cameral networks, and also the local picture of spectral networks up to a subtlety about the labels of lines (see Section \ref{sec: Cameral Networks}).  

\subsection{Chevalley bases}
\label{subsect:chevalley}

We begin by fixing a basis of $\fr g$ with respect to which the structure constants are particularly well behaved.

\begin{definition}
\label{defin:chevalley_basis}
Let $\fr g$ be a complex semisimple Lie algebra. Fix a Cartan $\fr t \subset \fr g$, which determines a root space decomposition $\fr g = \fr t \oplus \bigoplus_{\alpha \in \Phi} \fr g_\alpha$ (with $\Phi$ denoting the set of roots); also fix a set of simple roots.
Consider a basis\footnote{We mean a basis of the underlying vector space.} of $\fr g$ compatible with this decomposition, that is to say:
\begin{itemize}
    \item a basis $\{h_\alpha\}_{\alpha \text{ simple}}$ for $\fr t$;
    \item a basis $\{e_\alpha\}_{\alpha\in \Phi}$ for each of the 1-dimensional root spaces $\fr g_\alpha$.
\end{itemize}
If $\gamma \in \Phi$ is not simple, write it as a linear combination of simple roots $\gamma=\sum_{i=1}^{n}a_{i}\alpha_i$, and define $h_\gamma$ as the corresponding linear combination $h_{\gamma}:=\sum_{i=1}^{n}a_{i}h_{\alpha_i}$.

We call the basis a \textbf{Chevalley basis}\footnote{See warning \ref{warn:sl2_triples}.} if it satisfies the relations:
\begin{align}
    [h_\alpha, e_\gamma] &= 2 \frac{(\alpha, \gamma)}{(\alpha,\alpha)} e_\gamma,
    \label{eq:chevalley_sl2}
    \\
    [e_{\alpha}, e_{-\alpha}] &= -h_\alpha ,
    \label{eq:chevalley_cmm}
    \\
    \label{eq:chevalley_p}
    [e_\alpha, e_\gamma] &= \left\{
    \begin{array}{ll}
    0 & \text{if } \alpha + \gamma \text{ not a root,} \\
    \pm (p_{\alpha,\gamma}+1) e_{\alpha + \gamma} & \text{if } \alpha + \gamma \text{ is a root.}
    \end{array}
     \right.
\end{align}
In Equation \ref{eq:chevalley_p}, $p_{\alpha,\gamma}$ is the largest integer such that $\alpha - p_{\alpha,\gamma} \gamma$ is a root. 
\end{definition}

\begin{remark}
\label{rem:describe_p}
If we want to fix numbers $C_{\alpha, \gamma}$ such that the system of equations:
\[
[e_\alpha, e_\gamma] = C_{\alpha,\gamma}\cdot e_{\alpha + \gamma}
\]
in the variables $\{e_{\gamma}\}_{\gamma \in \Phi}$ is consistent, it's not necessarily possible to take all $C_{\alpha, \gamma} = \pm 1$. Indeed, one can prove that $C_{\alpha, \gamma} C_{-\alpha, -\gamma} = (p_{\alpha, \gamma} + 1)^2$.

A Chevalley basis is appealing because it has $C_{\alpha, \gamma} = C_{-\alpha, -\gamma} = \pm (p_{\alpha, \gamma} + 1)$, which are small integers:
\begin{itemize}
    \item For $\fr g$ of type ADE, all $p_{\alpha, \gamma} = 0$.
    \item For $\fr g$ of type BCF, $p_{\alpha, \gamma} = 1$ if $\alpha, \gamma$ are both short roots, and $p_{\alpha, \gamma} = 0$ otherwise.
    \item For $\fr g$ the Lie algebra of the group $G2$, $p_{\alpha, \gamma} \in \{0,1,2\}$, depending on the angle between $\alpha$ and $\gamma$.
\end{itemize}
\end{remark}

\begin{warning}
\label{warn:sl2_triples}
According to our definition, for any root $\alpha$, $\{h_\alpha, e_\alpha, -e_{-\alpha}\}$ is an $\fr{sl}_2$ triple. However, the standard definition of a Chevalley basis does not include the minus sign in Equation \ref{eq:chevalley_cmm}, which would make $\{h_\alpha, e_\alpha, e_{-\alpha}\}$ an $\fr{sl}_2$ triple. We chose a non-standard sign convention in order to simplify later constructions.
\end{warning}

Due to Remark \ref{rem:describe_p} and Warning \ref{warn:sl2_triples}, we can informally see a Chevalley basis as a choice of $\fr{sl}_2$ triples for all roots of $\fr g$, with Lie brackets as simple as possible.

\begin{proposition}
Every complex semisimple Lie algebra $\fr g$ admits a Chevalley basis.
\end{proposition}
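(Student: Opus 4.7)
The plan is to reduce to the classical form of Chevalley's theorem (as found in Humphreys, \emph{Introduction to Lie Algebras and Representation Theory}, Chapter VII) and then twist by a sign cocycle to match the non-standard convention of Warning \ref{warn:sl2_triples}. The classical theorem produces a basis $\{h_\alpha^{\mathrm{std}}, e_\alpha^{\mathrm{std}}\}$ satisfying Equation \ref{eq:chevalley_sl2}, Equation \ref{eq:chevalley_p}, and the \emph{standard} commutator relation $[e_\alpha^{\mathrm{std}}, e_{-\alpha}^{\mathrm{std}}] = +h_\alpha$. I will take this as a black box rather than reproduce its proof, which rests on the existence of a $\Z$-form of the universal enveloping algebra and on careful sign bookkeeping for extraspecial pairs of roots.

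Next, I would fix a choice of positive roots $\Phi^+ \subset \Phi$ and define a sign $\epsilon_\alpha := +1$ if $\alpha \in \Phi^+$ and $\epsilon_\alpha := -1$ if $\alpha \in -\Phi^+$. Set $h_\alpha := h_\alpha^{\mathrm{std}}$ and $e_\alpha := \epsilon_\alpha \, e_\alpha^{\mathrm{std}}$ for every $\alpha \in \Phi$. The verification that this new basis satisfies the three Chevalley relations of Definition \ref{defin:chevalley_basis} is then a short computation: Equation \ref{eq:chevalley_sl2} is unaffected because $h_\alpha$ is unchanged and $e_\gamma$ is merely rescaled; Equation \ref{eq:chevalley_cmm} follows from $\epsilon_\alpha \epsilon_{-\alpha} = -1$, giving $[e_\alpha, e_{-\alpha}] = \epsilon_\alpha \epsilon_{-\alpha}\, h_\alpha = -h_\alpha$; and Equation \ref{eq:chevalley_p} holds because the original structure constant $\pm(p_{\alpha,\gamma}+1)$ only gets multiplied by the extra sign $\epsilon_\alpha \epsilon_\gamma \epsilon_{\alpha+\gamma}^{-1} \in \{\pm 1\}$, which can be absorbed into the ambiguous $\pm$.

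There is effectively no obstacle in this reduction step; all the work is in the classical theorem, which I would simply cite. If one wanted to prove existence from scratch, the main difficulty would be the third relation: showing that one can consistently choose signs for all pairs $(\alpha,\gamma)$ with $\alpha + \gamma \in \Phi$ so that the Jacobi identity is satisfied. This is classically handled by choosing signs on a set of extraspecial pairs (one per positive root that is a sum of two others) and propagating via the Jacobi identity, with the bound $|C_{\alpha,\gamma}| = p_{\alpha,\gamma}+1$ coming from the standard $\mathfrak{sl}_2$ representation theory applied to the $\alpha$-string through $\gamma$, as summarized in Remark \ref{rem:describe_p}. I would only sketch this if a self-contained proof were desired; otherwise, invoking the classical theorem and performing the sign twist above is the most efficient route.
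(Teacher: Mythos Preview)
Your proposal is correct and takes essentially the same approach as the paper: the paper simply cites Chevalley's original result and Tao's exposition, without giving any argument. Your proposal is in fact slightly more careful, since you explicitly address the non-standard sign convention of Warning \ref{warn:sl2_triples} via the twist $e_\alpha := \epsilon_\alpha e_\alpha^{\mathrm{std}}$, whereas the paper leaves this passage from the standard convention to its own convention implicit.
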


This result was proved by Chevalley in \cite{chevalley1955}, and an exposition can be found in the blog post \cite{tao_blog_chevalley}.

\begin{example}
\label{eg:chevalley_basis}
Let $\fr g = \fr{sl}_3$, and $\alpha, \beta$ denote a choice of simple roots. We construct a Chevalley basis from the following basis of the Cartan: 
\[
h_{\alpha} = \left(
\begin{array} {ccc}
1 & 0 & 0 \\
0 & -1 & 0 \\
0 & 0 & 0
\end{array}
\right)
\hspace{1cm}
h_{\beta} = \left(
\begin{array} {ccc}
0 & 0 & 0 \\
0 & 1 & 0 \\
0 & 0 & -1
\end{array}
\right)
\]
and the following basis vectors for the root spaces:
\[
e_{\alpha} = \left(
\begin{array} {ccc}
0 & 1 & 0 \\
0 & 0 & 0 \\
0 & 0 & 0
\end{array}
\right)
\hspace{1cm}
e_{\beta} = \left(
\begin{array} {ccc}
0 & 0 & 0 \\
0 & 0 & 1 \\
0 & 0 & 0
\end{array}
\right)
\hspace{1cm}
e_{\alpha + \beta} = \left(
\begin{array} {ccc}
0 & 0 & 1 \\
0 & 0 & 0 \\
0 & 0 & 0
\end{array}
\right)
\]
\[
e_{-\alpha} = \left(
\begin{array} {ccc}
0 & 0 & 0 \\
-1 & 0 & 0 \\
0 & 0 & 0
\end{array}
\right)
\hspace{7mm}
e_{-\beta} = \left(
\begin{array} {ccc}
0 & 0 & 0 \\
0 & 0 & 0 \\
0 & -1 & 0
\end{array}
\right)
\hspace{7mm}
e_{-\alpha - \beta} = \left(
\begin{array} {ccc}
0 & 0 & 0 \\
0 & 0 & 0 \\
-1 & 0 & 0
\end{array}
\right)
\]
There are other Chevalley bases for $\fr{sl}_3$.
For any re-scaling of $e_\alpha, e_\beta$ by $A, B \in \C^\times$, it is possible to re-scale $e_{\alpha + \beta}$ by $AB$, and $e_{-\alpha}, e_{-\beta}$, and  $e_{-\alpha - \beta}$ by $A^{-1}, B^{-1}$, and $A^{-1}B^{-1}$ respectively, so that relations \ref{eq:chevalley_sl2} - \ref{eq:chevalley_p} are unchanged.

\end{example}

The existence of Chevalley bases implies the following fact, which will be used repeatedly.

\begin{lemma}
\label{lem:roots_commutators_vanish}
Let $\fr g$ be a semisimple Lie algebra, and consider a root space decomposition $\fr g = \fr t \oplus \bigoplus_{\alpha \in \Phi} \fr g_\alpha$, as in Definition \ref{defin:chevalley_basis}. Then for all $\alpha, \beta \in \Phi$ such that $\alpha+\beta \neq 0$, $[\fr g_\alpha, \fr g_\beta] \subset \fr g_{\alpha + \beta}$. Moreover, $[\fr g_\alpha, \fr g_\beta] = 0$ if and only if $\alpha + \beta \not \in \Phi$.
\end{lemma}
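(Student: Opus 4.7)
The plan is to split the lemma into two assertions and address them in order, using both the standard weight-space computation via the Jacobi identity and the existence of a Chevalley basis.

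First I would prove the inclusion $[\fr g_\alpha, \fr g_\beta] \subset \fr g_{\alpha+\beta}$. Take arbitrary $x \in \fr g_\alpha$, $y \in \fr g_\beta$, and $H \in \fr t$. The Jacobi identity gives
\[
[H,[x,y]] = [[H,x],y] + [x,[H,y]] = \alpha(H)[x,y] + \beta(H)[x,y] = (\alpha+\beta)(H)\,[x,y],
\]
so $[x,y]$ lies in the $(\alpha+\beta)$-weight space for the $\fr t$-action on $\fr g$. By the root space decomposition, this weight space is $\fr g_{\alpha+\beta}$ when $\alpha+\beta$ is a root, and is $0$ when $\alpha+\beta$ is neither a root nor zero. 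Since we have assumed $\alpha+\beta \neq 0$, this already establishes the inclusion and, as a corollary, the "only if" direction of the second assertion: when $\alpha+\beta \notin \Phi$ the target space $\fr g_{\alpha+\beta}$ is zero, so $[\fr g_\alpha, \fr g_\beta]=0$.

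For the "if" direction I would invoke the existence of a Chevalley basis provided by Definition \ref{defin:chevalley_basis}. Suppose $\alpha+\beta \in \Phi$ and pick Chevalley basis vectors $e_\alpha \in \fr g_\alpha$ and $e_\beta \in \fr g_\beta$. By Equation \ref{eq:chevalley_p},
\[
[e_\alpha, e_\beta] = \pm(p_{\alpha,\beta}+1)\,e_{\alpha+\beta}.
\]
Because $p_{\alpha,\beta}$ is by definition a non-negative integer, the scalar $p_{\alpha,\beta}+1$ is a nonzero integer in $\C$, and $e_{\alpha+\beta}$ is itself a nonzero basis vector in the one-dimensional space $\fr g_{\alpha+\beta}$. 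Hence $[e_\alpha,e_\beta]\neq 0$, which exhibits a nonzero element in $[\fr g_\alpha, \fr g_\beta]$, completing the proof.

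There is no real obstacle here; the only point requiring care is to notice that the "only if" half is immediate from the weight-space argument (no appeal to Chevalley bases is needed), while the "if" half is the place where non-degeneracy must be verified, and this is exactly where the explicit structure constants of a Chevalley basis supply the required nonzero bracket.
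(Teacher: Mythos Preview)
Your proof is correct and follows essentially the same approach as the paper: the Jacobi identity gives the inclusion $[\fr g_\alpha,\fr g_\beta]\subset\fr g_{\alpha+\beta}$, and the Chevalley relation \eqref{eq:chevalley_p} supplies the nonzero bracket when $\alpha+\beta\in\Phi$. One cosmetic slip: you have the ``if'' and ``only if'' labels swapped---the weight-space argument handles the ``if'' direction ($\alpha+\beta\notin\Phi\Rightarrow[\fr g_\alpha,\fr g_\beta]=0$), and the Chevalley basis handles ``only if''.
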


\begin{proof}
The root spaces $\fr g_\alpha$ are 1-dimensional, so it suffices to consider the bracket $[e_\alpha, e_\beta]$ for some $e_\alpha \in \fr g_\alpha$ and $e_\beta \in \fr g_\beta$. The Jacobi identity implies that, for all $h \in \fr t$:
\begin{align*}
\big[h, [e_\alpha,e_\beta] \big] &= \big[ [h,e_\alpha], e_\beta \big] + \big[ e_\alpha, [h,e_\beta] \big]
\\
&= \big[ \alpha(h)\cdot e_\alpha, e_\beta\big] + \big[e_\alpha, \beta(h)\cdot e_\beta\big]
\\
&= (\alpha + \beta)(h) \cdot [e_\alpha, e_\beta].
\end{align*}
Hence $[\fr g_\alpha, \fr g_\beta] \subset \fr g_{\alpha+\beta}$.

It's clear then that if $\alpha+\beta \not \in \Phi$, then $[\fr g_\alpha, \fr g_\beta] = 0$. The converse follows from the existence of a Chevalley basis: according to relation \ref{eq:chevalley_p}, if $\alpha + \beta$ is a root, then $[e_\alpha, e_\beta]$ is a nonzero scalar multiple of $e_{\alpha+\beta}$.
\footnote{For a more elementary proof of the converse, see e.g. Theorem 6.44 in \cite{kirillov_introduction}.}
\end{proof}

We now let $\fr g$ be a reductive Lie algebra. We fix a Chevalley basis for the semisimple part of $\fr g$.
In particular, this determines a map:
\begin{align*}
    \Phi & \to \{\fr{sl}_2 \text{-triples in } \fr g \},
    \\
    \alpha &\mapsto (h_\alpha, e_\alpha, -e_{-\alpha}).
\end{align*}
In other words, each $\alpha$ determines a map $i_\alpha : \fr{sl}_2 \to \fr g$. Because $SL(2)$ is simply connected, Lie's theorems provide unique group homomorphisms $I_\alpha$, making the following diagram commute:

\begin{equation}
\label{diag:exp}
\begin{tikzcd}
\fr{sl}_2 \arrow{r}{i_\alpha}\arrow{d}{\exp} & \fr g \arrow{d}{\exp} \\
SL(2)\arrow{r}{I_\alpha} & G 
\end{tikzcd}
\end{equation}

Let $s_\alpha \in W$ be the reflection about the root hyperplane orthogonal to $\alpha$, and recall the short exact sequence
\begin{equation}
\label{eq:weyl_exact_seq}
\begin{tikzcd}
1 \arrow{r} & T \arrow{r} & N \arrow{r}{q} & W \arrow{r} & 1.
\end{tikzcd}
\end{equation}

Any $\fr{sl}_2$ triple, in particular the one coming from our Chevalley basis, determines an $n_\alpha \in N$ such that $q(n_\alpha) = s_\alpha$, via the formula:

\begin{equation}
\label{eq:preferred_lift}
n_{\alpha}:= I_{\alpha}\left(\left(\begin{matrix}0 & 1\\ -1 & 0\end{matrix}\right)\right)
\end{equation}

These group elements these were first introduced by Tits in \cite{tits1966normalisateurs}.

\begin{warning}
The lifts $n_\alpha$ do \emph{not}, in general provide a group homomorphism $W\rightarrow N$. For instance, $n_\alpha^2 \neq \id$ in the setting of Example \ref{ex:lift_sl2}. Providing such a homomorphism is impossible in general, because the sequence \ref{eq:weyl_exact_seq} does not split in general \cite{curtis1974normalizers} (e.g. it does not split for $SL(2,\bbC)$).  See \cite{curtis1974normalizers} for discussion of when such a homomorphism exists.  Alternatively, as shown in \cite{tits1966normalisateurs}, it is possible to find a subgroup of $N$ which is a finite cover of the Weyl group $W$.
\end{warning}

\begin{remark}
\label{ex:lift_sl2}
Working analytically for $\fr g = \fr{sl}_2$, a Chevalley basis is:
\[
h_\alpha = \left( \begin{array}{cc}
  1   & 0 \\
  0   & -1
\end{array}
\right)
\hspace{1cm}
e_\alpha = \left( \begin{array}{cc}
  0   & 1 \\
  0   & 0
\end{array}
\right)
\hspace{1cm}
e_{-\alpha} = \left( \begin{array}{cc}
  0   & 0 \\
  -1   & 0
\end{array}
\right).
\]
We then have that 
\[
n_\alpha  = \left( \begin{array}{cc}
  0   & 1 \\
  -1   & 0
\end{array}
\right) = \exp \left(\frac{\pi}{2} \left( \begin{array}{cc}
  0   & 1 \\
  -1   & 0
\end{array}
\right)\right).
\]

Hence for arbitrary $G$, when working complex analytically we have that 
\begin{equation}
\label{eq: nalpha as exponential}n_{\alpha}=I_{\alpha}\left( \exp \left(\frac{\pi}{2} \left(\left( \begin{array}{cc}
  0   & 1 \\
  0   & 0
\end{array}
\right) + \left( \begin{array}{cc}
  0   & 0 \\
  -1   & 0
\end{array}
\right)\right)\right)\right) = \exp \left(\frac{\pi}{2}(e_{\alpha}+e_{-\alpha})\right)\end{equation}
\end{remark}

\begin{lemma}
\label{lem:w}
The element $n_\alpha$ is a lift to $N$ of the simple reflection $s_\alpha$.
\end{lemma}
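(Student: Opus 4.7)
The plan is to verify directly the two assertions: that $n_\alpha$ normalizes $T$, and that the induced element of $W = N/T$ is the reflection $s_\alpha$. Both follow at once from computing the adjoint action $\mathrm{Ad}(n_\alpha)$ on $\mathfrak{t}$, after which the fact that $T$ is a connected complex torus (hence generated by $\exp(\mathfrak{t})$) ensures that preservation of $\mathfrak{t}$ gives preservation of $T$.

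First I would decompose $\mathfrak{t} = \C\cdot h_\alpha \oplus \ker\alpha$ and treat each summand. On $\ker\alpha$ the action is trivial: for any $h \in \ker\alpha$, relation \ref{eq:chevalley_sl2} (applied to $\pm\alpha$) gives $[e_{\pm\alpha}, h] = \mp\alpha(h)\,e_{\pm\alpha} = 0$, so $h$ commutes with the element $\tfrac{\pi}{2}(e_\alpha + e_{-\alpha})$ whose exponential is $n_\alpha$ by Equation \ref{eq: nalpha as exponential}. Therefore $\mathrm{Ad}(n_\alpha)(h) = h$ on $\ker\alpha$.

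For the $h_\alpha$ direction, I use the defining commutative diagram \ref{diag:exp}. Let $n := \begin{pmatrix} 0 & 1 \\ -1 & 0 \end{pmatrix} \in SL(2)$ and $h := \mathrm{diag}(1,-1) \in \mathfrak{sl}_2$, so that $n_\alpha = I_\alpha(n)$ and $h_\alpha = i_\alpha(h)$. Equivariance of $\mathrm{Ad}$ under the Lie group homomorphism $I_\alpha$ yields
\[
\mathrm{Ad}(n_\alpha)(h_\alpha) \;=\; i_\alpha\bigl(\mathrm{Ad}(n)(h)\bigr).
\]
A direct matrix computation gives $n h n^{-1} = -h$, so $\mathrm{Ad}(n_\alpha)(h_\alpha) = -h_\alpha$. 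Combined with the previous step, $\mathrm{Ad}(n_\alpha)|_{\mathfrak{t}}$ agrees with the linear map $h \mapsto h - \alpha(h)\,h_\alpha$, which is precisely the reflection $s_\alpha$ acting on $\mathfrak{t}$.

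Finally, since $\mathrm{Ad}(n_\alpha)$ preserves $\mathfrak{t}$ and $T$ is the connected complex torus generated by $\exp(\mathfrak{t})$, it follows that $n_\alpha T n_\alpha^{-1} = T$, i.e.\ $n_\alpha \in N$. The action of $q(n_\alpha) \in W$ on $T$ is then the exponential of $\mathrm{Ad}(n_\alpha)|_{\mathfrak{t}} = s_\alpha$, so $q(n_\alpha) = s_\alpha$. There is no substantive obstacle here — the only point requiring mild care is the passage from the action on the Lie algebra to the action on $T$, which is handled by connectedness of $T$.
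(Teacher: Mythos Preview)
Your proof is correct and follows essentially the same approach as the paper: decompose $\mathfrak{t}$ as $\C h_\alpha \oplus \ker\alpha$, show $\mathrm{Ad}(n_\alpha)$ is trivial on $\ker\alpha$ via the bracket relations, and reduce the computation on $h_\alpha$ to an $SL(2)$ matrix identity through diagram~\ref{diag:exp}. The only minor addition is that you explicitly address why $n_\alpha \in N$ via connectedness of $T$, a point the paper leaves implicit.
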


\begin{proof}
We prove this in the setting of Lie groups, the result then immediately follows for reductive algebraic groups over $\bbC$.

For every root $\alpha$, the Killing form of $\fr g$ determines an orthogonal decomposition:
\begin{equation}
\label{eq:decomp_lie_algebra}
 \fr{t} = \fr t_{\alpha} \oplus \fr t_{\Ker(\alpha)} . 
\end{equation}  
Here $\fr{t}_\alpha = \Span( h_\alpha)$; by construction, $h_\alpha$ is a scalar multiple of the co-root $\alpha^\vee$, the dual of $\alpha$ under the Killing form.  We define $t_{\Ker(\alpha)}:=\Ker(\alpha)$.

We prove that conjugation by $n_\alpha$ fixes $\fr t_{\Ker(\alpha)}$, and its restriction to $\fr t_{\alpha}$ is reflection about the hyperplane $\Ker \alpha$.

\begin{enumerate}
    \item We show that $\ad_{n_\alpha} (h_\alpha) = - h_\alpha$. For $G = SL(2)$, this is a trivial computation. The general case is reduced to the $SL(2)$ case using diagram \ref{diag:exp} (we here use $e$, $h$ and $-f$ to denote a Chevalley basis for $\mathfrak{sl}_{2}$):
    \begin{align*}
        \ad_{n_\alpha} (h_\alpha)
        &= \left.\frac{d}{dt}\right|_{t=0}
        \exp(i_\alpha(\pi(e-f)/2))
        \exp(i_{\alpha}(t h))
         \exp(i_\alpha(-\pi(e-f)/2))  \\
        &= \left.\frac{d}{dt}\right|_{t=0}
        I_\alpha  \left(  \exp(\pi(e-f)/2) 
        \exp(t h)
         \exp(-\pi(e-f)/2)\right) \\
         &= I_\alpha \left(  \exp(\pi(e-f)/2) 
        h
         \exp(-\pi(e-f)/2)\right) \\
         &= -h_\alpha .
    \end{align*}
    \item If $\alpha(h) = 0$, then:
    \[   
    [h, e_{\pm\alpha}] =\pm \alpha(h) e_{\pm\alpha} = 0.
    \]
    It follows that $n_\alpha h n_\alpha^{-1} = h$.

\end{enumerate}
\end{proof}

The next result is the main input for Lemma \ref{lemma: monodromy to Stokes factors}, where we interpret the left hand side (LHS) of the equation as a product of Stokes factors, and the RHS as the monodromy of an $N$-local system.

\begin{lemma}
\label{lem:triple_product}
We have the following identity in $G$:
\[   \exp{(e_\alpha)} \exp{(e_{-\alpha})}\exp{(e_\alpha)} = n_\alpha .  \]
\end{lemma}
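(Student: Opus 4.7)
The plan is to reduce the identity to a computation in $SL(2)$ using the homomorphism $I_\alpha : SL(2) \to G$ from diagram \ref{diag:exp}, and then verify it by direct matrix multiplication.

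First I would translate both sides of the desired identity into statements about $I_\alpha$. Recall our sign convention (Warning \ref{warn:sl2_triples}) makes $(h_\alpha, e_\alpha, -e_{-\alpha})$ an $\mathfrak{sl}_2$-triple, so under the standard basis $\{h,e,f\}$ of $\mathfrak{sl}_2$ we have $i_\alpha(h) = h_\alpha$, $i_\alpha(e) = e_\alpha$, and $i_\alpha(f) = -e_{-\alpha}$, hence $i_\alpha(-f) = e_{-\alpha}$. By the commutativity of diagram \ref{diag:exp}, each factor on the LHS of the identity can be written as $I_\alpha$ applied to an exponential in $SL(2)$:
\[
\exp(e_\alpha) = I_\alpha(\exp e), \qquad \exp(e_{-\alpha}) = I_\alpha(\exp(-f)), \qquad \exp(e_\alpha) = I_\alpha(\exp e).
\]
Since $I_\alpha$ is a group homomorphism, the LHS equals $I_\alpha\!\bigl(\exp(e)\exp(-f)\exp(e)\bigr)$, while by definition (Equation \ref{eq:preferred_lift}) the RHS $n_\alpha$ equals $I_\alpha\!\left(\begin{smallmatrix} 0 & 1 \\ -1 & 0 \end{smallmatrix}\right)$. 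It therefore suffices to verify the identity
\[
\exp(e)\exp(-f)\exp(e) = \begin{pmatrix} 0 & 1 \\ -1 & 0 \end{pmatrix}
\]
inside $SL(2)$.

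The second (and only remaining) step is then the direct matrix computation. Since $e$ and $f$ are nilpotent of square zero, $\exp(e) = \left(\begin{smallmatrix} 1 & 1 \\ 0 & 1 \end{smallmatrix}\right)$ and $\exp(-f) = \left(\begin{smallmatrix} 1 & 0 \\ -1 & 1 \end{smallmatrix}\right)$, and multiplying these three matrices gives the claimed result.

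There is no real obstacle here; the non-standard sign convention in the Chevalley basis has been engineered precisely so that this identity holds with the simple formula for $n_\alpha$, and the only thing to be slightly careful about is keeping track of the sign in $i_\alpha(f) = -e_{-\alpha}$ when unwinding the triple product through $I_\alpha$.
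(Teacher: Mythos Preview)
Your proposal is correct and follows essentially the same approach as the paper: reduce to $SL(2)$ via the homomorphism $I_\alpha$ from diagram \ref{diag:exp}, then verify the identity by the explicit $2\times 2$ matrix multiplication. The paper's proof is terser and does not spell out the sign bookkeeping $i_\alpha(f) = -e_{-\alpha}$, but the argument is the same.
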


This makes sense in the algebraic setting, as the exponent of nilpotent elements of the Lie algebra can be defined algebraically.

\begin{proof}
Again, it suffices to prove this for $G=SL(2)$, because Diagram \ref{diag:exp} means the general case follows from this one. For $SL(2)$ this is the straightforward identity:
\[   
\left( \begin{array}{cc}
    1 & 1  \\
    0 & 1
\end{array}
\right)
\left( \begin{array}{cc}
    1 & 0  \\
    -1 & 1
\end{array}
\right)
\left( \begin{array}{cc}
    1 & 1  \\
    0 & 1
\end{array}
\right)
=
\left( \begin{array}{cc}
    0 & 1  \\
    -1 & 0
\end{array}
\right).
\]
\end{proof}

Consider the orthogonal decomposition, as used in the proof of Lemma \ref{lem:w}:
\begin{equation}
\label{equation: Orthogonal decomposition of t}
 \fr{t} = \fr t_{\alpha} \oplus \fr t_{\Ker(\alpha)} . 
\end{equation}  

Let $T_{\alpha} = \exp (\fr t_{\alpha})$ and $T_{\Ker(\alpha)} = \exp (\fr t_{\Ker \alpha})$. We can equivalently describe $T_{\alpha}$ as the image $ I_{\alpha}(T_{SL(2)})$, under the homomorphism $I_{\alpha}$ from diagram \ref{diag:exp}, of the maximal torus of $SL(2)$.

\begin{lemma}
\label{lem:torus_decomposition}
The multiplication homomorphism:
\[
T_{\alpha} \times T_{\Ker(\alpha)} \to T
\]
is surjective, and its kernel is contained in the subgroup:
\[
\big\{(\id, \id), \big(I_\alpha(-\id_{SL(2)}), I_\alpha(-\id_{SL(2)})\big)\big\}.
\]
\end{lemma}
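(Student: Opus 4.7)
The plan is to handle surjectivity and the kernel computation separately, using the character $\alpha: T \to \bbC^\times$ as the main diagnostic tool.

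For surjectivity, I will use that $T$ is a complex algebraic torus, so the exponential $\exp : \ft \to T$ is surjective. Given $t \in T$, write $t = \exp(X)$ for some $X \in \ft$, and decompose $X = X_\alpha + X_K$ according to the orthogonal splitting $\ft = \ft_\alpha \oplus \ft_{\Ker(\alpha)}$ of Equation \ref{equation: Orthogonal decomposition of t}. Since $\ft$ is abelian, $[X_\alpha, X_K] = 0$, so $t = \exp(X_\alpha)\exp(X_K)$ lies in the image of the multiplication map, with factors in $T_\alpha$ and $T_{\Ker(\alpha)}$ respectively.

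For the kernel, an element has the form $(s, s^{-1})$ with $s \in T_\alpha \cap T_{\Ker(\alpha)}$, so it suffices to show that this intersection is contained in $\{\id,\, I_\alpha(-\id_{SL(2)})\}$ and that the non-trivial element has order $2$. The key input is that the root $\alpha \in \ft^\vee$ exponentiates to a character $\alpha : T \to \bbC^\times$, which is trivial on $T_{\Ker(\alpha)} = \exp(\Ker\alpha)$ by definition. On the other hand, by specializing Equation \ref{eq:chevalley_sl2} to $\gamma = \alpha$ we have $\alpha(h_\alpha) = 2$, so for $s = \exp(c\, h_\alpha) \in T_\alpha$ one computes $\alpha(s) = e^{2c}$. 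Therefore $s \in T_\alpha \cap T_{\Ker(\alpha)}$ forces $c \in \pi i \bbZ$, giving $s = \exp(\pi i k\, h_\alpha)$ for some $k \in \bbZ$.

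To identify this with $I_\alpha(-\id_{SL(2)})$ I will use the commuting square in Diagram \ref{diag:exp}: since $i_\alpha$ sends $h = \diag(1,-1)$ to $h_\alpha$, we get $\exp(\pi i k\, h_\alpha) = I_\alpha(\exp(\pi i k\, h)) = I_\alpha(\diag((-1)^k, (-1)^k)) = I_\alpha((-\id_{SL(2)})^k)$, which is either $\id$ or $I_\alpha(-\id_{SL(2)})$ according to the parity of $k$. Finally, because $(-\id_{SL(2)})^2 = \id_{SL(2)}$ and $I_\alpha$ is a group homomorphism, $I_\alpha(-\id_{SL(2)})$ has order dividing $2$, so $s^{-1} = s$ in the non-trivial case; this yields the claimed form of the kernel. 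I do not anticipate a real obstacle; the only thing to keep track of is the factor of $2$ from $\alpha(h_\alpha) = 2$ (which is precisely what introduces the $I_\alpha(-\id_{SL(2)})$ ambiguity and prevents the kernel from being automatically trivial).
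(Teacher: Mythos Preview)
Your proof is correct and follows essentially the same approach as the paper: both use the character $\alpha$ to constrain $T_\alpha \cap T_{\Ker(\alpha)}$ and then identify the result via $I_\alpha$. The only cosmetic difference is that the paper pulls back to $T_{SL(2)}$ and computes the kernel of $\exp(\alpha)\circ I_\alpha$ there, whereas you compute directly in $T_\alpha$ via $\exp(c\,h_\alpha)$; your write-up is arguably cleaner.
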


\begin{proof}
Due to the orthogonal decomposition \ref{eq:decomp_lie_algebra} at the Lie algebra level, there is a surjective homomorphism $T_{\alpha} \times T_{\Ker \alpha} \to T$. Its finite kernel is the intersection $T_{\alpha} \cap T_{\Ker \alpha}$ in $G$, which embeds antidiagonally in $T_{\alpha} \times T_{\Ker \alpha}$ via the map $t\mapsto (t,t^{-1})$. Since $T_{\alpha} = I_\alpha(T_{SL(2)})$, and $T_{\Ker \alpha} \subset \Ker(\exp(\alpha))$, we need only analyze the diagram:
\[
\begin{tikzcd}
T_{SL(2)} \arrow{r}{I_{\alpha}} & T \arrow{r}{\exp(\alpha)} & \C^\times .
\end{tikzcd}
\]
It follows that:
\[    T_{\alpha} \cap T_{\Ker \alpha} \subset  I_{\alpha}\big(\Ker (exp(\alpha)\circ I_\alpha)\big). \]

We have the containment $Ker(I_{\alpha})\subset \{ \pm \id_{SL(2)} \}$.  Hence as $exp(\alpha)|_{T_{\alpha}}$ has trivial kernel the result follows.
\end{proof}

\begin{lemma}
\label{lem:commutation_t_alpha}
For every $t\in T_\alpha$, conjugation by $n_\alpha$ gives $t^{-1}$:
\[
n_\alpha t n_\alpha^{-1} =  t^{-1}.
\]
\end{lemma}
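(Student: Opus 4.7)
The plan is to reduce to the case of $SL(2)$, exactly as in the proofs of Lemmas \ref{lem:w} and \ref{lem:triple_product}. By construction $T_\alpha = I_\alpha(T_{SL(2)})$ and $n_\alpha = I_\alpha\!\left(\begin{smallmatrix} 0 & 1 \\ -1 & 0 \end{smallmatrix}\right)$, so any element $t \in T_\alpha$ can be written as $t = I_\alpha(t')$ for some $t' \in T_{SL(2)}$, and:
\[
n_\alpha t n_\alpha^{-1} = I_\alpha\!\left( \left(\begin{smallmatrix} 0 & 1 \\ -1 & 0 \end{smallmatrix}\right) t' \left(\begin{smallmatrix} 0 & 1 \\ -1 & 0 \end{smallmatrix}\right)^{-1} \right).
\]
Thus it suffices to verify the claim inside $SL(2)$, since $I_\alpha$ is a group homomorphism and commutes with taking inverses.

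For $SL(2)$, a direct matrix computation handles it: writing $t' = \left(\begin{smallmatrix} a & 0 \\ 0 & a^{-1} \end{smallmatrix}\right)$, one checks that
\[
\left(\begin{smallmatrix} 0 & 1 \\ -1 & 0 \end{smallmatrix}\right)
\left(\begin{smallmatrix} a & 0 \\ 0 & a^{-1} \end{smallmatrix}\right)
\left(\begin{smallmatrix} 0 & -1 \\ 1 & 0 \end{smallmatrix}\right)
= \left(\begin{smallmatrix} a^{-1} & 0 \\ 0 & a \end{smallmatrix}\right)
= (t')^{-1}.
\]
Applying $I_\alpha$ gives the stated identity. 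There is no real obstacle here; this is a routine reduction to $SL(2)$ of the kind already used earlier in this section. Alternatively, one can deduce the result on the Lie-algebra level from step (1) of the proof of Lemma \ref{lem:w} (which shows $\operatorname{Ad}_{n_\alpha}(h_\alpha) = -h_\alpha$) and then exponentiate, using that $T_\alpha$ is connected and generated by $\exp(\mathbb{C}\cdot h_\alpha)$.
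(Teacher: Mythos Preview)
Your proof is correct and is essentially identical to the paper's: both reduce to $SL(2)$ via $I_\alpha$ and verify the same $2\times 2$ matrix identity. Your added remark about the Lie-algebra alternative is a valid aside but not needed.
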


\begin{proof}
The relation is the image under $I_\alpha$ of the $SL(2)$ relation:
\[
\left( \begin{array} {cc} 
0 & 1 \\ -1 & 0
\end{array} \right)
\left( \begin{array} {cc} 
a & 0 \\ 0 & a^{-1}
\end{array} \right)
\left( \begin{array} {cc} 
0 & -1 \\ 1 & 0
\end{array} \right)
=
\left( \begin{array} {cc} 
a^{-1} & 0 \\ 0 & a
\end{array} \right) .
\]
\end{proof}

\begin{lemma}
\label{lem:t_ker_comm}
The adjoint action of $T_{\Ker(\alpha)}$ on $\fr g$ fixes $e_{\pm \alpha}$. Consequently, the adjoint action of $T_{\Ker(\alpha)}$ on $N$ fixes $n_\alpha$.
\end{lemma}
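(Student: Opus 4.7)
The plan is to prove the two statements in turn, with the first being essentially immediate from the root space decomposition and the second reducing to the first via an exponential formula.

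For the first claim, I would start by unpacking the definition: by construction $\fr t_{\Ker(\alpha)} = \Ker(\alpha) \subset \fr t$ (from the orthogonal decomposition displayed just before Lemma \ref{lem:torus_decomposition} and used in Lemma \ref{lem:w}). Hence for any $h \in \fr t_{\Ker(\alpha)}$ we have $\alpha(h) = 0$, so $[h, e_{\pm \alpha}] = \pm \alpha(h)\, e_{\pm \alpha} = 0$. Exponentiating gives $\Ad_{\exp(h)}(e_{\pm \alpha}) = \exp(\ad_h)(e_{\pm \alpha}) = e_{\pm \alpha}$. Since $T_{\Ker(\alpha)} = \exp(\fr t_{\Ker(\alpha)})$ by definition, this shows that every element of $T_{\Ker(\alpha)}$ fixes $e_{\pm\alpha}$ under the adjoint action.

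For the second claim, I would use the complex analytic description of $n_\alpha$ from Remark \ref{ex:lift_sl2}, namely $n_\alpha = \exp\bigl(\tfrac{\pi}{2}(e_\alpha + e_{-\alpha})\bigr)$ (Equation \ref{eq: nalpha as exponential}). For any $t = \exp(h) \in T_{\Ker(\alpha)}$, the standard identity $t \exp(X) t^{-1} = \exp(\Ad_t(X))$ combined with the first part gives
\[
t n_\alpha t^{-1} = \exp\!\Bigl(\tfrac{\pi}{2}\bigl(\Ad_t(e_\alpha) + \Ad_t(e_{-\alpha})\bigr)\Bigr) = \exp\!\Bigl(\tfrac{\pi}{2}(e_\alpha + e_{-\alpha})\Bigr) = n_\alpha.
\]
Since $T_{\Ker(\alpha)}$ is connected and generated by elements of the form $\exp(h)$, this gives the claim for all $t \in T_{\Ker(\alpha)}$; the statement is algebraic, so its validity in the analytic setting suffices for reductive algebraic groups over $\bbC$.

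There is no real obstacle here — the lemma is a direct consequence of the definitions, of the formula $[h, e_\beta] = \beta(h) e_\beta$, and of the explicit exponential description of $n_\alpha$. The only minor subtlety is the analytic versus algebraic framing, which is harmless because the group is over $\bbC$ and $T_{\Ker(\alpha)}$ is defined as the image of $\fr t_{\Ker(\alpha)}$ under the exponential map.
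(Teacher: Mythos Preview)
Your proof is correct and follows essentially the same approach as the paper: compute $[h,e_{\pm\alpha}] = \pm\alpha(h)e_{\pm\alpha} = 0$ for $h \in \fr t_{\Ker(\alpha)}$ and exponentiate. The paper's proof is terser (it treats only $e_\alpha$ and leaves the consequence for $n_\alpha$ implicit), but the argument is the same.
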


\begin{proof}
If $h \in \fr t_{\Ker \alpha}$, then $[h,e_\alpha] = \alpha(h) e_\alpha = 0$. It follows that $\ad_{\exp(h)}(e_\alpha) = e_\alpha$.
\end{proof}

\begin{lemma}
\label{lem:adj_scalar}
For all $t \in T$, $\ad_t(e_\alpha)$ is a scalar multiple of $e_\alpha$. Moreover, all scalar multiples of $e_\alpha$ arise in this way.\footnote{Due to Lemma \ref{lem:t_ker_comm}, we could just as well restrict to $t\in T_\alpha$.} 
\end{lemma}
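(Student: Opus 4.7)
The plan is to handle the two assertions separately, first using the root space decomposition to get the scalar multiple claim, then combining the torus decomposition from Lemma \ref{lem:torus_decomposition} with an $SL(2)$ computation to get surjectivity onto nonzero scalars.

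For the first assertion, I would start from the defining property of the root space, namely that $[h, e_\alpha] = \alpha(h)\, e_\alpha$ for every $h \in \ft$. Exponentiating gives $\ad_{\exp(h)}(e_\alpha) = e^{\alpha(h)}\, e_\alpha$, which is a scalar multiple of $e_\alpha$. Since $G$ is a complex reductive algebraic group and $T$ is a complex torus, the exponential map $\exp : \ft \to T$ is surjective, so every $t \in T$ is of the form $\exp(h)$ for some $h \in \ft$, and hence $\ad_t(e_\alpha) \in \C \cdot e_\alpha$.

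For the second assertion, I would use Lemma \ref{lem:torus_decomposition} to write any $t \in T$ (up to the finite kernel identified there) as a product $t = t_\alpha\, t_{\Ker(\alpha)}$ with $t_\alpha \in T_\alpha$ and $t_{\Ker(\alpha)} \in T_{\Ker(\alpha)}$. By Lemma \ref{lem:t_ker_comm}, $\ad_{t_{\Ker(\alpha)}}(e_\alpha) = e_\alpha$, so $\ad_t(e_\alpha) = \ad_{t_\alpha}(e_\alpha)$, and it suffices to prove that $T_\alpha$ acts on $e_\alpha$ by all of $\C^\times$. Since $T_\alpha = I_\alpha(T_{SL(2)})$ (from the discussion preceding Lemma \ref{lem:torus_decomposition}), Diagram \ref{diag:exp} reduces this to the $SL(2)$ computation
\[
\left(\begin{array}{cc} a & 0 \\ 0 & a^{-1}\end{array}\right)
\left(\begin{array}{cc} 0 & 1 \\ 0 & 0\end{array}\right)
\left(\begin{array}{cc} a^{-1} & 0 \\ 0 & a\end{array}\right)
= a^{2}\left(\begin{array}{cc} 0 & 1 \\ 0 & 0\end{array}\right),
\]
and the map $a \mapsto a^{2}$ from $\C^\times$ to $\C^\times$ is surjective.

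There is no serious obstacle here: the only thing to be careful about is not to conclude more than what is true, namely that $0$ is not in the image (as $\ad_t$ is invertible), so ``all scalar multiples'' must be interpreted as all nonzero scalar multiples, which is exactly what the $a \mapsto a^2$ surjectivity delivers.
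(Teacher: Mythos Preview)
Your argument is correct. Both halves reach the same conclusion as the paper, and the second half (reduction to $SL(2)$ and the computation $a \mapsto a^2$) is essentially identical to the paper's, except that the paper phrases it as ``choose a square root $\lambda^{1/2}$'' rather than invoking Lemmas \ref{lem:torus_decomposition} and \ref{lem:t_ker_comm} first; your detour through those lemmas is unnecessary since it suffices to exhibit \emph{some} $t$ with $\ad_t(e_\alpha) = \lambda e_\alpha$.

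The first half is where you differ. You exponentiate $[h,e_\alpha] = \alpha(h) e_\alpha$ to get $\ad_{\exp(h)}(e_\alpha) = e^{\alpha(h)} e_\alpha$ and then appeal to surjectivity of $\exp : \ft \to T$. The paper instead argues algebraically: since $\ad_t$ is a Lie algebra automorphism fixing $\ft$ pointwise, applying it to the relation $[h_\gamma,e_\alpha] = 2\frac{(\gamma,\alpha)}{(\gamma,\gamma)} e_\alpha$ shows $\ad_t(e_\alpha)$ lies in the same (one-dimensional) root eigenspace. The paper's route avoids the analytic input of exponential surjectivity and works uniformly in the algebraic category; yours is shorter and gives the scalar explicitly as $e^{\alpha(h)}$, which in fact already proves surjectivity onto $\C^\times$ without the $SL(2)$ step.
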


\begin{proof}
Consider relation \ref{eq:chevalley_sl2} from the Definition \ref{defin:chevalley_basis} of a Chevalley basis:
\[
[h_\gamma, e_\alpha] = 2 \frac{(\gamma, \alpha)}{(\gamma, \gamma)} e_\alpha .
\]
Using that the adjoint action preserves the Lie bracket, and the fact that $\ad_{t}(h_\gamma) = h_\gamma$, we obtain:
\[
[h_\gamma,\ad_t(e_\alpha)] = 2 \frac{(\gamma, \alpha)}{(\gamma, \gamma)} \ad_t(e_\alpha) .
\]
Thus, $\ad_t(e_\alpha)$ belongs to the same root eigenspace as $e_\alpha$. Since the root eigenspaces are 1-dimensional, there exists $\lambda \in \C^\times$ such that:
\begin{equation}
\label{eq:adj_scalar}
\ad_t(e_\alpha)= \lambda e_\alpha.
\end{equation}

For the converse statement, it suffices to find, for every $\lambda \in \C^\times$, some $t \in T_\alpha$ satisfying Equation \ref{eq:adj_scalar}. Looking at the image of $T_{SL(2)}$ under $I_\alpha : SL(2) \to G$ reduces this to a simple computation in $SL(2)$. Choose a square root of $\lambda$, and then:
\[
\left(
\begin{array}{cc}
    \lambda^{1/2} & 0 \\
    0 & \lambda^{-1/2}
\end{array}
\right)
\left(
\begin{array}{cc}
    0 & 1 \\
    0 & 0
\end{array}
\right)
\left(
\begin{array}{cc}
    \lambda^{-1/2} & 0 \\
    0 & \lambda^{1/2}
\end{array}
\right)
=
\left(
\begin{array}{cc}
    0 & \lambda \\
    0 & 0
\end{array}
\right).
\]
\end{proof}

\begin{lemma}
\label{lem:commutation_n}
Let $n \in N$, and $q(n) \in W$ be it's image in the Weyl group. Define $\alpha' = q(n)(\alpha)$. Then there exists some $t \in T_{\alpha'}$ such that:
\begin{itemize}
    \item $\ad_n(e_{\pm\alpha}) = \ad_t (e_{\pm \alpha'})$;
    \item $\Ad_{n} (n_\alpha) = \Ad_{t} (n_{\alpha'})$.
\end{itemize}
\end{lemma}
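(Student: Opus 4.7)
The plan is to first exhibit the scalar by which $n$ scales $e_\alpha$, use Lemma \ref{lem:adj_scalar} to realize that scalar by an element $t \in T_{\alpha'}$, and then check both equalities against this $t$.

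First I would observe that since conjugation by $n$ descends to the Weyl action $q(n)$ on $\ft$, it sends the root space $\fg_\alpha$ to $\fg_{q(n)(\alpha)} = \fg_{\alpha'}$, and similarly sends $\fg_{-\alpha}$ to $\fg_{-\alpha'}$. Hence there exist $\lambda, \mu \in \C^\times$ with $\ad_n(e_\alpha) = \lambda e_{\alpha'}$ and $\ad_n(e_{-\alpha}) = \mu e_{-\alpha'}$. By Lemma \ref{lem:adj_scalar} there exists $t \in T_{\alpha'}$ with $\ad_t(e_{\alpha'}) = \lambda e_{\alpha'}$; this is the candidate element. This immediately gives the first equality for $e_\alpha$.

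Next I would pin down $\mu$. Applying $\ad_n$ to the Chevalley relation $[e_\alpha, e_{-\alpha}] = -h_\alpha$ and using that $\ad_n(h_\alpha) = h_{q(n)(\alpha)} = h_{\alpha'}$ (the Weyl action on the Cartan sends coroots to coroots), we obtain
\[
[\lambda e_{\alpha'},\, \mu e_{-\alpha'}] = -h_{\alpha'} = [e_{\alpha'}, e_{-\alpha'}],
\]
so $\lambda \mu = 1$, i.e.\ $\mu = \lambda^{-1}$. On the other side, since $T_{\alpha'} = I_{\alpha'}(T_{SL(2)})$, the adjoint action of any $t \in T_{\alpha'}$ on the triple $(h_{\alpha'}, e_{\alpha'}, -e_{-\alpha'})$ is the image under $I_{\alpha'}$ of a diagonal $SL(2)$-conjugation on the standard triple, which scales $e$ by some $a^2$ and $f$ by $a^{-2}$. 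Hence $\ad_t(e_{-\alpha'}) = \lambda^{-1} e_{-\alpha'} = \mu e_{-\alpha'} = \ad_n(e_{-\alpha})$, finishing the first bullet.

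For the second bullet I would apply Lemma \ref{lem:triple_product} to both $n_\alpha$ and $n_{\alpha'}$ and use that $\Ad_g(\exp X) = \exp(\ad_g X)$:
\[
\Ad_n(n_\alpha) = \exp\!\bigl(\ad_n(e_\alpha)\bigr)\exp\!\bigl(\ad_n(e_{-\alpha})\bigr)\exp\!\bigl(\ad_n(e_\alpha)\bigr) = \exp(\lambda e_{\alpha'})\exp(\lambda^{-1} e_{-\alpha'})\exp(\lambda e_{\alpha'}),
\]
and the same triple product equals $\Ad_t(n_{\alpha'})$ by the first bullet applied to $t$ in place of $n$. Thus $\Ad_n(n_\alpha) = \Ad_t(n_{\alpha'})$.

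The only real subtlety is bookkeeping the sign in the Chevalley convention of Warning \ref{warn:sl2_triples}: because our $\fr{sl}_2$-triple is $(h_\alpha, e_\alpha, -e_{-\alpha})$ rather than $(h_\alpha, e_\alpha, e_{-\alpha})$, one must be careful that the scalars produced by $T_{\alpha'}$ on $e_{\alpha'}$ and $e_{-\alpha'}$ really are reciprocal; this is exactly the content of the $SL(2)$ computation above and causes no real obstruction. Everything else is formal from Lemmas \ref{lem:adj_scalar} and \ref{lem:triple_product} together with the Chevalley relation \eqref{eq:chevalley_cmm}.
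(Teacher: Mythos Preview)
Your proof is correct and follows essentially the same route as the paper's: both arguments first observe that $\ad_n$ carries $\fg_{\pm\alpha}$ to $\fg_{\pm\alpha'}$, use Lemma~\ref{lem:adj_scalar} to produce $t\in T_{\alpha'}$ matching the scalar on $e_{\alpha'}$, and then invoke the Chevalley relation $[e_{\alpha},e_{-\alpha}]=-h_\alpha$ together with $\ad_n(h_\alpha)=h_{\alpha'}$ to force the scalar on $e_{-\alpha'}$ to be the reciprocal. The only cosmetic difference is in the second bullet: the paper exponentiates via Equation~\eqref{eq: nalpha as exponential}, whereas you use the triple-product factorization of Lemma~\ref{lem:triple_product}; your choice has the minor advantage of staying entirely with exponentials of nilpotent elements, hence remaining transparently algebraic.
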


\begin{proof}
For all $\gamma \in \Phi$ and $h_\gamma$ as in Definition \ref{defin:chevalley_basis}, let $\gamma' = q(n)(\gamma)$. Due to Lemma \ref{lem:w}, $\ad_n(h_\gamma) = h_{\gamma'}$. The fact that the adjoint action preserves the Lie bracket, and relation \ref{eq:chevalley_sl2} from Definition \ref{defin:chevalley_basis} imply:
\[
[h_{\gamma'} , \ad_n(e_{\pm \alpha})] = \pm 2 \frac{(\gamma, \alpha)}{(\alpha, \alpha)} \ad_n(e_{\pm \alpha}).
\]
Similarly,
\[
[h_{\gamma'} , e_{\pm \alpha'}] = \pm 2 \frac{(\gamma', \alpha')}{(\alpha', \alpha')} e_{\pm \alpha'}.
\]

The bilinear pairing between roots is Weyl-invariant, so for all roots $\gamma$:
\[
\frac{(\gamma, \alpha)}{(\alpha, \alpha)} = \frac{(\gamma', \alpha')}{(\alpha', \alpha')}.
\]
Then $\ad_n(e_{\pm \alpha})$ and $e_{\pm \alpha'}$ belong to the same eigenspace for the adjoint action of $\fr t$.
These eigenspaces are one dimensional, which means $\ad_n(e_{\pm \alpha})$ and $e_{\pm \alpha'}$ are scalar multiples of each other.
Now, using Lemma \ref{lem:adj_scalar}, there exist $t_{\pm} \in T$ such that $\ad_n(e_{\pm \alpha}) = \ad_{t_{\pm}}(e_{\pm \alpha'})$. Moreover, due to Lemma \ref{lem:t_ker_comm}, we can assume that $t_{\pm} \in T_{\alpha'}$. To conclude, it remains to show that we can take $t_+ = t_-$.

It suffices to show that $\ad_{t_+}(e_{-\alpha'}) = \ad_{t_-}(e_{-\alpha'})$.
Using that the Lie bracket is preserved under the adjoint action we apply $\ad_n$ to Relation \ref{eq:chevalley_cmm} from Definition \ref{defin:chevalley_basis} to obtain:
\[
[\ad_{t_+}(e_{\alpha'}), \ad_{t_-}(e_{-\alpha'})] = - h_{\alpha'}.
\]
On the other hand, applying $\ad_{t_+}$ (to Relation \ref{eq:chevalley_cmm} for the root $\alpha'$) we obtain:
\[
[\ad_{t_+}(e_{\alpha'}), \ad_{t_+}(e_{-\alpha'})] = - h_{\alpha'}.
\]
Since $\ad_{t_+}(e_{-\alpha'})$ is a scalar multiple of $\ad_{t_-}(e_{-\alpha'})$, these two relations imply they are equal. So there exists $t \in T_{\alpha'}$, such that:
\[
\ad_n(e_{\pm \alpha}) = \ad_t(e_{\pm \alpha'}).
\]

Finally, using Equation \ref{eq: nalpha as exponential}, we obtain $\Ad_n(n_\alpha) = \Ad_t(n_{\alpha'})$.
\end{proof}

\subsection{Stokes Data}
\label{sec: Stokes Data}

In non-abelianization we will modify $G$-local systems by unipotent automorphisms called Stokes factors along a set of curves on $X$ called a spectral network. When these curves intersect, we may need to add additional curves, and assign Stokes factors to them as well. In this section, we define 2D scattering diagrams (Definition \ref{def:scattering_diagram}), which describe the local structure of the intersecting curves around an intersection point\footnote{Strictly speaking this describes the local structure of a basic cameral network.  There is a slight difference between the local structure of a cameral network and that of a spectral network, due to the labels of the lines being different, as is explained in section \ref{sec: Cameral Networks}.}. Then we show (Theorem \ref{thm:assignment_stokes_intersection}) that the Stokes factors for the outgoing curves are uniquely determined by the Stokes factors for the incoming ones.

We begin with some results about unipotent subgroups of $G$ associated to convex sets of roots.




\begin{definition}[Convex set of roots]
\label{def: convex set of roots}
We say that a set of roots $C \subset \Phi$ is \emph{convex} if 
there exists a polarization $\Phi = \Phi_+ \coprod \Phi_-$ such that $C \subset \Phi_+$.
\end{definition}

\begin{definition}
\label{definition: poyhedral nilpotent lie algebra}
We define the following Lie algebras:
\begin{itemize}
    \item For $\alpha \in \Phi$, let $\fu_\alpha$ denote the root space $\fg_\alpha$.
    \item For $\{\alpha_1, \dots, \alpha_j\}$ a convex set of roots, let $\langle \fu_{\alpha_1} , \dots, \fu_{\alpha_n} \rangle$ denote the Lie subalgebra of $\fg$ generated by $\fu_{\alpha_1}, \dots, \fu_{\alpha_n}$.
    \item For $C \subset \Phi$ a convex subset closed under addition, define:
    \[
    \fu_{C}
    :=
    \bigoplus_{\gamma \in C}\fu_{\gamma}.
    \]
    This is a Lie algebra by Lemma \ref{lem:roots_commutators_vanish}.  We will abuse notation by denoting $\fu_{\{\gamma_{1},...,\gamma_{k}\}}$ as $\fu_{\gamma_1, \dots, \gamma_k}$.
\end{itemize}
\end{definition}

\begin{lemma}
All Lie algebras from Definition \ref{definition: poyhedral nilpotent lie algebra} are nilpotent.
\end{lemma}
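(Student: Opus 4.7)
My plan is to reduce all three cases to a single bounded--height argument using Lemma \ref{lem:roots_commutators_vanish}. Case (1), $\fu_\alpha = \fg_\alpha$, is one-dimensional and hence abelian, so it is nilpotent for trivial reasons. Case (2) will be reduced to case (3) by observing that the Lie algebra generated by $\fu_{\alpha_1},\dots,\fu_{\alpha_n}$ is contained in a $\fu_{C'}$ for a suitable convex $C'$ that is closed under addition. So the heart of the proof is case (3).

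The key step is the following. Because $C$ is convex, by Definition \ref{def: convex set of roots} there is a polarization $\Phi = \Phi_+ \amalg \Phi_-$ with $C \subset \Phi_+$. Fix simple roots $\alpha_1,\dots,\alpha_r$ for this polarization, and define the height $\mathrm{ht}(\gamma) = \sum_i c_i$ for $\gamma = \sum_i c_i \alpha_i \in \Phi_+$. Set $h := \max\{\mathrm{ht}(\gamma) : \gamma \in \Phi_+\}$, which is finite because $\Phi_+$ is finite, and let $h_0 := \min\{\mathrm{ht}(\gamma) : \gamma \in C\} \geq 1$. I claim that the $k$-th term of the lower central series of $\fu_C$ satisfies
\[
\fu_C^{(k)} \subset \bigoplus_{\substack{\gamma \in C \\ \mathrm{ht}(\gamma) \geq k\, h_0}} \fu_\gamma.
\]
This follows by induction on $k$: the case $k=1$ is the definition of $\fu_C$, and the inductive step uses Lemma \ref{lem:roots_commutators_vanish} (which gives $[\fu_\alpha, \fu_\beta] \subset \fu_{\alpha+\beta}$, with $\alpha+\beta \neq 0$ automatic since both $\alpha, \beta$ lie in $\Phi_+$) together with additivity of height. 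Once $k\, h_0 > h$, the right-hand side is empty, so $\fu_C^{(k)} = 0$ and $\fu_C$ is nilpotent.

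For case (2), let $C'$ denote the set of all roots that can be written as a sum $\alpha_{i_1} + \cdots + \alpha_{i_m}$ for some $m \geq 1$ and indices in $\{1,\dots,n\}$ (equivalently, the set of roots obtained from $\{\alpha_1,\dots,\alpha_n\}$ under repeated addition, whenever the sum is a root). Since $\{\alpha_1,\dots,\alpha_n\}$ is convex, $C' \subset \Phi_+$ for the same polarization, so $C'$ is convex; by construction it is closed under addition (in the sense used in Definition \ref{definition: poyhedral nilpotent lie algebra}, where sums that are not roots are simply absent). Iterated applications of Lemma \ref{lem:roots_commutators_vanish} show $\langle \fu_{\alpha_1},\dots,\fu_{\alpha_n}\rangle \subset \fu_{C'}$, and since a subalgebra of a nilpotent Lie algebra is nilpotent, case (3) applied to $C'$ finishes the argument.

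I do not expect a serious obstacle here: the only mild subtlety is making sure that brackets $[\fu_\alpha,\fu_\beta]$ with $\alpha+\beta \notin \Phi$ are handled correctly, but Lemma \ref{lem:roots_commutators_vanish} says exactly that such brackets vanish, so they pose no problem for the containment in the height filtration.
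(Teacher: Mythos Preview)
Your proof is correct. The paper's argument is slightly more streamlined: it observes that all three Lie algebras are Lie subalgebras of $\fr n_+ = \bigoplus_{\alpha \in \Phi_+} \fg_\alpha$ (for the polarization witnessing convexity), cites the standard fact that $\fr n_+$ is nilpotent, and invokes ``subalgebras of nilpotent Lie algebras are nilpotent'' uniformly for all three cases. Your height-filtration argument for case (3) is essentially the standard proof that $\fr n_+$ is nilpotent, carried out directly on $\fu_C$; so you have unpacked one step that the paper treats as a black box, at the cost of a little extra bookkeeping and a separate reduction for case (2). Both approaches rest on the same idea --- convexity places everything inside $\Phi_+$, where heights are bounded --- so the difference is one of presentation rather than substance.
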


\begin{proof}
Due to the convexity assumption, there exists a polarization $\Phi = \Phi_+ \coprod \Phi_-$ of the root system, such that $\{\alpha_1, \dots, \alpha_j\} \subset \Phi_+$. Recall that the Lie algebra:
\[
\fr{n}_+ = \bigoplus_{\alpha \in \Phi_+} \fg_\alpha
\]
is nilpotent. 
The Lie algebras in Definition \ref{definition: poyhedral nilpotent lie algebra} are Lie subalgebras of $\fr n_+$. Lie subalgebras of nilpotent Lie algebras are nilpotent.
\end{proof}

\begin{definition}
For any set of convex roots $\{\gamma_1, \dots, \gamma_k\}$ closed under addition, let $U_{\gamma_1, \dots, \gamma_k} := \exp ( \fu_{\gamma_1, \dots, \gamma_k})$.
\end{definition}

\begin{remark}
For any nilpotent Lie algebra $\fu$, the exponential map $\exp : \fu \to U$ is algebraic (The Taylor series is finite).  Therefore, all constructions in this section that involve the exponential map makes sense in the setting of algebraic groups. Moreover, for $\fu$ nilpotent, $\exp :\fu \to U$ is an isomorphism of schemes.
\end{remark}

\begin{definition}
\label{def:convex_hulls}
Let $\{\alpha_1, \dots, \alpha_j\}$ be a convex set of roots. 
We define their \textbf{restricted convex hull} as the subset:
\[
\Conv^\N_{\alpha_1, \dots, \alpha_j} 
:=
\left\{ 
\gamma \in \Phi 
| 
\gamma = \sum_{i=1}^j n_i \alpha_i, 
\; 
n_i \in \N \cup\{0\} 
\right\} .
\]
\end{definition}

The restricted convex hull is motivated by the following reformulation of Lemma \ref{lem:roots_commutators_vanish}.

\begin{lemma}
\label{lem:iterated_brackets_roots}
Let $\alpha, \beta \in \Phi$, such that $\alpha \neq \pm \beta$. Then $\langle \fu_{\alpha},\fu_{\beta}\rangle = \fu_{\Conv^\N_{\alpha,\beta}}$, using the notation of Definition \ref{definition: poyhedral nilpotent lie algebra}.
\end{lemma}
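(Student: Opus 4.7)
The plan is to prove the two inclusions separately, with the first being essentially automatic and the second requiring an induction whose key step is a root-system lemma.

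For the inclusion $\langle \fu_\alpha, \fu_\beta\rangle \subseteq \fu_{\Conv^\N_{\alpha,\beta}}$, I would argue inductively on the length of iterated brackets. By Lemma \ref{lem:roots_commutators_vanish}, a bracket $[\fg_\gamma,\fg_\delta]$ lies in $\fg_{\gamma+\delta}$ whenever $\gamma+\delta\neq 0$. Starting from $\fu_\alpha,\fu_\beta$, every iterated bracket therefore lives in some $\fg_{n\alpha+m\beta}$ with $n,m\geq 0$ (not both zero), and the bracket is nonzero only when $n\alpha+m\beta\in\Phi$, i.e.\ when this index lies in $\Conv^\N_{\alpha,\beta}$. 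The hypothesis $\alpha\neq\pm\beta$ guarantees linear independence of $\alpha,\beta$ (roots proportional to $\alpha$ in a reduced root system are $\pm\alpha$), so $n\alpha+m\beta=0$ forces $n=m=0$; hence no iterated bracket can drop into $\fr t$.

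For the inclusion $\fu_{\Conv^\N_{\alpha,\beta}} \subseteq \langle\fu_\alpha,\fu_\beta\rangle$, I would induct on the height $h(\gamma)=n+m$ of a root $\gamma=n\alpha+m\beta\in\Conv^\N_{\alpha,\beta}$. The case $h=1$ is tautological. For $h\geq 2$, a root of the form $k\alpha$ or $k\beta$ with $k\geq 2$ does not exist in a reduced root system, so necessarily $n,m\geq 1$.

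The heart of the argument, and the main obstacle, is the following decomposition claim: if $\gamma=n\alpha+m\beta$ is a root with $n,m\geq 1$, then at least one of $\gamma-\alpha$ or $\gamma-\beta$ is a root. I would prove this by contradiction using root strings: if neither is a root, then $\langle\gamma,\alpha^\vee\rangle\leq 0$ and $\langle\gamma,\beta^\vee\rangle\leq 0$. Expanding via $\gamma=n\alpha+m\beta$ yields
\[
2n + m\langle\beta,\alpha^\vee\rangle \leq 0,
\qquad
n\langle\alpha,\beta^\vee\rangle + 2m \leq 0,
\]
and since $n,m\geq 1$, multiplying these gives $\langle\alpha,\beta^\vee\rangle\langle\beta,\alpha^\vee\rangle\geq 4$. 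But for non-proportional roots this product lies in $\{0,1,2,3\}$, contradicting $\alpha\neq\pm\beta$.

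Once the decomposition is in hand, say $\gamma-\alpha\in\Phi$, then $\gamma-\alpha=(n-1)\alpha+m\beta$ has $n-1\geq 0$ and $m\geq 1$, so it belongs to $\Conv^\N_{\alpha,\beta}$ with strictly smaller height. By the inductive hypothesis $\fg_{\gamma-\alpha}\subset\langle\fu_\alpha,\fu_\beta\rangle$, and by Lemma \ref{lem:roots_commutators_vanish} applied to the roots $\alpha$ and $\gamma-\alpha$ (whose sum $\gamma$ is a root), the bracket $[\fg_\alpha,\fg_{\gamma-\alpha}]$ equals the one-dimensional space $\fg_\gamma$. Hence $\fg_\gamma\subset\langle\fu_\alpha,\fu_\beta\rangle$, completing the induction.
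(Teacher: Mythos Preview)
Your proof is correct and follows the same inductive approach as the paper, which simply invokes ``recursive application'' of Lemma~\ref{lem:roots_commutators_vanish} without further detail. Your argument is in fact more complete: the decomposition step (that for $\gamma=n\alpha+m\beta\in\Phi$ with $n,m\geq 1$, one of $\gamma-\alpha$ or $\gamma-\beta$ is a root) is precisely the nontrivial content hidden in the paper's phrase ``recursive application,'' and your root-string argument establishing it is clean and standard.
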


\begin{proof}
By Lemma \ref{lem:roots_commutators_vanish}, $[\fu_\alpha, \fu_\beta] = \fu_{\alpha + \beta}$ if $\alpha + \beta \in \Phi$, and $[\fu_\alpha, \fu_\beta] = 0$ otherwise. By recursive application of this result, we obtain that $\langle \fu_{\alpha}, \fu_{\beta}\rangle$ contains $\fu_{\gamma}$ if and only if $\gamma \in \Conv_{\alpha, \beta}^\N$.
\end{proof}

\begin{example}
\label{eg:convex_hull_typeA}
In root systems of type ADE, for every convex pair $\{\alpha_1, \alpha_2\}$, $\Conv^\N_{\alpha_1, \alpha_2}=\{\alpha_{1},\alpha_{2}\}$. This is because the restriction of the root system to the plane spanned by $\alpha_1$ and $\alpha_2$ is a root system of type $A_1 \times A_1$ or $A_2$. In both cases, the claim is obvious. (See Figures \ref{fig:a1root} and \ref{fig:a2root}.)
\end{example}

\begin{figure}[h]
    \centering
    \begin{minipage}{.5\textwidth}
      \centering
      \includegraphics[width = 0.7 \textwidth]{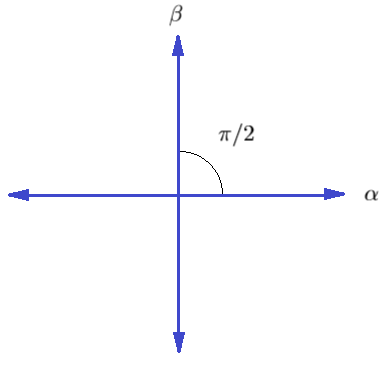}
      \caption{The root system $A_1 \times A_1$.}
      \label{fig:a1root}
    \end{minipage}%
    \begin{minipage}{.5\textwidth}
      \centering
      \includegraphics[width = 0.8 \textwidth]{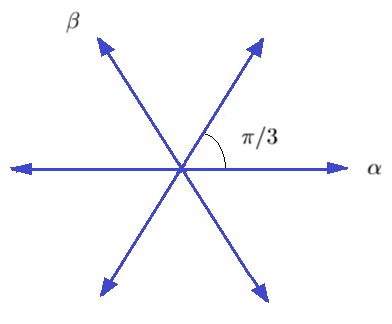}
      \caption{The root system $A_2$.}
      \label{fig:a2root}
    \end{minipage} 
\end{figure}

\begin{example}
In a root system of type B2 (see Figure \ref{fig:b2root}), let $\alpha_1, \alpha_2$ be orthogonal long roots. Then:
\[
\Conv^\N_{\alpha_1, \alpha_2} = \{\alpha_1,  \alpha_2\}.
\]

Because of the restriction that the coefficients must lie in $\bbN\cup \{0\}$ we have that $(\alpha_1 + \alpha_2)/2\notin \Conv^\N_{\alpha_1, \alpha_2}$.
\end{example}

\begin{figure}[h]
    \centering
    \begin{minipage}{.5\textwidth}
      \centering
      \includegraphics[width = 0.75 \textwidth]{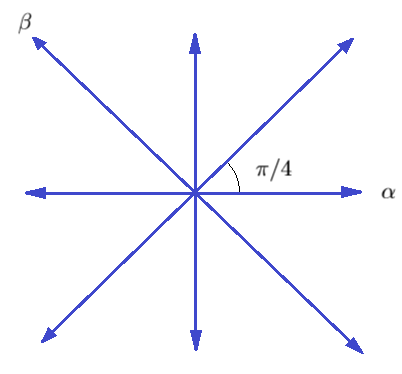}
      \caption{The root system $B_2$.}
      \label{fig:b2root}
    \end{minipage}%
    \begin{minipage}{.5\textwidth}
      \centering
      \includegraphics[width = 0.67\textwidth]{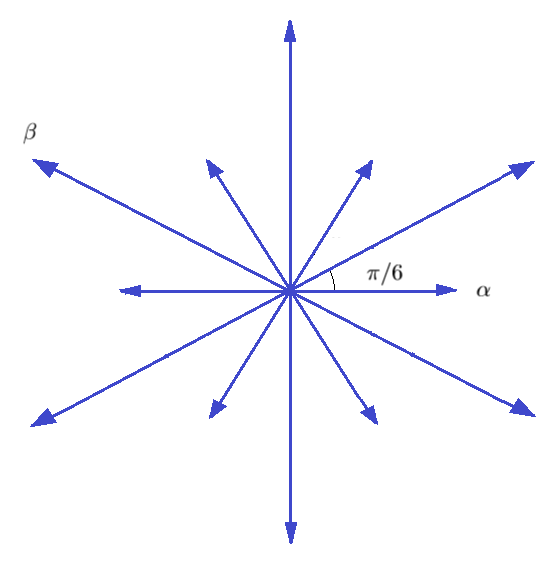}
      \caption{The root system $G_2$.}
      \label{fig:g2root}
    \end{minipage}
\end{figure}

See Appendix \ref{sect:planar} for other explicit examples, in the case of the planar root systems of Figures \ref{fig:a1root}-\ref{fig:g2root}.

\begin{definition}[Height of a root]
\label{definition: height of a root}
Given a polarization $\Phi = \Phi_+ \coprod \Phi_-$, we let $\{\alpha_1, \dots, \alpha_d\}$ denote the simple roots with respect to this polarization.  Recall that the simple roots are a basis for the root system. Then all $\gamma \in \Phi^{+}$ can be written uniquely as:
\[
\gamma = \sum_{i=1}^d n_i \alpha_i ,\; n_i \in \N \cup\{0\}.
\]
We define the \textbf{height} of $\gamma\in \Phi^{+}$ as $ht(\gamma) = \sum_{i=1}^d n_i$.
\end{definition}

Note that, $ht(\gamma_1) + ht(\gamma_2) = ht(\gamma_1 + \gamma_2)$. 

\begin{lemma}
\label{lemma on height of commutator}
Let $X_{\beta}\in \fu_{\beta}$, nd $X_{\gamma}\in \fu_{\gamma}$.  If $0\neq [X_{\beta},X_{\delta}] \in \fu_{\gamma}$, then $ht(\beta), ht(\delta) < ht(\gamma)$.
\end{lemma}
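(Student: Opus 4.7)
The plan is to derive the height inequality directly from the additivity of height and the classification of commutators of root spaces given by Lemma \ref{lem:roots_commutators_vanish}. Note that since heights are defined on $\Phi^+$, the statement implicitly requires $\beta, \delta, \gamma$ to be positive roots with respect to some fixed polarization; this is the context in which the lemma will be applied (inside a convex set of roots, cf. Definition \ref{def: convex set of roots}).

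First I would apply Lemma \ref{lem:roots_commutators_vanish}: since $[X_\beta, X_\delta]$ is nonzero, $\beta + \delta$ must be a root, and $[X_\beta, X_\delta] \in \fg_{\beta+\delta}$. Combined with the hypothesis $[X_\beta, X_\delta] \in \fu_\gamma = \fg_\gamma$, this forces the identification $\gamma = \beta + \delta$.

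Next I would use the additivity of height, which is an immediate consequence of Definition \ref{definition: height of a root} together with the uniqueness of the expansion of a root in the simple root basis: if $\beta = \sum n_i \alpha_i$ and $\delta = \sum m_i \alpha_i$ with $n_i, m_i \in \N \cup \{0\}$, then $\gamma = \sum (n_i + m_i) \alpha_i$, and summing the coefficients gives $ht(\gamma) = ht(\beta) + ht(\delta)$.

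Finally, the key observation is that any positive root $\beta \in \Phi^+$ satisfies $ht(\beta) \geq 1$ (since at least one $n_i$ must be positive, as $\beta \neq 0$, and all $n_i \geq 0$). Applying this to both $\beta$ and $\delta$ gives $ht(\gamma) = ht(\beta) + ht(\delta) > ht(\beta)$ and similarly $ht(\gamma) > ht(\delta)$, as required. There is no substantive obstacle here; the whole content of the lemma is the reduction to these two elementary facts about root systems. The only point to state carefully is the implicit positivity assumption on $\beta, \delta, \gamma$, which should be recorded explicitly so that $ht$ is well defined on all three roots.
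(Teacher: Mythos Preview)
Your proof is correct and follows exactly the approach of the paper, which simply says the result follows immediately from Lemma~\ref{lem:roots_commutators_vanish}. You have just spelled out the details: $\gamma = \beta + \delta$, height is additive, and positive roots have height at least~$1$.
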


\begin{proof}
This follows immediately from Lemma \ref{lem:roots_commutators_vanish}.
\end{proof}

\begin{proposition}
\label{prop:mult_iso}
Let $C \subset \Phi$ be a convex subset such that $C = \Conv_C^\N$. Then multiplication gives an isomorphism of schemes:
\[
\prod_{\gamma \in C} U_{\gamma}
\overset{M}{\longrightarrow}
U_C ,
\]
for any ordering of the product on the left hand side.
\end{proposition}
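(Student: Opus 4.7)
The plan is to translate the multiplication map into a polynomial map between affine spaces by passing to exponential coordinates, and then to exploit the height filtration on roots to construct a polynomial inverse. Since $\fu_C$ is nilpotent, the exponential $\exp : \fu_C \to U_C$, and each $\exp : \fu_\gamma \to U_\gamma$, is an isomorphism of affine schemes. So it suffices to show that the Baker--Campbell--Hausdorff (BCH) map
\[
\phi : \bigoplus_{\gamma \in C} \fu_\gamma \longrightarrow \fu_C, \qquad (X_\gamma)_{\gamma \in C} \longmapsto \log\!\left( \prod_{\gamma \in C} \exp(X_\gamma) \right)
\]
is an isomorphism of affine schemes for the fixed (but arbitrary) ordering of the product. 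Because $\fu_C$ is nilpotent, BCH terminates and $\phi$ is polynomial, of the form
\[
\phi(X_{\gamma_1},\ldots,X_{\gamma_k}) = X_{\gamma_1} + \cdots + X_{\gamma_k} + (\text{iterated Lie brackets of the } X_{\gamma_i}).
\]

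The next step is to observe that $\phi$ is unipotent triangular with respect to the height filtration. By Lemma \ref{lem:roots_commutators_vanish}, an iterated bracket $[X_{\gamma_{i_1}},[X_{\gamma_{i_2}},[\ldots,X_{\gamma_{i_\ell}}]]]$ with $\ell \geq 2$ lies in $\fu_{\gamma_{i_1}+\cdots+\gamma_{i_\ell}}$ if that sum is a root, and vanishes otherwise; by Lemma \ref{lemma on height of commutator}, this sum has height strictly greater than each of the individual $\gamma_{i_j}$. Identifying the source and target of $\phi$ with $V := \bigoplus_{\gamma \in C} \fu_\gamma$ in the obvious way, it follows that the $\fu_\gamma$-component of $\phi$ equals $X_\gamma$ plus a polynomial expression in the $X_{\gamma'}$ with $\gamma' \in C$ and $ht(\gamma') < ht(\gamma)$.

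Finally I would construct the inverse $\psi$ of $\phi$ recursively on height. For $\gamma \in C$ of minimum height, the $\fu_\gamma$-component of $\phi$ is exactly $X_\gamma$, so set $X_\gamma := Y_\gamma$. Inductively, assume that for every $\gamma'$ of height less than $n$ the value of $X_{\gamma'}$ has been expressed as a polynomial in the coordinates of $Y$; then for $\gamma$ of height $n$ the triangular form above lets us solve algebraically for $X_\gamma$ as a polynomial in $Y_\gamma$ and in the already determined $X_{\gamma'}$. This produces a polynomial inverse to $\phi$, so $M$ is an isomorphism of affine schemes. The argument uses no feature of the chosen ordering beyond whichever one was fixed, so the same conclusion holds for any ordering. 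The only real work is bookkeeping in the triangularity argument; both the BCH reduction and the height-filtration inversion are routine once that is set up.
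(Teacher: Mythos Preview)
Your proposal is correct and follows essentially the same approach as the paper: reduce via the exponential to a BCH map on $\bigoplus_{\gamma\in C}\fu_\gamma$, observe that the bracket corrections land in root spaces of strictly greater height (using Lemmas \ref{lem:roots_commutators_vanish} and \ref{lemma on height of commutator}), and invert by induction on height. The paper's write-up is slightly more explicit about the commutative square of schemes but the argument is the same.
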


\begin{warning}
The morphism in Proposition \ref{prop:mult_iso} does \emph{not} in general respect the group structure.
\end{warning}

\begin{proof}
We use the Baker-Campbell-Hausdorff formula:
\[
\exp(X) \exp(Y) 
= 
\exp \left( X + Y + \frac{1}{2}[X,Y] + \dots \right) ,
\]
where the dots indicate higher order iterated Lie brackets of $X$ and $Y$. We only need this formula for the case when each of $X$, $Y$ spans a root space of $\fg$. Due to Lemma \ref{lem:roots_commutators_vanish} and the convexity assumption, there are only finitely many nonzero iterated Lie brackets in this case.

For all $\gamma \in C$, let $X_\gamma \in \fu_\gamma$. Applying the Baker-Campbell-Hausdorff formula iteratively, we obtain:
\[
\prod_{\gamma \in C} \exp(X_\gamma) 
= 
\exp \left( \sum_{\gamma \in C} (X_\gamma + \text{brack}_\gamma) \right) ,
\]
where $\text{brack}_\gamma$ is the sum of all iterated Lie brackets appearing in the resulting exponential which belong to the root space $\fu_\gamma$.  Note that $\text{brack}_\gamma$ depends on the set $\{X_{\gamma}\}_{\gamma\in C}$.

It follows that we have a commutative diagram of schemes:
\[
\begin{tikzcd}
\prod_{\gamma \in C} U_{\gamma} 
\arrow{r}{M}
&
U_C
\\
\bigoplus_{\gamma \in C} \fu_\gamma
\arrow{u}{\prod_{\gamma \in C} \exp }
\arrow{r}{m}
&
\fu_C ,
\arrow[swap]{u}{\exp}
\end{tikzcd}
\]
where $m$ is the map:
\begin{equation}
\label{eq:mult_bijective}
(X_\gamma)_{\gamma \in C} 
\longmapsto
\sum_{\gamma \in C} X_\gamma + \text{brack}_\gamma .
\end{equation}
Since the vertical arrows are isomorphisms of schemes (because the algebraic groups are unipotent), it suffices to prove that $m$ is invertible. Because we can compose $m$ with the projections $\fu_C \to \fu_\gamma$, invertibility means recovering the input tuple $(X_{\gamma})_{\gamma \in C}$ from the output tuple $(X_\gamma + \text{brack}_\gamma )_{\gamma \in C}$.
We will argue by induction on the height of $\gamma \in C$.

Since $C$ is convex, there exists a polarization $\Phi = \Phi_+ \coprod \Phi_-$ such that $C \subset \Phi_+$.  We consider the height of a root (Definition \ref{definition: height of a root}) with respect to such a polarization.

By Lemma \ref{lemma on height of commutator} if a non-zero iterated Lie bracket involving $X_{\beta}$ belongs to $\fu_\gamma$, then $ht(\beta) < ht(\gamma)$. In other words, $\text{brack}_\gamma$ only depends on those $X_{\beta}$ with $ht(\beta) < ht(\gamma)$.

We now argue that $m$ is invertible by inductively constructing an inverse, where we are inducting on the height of the element we are applying $m$ to.  The base case is given by all $\gamma$ of minimal height: for these, $\text{brack}_\gamma = 0$, and the composition of $m$ with the projection $\fu_C \to \fu_\gamma$ recovers $X_\gamma$.
For the inductive step, assume we know $X_\beta$ for all $\beta \in C$ such that $ht(\beta) < ht(\gamma)$. These determine $\text{brack}_\gamma$,
so we can recover $X_\gamma$ uniquely from the output of Equation \ref{eq:mult_bijective}.
\end{proof}

\begin{example}
For $\fg = \fr{sl}_3$, choose a polarization so that the positive root spaces correspond to upper-triangular matrices. We have the explicit formula:
\begin{align*}
\left( \begin{array}{ccc}
    1 & a & 0 \\
    0 & 1 & 0 \\
    0 & 0 & 1
\end{array}
\right)
\left( \begin{array}{ccc}
    1 & 0 & c \\
    0 & 1 & 0 \\
    0 & 0 & 1
\end{array}
\right)
\left( \begin{array}{ccc}
    1 & 0 & 0 \\
    0 & 1 & b \\
    0 & 0 & 1
\end{array}
\right)
&=
\left( \begin{array}{ccc}
    1 & a & c+ab \\
    0 & 1 & b \\
    0 & 0 & 1
\end{array}
\right)
\\
= \exp 
&
\left( \begin{array}{ccc}
    0 & a & c + ab/2 \\
    0 & 0 & b \\
    0 & 0 & 0
\end{array}
\right)
\end{align*}
The map $(a,b,c) \mapsto (a, b , c+ ab/2)$ is clearly invertible.
\end{example}

We remark that Proposition \ref{prop:mult_iso} is really a statement about root spaces. If we use a basis for $\fg$ that is not compatible with the root space decomposition, then the result need not be true as Example \ref{example: really needed root spaces} shows.

\begin{example}
\label{example: really needed root spaces}
Let $\fg = \fr{sl}_3$, and choose the following basis for the Lie subalgebra of strictly upper triangular matrices:
\[
\left( \begin{array}{ccc}
    0 & 1 & 1 \\
    0 & 0 & 0 \\
    0 & 0 & 0
\end{array}
\right),
\hspace{1cm}
\left( \begin{array}{ccc}
    0 & 0 & 0 \\
    0 & 0 & 1 \\
    0 & 0 & 0
\end{array}
\right),
\hspace{1cm}
\left( \begin{array}{ccc}
    0 & 1 & -1 \\
    0 & 0 & 0 \\
    0 & 0 & 0
\end{array}
\right).
\]
Then:
\begin{align*}
\left( \begin{array}{ccc}
    1 & a & a \\
    0 & 1 & 0 \\
    0 & 0 & 1
\end{array}
\right)
\left( \begin{array}{ccc}
    1 & 0 & 0 \\
    0 & 1 & b \\
    0 & 0 & 1
\end{array}
\right)
\left( \begin{array}{ccc}
    1 & c & -c \\
    0 & 1 & 0 \\
    0 & 0 & 1
\end{array}
\right)
=&
\left( \begin{array}{ccc}
    1 & a+c & a-c + ab \\
    0 & 1 & b \\
    0 & 0 & 1
\end{array}
\right).
\end{align*}
Elements of the form:
\[
\left( \begin{array}{ccc}
    1 & x & z \\
    0 & 1 & y \\
    0 & 0 & 1
\end{array}
\right)
\]
with $y = -2$ and $x \neq -z$ form a codimension 1 locus not in the image of the multiplication map.
\end{example}

We will obtain an immediate corollary (\ref{cor:stokes_sector}) from Proposition \ref{prop:mult_iso}. To state it, we need some additional definitions.

\begin{definition}
\label{def:scattering_diagram}
Let $C_{in} \subset \Phi$ be a convex set, and set $C_{out} = \Conv^\N_{C_{in}}$.
An \textbf{undecorated 2D scattering diagram} is a finite collection of distinct oriented rays in $\bbR^{2}$, starting or ending at $\{0\}\in \bbR^{2}$, together with the data of:
\begin{itemize}
    \item a bijection between the set of incoming rays and $C_{in}$ (we say that incoming half-lines are labeled by elements of $C_{in}$);
    \item a bijection between the set of outgoing rays and $C_{out}$;
    
\end{itemize}
A \textbf{decorated 2D scattering diagram} is an undecorated 2d scattering diagram together with:
\begin{itemize}
    \item  For every ray $\ell$ with label $\alpha$, an element $u_{\ell} \in U_\alpha$ called the \textbf{Stokes factor},
\end{itemize}
such that the product taken over both incoming and outgoing Stokes factors, in clockwise order around the intersection point, is the identity:
\begin{equation}
\label{eq:ordered_prod}
\overrightarrow{\prod}_{\ell \in C_{in} \coprod C_{out}} u_{\ell}^{\pm 1} = \id .
\end{equation}
Here $\overrightarrow{\prod}$ denotes a clockwise-ordered product\footnote{This condition is independent of the ray at which we start the clockwise-ordered product.},
and the exponent accounts for orientation: it is $-1$ for incoming rays, and $+1$ for outgoing rays.
\end{definition}


\begin{definition}
A \textbf{solution to an (undecorated) 2D scattering diagram} is informally a way to assign Stokes factors $u_\gamma \in U_\gamma$ to the outgoing half-lines, given arbitrary Stokes factors on the incoming rays, such that the result is a decorated 2D scattering diagram. Formally, it is a morphism of schemes:
\[
\prod_{\alpha \in C_{in}} U_\alpha
\to 
\prod_{\gamma \in C_{out}} U_\gamma,
\]
such that the product in Equation \ref{eq:ordered_prod}, taken over the inputs and outputs of the morphism, is the identity.
\end{definition}

\begin{remark}
A solution to an (undecorated) 2D scattering diagram does not depend on the angles between the Stokes rays.  In fact it only depends on the cyclic order of the rays.
\end{remark}

See Figures \ref{fig:outgoing_sector}, \ref{fig:A3_joint} for examples. This definition is motivated by the local structure of cameral networks, as defined in Section \ref{sec: Cameral Networks}, around intersections of Stokes curves.

\begin{corollary}
\label{cor:stokes_sector}
Consider a 2D scattering diagram such that the incoming rays are constrained to a sector of the plane with central angle $< \pi$, and the outgoing rays are constrained to the opposite sector as in Figure \ref{fig:outgoing_sector}. Then the scattering diagram has a unique solution.

\begin{figure}[h]
    \centering
    \includegraphics[width = 0.7 \textwidth]{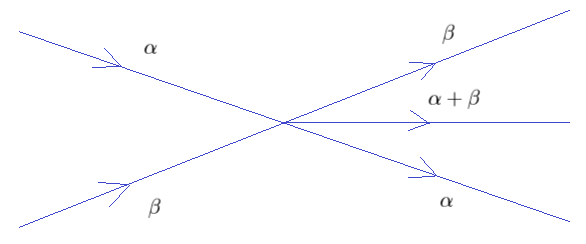}
    \caption{A 2D scattering diagram in which incoming and outgoing curves are restricted to opposite sectors.}
    \label{fig:outgoing_sector}
\end{figure}

\end{corollary}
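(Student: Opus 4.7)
The plan is to reduce this to Proposition \ref{prop:mult_iso} by exploiting the sector constraint to rewrite Equation \ref{eq:ordered_prod} as an equation of the form ``ordered product of the outgoing factors equals a prescribed element of $U_{C_{out}}$.''

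First I would observe that since $C_{in}$ lies in a sector of central angle $< \pi$, there exists a polarization $\Phi = \Phi_+ \coprod \Phi_-$ with $C_{in} \subset \Phi_+$. Then $C_{out} = \Conv^{\N}_{C_{in}} \subset \Phi_+$ as well, so $C_{out}$ is convex, and by construction $C_{out} = \Conv^{\N}_{C_{out}}$ (a non-negative integer combination of non-negative integer combinations of elements of $C_{in}$ is again such a combination, provided it is a root). In particular $C_{out}$ is closed under addition, so the Lie algebra $\fu_{C_{out}}$ and the unipotent group $U_{C_{out}} = \exp(\fu_{C_{out}})$ are well-defined, and $U_\alpha \subset U_{C_{out}}$ for every $\alpha \in C_{in} \subset C_{out}$.

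Next, I would split the clockwise product of Equation \ref{eq:ordered_prod}: choose the starting point of the clockwise traversal to lie between the two opposite sectors, so that the traversal first crosses all incoming rays (in some clockwise order, call it $<_{in}$) and then all outgoing rays (in some clockwise order $<_{out}$). Because incoming rays contribute $u_\alpha^{-1}$ and outgoing rays contribute $u_\gamma$, the constraint becomes
\[
\left(\overrightarrow{\prod}\nolimits^{<_{in}}_{\alpha \in C_{in}} u_\alpha^{-1}\right)\left(\overrightarrow{\prod}\nolimits^{<_{out}}_{\gamma \in C_{out}} u_\gamma\right) = \id,
\]
which I rewrite as
\[
\overrightarrow{\prod}\nolimits^{<_{out}}_{\gamma \in C_{out}} u_\gamma \;=\; \overleftarrow{\prod}\nolimits^{<_{in}}_{\alpha \in C_{in}} u_\alpha .
\]
Both sides lie in $U_{C_{out}}$: the right hand side because each factor $u_\alpha$ does, and the left hand side by definition.

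Finally, I would apply Proposition \ref{prop:mult_iso} to the convex set $C_{out}$ with the ordering $<_{out}$: multiplication gives an isomorphism of schemes $\prod_{\gamma \in C_{out}} U_\gamma \xrightarrow{\sim} U_{C_{out}}$. Thus, for every input tuple $(u_\alpha)_{\alpha \in C_{in}} \in \prod_{\alpha \in C_{in}} U_\alpha$, the right hand side defines an element of $U_{C_{out}}$, and there exists a unique tuple $(u_\gamma)_{\gamma \in C_{out}}$ whose ordered product realizes that element. The resulting assignment is the composition
\[
\prod_{\alpha \in C_{in}} U_\alpha \;\xrightarrow{\;\overleftarrow{\prod}\;}\; U_{C_{out}} \;\xrightarrow{\;M^{-1}\;}\; \prod_{\gamma \in C_{out}} U_\gamma,
\]
which is a morphism of schemes, and is manifestly the unique solution. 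There is essentially no obstacle here beyond bookkeeping; the only point that deserves care is verifying that the sector condition really forces all incoming rays to precede all outgoing rays in a single clockwise sweep, which is exactly the hypothesis that the two sectors are opposite and each has angular width $< \pi$.
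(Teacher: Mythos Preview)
Your proof is correct and follows essentially the same approach as the paper's: use the sector hypothesis to separate the clockwise product into an incoming block and an outgoing block, then invoke Proposition~\ref{prop:mult_iso} to invert the outgoing multiplication map. The paper phrases the solution as the composition $m_{out}^{-1}\circ m_{in}$, which is exactly your $M^{-1}\circ\overleftarrow{\prod}$; your additional remark that $C_{out}=\Conv^{\N}_{C_{out}}$ makes explicit the hypothesis needed to apply Proposition~\ref{prop:mult_iso}, which the paper leaves implicit.
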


\begin{proof}
Due to the assumption about separation of incoming and outgoing rays, equation \ref{eq:ordered_prod} has the form:
\[
\overrightarrow{\prod}_{\alpha \in C_{in}} u_\alpha^{-1} \
\overrightarrow{\prod}_{\gamma \in C_{out}} u_\gamma = \id .
\]
All factors in the first product are known, and all factors in the second product must be determined.
Let:
\begin{align*}
m_{in} &: \overleftarrow{\prod}_{\alpha \in C_{in}} U_\alpha \to U_{C_{out}} ,
\\
m_{out} &: \overrightarrow{\prod}_{\gamma \in C_{out}} U_\gamma \to U_{C_{out}} .
\end{align*}
be the multiplication maps, where $\overrightarrow{\prod}$ denotes a clockwise-ordered product, and $\overleftarrow{\prod}$ a counterclockwise-ordered product. Then the solution to the 2D scattering diagram is the composition:
\begin{equation}
\label{equation: Equation of corollary 3.32}
\begin{tikzcd}
\overleftarrow{\prod}_{\alpha \in C_{in}} U_\alpha
\arrow{r}{m_{in}}
&
U_{C_{out}}
\arrow{r}{m_{out}^{-1}}
& 
\overrightarrow{\prod}_{\gamma \in C_{out}} U_\gamma .
\end{tikzcd}
\end{equation}
The map $m_{out}^{-1}$ exists by Proposition \ref{prop:mult_iso}.
\end{proof}

In Appendix \ref{sect:planar}, we give explicit formulas for the composition $m_{out}^{-1} \circ m_{in}$, in the case of planar root systems.

\subsubsection{Rays Not Separated}

In the remainder of the section, we generalize Corollary \ref{cor:stokes_sector} by allowing incoming and outgoing rays in the 2D scattering diagram to be interspersed, as in Example \ref{eg:A3_joint}.

The subset $\sum_{\gamma \in C_{out}} \R_+ \cdot \gamma \subset \ft^*$ is a cone with vertex at the origin.  By a \emph{face} of $C_{out}$ we refer to the intersection of $C_{out}$ with a face of the cone $\sum_{\gamma \in C_{out}} \R_+ \cdot \gamma$.  Recall that by a face of the cone $\sum_{\gamma \in C_{out}} \R_+ \cdot \gamma$ we mean an intersection of this cone with a (real) half-space of $\ft$, such that no interior points of the cone are included in this intersection. Throughout, we use the notation $\Delta \subset C_{out}$ to denote a face of $C_{out}$ of unspecified dimension.


\begin{lemma}[Faces and projection]
\label{lem:faces_projection}
Let $\Delta_{f}\subset \Delta$ be a face. The projection $p_f: \fu_{\Delta}\rightarrow \fu_{\Delta_{f}}$ with kernel $\oplus_{\gamma\in \Delta \setminus \Delta_{f}}\fu_{\gamma}$ is a morphism of Lie algebras.
\end{lemma}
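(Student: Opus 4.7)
The plan is to reduce the statement to a check on pairs of root vectors and then dispatch the cases using a single supporting linear functional. First I would observe that both $p_f$ and the Lie bracket are bilinear, so by the root space decomposition $\fu_\Delta = \bigoplus_{\gamma \in \Delta} \fu_\gamma$ it suffices to verify the identity $p_f([X_\alpha, X_\beta]) = [p_f(X_\alpha), p_f(X_\beta)]$ for $X_\alpha \in \fu_\alpha$ and $X_\beta \in \fu_\beta$ with $\alpha, \beta \in \Delta$.

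Next I would invoke Lemma \ref{lem:roots_commutators_vanish} to conclude that $[X_\alpha, X_\beta] \in \fu_{\alpha+\beta}$, and that this bracket vanishes unless $\alpha + \beta \in \Phi$. Note $\alpha + \beta \neq 0$ is automatic since $\Delta \subset C_{out}$ is convex in the sense of Definition \ref{def: convex set of roots}, hence lies in some $\Phi_+$. When $\alpha + \beta$ is a root, I would show $\alpha + \beta \in \Delta$: the identity $C_{out} = \Conv^\N_{C_{out}}$ means $C_{out}$ is closed under addition within $\Phi$, and since $\Delta = C_{out} \cap F$ for a face $F$ of $\mathrm{cone}(C_{out})$, the fact that cone faces are closed under addition gives $\alpha + \beta \in F$, hence $\alpha + \beta \in \Delta$.

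The key geometric input is to choose a linear functional $\phi$ on $\ft^*$ realizing $\Delta_f$ as a face of $\Delta$: one can pick $\phi$ with $\phi \geq 0$ on $\mathrm{cone}(\Delta)$ and $\Delta_f = \{\gamma \in \Delta : \phi(\gamma) = 0\}$. Linearity of $\phi$ together with non-negativity on $\Delta$ implies $\phi(\alpha + \beta) = 0$ if and only if $\phi(\alpha) = \phi(\beta) = 0$. In other words, for $\alpha, \beta \in \Delta$ with $\alpha + \beta \in \Delta$, one has $\alpha + \beta \in \Delta_f$ if and only if both $\alpha$ and $\beta$ lie in $\Delta_f$.

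The case analysis is then immediate. If $\alpha, \beta \in \Delta_f$, then $\alpha + \beta \in \Delta_f$ (when $\alpha+\beta \in \Phi$), so $p_f$ acts as the identity on $\fu_\alpha$, $\fu_\beta$, and $\fu_{\alpha+\beta}$, and both sides equal $[X_\alpha, X_\beta]$. If either $\alpha$ or $\beta$ fails to lie in $\Delta_f$, the right-hand side vanishes because $p_f$ kills the offending factor, while the left-hand side vanishes either because $\alpha + \beta \notin \Phi$ or because $\alpha + \beta \in \Delta \setminus \Delta_f$ by the previous paragraph, so $[X_\alpha, X_\beta] \in \fu_{\alpha+\beta} \subset \ker p_f$. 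There is no real obstacle here; the only thing to be careful about is verifying that $\alpha + \beta$ actually lies in $\Delta$ (not merely in $C_{out}$) so that the face-characterization via $\phi$ applies, which is exactly where the ``face of a face'' structure is used.
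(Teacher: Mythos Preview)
Your proof is correct and follows essentially the same approach as the paper: reduce to root vectors, invoke Lemma~\ref{lem:roots_commutators_vanish}, and use the face property to control which sums land in $\Delta_f$. You are slightly more explicit than the paper in two places---you spell out the reduction to homogeneous root vectors by bilinearity, and you introduce a supporting linear functional $\phi$ to make the face condition concrete---whereas the paper simply asserts the case split on general $X_1,X_2\in\fu_\Delta$ and appeals to ``the assumption that $\Delta_f$ is a face'' without unpacking it.
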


\begin{proof}
We need to prove that $p_f\big([X_1,X_2]\big) = \big[p_f(X_1),p_f(X_2)\big]$, for all $X_1, X_2 \in \fr u_{\Delta}$. We analyze two cases.
\begin{itemize}
    \item If $X_1, X_2 \in \fu_{\Delta_f}$, then $[X_1, X_2] \in \fu_{\Delta_f}$, due to Lemma \ref{lem:roots_commutators_vanish}. Hence $p_f$ leaves each of $X_1, X_2, [X_1,X_2]$ unchanged.
    \item If at least one of $X_1, X_2$ is not in $\fu_{\Delta_f}$, then, using Lemma \ref{lem:roots_commutators_vanish} and the assumption that $\Delta_f$ is a face, either $[X_1, X_2] = 0$ or $[X_1, X_2] \not \in \fu_{\Delta_f}$. Thus, the equality $p_f\big([X_1,X_2]\big) = \big[p_f(X_1),p_f(X_2)\big]$ holds with both sides equal to 0.
\end{itemize}
\end{proof}

As a consequence:

\begin{lemma}
\label{lem:projection_to_face}
The morphism of algebraic groups $U_{\Delta}\to U_{\Delta_{f}}$ corresponding to the projection of Lemma \ref{lem:faces_projection}, acts as the identity on $U_{\Delta_f}$ and sends every element of the form $\exp(X_\gamma)$ ($X_{\gamma}\in \fu_{\gamma}$) for $\gamma \not \in \Delta_f$ to $\id_{U_{\Delta_f}}$.
\end{lemma}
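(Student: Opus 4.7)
The plan is to deduce the statement directly from the fact that the exponential map is an isomorphism of schemes between a nilpotent Lie algebra and the corresponding unipotent algebraic group, and that it intertwines Lie algebra homomorphisms with group homomorphisms.

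First I would invoke that both $\fu_{\Delta}$ and $\fu_{\Delta_f}$ are nilpotent (they are sub Lie algebras of the nilpotent Lie algebra $\fr n_+$ associated to some polarization containing $\Delta$), so $\exp$ gives isomorphisms of schemes $\exp_\Delta : \fu_\Delta \xrightarrow{\sim} U_\Delta$ and $\exp_{\Delta_f} : \fu_{\Delta_f} \xrightarrow{\sim} U_{\Delta_f}$. The projection $p_f$ is a morphism of Lie algebras by Lemma \ref{lem:faces_projection}, and functoriality of the exponential for nilpotent Lie algebras provides the group morphism $P_f : U_\Delta \to U_{\Delta_f}$ characterized by $P_f \circ \exp_\Delta = \exp_{\Delta_f} \circ p_f$.

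Second, the two claims follow by unwinding this intertwining property on the two types of elements in question. If $X \in \fu_{\Delta_f}$, then $p_f(X) = X$ by the definition of $p_f$ in Lemma \ref{lem:faces_projection}, so $P_f(\exp_\Delta(X)) = \exp_{\Delta_f}(X)$, which says precisely that $P_f$ restricts to the identity on $U_{\Delta_f}$. If $\gamma \notin \Delta_f$ and $X_\gamma \in \fu_\gamma$, then $X_\gamma$ lies in the kernel of $p_f$, so $P_f(\exp_\Delta(X_\gamma)) = \exp_{\Delta_f}(0) = \id_{U_{\Delta_f}}$.

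I do not expect any real obstacle here: the only subtlety worth mentioning is that one should use the algebraic (polynomial) exponential available for nilpotent Lie algebras, so that everything takes place in the category of algebraic groups rather than only analytically; this is justified at the start of Section \ref{sec: Stokes Data}. All other steps are formal consequences of $p_f$ being a Lie algebra map with the stated kernel and fixed subspace.
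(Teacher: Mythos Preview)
Your proposal is correct and matches the paper's approach: the paper simply states the lemma ``As a consequence'' of Lemma~\ref{lem:faces_projection} without further proof, and your argument is exactly the unwinding of that consequence via the naturality of the exponential map for nilpotent Lie algebras.
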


The following easy example demonstrates our strategy for assigning Stokes factors to outgoing rays in 2D scattering diagrams, making use of Lemma \ref{lem:projection_to_face}. We then generalize and formalize this strategy in Theorem \ref{thm:assignment_stokes_intersection}.

\begin{example}
\label{eg:A3_joint}
Let $\fg = \fr{sl}_4$, and consider the polarization such that the positive roots correspond to strictly upper triangular matrices. Let $\alpha, \beta, \gamma$ denote the simple roots. Then the positive roots and their root spaces are:
\[
\left( \begin{array}{cccc}
    0 & \alpha &\alpha + \beta & \alpha + \beta + \gamma \\
    0 & 0 & \beta & \beta + \gamma \\
    0 & 0 & 0 & \gamma \\
    0 & 0 & 0 & 0
\end{array}
\right)
\]
We consider the 2D scattering diagram pictured in Figure \ref{fig:A3_joint}\footnote{This example was not obtained from a WKB construction.}, with three incoming rays, labeled by the simple roots $\alpha, \beta, \gamma$. Then $C_{out} = \Conv^\N_{\alpha,\beta,\gamma} = \Phi_+$.

\begin{figure}[h]
    \centering
    \includegraphics[width = 0.7 \textwidth]{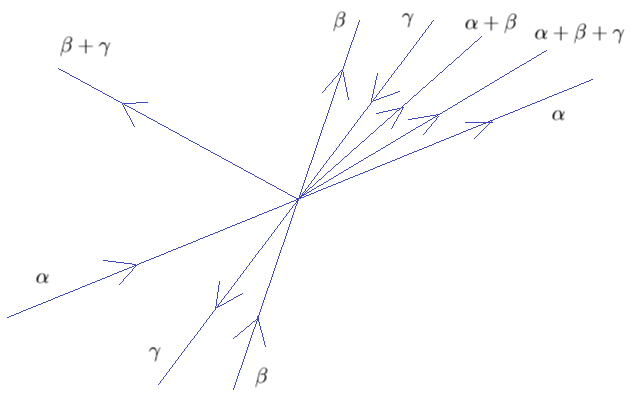}
    \caption{A 2D scattering diagram labeled by positive roots of $\fr{sl}_4$.}
    \label{fig:A3_joint}
\end{figure}

Equation \ref{eq:ordered_prod}, which expresses the fact that the clockwise-ordered product of Stokes factors around the intersection is the identity, has the explicit form:
\begin{equation}
\label{eq:stokes_A3_bulk}
u_\alpha^{-1} u'_\gamma u_\beta^{-1} u'_\alpha u'_{\alpha + \beta + \gamma} u'_{\alpha + \beta} u_\gamma^{-1} u'_{\beta} u'_{\beta + \gamma} 
= \id ,
\end{equation}
where $u_\alpha, u_\beta, u_\gamma$ are the known incoming Stokes factors, and the elements with a prime are the outgoing Stokes factors which need to be determined.

\begin{figure}[h]
    \centering
    \includegraphics[width = 0.5 \textwidth]{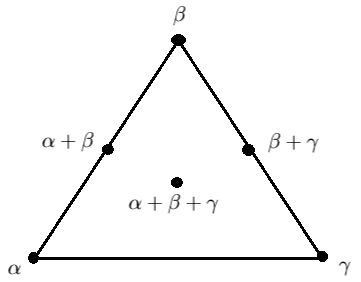}
    \caption{A cross-section of the cone over $\Phi_+$ in the root system A3.}
    \label{fig:A3_cone_section}
\end{figure}

Figure \ref{fig:A3_cone_section} shows a cross-section of the cone structure on $C_{out}$, with labels indicating 1-dimensional faces. Let $\Delta$ denote the 2D face spanned by $\alpha$ and $\beta$. The image of equation \ref{eq:stokes_A3_bulk} under the projection $U_{C_{out}} \to U_{\Delta}$ is:
\begin{equation}
\label{eq:stokes_A3_face}
u_\alpha^{-1}u_\beta^{-1}u'_\alpha u'_{\alpha + \beta} u'_\beta = \id .
\end{equation}

Projecting further to the 1-dimensional face spanned by $\alpha$, we obtain $u_\alpha = u'_\alpha$. Analogously, we obtain $u_\beta = u'_\beta$.
Substituting these into \ref{eq:stokes_A3_face} gives:
\begin{equation}
\label{eq:u'_comm}
u'_{\alpha + \beta} = u_\alpha^{-1}u_\beta u_\alpha u_\beta^{-1}  .
\end{equation}
We can confirm that $u'_{\alpha + \beta} \in U_{\alpha +\beta}$ as follows:
The RHS of equation \ref{eq:u'_comm} is in the kernel of both projections $U_{\Delta} \to U_\alpha$ and $U_{\Delta} \to U_\beta$. The intersection of the two kernels is precisely $U_{\alpha + \beta}$.

The elements $u'_\gamma$, $u'_{\beta + \gamma}$ are determined analogously. Then $u'_{\alpha + \beta + \gamma}$ is the only remaining unknown in \ref{eq:stokes_A3_bulk}, so it is uniquely determined:

\begin{equation}
u'_{\alpha + \beta + \gamma} 
=
u_\alpha^{-1}u_\beta u_\gamma^{-1}u_\alpha 
(u_\beta^{-1}u_\gamma u_\beta u_\gamma^{-1}) u_\beta^{-1}u_{\gamma} (u_\beta u_\alpha^{-1}u_\beta^{-1} u_\alpha) .
\end{equation}
Again, this element is in the intersection of the kernels of all face projections, which is $U_{\alpha + \beta + \gamma}$.
\end{example}

Example \ref{eg:A3_joint} was made easy by the fact that, at every stage, there is at most a single root which doesn't lie on one of the faces. Clearly, we cannot expect this simplification in general: Figures \ref{fig:b2root} and \ref{fig:g2root} show that this fails for the root systems B2 and G2, respectively. In fact it fails for the simply laced root systems D4, as the following example shows.

\begin{example}
\label{eg:D4}
\begin{figure}[h]
    \centering
    \includegraphics[width = 0.25 \textwidth]{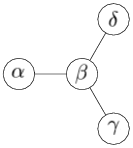}
    \caption{Dynkin diagram for D4, with simple roots labeled.}
    \label{fig:dynkin_D4}
\end{figure}
Consider the root system D4, with simple roots $\{\alpha, \beta, \gamma, \delta\}$ as labeled on the Dynkin diagram in Figure \ref{fig:dynkin_D4}. The positive roots, ordered by the dimension of the smallest-dimensional face they lie on, are:
\begin{align*}
&1:\alpha, \beta, \delta, \gamma ,
\\
&2:\alpha + \beta, \beta + \gamma, \beta + \delta,
\\
&3:\alpha + \beta + \gamma , \alpha + \beta + \delta , \beta + \gamma + \delta ,
\\
&4:\alpha + \beta + \gamma + \delta , \alpha + 2\beta + \gamma + \delta .
\end{align*}
There are two roots in the interior of the 4-dimensional cone.
\end{example}

The following result generalizes the strategy of Example \ref{eg:A3_joint} appropriately. For convenience, we introduce some notation for an ordered product of unipotent elements.

Let $u_{\gamma}^{(')}$ refer to $u_{\gamma}'$ for outgoing lines, and to $u_{\gamma}$ for incoming lines.  Let $\Delta$ be a face of $C_{out}$ of arbitrary dimension. Then we define:
\begin{equation}
\label{eq:u_cout}
u_\Delta := \overrightarrow{\prod}_{\gamma \in (C_{in}\cap \Delta) \coprod (C_{out}\cap \Delta)} (u_\gamma^{(')})^{\pm 1}
\end{equation}
as a product over all Stokes factors corresponding to roots $\gamma \in \Delta$, taken in clockwise order around the intersection, where the exponent is $+1$ for outgoing curves, and $-1$ for incoming curves.  In order to define this we need to choose a ray at which we start the clockwise ordering.  Different choices will result in conjugate values of $u_{\Delta}$.  Equation \ref{eq:ordered_prod}, expressing the constraint that a solution to a 2D scattering diagram must satisfy, can be rewritten as $u_{C_{out}} = \id$.

\begin{theorem}
\label{thm:assignment_stokes_intersection}
Every 2D scattering diagram has a unique solution. Concretely, this means that there is a unique morphism of schemes:
\begin{align*}
\prod_{\gamma \in C_{in}} U_{\gamma} 
&\xrightarrow{s} 
\prod_{\gamma \in C_{out}} U_{\gamma}
\\
(u_\gamma)_{\gamma \in C_{in}} 
&\xmapsto{s}
(u'_\gamma)_{\gamma \in C_{out}}
\end{align*}
such that $u_{C_{out}}: = \overrightarrow{\prod}_{\gamma \in C_{in} \cup C_{out}} (u_\gamma^{(')})^{\pm 1} = \id$, where we are using the notation and sign conventions of equation \ref{eq:u_cout}.

Furthermore for any decoration $(u_\gamma)_{\gamma \in C_{in}}$, $s((u_\gamma)_{\gamma \in C_{in}})$ is the unique set of Stokes factors for the outgoing rays such that $u_{C_{out}} = \id$, using the notation of equation \ref{eq:u_cout}, and the specified decorations on the incoming rays.
\end{theorem}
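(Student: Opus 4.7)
The plan is to recast the equation $u_{C_{out}} = \id$ as a vanishing condition on each root-space component of $\log u_{C_{out}} \in \fu_{C_{out}}$, and to solve the resulting triangular system inductively on the height of $\gamma \in C_{out}$. Since $C_{out}$ is convex and closed under addition, $\fu_{C_{out}} = \bigoplus_{\gamma \in C_{out}} \fu_\gamma$ is a nilpotent Lie subalgebra of $\fg$, and the exponential $\exp : \fu_{C_{out}} \to U_{C_{out}}$ is a scheme isomorphism (already used in the proof of Proposition \ref{prop:mult_iso}); the condition $u_{C_{out}} = \id$ is therefore equivalent to the vanishing of every $\fu_\gamma$-component of $\log u_{C_{out}}$.

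Fix a polarization $\Phi = \Phi_+ \coprod \Phi_-$ with $C_{out} \subset \Phi_+$, guaranteed by convexity (Definition \ref{def: convex set of roots}), and work with the associated height function of Definition \ref{definition: height of a root}. Applying the Baker--Campbell--Hausdorff formula to the clockwise-ordered product defining $u_{C_{out}}$ and projecting to the $\fu_\gamma$-summand yields a finite sum consisting of linear contributions from factors labeled by $\gamma$, plus iterated brackets of factors whose labels $\delta_1, \dots, \delta_k$ satisfy $\sum_i \delta_i = \gamma$ with $k \geq 2$. By Lemma \ref{lemma on height of commutator}, every such $\delta_i$ has $ht(\delta_i) < ht(\gamma)$; in particular no $\delta_i$ equals $\gamma$ itself, because otherwise the remaining $\delta_j$'s would sum to zero, which is impossible in $\Phi_+$. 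Consequently the $\fu_\gamma$-component of $\log u_{C_{out}}$ has the form
\[
\log u'_\gamma - \epsilon_\gamma \log u_\gamma + F_\gamma\!\left( (u'_\delta)_{\delta \in C_{out},\, ht(\delta) < ht(\gamma)},\, (u_\delta)_{\delta \in C_{in},\, ht(\delta) < ht(\gamma)} \right),
\]
where $\epsilon_\gamma = 1$ if $\gamma \in C_{in}$ and $\epsilon_\gamma = 0$ otherwise (with $u_\gamma$ absent in the latter case), and $F_\gamma$ is a polynomial expression built from finitely many BCH brackets.

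The key observation is that $\log u'_\gamma$ enters with coefficient $+1$ and all remaining terms depend only on $u_\gamma$ and on strictly lower-height Stokes data, so setting the above expression to zero uniquely solves for $u'_\gamma$. I then induct on $ht(\gamma)$. In the base case, any minimum-height $\gamma \in C_{out}$ lies in $C_{in}$: the root system is reduced, so in a decomposition $\gamma = \sum n_i \alpha_i$ with $\alpha_i \in C_{in}$, a single $n_i \geq 2$ is impossible (as $n\alpha \notin \Phi$), and if two $n_i$ are nonzero then some $\alpha_i$ has strictly smaller height than $\gamma$, contradicting minimality. Thus $F_\gamma$ vanishes identically, and the equation reduces to $\log u'_\gamma - \log u_\gamma = 0$, giving $u'_\gamma = u_\gamma$. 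The inductive step is identical, with $F_\gamma$ evaluated on the previously determined data. Assembling the $u'_\gamma$'s produces the required scheme morphism $s$, uniquely characterized, and the tuple $s\big((u_\gamma)_{\gamma \in C_{in}}\big)$ automatically satisfies $u_{C_{out}} = \id$ since every root-space component of $\log u_{C_{out}}$ vanishes by construction.

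I expect the principal technical point to be the verification that $\log u'_\gamma$ enters the $\fu_\gamma$-component only linearly and with coefficient $+1$; this reduces to the elementary fact above that no BCH bracket contributing to $\fu_\gamma$ can contain $\log u'_\gamma$ as one of its inputs. The rest is a systematic application of BCH, which terminates by nilpotence of $\fu_{C_{out}}$, so all formulas are polynomial and manifestly define morphisms of schemes as required.
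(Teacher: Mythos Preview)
Your proof is correct and takes a genuinely different route from the paper's. The paper inducts on the dimension of the faces $\Delta$ of the cone $\sum_{\gamma\in C_{out}}\R_+\gamma$: it first uses the face-projection homomorphisms $U_{\Delta}\to U_{\Delta_f}$ (Lemma~\ref{lem:projection_to_face}) to determine the $u'_\gamma$ lying on proper faces, and then, for the roots in the interior $\Delta^{int}$, it applies a sequence of ``twisted transpositions'' (instances of Corollary~\ref{cor:stokes_sector}) to collect the unknown factors together, finally invoking the scheme isomorphism of Proposition~\ref{prop:mult_iso} to invert the resulting multiplication map. By contrast, you bypass the face structure entirely: you take the single BCH logarithm of the full cyclic product, project to each root space, and observe that the resulting system for $(\log u'_\gamma)_{\gamma\in C_{out}}$ is triangular with respect to height, with unit diagonal. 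This is essentially the same height-triangularity that underlies the proof of Proposition~\ref{prop:mult_iso}, but applied directly to the scattering equation rather than being mediated through the face filtration and transposition moves. Your argument is shorter and handles the interspersal of incoming and outgoing rays uniformly; the paper's argument, on the other hand, exhibits the extra compatibility that the restriction of the solution to any face is itself the solution of the induced face scattering diagram, which is a structural feature not visible in the height induction.
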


\begin{remark}
The proof we provide gives a construction of this morphism.
\end{remark}

\begin{remark}
The second uniqueness statement is not implied by the fact that the 2D scattering diagram has a solution, because a priori for a particular decoration there might be multiple assigments of Stokes factors to outgoing lines satisfying the equation $u_{C_{out}} = \id$.  The uniqueness of the solution only implies that exactly one of them would extend to a solution of the scattering diagram.
\end{remark}

\begin{proof}
The proof is by induction on the dimension of the faces $\Delta$ of $C_{out}$. Our inductive hypothesis is that there are unique $\{u_\gamma'\}_{\gamma \in \Delta}$ such that $u_\Delta = \id$.  Recall the definition of $u_{\Delta}$ from Equation \ref{eq:u_cout} and note that while this depended on a choice of ray, the condition $u_\Delta = \id$ is independent of this choice.

For the base step, if $\Delta$ is a 1-dimensional face, then $u_{\Delta} = u_\gamma' u_\gamma^{-1}$. The unique solution to the equation $u_\Delta = \id$ is $u_\gamma' = u_\gamma$.

For the inductive step, we assume the inductive hypothesis for all faces $\Delta_f$ of $\Delta$, together with the additional assumption that if $\gamma$ lies in multiple faces $\Delta_{f}$, the $u_{\gamma}'$ assigned to it by the hypothesis for each face is independent of the face $\Delta_{f}$ considered. Applying the morphism in Lemma \ref{lem:projection_to_face} to Equation \ref{eq:u_cout} gives the version of Equation \ref{eq:u_cout} for $\Delta_{f}$. Applying the inductive hypothesis determines $u'_{\gamma}$ uniquely for all $\gamma$ belonging to some face $\Delta_f$ of $\Delta$.
It suffices to determine $u'_\gamma$ for $\gamma \in \Delta^{int} := \Delta \setminus \cup_f \Delta_f$, in such a way that $u_{\Delta}$ is the identity, and show that there is a unique way to do this.  In particular the inductive construction will satisfy the additional hypothesis that if $\gamma$ lies in multiple faces $\Delta_{f}$, the $u_{\gamma}'$ assigned to it by the hypothesis for each face in independent of the face $\Delta_{f}$ considered.  This is because in every case $u_{\gamma}'$ will be equal to the Stokes factor assigned to the outgoing ray labelled by $\gamma$ by the lowest dimensional face containing $\gamma$.

\begin{enumerate}
    \item 
    \label{case:curves_together}
    First, we analyze the situation in which the outgoing rays for $\gamma \in \Delta^{int}$ are not separated by any curves labeled by roots lying on a face of $\Delta$. Then $u_\Delta = \id$ gives an equation:
    \begin{equation}
    \label{eq:prod_int}
    \overrightarrow{\prod}_{\gamma \in \Delta^{int}} u'_{\gamma} = u_{known},
    \end{equation}
    where the LHS is an ordered product over all elements that need to be determined, and the RHS is a function of
    quantities that are already known, namely $u_\gamma$ or $u'_\gamma$ for $\gamma$ on one of the faces of $\Delta$. Moreover, the projection of $u_{known}$ to each $U_{\Delta_f}$ is $u_{\Delta_f}$, which is the identity due to the inductive hypothesis. Thus:
    \[
    u_{known} \in \Ker(U_\Delta \longrightarrow \prod_f U_{\Delta_f}) = U_{\Delta^{int}} .
    \]
    By Proposition \ref{prop:mult_iso}, the multiplication map:
    \begin{equation}
    \label{eq:mult_delta_int}
    \overrightarrow{\prod}_{\gamma \in \Delta^{int}} U_\gamma \overset{M}{\longrightarrow} U_{\Delta^{int}}
    \end{equation}
    is an isomorphism of schemes. Therefore, $M^{-1}(u_{known})$ is the unique tuple satisfying Equation \ref{eq:prod_int}, thus ensuring that $u_{\Delta} = \id$.
    
    \item 
    \label{case:two_curves}
    Second, we analyze the situation in which there are are two curves labeled by $\gamma_1, \gamma_2 \in \Delta^{int}$, which are separated by a number of curves labeled by $\beta_1, \dots , \beta_k \in \cup_f \Delta_f$, as in Figure \ref{fig:exchange_cone}.
    
    \begin{figure}[ht]
        \centering
        \includegraphics[width = 0.2 \textwidth]{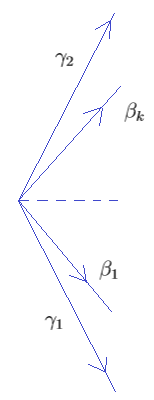}
        \caption{Two curves labeled by roots in $\Delta^{int}$, separated by $k$ curves labeled by roots not in $\Delta^{int}$. (The curves outside the cone spanned by $\gamma_1, \gamma_2$ are not drawn.)}
        \label{fig:exchange_cone}
    \end{figure}
    
    We reduce this to case \ref{case:curves_together} by repeated applications of Proposition \ref{prop:mult_iso}. Concretely, to exchange the curves labeled by $\gamma_1$ and $\beta_1$, we use the composition of Equation \ref{equation: Equation of corollary 3.32}:
    \begin{equation}
    \label{eq: twisted transposition}
    \begin{tikzcd}
    U_{\gamma_1} \times U_{\beta_1} 
    \arrow{r}
    &
    U_{\Conv^\N_{\gamma_1, \beta_1}}
    \arrow{r}
    &
    U_{\beta_1} \times \prod_{\delta \in \Conv^\N_{\gamma_1,\beta_1}\setminus \{\gamma_1, \beta_1\}} U_\delta \times U_{\gamma_1}
    \end{tikzcd}
    \end{equation}
    Explicit formulas for these maps are given in Lemma \ref{lem:swap2}.  We call the application of this map a twisted transposition.
    Since $\gamma_1 \in \Delta^{int}$, $\Conv^\N_{\gamma_1,\beta_1} \setminus \{\gamma_1, \beta_1\} \subset \Delta^{int}$. So the new factors $U_\delta$ that we introduce are subgroups of $U_{\Delta^{int}}$. In the next step, we need to exchange $U_{\gamma_1}$ and all $U_\delta$ past $U_{\beta_2}$. Continuing in this way for all $\beta_j$, we are almost in the setting of case \ref{case:curves_together}.  However unlike case \ref{case:curves_together} we may have multiple rays labelled by the same root, due to the new rays created by the twisted transpositions.  We leave dealing with this subtlety to case \ref{case: no holds barred}.

    \item \label{case: no holds barred}Finally, when there are more than two roots in $\Delta^{int}$, and the corresponding curves are not next to each other, we choose one of the curves to keep fixed, and apply the procedure in case \ref{case:two_curves} to all of the others, to move them next to the chosen curve. This procedure involves repeated twisted transpositions of two elements $u_{\gamma_i}u_{\beta_j}$, in order to obtain a product $u_{\beta_j}\cdot \prod_{\delta \in \Conv^\N_{\gamma_i,\beta_j} \setminus \{\gamma_i, \beta_j\}}u_\delta \cdot u_{\gamma_i}$ as in Equation \ref{eq: twisted transposition}.
    We do this to neighboring factors in the equation $u_\Delta = \id$ until factors involving roots in $\Delta^{int}$ are no longer separated. This yields:
    \begin{equation}
    \label{eq:multiset}
    \prod_{\delta \in S} \tilde u_\delta = u_{known},
    \end{equation}
    where:
    \begin{itemize}
        \item $u_{known} \in U_{\Delta^{int}}$ is a product of factors corresponding to roots in $\cup_f \Delta_f$, just like in case \ref{case:curves_together}.
        \item $S$ is a multiset containing elements of $\Delta^{int}$, possibly with multiplicity. The reason we obtain a multiset is that the same $\delta \in \Phi$ may appear in $\Conv^\N_{\gamma_i,\beta_j}$ and $\Conv^\N_{\gamma_k,\beta_l}$, for $i\neq k$ and/or $j\neq l$.
        \item $\tilde u_{\delta}$ is either $u'_\delta$ for some $\delta \in \Delta^{int}$ (i.e. one of the unknowns which must be determined), or $u_\delta^{\beta_i,\gamma_j}$, defined as the image of $u_{\beta_j}^{-1}u_{\gamma_i}u_{\beta_j}u_{\gamma_i}^{-1} \in U_{\Conv^\N_{\gamma_i,\beta_j} \setminus \{\gamma_i, \beta_j\}}$ under the composition:
        \begin{equation}
        \label{eq:mult_comm_proj}
        \begin{tikzcd}
        U_{\Conv^\N_{\gamma_i,\beta_j} \setminus \{\gamma_i, \beta_j\}}
        \arrow{r}{M^{-1}}
        &
        \prod_{\nu \in \Conv^\N_{\gamma_i,\beta_j} \setminus \{\gamma_i, \beta_j\}} U_\nu
        \arrow{r}{\text{proj}_{U_\delta}}
        &
        U_{\delta},
        \end{tikzcd}
        \end{equation}
        for some $\beta_{i}$ and $\gamma_{j}$.
    \end{itemize}
    
    To get rid of the multiset $S$ in Equation \ref{eq:multiset}, we again apply twisted transpositions to modify the factors on the LHS of this equation.  We proceed starting with root $\gamma$ of minimal height\footnote{For a polarization, with $\Phi^{+}\supset \Delta$.} in this multiset.  For these curves we use twisted transposition to bring them next to each other, and then take the product of the factors $u$ on these lines.  We then proceed to the next lowest height.  By Lemma \ref{lemma on height of commutator} we only introduce new factors of a height greater than the height of the root we are dealing with, and as such this process terminates with Equation \ref{eq:multiset} modified to the form:
    \begin{equation}
    \label{eq:thing o' beauty}
    \overrightarrow{\prod}_{\gamma \in \Delta^{int}} (u'_\gamma \cdot u_\gamma^{\text{brack}}) = u_{known},
    \end{equation}
    where, $u_\gamma^{\text{brack}}$ is the product of all factors $u_{\gamma}^{\delta_1,\delta_2} \in U_\gamma$ obtained by passing a commutator $u_{\delta_1}^{-1}u_{\delta_2}u_{\delta_1}u_{\delta_2}^{-1}$ through Equation \ref{eq:mult_comm_proj}.
    
    Just like in case \ref{case:curves_together}, each of the factors $(u'_\gamma \cdot u_\gamma^{\text{brack}})_{\gamma \in \Delta^{int}}$ from the LHS of Equation \ref{eq:thing o' beauty} is uniquely determined, by taking the preimage under the multiplication map of Equation \ref{eq:mult_delta_int}. It remains to show that we can uniquely determine the tuple $(u'_\gamma)_{\gamma \in \Delta^{int}}$ from the tuple $(u'_\gamma \cdot u_\gamma^{\text{brack}})_{\gamma \in \Delta^{int}}$. This can be proved by induction on the height of the roots $\gamma \in \Delta^{int}$. The argument is analogous to that in the proof of Proposition \ref{prop:mult_iso}, and we do not repeat it here.
\end{enumerate}
\end{proof}

\section{Cameral and Spectral Networks}
\label{sec: Cameral Networks}

The non-abelianization map of Section \ref{sec: flat Donagi Gaitsgory} takes certain $N$-local systems on $X\backslash P$ and produces a $G$ local system on $X$ by modifying (``cutting and regluing'') along a certain real one dimensional locus.  The \emph{spectral network} is the data of this locus, together with some additional data\footnote{These are the labels of Definition \ref{def:spectral_net}.} necessary for doing the modification.  In this section we introduce spectral networks on $X$ via the closely related notion of a \emph{cameral network} on the cameral cover $\tilde{X}\rightarrow X$. This differs from the notions/constructions in \cite{gaiotto2013spectral, longhi2016ade} in that:
\begin{itemize}
    \item we do not use trivializations of the spectral or cameral cover\footnote{See Remark \ref{rem:root_noncanonical} for the precise relation to the constructions of \cite{gaiotto2013spectral,longhi2016ade}.};
    \item we give a construction for all reductive algebraic groups $G$, whereas \cite{gaiotto2013spectral} treats the case of $G = GL(n)$ and $SL(n)$, and \cite{longhi2016ade} treats groups of type ADE.
\end{itemize}

Before defining cameral networks, we specify the setup and introduce some notation.

\begin{convention}
\label{convention:cameral}
Let $X^{c}$ denote a smooth, connected, compact Riemann surface and $D$ a reduced divisor, with $\text{deg}(D)>2$. We will work with the non-compact Riemann surface $X = X^{c} \setminus D$, and consider meromorphic 1-forms on $X^c$ which are sections of $K_{X^{c}}(D)$. The restriction of these 1-forms to $X$ is holomorphic. We only consider cameral covers $\pi : \tilde X \to X$ which:
\begin{itemize}
    \item Arise from points in the Hitchin base for the divisor $D$, i.e. from sections $a : X^c \to \ft_{K_{X^{c}}(D)}\sslash W$.  We are here abusing notation by denoting by $K_{X^{c}}(D)$ the associated principal $\bbG_{m}$-bundle.
    \item Come from $a\in \cA^{\diamondsuit}$ (see Definition \ref{definition:  cA diamondsuit}), which means that $\tilde{X}$ is smooth (Proposition \ref{prop: diamond suit implies smooth}), and that all ramification points of $\pi$ have order 2 (Proposition \ref{proposition: A diamond non empty}).
    \item The compact cameral cover $\tilde{X}^{c}\rightarrow X^{c}$ is not ramified over $D$.  See Definition \ref{definition: Condition 0} for a description of the corresponding locus in the Hitchin base. 
\end{itemize}
When we wish to emphasize the point in the base giving rise to the cameral cover we will denote the cameral covder by $\tilde{X}_{a}$.  We will denote by $\tilde{X}^{c}$ or $\tilde{X}^{c}_{a}$ the cameral cover of $X^{c}$ giving rise to the cameral cover of $X$.

Many of the constructions, and some of the examples do not require either that $\text{deg}(D)>2$ condition, or that $D$ is reduced.
\end{convention}


\begin{remark}
\label{remark: What we miss}
The assumption that $\text{deg}(D)>2$ means that we only consider cameral networks and non-abelianization for non-compact surfaces. 
The reason is that Stokes curves on a compact surface $X^{c}$ necessarily either become dense\footnote{In the sense of Definition \ref{defin:basic_abstract} (1).} somewhere on $X^{c}$ or include trajectories between different branch points of the cameral cover (see \cite{strebel1984quadratic, strebel1978density} for the $SL(2,\bbC)$ case).  We don't develop techniques to deal with these situation in this paper. See \cite{fenyes2015dynamical} for a treatment of non-abelianization for $SL(2,\R)$ local systems on compact surfaces. See also \cite{hollands2016spectral} for spectral networks of Fenchel--Nielsen type which consist of trajectories between branch points, on a possibly compact Riemann surface, in the case of $SL(2,\C)$, and \cite{hollands_kidwai} for the case of $SL(n,\C)$.  Even in the non-compact case these are unfortunately excluded from consideration in this paper by our assumption that we do not have trajectories between branch points.  Some of these papers works in slightly modified settings.  

Because we are excluding several interesting cases of spectral networks, we call the spectral networks that we do consider \emph{basic spectral networks}.
\end{remark}

\begin{convention}
\label{convention:blowup}
Let $\pi :\tilde X \to X$ denote a cameral cover as specified in Convention \ref{convention:cameral}. We denote by $P \subset X$ the branch points, and $R \subset \tilde X$ the ramification points.  Let $X^\circ$ be the marked, oriented real blowup of $X$ at $P$. Specifically, for all $p \in P$:
\begin{itemize}
    \item $p$ is replaced by a boundary circle $S^1_p$;
    \item the orientation of $X$ induces an orientation of $S^1_p$;
    \item we choose a marked point $x_p \in S^1_p$.
\end{itemize}

Let $\tilde X^\circ$ be the oriented real blowup of $\tilde X$ along $R$, and denote by $S^1_r$ the boundary circle which is the preimage of $r \in R$ under the blowup map.  There is a map $\pi^{\circ}: \tilde{X}^{\circ}\rightarrow X^{\circ}$, which exhibits $\tilde X^\circ$ as a principal $W$-bundle over $X^\circ$.
\end{convention}

\subsection{Abstract Cameral Networks}
\label{subsec: abstract cameral networks}

In this section we define basic abstract cameral networks.  In later sections we will then consider a special case of these called basic WKB cameral networks which we will define in Definition \ref{def:wkb_cameral}.

\begin{definition}
\label{defin:basic_abstract}
Let $\pi : \tilde X \to X$ be a cameral cover satisfying convention \ref{convention:cameral}, and $\pi^\circ : \tilde X^\circ \to X^\circ$ the induced map on real blowups.
A \textbf{basic abstract cameral network} on $\tilde X^\circ$ is the data of:
\begin{itemize}
    \item A subset $X^{\circ '}\subset X^{\circ}$ such that this inclusion induces a homotopy equivalence. We then define $\tilde X^{\circ '} = \tilde X^\circ \times_{X^\circ} X^{\circ '}$.
    \item A finite set $\tilde \cW$ of oriented, semi-infinite, real curves $\ell \subset \tilde X^{\circ '}$, called \textbf{Stokes curves} (or Stokes lines) that intersect in a finite set $\tilde \cJ\subset \tilde X^{\circ '}$. By ``semi-infinite'' we mean that $\ell$ is homotopic to $[0,1)$.  We furthermore require that if we compactify $\tilde X^{\circ '}$ with a boundary circle $S^1_d$ for each $d \in D$, then for any $\ell \in \tilde \cW$, the set $\overline{\ell}\backslash \ell$ is a single point on the boundary circles $S^1_d$, for some $d \in D$.
    \item for each $\ell \in \tilde \cW$, a root $\alpha_\ell \in \Phi$ called the \textbf{label} of the Stokes curve.
\end{itemize}

This data must satisfy the following conditions:

\begin{enumerate}
    \item
    \label{item:non-denseness}
    \textbf{Non-denseness:}
    Let $f : [0,1] \to \tilde X^{\circ'}$ such that, the embedded line segment is transversal to all lines in $\tilde{\cW}$ passing through $f(1/2)$. Then there is a neighbourhood $1/2\in U\subset [0,1]$ such that $f\big(U\big) \cap \tilde \cW$ is discrete. (The point is to rule out curves becoming dense, as in Figure \ref{fig:dense_cylinder}.)
    
    \begin{figure}[h]
        \centering
        \includegraphics[width = 0.8\textwidth]{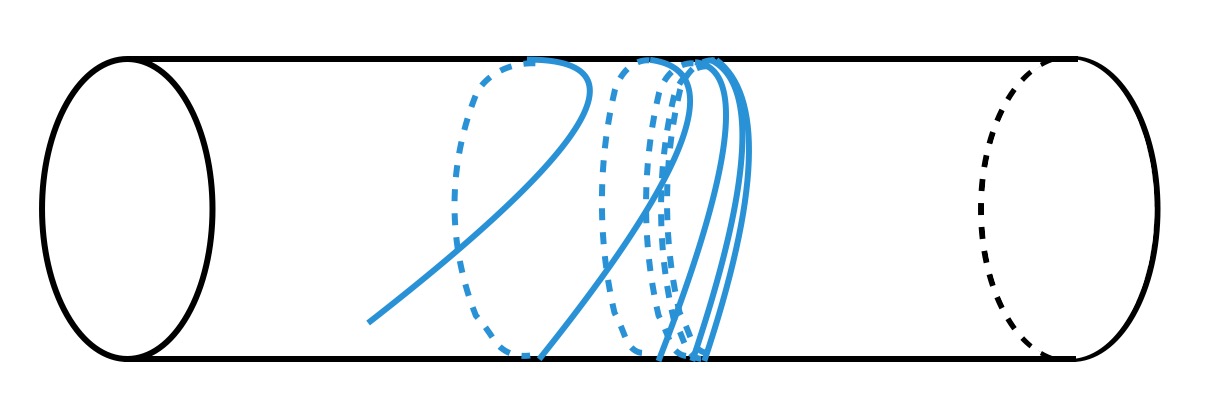}
        \caption{Dense curve wrapping around a cylinder.}
        \label{fig:dense_cylinder}
    \end{figure}

    \item
    \label{item:equivariance}
    \textbf{Equivariance:}
    For any Stokes curve $\ell\in \tilde \cW$ and Weyl group element $w\in W$, $w(\ell) \in \tilde \cW$. Moreover, $\alpha_{w(\ell)} = w\big(\alpha_{\ell}\big)$.
    
    \item
    \label{item:local_ramification}
    \textbf{Local picture around ramification points:}
    For each $r\in R$,
    there are six Stokes curves originating from the boundary circle $S^1_r$. For each $r$, let $\alpha_r \in \Phi$ be the root, unique up to $\pm$, such that $\tilde a(r)$ lies on the root hyperplane $H_{\alpha_r}$. Then the six curves are labeled by $\pm \alpha_r$ in alternating fashion, as shown in Figure \ref{fig:ramif1}.
    
    \begin{figure}[h]
    \centering
    \begin{minipage}{.5\textwidth}
      \centering
      \def\svgwidth{150pt}
\begingroup%
  \makeatletter%
  \providecommand\color[2][]{%
    \errmessage{(Inkscape) Color is used for the text in Inkscape, but the package 'color.sty' is not loaded}%
    \renewcommand\color[2][]{}%
  }%
  \providecommand\transparent[1]{%
    \errmessage{(Inkscape) Transparency is used (non-zero) for the text in Inkscape, but the package 'transparent.sty' is not loaded}%
    \renewcommand\transparent[1]{}%
  }%
  \providecommand\rotatebox[2]{#2}%
  \newcommand*\fsize{\dimexpr\f@size pt\relax}%
  \newcommand*\lineheight[1]{\fontsize{\fsize}{#1\fsize}\selectfont}%
  \ifx\svgwidth\undefined%
    \setlength{\unitlength}{595.27559055bp}%
    \ifx\svgscale\undefined%
      \relax%
    \else%
      \setlength{\unitlength}{\unitlength * \real{\svgscale}}%
    \fi%
  \else%
    \setlength{\unitlength}{\svgwidth}%
  \fi%
  \global\let\svgwidth\undefined%
  \global\let\svgscale\undefined%
  \makeatother%
  \begin{picture}(1,1)%
    \lineheight{1}%
    \setlength\tabcolsep{0pt}%
    \put(0,0){\includegraphics[width=\unitlength,page=1]{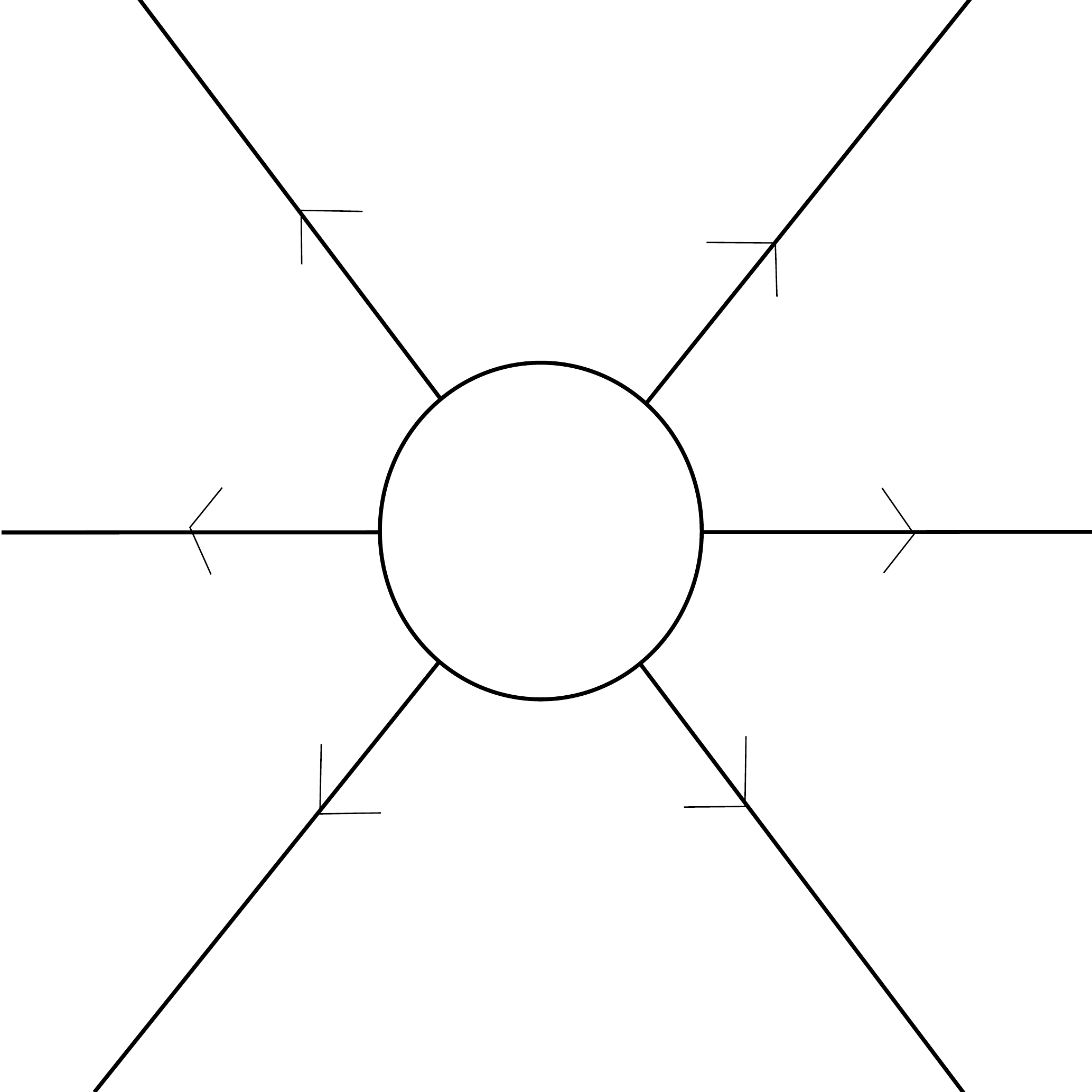}}%
    \put(0.44909966,0.55188315){\color[rgb]{0,0,0}\makebox(0,0)[lt]{\lineheight{1.25}\smash{\begin{tabular}[t]{l}$S_{r}^{1}$\end{tabular}}}}%
    \put(0.82665027,0.78901038){\color[rgb]{0,0,0}\makebox(0,0)[lt]{\lineheight{1.25}\smash{\begin{tabular}[t]{l}$\alpha_{r}$\end{tabular}}}}%
    \put(0.8372762,0.39575671){\color[rgb]{0,0,0}\makebox(0,0)[lt]{\lineheight{1.25}\smash{\begin{tabular}[t]{l}$-\alpha_{r}$\end{tabular}}}}%
    \put(0.8154222,0.14732753){\color[rgb]{0,0,0}\makebox(0,0)[lt]{\lineheight{1.25}\smash{\begin{tabular}[t]{l}$\alpha_{r}$\end{tabular}}}}%
    \put(0.26497178,0.12429083){\color[rgb]{0,0,0}\makebox(0,0)[lt]{\lineheight{1.25}\smash{\begin{tabular}[t]{l}$-\alpha_{r}$\end{tabular}}}}%
    \put(0.0940034,0.41421432){\color[rgb]{0,0,0}\makebox(0,0)[lt]{\lineheight{1.25}\smash{\begin{tabular}[t]{l}$\alpha_{r}$\end{tabular}}}}%
    \put(0.2678843,0.88268609){\color[rgb]{0,0,0}\makebox(0,0)[lt]{\lineheight{1.25}\smash{\begin{tabular}[t]{l}$-\alpha_{r}$\end{tabular}}}}%
  \end{picture}%
\endgroup%

      \caption{Stokes curves around a ramification point $r$.}
      \label{fig:ramif1}
    \end{minipage}%
    \begin{minipage}{.5\textwidth}
      \centering
      \includegraphics[scale = 0.5]{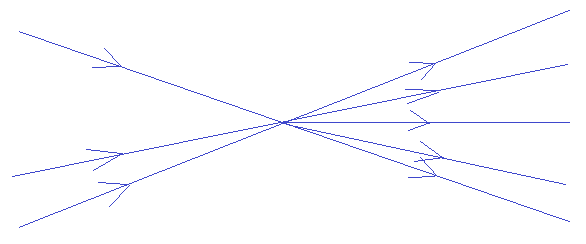}
      \caption{Stokes curves around a joint.}
      \label{fig:stokes_joint}
    \end{minipage}
\end{figure}

    \item
    \label{item:local_joint}
    \textbf{Local picture around joints:}
    The orientation on the Stokes curves specifies positive and negative parts of the tangent spaces of these curves, corresponding to tangent vectors in the same, and in the opposite direction of the orientation respectively.
    
    Let $x\in \cJ$ be a joint.  Consider the rays in $T_{x}X$ given by the negative part of the tangent spaces at $x$ of the incoming Stokes curves, and the positive part of the tangent spaces at $x$ to the outgoing Stokes curves.  Label these rays by the roots labelling the correspond Stokes curves.  We require that these form an undecorated 2D scattering diagram in the sense of Definition \ref{def:scattering_diagram}.
    
    \item 
    \label{item:acyclic}
    \textbf{Acyclicity:}
    Regard $\tilde \cW$ as a directed graph, with one vertex for each boundary circle of $\tilde X^{\circ '}$ and each intersection of Stokes curves $x \in \tilde \cJ$, and one directed edge for each connected component of $\tilde \cW \setminus \tilde \cJ$. This graph is required to be acyclic.
\end{enumerate}

We say that the Stokes curves originating at a boundary circle are \textbf{primary Stokes curves}, those originating at an intersection point $x \in \tilde \cJ$ are \textbf{new Stokes curves}, and the intersection points $x \in \tilde \cJ$ are the \textbf{joints} of the cameral network.
\end{definition}

In Definition \ref{defin:basic_abstract}, the equivariance condition ensures that basic abstract cameral networks on $\tilde X$ descend to objects on $X$ called basic spectral networks; see Definition \ref{def:spectral_net}. The acyclicity condition implies the existence of a total order on $\cJ:=\tilde{\cJ}/W$ (see Lemma \ref{lem: top ordering J}), which is necessary for associating Stokes factors to curves in an iterative fashion in Construction \ref{construction of swkb}.

We will sometimes omit the term ``abstract'' when discussing these networks.


\begin{remark}
\label{remark: generalizing our definition spectral network}
There are possible generalizations of Definition \ref{defin:basic_abstract} that we don't consider in this article:
\begin{itemize}
    \item The acyclicity condition implies that no Stokes curves run into a ramification point. In particular, we don't deal with double lines such as those in Figure \ref{fig:double_wall}. These have been studied in \cite{gaiotto2013spectral} with respect to describing BPS states, \cite{takei2008sato} in the setting of WKB analysis, and in \cite{hollands2016spectral}, \cite{hollands_kidwai} in the context of Fenchel-Nielsen coordinates for $G=SL(2)$ and $G = SL(n)$, respectively.
    
    \item The acyclicity condition could be relaxed, to allow certain cycles that don't preclude the inductive definition of Stokes factors in Construction \ref{construction of swkb}.

    \item Condition \ref{item:local_ramification} is motivated by the behaviour of WKB networks locally around ramification points of order 2. Since we only study smooth cameral covers, all ramification points have order 2. Working with more general ramification patterns would require modifications to condition \ref{item:local_ramification}. The paper \cite{longhi2016ade} contains some examples of higher order ramification for groups of type ADE. Some more general types of ramification can be treated by degenerating smooth cameral curves (together with their cameral networks) to a singular cameral curve. See Figure 36 of \cite{gaiotto2013spectral}, and the paper \cite{williams2016toda} for examples for $SL(n)$ and $GL(n)$.  Extra work is necessary to extend the non-abelianization procedure of Section \ref{sec: flat Donagi Gaitsgory} to these general ramification patterns; we don't carry this out in the present article.
    \item Condition \ref{item:local_joint} is needed to iteratively assign decorations (unipotent automorphisms) to lines in the cameral network.  Basic WKB cameral networks for $GL(n)$ or $SL(n)$ that do not satisfy the criterion that the roots labelling incoming Stokes lines are a convex set of roots will also violate the acyclicity condition, as shown in Lemmas 3.2.6 and 4.1.20 of the first authors thesis \cite{ionita2020spectral}.  We believe it should be possible to make some relaxations to the assumption that the tangent rays are all distinct, in particular it should be possible to have Stokes rays labelled by multiple roots.
\end{itemize}
We expect that the phenomena enumerated above, and disallowed in Definition \ref{defin:basic_abstract}, are non-generic for the WKB cameral networks defined subsequently, when a sufficiently high degree divisor $D$ is considered.  See Conjecture \ref{conjecture: open dense set} for further discussion.

\end{remark}

\subsection{Abstract Spectral Networks}

In Section \ref{sec: flat Donagi Gaitsgory}, we will need to cut and glue local systems on $X^\circ$. In view of this, we define basic spectral networks on $X^\circ$, which are informally pushforwards of basic cameral networks via $\pi^\circ : \tilde X^\circ \to X^\circ$. 

\begin{definition}
\label{def:spectral_net}
Let $\tilde \cW$ be a basic abstract Cameral network. The associated \textbf{basic abstract spectral network} $\cW$ is the following union of oriented, semi-infinite, real curves on $X^{\circ '}$:
\begin{itemize}
    \item As oriented half-lines, $\cW = \pi_* \tilde \cW$.
    \item The label of $\ell \in \cW$ is a
    locally constant section $\psi_\ell$
    of $\Hom_W(\tilde X^\circ|_{\ell}, \Phi)$.
    It maps any preimage $\tilde \ell$ of $\ell$ to the root which labels $\tilde \ell$.
\end{itemize}

We call the lines $\ell$ Stokes curves or Stokes lines, and we call the images of joints in the cameral network joints.  We denote the set of these joints by $\cJ=\tilde{\cJ}/W$.

If $\tilde \cW$ is a basic WKB cameral network (as will be defined in Definition \ref{def:wkb_cameral}), then we say that $\cW$ is a \emph{basic WKB spectral network}.
\end{definition}

\begin{remark}
\label{rem:three_lines}
In Definition \ref{defin:basic_abstract}, we require that six Stokes curves start at each ramification point. Since $\pi : \tilde X \to X$ has degree 2 around each branch point, the $W$-equivariance of cameral networks ensures that the corresponding spectral network has three Stokes curves starting at each branch point. This is in agreement with the constructions of spectral networks in \cite{gaiotto2013spectral}, \cite{longhi2016ade}.
\end{remark}

\begin{remark}
\label{rem:root_noncanonical}
Any trivialization $\phi : \tilde X^{\circ'}|_{\ell} \cong \ell \times W$, gives a mapping:
\begin{align}
\begin{split}
\label{eq:root_noncanonical}
\Hom_W(\tilde X^{\circ'}|_{\ell}, \Phi) &\to \Phi ,
\\
\psi_\ell &\mapsto \psi_\ell \big(\phi^{-1}(1_W)\big).
\end{split}
\end{align}
Thus, each choice of trivialization gives a way to label curves in the spectral network by elements of $\Phi$.

In the earlier works \cite{gaiotto2013spectral}, \cite{longhi2016ade}, the covering $\pi : \tilde X \to X$ is trivialized away from a system of branch cuts, and curves in the spectral network are labeled by roots of $\fr g$. The map in Equation \ref{eq:root_noncanonical} gives the relationship between our labels and their labels. This identifies our spectral networks with those of \emph{loc. cit.}
\end{remark}



\begin{remark}
\label{rem:snakes}
An example of basic abstract spectral networks that are not known to be basic WKB spectral networks come from the paper \cite{gaiotto2014spectralsnake} which considers ``minimal spectral networks'' for $G = SL(n)$, and shows they are related to Fock-Goncharov coordinates. Minimal spectral networks are not, a priori, basic WKB spectral networks (Definition \ref{def:spectral_net}); Section 4.2 of \emph{loc. cit.} finds some examples which are basic WKB spectral networks in the cases where $n\leq 5$.
Minimal spectral networks are examples of basic abstract spectral networks.
\end{remark}

\subsubsection{Ordering of Joints}

\begin{lemma}
\label{lem: top ordering J}
Let $\cJ=\tilde{\cJ}/W$ be the set of joints of a basic abstract spectral network.  Then there is a total order on $\cJ$, such that for the graph formed by Stokes curves and joints of the basic spectral network, for any edge $e$, $\text{source}(e) < \text{target}(e)$.
\end{lemma}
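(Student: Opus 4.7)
My plan is to lift the problem to the cameral cover, where the acyclicity hypothesis (condition \ref{item:acyclic} of Definition \ref{defin:basic_abstract}) already gives a DAG, and then transport a topological order back down to $X^{\circ'}$.

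More precisely, I will form the directed graph $\Gamma$ on $X^{\circ'}$ whose vertices are the joints $\cJ$ together with the boundary circles of $X^{\circ'}$ (which serve as source/sink vertices for primary Stokes curves and for curves leaving toward $D$), and whose directed edges are the connected components of $\cW\setminus\cJ$, oriented by the orientation on the corresponding Stokes curves. Analogously, $\tilde\Gamma$ denotes the graph on $\tilde X^{\circ'}$ built from $\tilde\cW$ and $\tilde\cJ$; condition \ref{item:acyclic} states that $\tilde\Gamma$ is acyclic. The main step is to show that $\Gamma$ is also acyclic. Once this is established, any topological sort of $\Gamma$ restricts to a total order on $\cJ\subset\mathrm{Vert}(\Gamma)$ satisfying $\mathrm{source}(e)<\mathrm{target}(e)$ for every directed edge $e$, which is the conclusion of the lemma.

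To prove acyclicity of $\Gamma$, the key observation is that the projection $\pi^{\circ}\colon\tilde X^{\circ'}\to X^{\circ'}$ is a local homeomorphism away from $R$, and joints are disjoint from $R$ (by condition \ref{item:local_ramification}, which places only primary curves at ramification boundary circles). Hence, given a joint $v\in\cJ$, a lift $\tilde v\in\tilde\cJ$ of $v$, and an outgoing edge $e\colon v\to v'$ of $\Gamma$, there is a unique lift $\tilde e$ of $e$ starting at $\tilde v$; its endpoint is some lift $\tilde v'$ of $v'$. Now suppose, for contradiction, that $\Gamma$ contained a directed cycle $v_0\to v_1\to\cdots\to v_k=v_0$. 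Choose any lift $\tilde v_0$ of $v_0$, and iteratively lift each edge to obtain a directed path $\tilde v_0\to\tilde v_1\to\cdots\to\tilde v_k$ in $\tilde\Gamma$. The endpoint $\tilde v_k$ is some lift of $v_0$, so $\tilde v_k=w\cdot\tilde v_0$ for a unique $w\in W$.

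By the $W$-equivariance condition \ref{item:equivariance}, applying $w$ to the lifted path yields a directed path from $w\tilde v_0=\tilde v_k$ to $w^2\tilde v_0$ in $\tilde\Gamma$. Concatenating $m:=\mathrm{ord}(w)$ such translates produces a directed closed walk $\tilde v_0\to\cdots\to w^m\tilde v_0=\tilde v_0$ in $\tilde\Gamma$, which contains a directed cycle. This contradicts the acyclicity of $\tilde\Gamma$, so $\Gamma$ is acyclic. The only nontrivial point in the argument is the uniqueness and existence of lifts of edges at joints; I expect this to follow immediately from the fact that $\pi^{\circ}$ is a covering map on the complement of the ramification boundary circles, together with the $W$-equivariance of $\tilde\cW$. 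No further technical obstacles are anticipated, as topological sorting of finite DAGs is standard and the set $\tilde\cJ$ (hence $\cJ$) is finite by Definition \ref{defin:basic_abstract}.
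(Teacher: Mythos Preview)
Your proposal is correct and follows essentially the same approach as the paper: show the spectral-network graph is acyclic by lifting a putative cycle to the cameral network and concatenating $W$-translates to produce a directed cycle upstairs, contradicting condition~\ref{item:acyclic}, then invoke a topological sort (the paper's Lemma~\ref{lem:topological_ordering}). Your write-up simply spells out in more detail what the paper compresses into the phrase ``concatenating lift(s) of this cycle.''
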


\begin{proof}
First we note that the directed graph associated to the basic spectral network is acyclic: Suppose otherwise that this has a cycle.  Then the associated basic cameral network has a cycle spanned by concatenating lift(s) of this cycle to the directed graph associated to the basic cameral network, contradicting property \ref{item:acyclic} of Definition \ref{defin:basic_abstract}.

Thus by Lemma \ref{lem:topological_ordering} below we have an order on $\cJ$ with the desired properties.
\end{proof}

\begin{lemma}
\label{lem:topological_ordering}
Let $G = (V,E)$ be a directed acyclic graph, with finite vertex set $V$ and finite edge set $E$. Then there exists a total order on $V$, such that for all $e \in E$, $\text{source}(e) < \text{target}(e)$.
\end{lemma}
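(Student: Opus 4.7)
The plan is to induct on the number of vertices $|V|$, using the standard fact that every finite non-empty DAG has a source, i.e. a vertex with no incoming edges.

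First I would prove the auxiliary claim that any finite non-empty DAG $G = (V,E)$ has a vertex $v_0 \in V$ with no incoming edges. The argument is by contradiction: if every vertex had at least one incoming edge, then starting from any vertex $v$ we could iteratively pick a predecessor to produce an infinite sequence $\ldots \to v_{-2} \to v_{-1} \to v$. Since $V$ is finite, by the pigeonhole principle some vertex would repeat in this backward walk, and the segment between the two occurrences would form a directed cycle, contradicting acyclicity.

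Next I would carry out the induction. The base case $|V| = 0$ (or $|V| = 1$) is trivial. For the inductive step, assuming the result for all DAGs on fewer than $|V|$ vertices, pick a source $v_0$ as above, and consider the induced subgraph $G' = (V \setminus \{v_0\}, E')$, where $E' \subset E$ consists of the edges of $G$ with neither endpoint equal to $v_0$. Then $G'$ is a finite DAG (a subgraph of a DAG is acyclic) on strictly fewer vertices, so by the inductive hypothesis it admits a total order $<'$ compatible with the direction of edges. Define a total order on $V$ by declaring $v_0$ to be the minimum element and using $<'$ on the remaining vertices. Any edge $e$ of $G$ that does not touch $v_0$ respects the order by the inductive hypothesis, and any edge incident to $v_0$ must have $v_0$ as its source (by construction) and hence respects the new order.

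I do not expect a main obstacle here — this is a classical topological sort argument, and the only nontrivial input is the existence of a source vertex, for which the cycle-by-pigeonhole argument is the standard trick. One could equivalently argue via sinks (reversing all edges and applying the same induction), or non-inductively by defining the order via the length of the longest directed path ending at each vertex, but the source-based induction is the most transparent.
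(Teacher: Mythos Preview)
Your proof is correct and follows essentially the same approach as the paper: both arguments repeatedly peel off source vertices (those of in-degree zero) and place them first in the order. The only cosmetic difference is that you remove one source at a time via induction, while the paper removes all current sources in each iteration and labels them in a batch; the underlying idea and the key observation (a finite nonempty DAG has a source) are identical.
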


\begin{proof}
This is a standard result in graph theory literature, called a topological ordering. We include the following explicit construction for completeness. For each iteration $k \geq 1$:

\begin{enumerate}
    \item Since the graph is acyclic, the subset $V_k \subset V$ of vertices with in-degree equal to 0 is nonempty. Label the vertices in $V_k$ by $n_k + 1, \dots, n_k + |V_k|$, where $n_k$ is the number of vertices already labeled before iteration $k$.
    \item Remove all vertices in $V_k$, and all edges $E_k = \{e \in E | \text{source}(e) \in V_k\}$ from the graph. I.e. update $V := V \setminus V_k$, $E := E \setminus E_k$. If $V$ is nonempty, return to step 1 with $k := k+1$.
\end{enumerate}
Since $|V_k| \geq 1$ for each iteration $k$, and $\sum_{k} |V_k| = |V|$, the number of iterations is bounded above by $|V|$. Hence the construction finishes after finitely many steps.
\end{proof}

\subsection{WKB Cameral Networks}
\label{subsec:wkb_cameral}

The most important examples of basic abstract cameral networks are basic WKB cameral networks, constructed in Construction \ref{constr:wkb_cameral} as collections of integral curves of certain vector fields on $\tilde X$. The spectral networks associated to these cameral networks generalize trajectories of quadratic differentials (see e.g. \cite{strebel1984quadratic} and our discussion in Example \ref{example: SL2 and quadratic differentials}).

Start with a cameral cover $\tilde{X}^{c}\xrightarrow{\pi} X^{c}$ corresponding to a point in $\cA^{\diamondsuit}$.  Note that we can restrict the resulting cameral cover of $X^{c}$ to a cover $\tilde{X}\rightarrow X\subset X^{c}$.

Recall that this cameral cover is equipped with a map:
\[
\begin{tikzcd}
\tilde X^{c} \arrow{r}{\tilde a} & \fr{t}_{ K_{X^{c}}(D)},
\end{tikzcd}
\]
clearly this gives a map $\tilde{a}'$
\[\tilde{X}\xrightarrow{\tilde a'} \fr{t}_{\pi^{*}(K_{X^{c}}(D))},\]
by noting that $\fr{t}_{ \pi^{*}(K_{X^{c}}(D))}=\fr{t}_{K_{X^{c}}(D)}\times_{X^{c}}\tilde{X^{c}}$.

Each root $\alpha \in \fr t^\vee$ can be evaluated on $\fr t$, giving a section of $\pi^*\big( K_{X^{c}}(D)\big)$.

\begin{construction}
\label{def:form_root}
To each root $\alpha$ of $\fr g$ we associate the meromorphic 1-form $\chi_\alpha$ on $\tilde X^{c}$, defined as the composition:
\[
\begin{tikzcd}
\tilde X^{c} \arrow{r}{\tilde a'} & \fr{t}_{\pi^{*}(K_{X^{c}}(D))} \arrow{r}{1\times \alpha} & \pi^{*}(K_{X^{c}}(D))\arrow{r} &  K_{\tilde{X}^{c}}(\pi^{*}(D)).
\end{tikzcd}
\]
We will abuse notation by also denoting $\chi_{\alpha}|_{\tilde{X}}$ as $\chi_{\alpha}$.
\end{construction}

The 1-form $\chi_{\alpha}$ is non-vanishing away from the ramification locus.  It vanishes to order two on the subset of the ramification locus where $\tilde{a}(\tilde{X})$ intersects $ K_{X^{c}}(D)\times_{\bbG_{m}}H_{\alpha}$, because $(1\times \alpha)\circ \tilde{a}'$ vanishes to order one, and $K_{\tilde{X}^{c}}(\pi^{*}(D))=\pi^{*}(K_{X^{c}}(D))(R)$.

In an analogous way $\chi_{\alpha}$ vanishes to order one at ramification points of $\pi$ away from where $\tilde{a}(\tilde{X})$ intersects $K_{X^{c}}(D)\times_{\bbG_{m}} H_{\alpha}$.

We also define quadratic differentials associated to $\alpha$ on $\tilde{X}_{\alpha}^{c}:=\tilde{X}^{c}\sslash <s_{\alpha}>$.  We will use these to analyze the trajectories of $\chi_{\alpha}$.  We write $\tilde{X}_{\alpha}:=\tilde{X}\sslash <s_{\alpha}>$ and we decompose the projection $\pi$ as:
\begin{equation}
\label{equation: factoring pi}
\begin{tikzcd}
\tilde X^{c}
\arrow[swap]{r}{p_\alpha}
\arrow[bend left]{rr}{\pi} 
&
\tilde X^{c}_{\alpha}
\arrow[swap]{r}{\pi_\alpha}
&
X^{c} .
\end{tikzcd}
\end{equation}
For $a\in \cA^{\diamondsuit}$, the $W$-equivariant map $\tilde{X}^{c}\rightarrow \ft_{K_{X^{c}}(D)}$ descends to a map 
\[\tilde{X}_{\alpha}^{c}=\tilde{X}^{c}\sslash <s_{\alpha}>\rightarrow \ft_{K_{X^{c}}(D)}\sslash <s_{\alpha}>.\]
As above we can enhance this to a map 
\[\tilde{X}_{\alpha}^{c}\rightarrow \ft_{\pi_{\alpha}^{*}(K_{X^{c}}(D))}\sslash <s_{\alpha}>.\]

\begin{construction}
\label{construction: Quadratic form alpha}
We define the meromorphic two form $\omega_{\alpha}\in \Gamma(\tilde{X}^{c}_{\alpha}, (K_{\tilde{X}^{c}_{\alpha}}(\pi_{\alpha}^{*}D))^{\otimes 2})$ as being the two form corresponding to the section 
\[\tilde{X}^{c}_{\alpha}\rightarrow \ft_{\pi_{\alpha}^{*}(K_{X^{c}}(D))}\sslash <s_{\alpha}> \xrightarrow{\alpha} \pi_{\alpha}^{*}(K_{X^{c}}(D))\sslash \bbZ_{2}\rightarrow K_{\tilde{X}_{\alpha}^{c}}(\pi_{\alpha}^{*}(D))\sslash \bbZ_{2}\cong (K_{\tilde{X}_{\alpha}^{c}}(\pi_{\alpha}^{*}(D)))^{\otimes 2}.\]
We will sometimes denote this by $\omega_{\alpha, a}$ if we want to emphasize the point of the Hitchin base used.
\end{construction}

Note that $p_{\alpha}^{*}\omega_{\alpha}=\chi_{\alpha}^{\otimes 2}$.  Hence we can see $p_{\alpha}^{*}\omega_{\alpha}$ vanishes with order four as a section of $(K_{\tilde{X}^{c}}(\pi^{*}D))^{\otimes 2}$ at the ramification points of $p_{\alpha}$.  Hence $\omega_{\alpha}$ vanishes with order one along the branch points of $p_{\alpha}$.

In an analogous way $\omega_{\alpha}$ vanishes to order two along the ramification locus of $\pi_{\alpha}$.

Furthermore $\tilde{X}^{c}\rightarrow \tilde{X}_{\alpha}^{c}$ is isomorphic to the $SL(2)$ cameral cover associated to the quadratic differential $\omega_{\alpha}$, where we are using the cameral construction with respect to the line bundle $(K_{\tilde{X}^{c}_{\alpha}}(D-R_{\pi_{\alpha}}))^{\otimes 2}$ on $\tilde{X}_{\alpha}^{c}$, where $R_{\pi_{\alpha}}$ is the ramification divisor of $\pi_{\alpha}:\tilde{X}_{\alpha}^{c}\rightarrow X^{c}$.

\begin{remark}
The quadratic differential $\omega_{\alpha}$ descends to a quadratic differential on $\tilde{X}\sslash Stab(\{\alpha, -\alpha\})$, where $Stab(\{\alpha, -\alpha\})\subset W$ is the subgroup preserving the unordered pair $\{\alpha,-\alpha\}$.
\end{remark}

Recall that we are considering the case where $\pi:\tilde X \to X$ has ramification points of degree 2. Locally around such a point $r\in R$, the covering map is modeled on the following simple example.

\begin{example}
\label{ex:cameral_sl2}
Let $G = SL(2)$, so that $W \cong \Z/2$. Let $X^c = \bbP^1$, $D = \{\infty\}$, so that $X = \bbA^1$. Let $x$ be a coordinate on $X$, and let $a \in \Gamma(X,\ft_{K_{X^c(D)}} \sslash  W)$ be:
\[
x \longmapsto [\pm h_\alpha\sqrt{x} dx],
\]
where square brackets denote the equivalence class under $W$, and $h_\alpha \in \ft$ is the element
\[
h_\alpha =
\left(
\begin{array}{cc}
    1 & 0 \\
    0 & -1
\end{array}
\right) .
\]

Letting $z$ denote a choice of square root of $x$,
Diagram \ref{eq:def_cameral} becomes:
\[
\begin{tikzcd}
z\in \tilde \bbA^1 
\arrow[swap,mapsto]{d}
\arrow[mapsto]{r}{\tilde a} 
&
z h_\alpha dx \in \fr t_{K_{\bbP^1}(D)} 
\arrow[mapsto]{d}
\\
x\in \bbA^1 
\arrow[mapsto]{r}{a} 
& 
\left[\pm \sqrt{x} h_\alpha dx\right] \in \fr{t}_{ K_{\bbP^1}(D)}\sslash W
\end{tikzcd}
\]
Finally, the 1-form associated to the root $\alpha$ is:
\[  
\chi_\alpha(z) =  \alpha( z h_\alpha  dx)= 4 z^2 dz,
\]
where we have used $\alpha(h_\alpha) = 2$, and $dx = 2z dz$.
\end{example}

More generally:

\begin{lemma}
\label{lem:form_local}
Let $\pi : \tilde X \to X$ be a cameral cover, and $r \in R\subset \tilde{X}$ a ramification point of order 2. There are exactly two roots $\pm \alpha$, whose associated 1-form $\chi_{\pm \alpha}$ vanishes at $r$. There exists a local coordinate $z$ centered at $r$, such that this 1-form is:
\[
\chi_{\pm \alpha}(z) = \pm  z^2 dz.
\]
\end{lemma}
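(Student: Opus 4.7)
The plan is to establish the uniqueness of the root first, and then produce the local coordinate via the primitive of $\chi_\alpha$.

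For the first claim, I will analyze the stabilizer of $r$ in $W$. Since $a \in \cA^\diamondsuit$, the cameral cover $\tilde X$ is smooth (Proposition \ref{prop: diamond suit implies smooth}) and $\tilde a$ is transversal to the discriminant divisor $D_H \subset \ft_{\mathcal L}\sslash W$. At a ramification point of order 2, the $W$-action on the fiber $\pi^{-1}(\pi(r))$ has stabilizer $\Stab(r)$ of order $2$, generated by some $\sigma \in W$. Since $\tilde a$ is $W$-equivariant, $\tilde a(r) \in \ft_{\mathcal L}^\sigma$. Transversality of $\tilde a$ to $D_H$ at $r$ (using the $a \in \mathcal A^\diamondsuit$ assumption in the form that $\tilde a(\tilde X)$ intersects $\mathcal L \times_{\bbG_m}(H_\alpha \cap H_\beta) = \emptyset$ for $\alpha \neq \pm\beta$) forces $\tilde a(r)$ to lie on exactly one root hyperplane $H_\alpha = H_{-\alpha}$; consequently $\Stab\bigl(\tilde a(r)\bigr) = \{1, s_\alpha\}$ and therefore $\sigma = s_\alpha$. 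This uniquely determines the unordered pair $\{\alpha, -\alpha\}$. Since $\chi_\beta$ vanishes at $r$ if and only if $\beta\bigl(\tilde a(r)\bigr) = 0$, i.e.\ iff $\tilde a(r) \in H_\beta$, the only roots whose associated 1-form vanishes at $r$ are $\pm\alpha$.

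For the second claim, I will use the already-noted vanishing order of $\chi_\alpha$ at $r$ (order 2 as a section of $K_{\tilde X^c}(\pi^*D)$; here $r \notin \pi^{-1}(D)$ by Convention \ref{convention:cameral}, so this bundle is identified locally with $K_{\tilde X^c}$ near $r$). Pick any holomorphic coordinate $w$ centered at $r$. Then $\chi_\alpha = f(w)\,dw$ with $f(w) = cw^2 + O(w^3)$ for some $c \in \bbC^\times$. Define
\[
F(w) := \int_0^w \chi_\alpha = \frac{c w^3}{3} + O(w^4),
\]
and set $z := (3F)^{1/3}$ for some choice of cube root. A direct expansion gives $z(w) = c^{1/3} w \bigl(1 + O(w)\bigr)$, so $z$ is a holomorphic coordinate centered at $r$, and by construction $\chi_\alpha = dF = d(z^3/3) = z^2\,dz$. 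Because $\chi_{-\alpha} = (-\alpha) \circ \tilde a' = -\chi_\alpha$ directly from Construction \ref{def:form_root}, the same coordinate yields $\chi_{-\alpha} = -z^2\,dz$.

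There is no real obstacle in the argument; the only subtlety is confirming that $\chi_\alpha$ has exactly a double zero at $r$ (rather than higher order), which is guaranteed by two inputs that the preceding text already isolates: the section $(1 \times \alpha) \circ \tilde a'$ vanishes to order 1 at $r$ by the transversality of $\tilde a$ to $H_\alpha$, and the identification $K_{\tilde X^c}(\pi^*D) \cong \pi^*K_{X^c}(D)(R)$ contributes one further order of vanishing at the ramification point. Thus $c \neq 0$, and the coordinate construction above goes through without difficulty.
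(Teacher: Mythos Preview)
Your proof is correct and follows essentially the same approach as the paper: identify $\pm\alpha$ via the unique root hyperplane containing $\tilde a(r)$ (using $a\in\cA^{\diamondsuit}$), note that $\chi_\alpha$ has an exact double zero at $r$, and then obtain the normal form by taking a cube root of the primitive $3\int\chi_\alpha$. The paper phrases the second step as a reduction to $SL(2)$ via the quadratic differential $\omega_\alpha$ on $\tilde X_\alpha$ and writes out the power series explicitly (citing Strebel), but the underlying coordinate construction is identical to your $z=(3F)^{1/3}$.
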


\begin{proof}
Since $\pi$ is the pullback of the projection $\fr{t}_{K_{X^{c}}(D)} \to \fr{t}_{ K_{X^{c}}(D)} \sslash W$, the image of $r$ under the map $\tilde X \to \fr{t}_{ K_{X^{c}}(D)}$ is on a root hyperplane; this determines $\pm \alpha$, as the two roots such that $s_{\pm \alpha} \in W$ fixes this hyperplane.

The lemma then reduces to the case of $SL(2)$ by considering the quadratic differential $\omega_{\alpha}$ on $\tilde{X}_{\alpha}$.  In this case $\chi_{\alpha}=-\chi_{-\alpha}$ and $\chi_{\alpha}$ vanishes with order two along the intersection of $\tilde{a}(\tilde{X})$ with $K_{X^{c}}(D)\times_{\bbG_{m}} H_{\alpha}$ as noted immediately after Construction \ref{def:form_root}.  This reduces the lemma to a classical result in the theory of quadratic differentials (see e.g. Section 6.2 of \cite{strebel1984quadratic}), which we now review. For a choice of local coordinate $u$ (vanishing at $r$) we have that
\[\chi_{\alpha}=u^{2}(a_{0}+a_{1}u+...)du,\]
with $a_{0}\neq 0$.

Setting \[b_{k}:=\frac{a_{k}}{m+k+1},\] and \[z:=(\sqrt[3]{1/3})u(b_{0}+b_{1}u+...)^{1/3},\] we have that 
\[\chi_{\alpha}=z^{2} dz,\]

and the result follows.


\end{proof}


Below, we provide a construction of a set of Stokes curves on $\tilde X^\circ$, starting from the data of a point $a \in \cA^{\diamondsuit}$ in a certain subset of the Hitchin base.
This is a straightforward generalization to arbitrary reductive algebraic $G$ of the construction of WKB spectral networks in \cite{gaiotto2013spectral} (for $SL(n)$, $GL(n)$), \cite{longhi2016ade} (type ADE), and of the Stokes graphs in e.g. \cite{bnr1982new, aoki2001exact, iwaki2014exact, honda2015virtual, takei2017wkb} (these correspond to networks for $SL(n)$, and $GL(n)$), see Warning \ref{warning: discrepancy lines} and Remark \ref{rem:root_noncanonical} for further details.
This construction does not necessarily produce a basic abstract cameral network in the sense of Definition \ref{defin:basic_abstract}, because the construction could either terminate with an error, or $\tilde{\cJ}$ could be infinite.
Basic WKB cameral networks will be defined as the output of this construction, assuming that the process does not terminate with an error, and that the intersection $\tilde{\cJ}\cap \tilde X^{\circ '}$ is finite for some subset $\tilde X^{\circ '} \subset \tilde X^\circ$, which is homotopy equivalent to $\tilde X^\circ$, see Definition \ref{def:wkb_cameral} and Remark \ref{rem:finite_network}.

\begin{construction}[WKB construction]
\label{constr:wkb_cameral}
Let $a\in \cA^{\diamondsuit}$ be a point in the subset of the Hitchin base defined in Definition \ref{definition:  cA diamondsuit}, with $\cL=K_{X^{c}}(D)$ where $\text{deg}(D)>2$ as in Convention \ref{convention:cameral}. Recall that this corresponds to a smooth cameral cover $\pi : \tilde X \to X$, with ramification of order 2. We construct from $a$ is the data of (c.f. Definition \ref{defin:basic_abstract}):
\begin{itemize}
    \item A set $\tilde \cW_{WKB}$ of oriented, semi-infinite real curves $\ell \subset \tilde X^\circ$, called \textbf{Stokes curves};
    \item for each $\ell \in \tilde \cW_{WKB}$, a root $\alpha_{\ell} \in \Phi$ called the \textbf{label} of the Stokes curve.
\end{itemize}
We define $\tilde \cW_{WKB}$ as the union $\bigcup_{k = 0}^{\infty} \tilde \cW_k$
over all iterations $k$ of the following algorithm, assuming that the algorithm does not terminate with an error.
\begin{enumerate}
    \item \label{item: initialization}\textbf{Initialization:} Due to Lemma \ref{lem:form_local}, for each $r \in R$, there are precisely two roots $\pm \alpha$ such that $\tilde a(r) \in H_\alpha$. Consider the action of $\R_+$ by scaling on the fibers of $T(\tilde X\setminus R)$, and let $V_\alpha \in \Gamma \big(\tilde X\setminus R, T(\tilde X\setminus R) / \R_+\big)$ be the unique oriented projective vector field on $\tilde X \setminus R$ such that:
    \begin{equation}
    \label{eq:diff_eq}
    \chi_\alpha(V_\alpha) \in \R_+.
    \end{equation}
    $V_\alpha$ determines a foliation of $\tilde X \setminus R$ by oriented integral curves; see Figure \ref{fig:foliation_ramif}.
    We consider only those integral curves that are incident to $r$, take their image under the map $ \tilde X \setminus R\hookrightarrow \tilde X^\circ$, and take their closure in $\tilde X^\circ$.
    
    \begin{figure}[h]
        \centering
        \includegraphics[width = 0.5\textwidth]{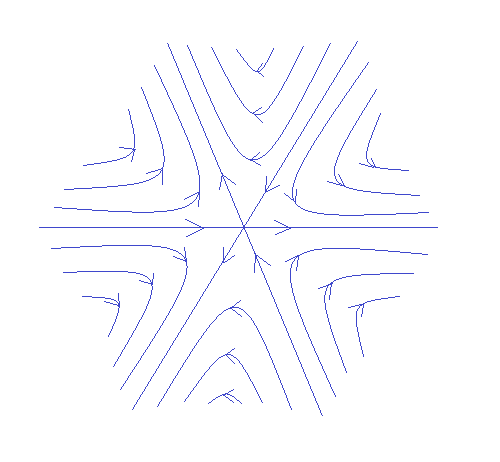}
        \caption{A schematic picture of the foliation determined by $V_\alpha$ near a ramification point fixed by $s_{\alpha}$ in the cameral cover.}
        \label{fig:foliation_ramif}
    \end{figure}
    
    The set of \textbf{primary Stokes curves} at $r \in R$ is the set $\tilde \cW_{0,r}$ of closures of the above integral curves. According to Lemma \ref{lem:wkb_local} below, there are three of them for $\alpha$, and another three for $-\alpha$. The labels of these curves are $\alpha$ and $-\alpha$, respectively.
    
    The set of all primary Stokes curves is:
    \[
    \tilde \cW_0 = \bigcup_{r \in R} \tilde \cW_{0,r}.
    \]
    
    \item \textbf{Iteration:} \label{item:new_line}
    For each iteration $k \geq 1$, do the following.
    Two or more existing Stokes curves may intersect at a point $x \in \tilde X^\circ$, which we call a \textbf{joint}. We let $\tilde \cJ_k$ denote the union of all joints which appear at iteration $k$.
    For each $J \in \tilde \cJ_k$, let
    $\{\alpha_1, \dots, \alpha_n\}$
    denote the labels of the incoming Stokes curves at $J$.
    Let $\gamma_j$ range over all roots in $\Conv^\N_{\alpha_1, \dots, \alpha_n}\backslash \{\alpha_{1},...,\alpha_{n}\}$. For each $j$, the 1-form $\chi_{\gamma_j}$ then determines\footnote{As in the initialization step.} $V_{\gamma_j} \in \Gamma\big(\tilde X \setminus R, T(\tilde X\setminus R) / \R_+\big)$, and the corresponding foliation by oriented, integral curves. The new Stokes curve $\ell_{x,j}$ is the unique oriented integral curve segment of $V_{\gamma_j}$ starting at $J$. We set:

  
    \begin{align*}
    \tilde \cW_{k,x} 
    &:=
    \bigcup_{j} \ell_{x,j} ,
    \\
    \tilde \cW_k 
    &:=
    \bigcup_{x \in \tilde \cJ_k} \tilde \cW_{k,x}. 
    \end{align*}
    
    \begin{figure}[h]
    \centering
    \includegraphics[width = 0.7 \textwidth]{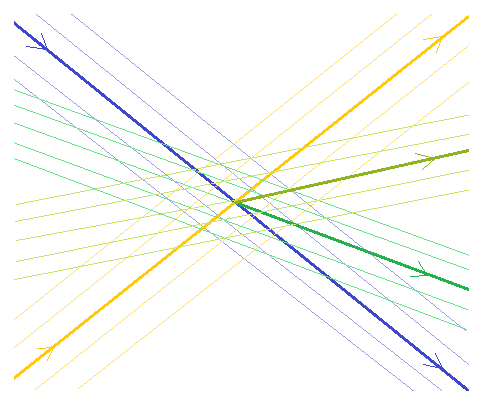}
    \caption{Stokes curves (solid) are segments of certain integral curves (transparent). When Stokes curves labeled by $\alpha$ (blue) and $\beta$ (yellow) intersect, we draw new Stokes curves for all $\gamma_i \in \Conv^\N_{\{\alpha,\beta\}}$. Here there are two new Stokes curves labeled by $\gamma_1$, $\gamma_2$ (green, khaki, respectively).  These new Stokes curves are integral curves of $V_{\gamma_1}$, $V_{\gamma_2}$ starting at the intersection point.}
    \label{fig:new_stokes}
  \end{figure}

    \item \textbf{Error:} 
    \label{item:error}
    The algorithm terminates with an error if, at some step $k$, any of the following happens:
    \begin{enumerate}
        \item \label{item: error: runs into ramification point}A Stokes curve runs into a ramification point. See Figure \ref{fig:double_wall} for an example.
        \item \label{item: error: cycle}The directed graph associated to $\tilde \cW_k$ (as in Definition \ref{defin:basic_abstract}, condition \ref{item:acyclic}) contains an oriented cycle. See Figure \ref{fig:oriented_cycle} for an diagram.
        \item \label{item: error: non convex}The roots labelling the incoming set of Stokes rays to a joint do not form a convex set, or the tangent rays of all Stokes curves entering and leaving a joint are not distinct.
        \item \label{item: error: denseness}A Stokes curve is dense in the sense that its set of limit points is not finite.  In Section \ref{subsection: denseness of individual trajectories}, specifically in Corollary \ref{corollary: behaviour on SF locus},  we show this does not occur for $a$ in the dense open subset $\cA_{SF,0}^{\diamondsuit}\subset \cA^{\diamondsuit}$.
    \end{enumerate}
    
    \begin{figure}[h]
    \centering
    \begin{minipage}{.5\textwidth}
      \centering
      \includegraphics[width = 0.9 \textwidth]{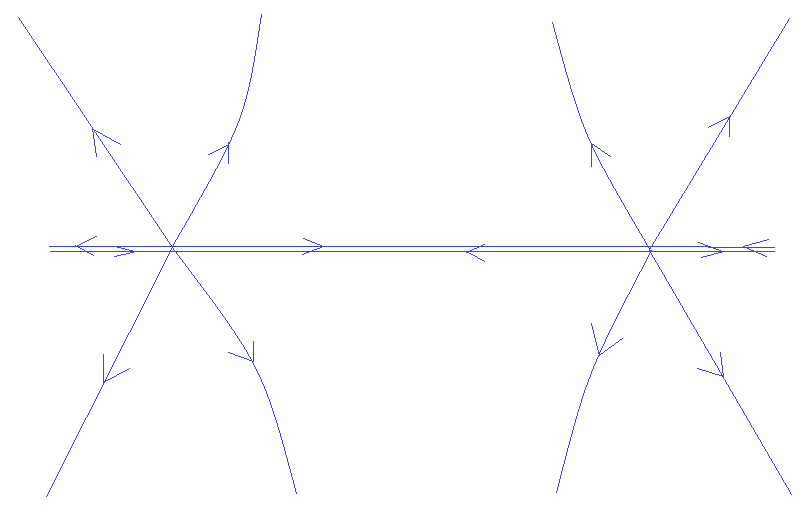}
      \caption{A Stokes curve incident to a ramification point, part of a double wall.}
      \label{fig:double_wall}
    \end{minipage}%
    \begin{minipage}{.5\textwidth}
      \centering
      \includegraphics[width = 0.9 \textwidth]{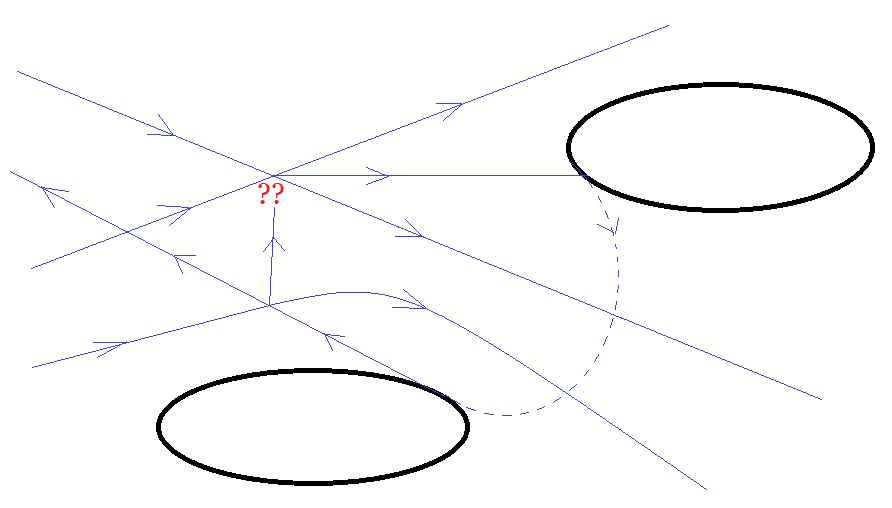}
      \caption{New Stokes curves form an oriented cycle.}
      \label{fig:oriented_cycle}
    \end{minipage}
    \end{figure}
    
    \end{enumerate}
\end{construction}

\begin{remark}
\label{rem:tangent_preimage}
The following is an alternative way to understand the definition of $V_{\alpha}$.  A section $\tilde{X}\rightarrow \ft_{ K_{\tilde{X}^{c}(D)}}$ is equivalent to a section $\tilde a^\vee\in \Hom(T\tilde{X}, \ft)$ (note that we have restricted away from the divisor $D$). To each root $\alpha$ we associate a subset $H_{\alpha}^{+}:=\{b\in \ft|\alpha(b) \in \R_+\}$, of real codimension 1. Taking the preimage of $H_{\alpha}^{+}$ under $\tilde a^\vee$ gives the oriented real projective vector field $V_{\alpha}$ on $\tilde{X}\backslash R$.  On the ramification locus the morphism $a^{\vee}$ is the zero map.
\end{remark}

\begin{warning}
\label{warning: discrepancy lines}
In a slightly more general context than the networks considered here there is one minor discrepancy between the pushforwards of the Stokes rays defined above to $X$, and those defined in the earlier works \cite{gaiotto2013spectral, longhi2016ade}.  Namely if a Stokes ray in the sense above labelled by a root $\alpha$ ends at a ramification point of $\tilde{X}^{c}\rightarrow X$ that is not fixed by $s_{\alpha}\in W$ then the associated Stokes ray on $X$ as described in \cite{gaiotto2013spectral, longhi2016ade} could continue through the associated branch point.  This discrepancy is irrelevant to the situations we consider due to us excluding networks that violate error condition \ref{item: error: runs into ramification point} of Construction \ref{constr:wkb_cameral}. 
\end{warning}

\begin{definition}
\label{def:wkb_cameral}
Let $a\in \cA^{\diamondsuit}$ be a point in the subset of the Hitchin base defined in Definition \ref{definition:  cA diamondsuit}. Assume that the WKB construction determined by $a$ does not terminate in an error. Assume, moreover, (see Remarks \ref{rem:finite_network} and \ref{remark:  Existence of generic WKB cameral networks}) that there exists a subset $X^{\circ '} \subset X^\circ$, such that:
\begin{itemize}
    \item the inclusion $X^{\circ '} \hookrightarrow X^\circ$ is a homotopy equivalence;
    \item the restriction $\tilde \cW_{WKB} ' := \tilde \cW_{WKB}|_{\tilde X^{\circ'}}$ consists of finitely many Stokes curves, intersecting in finitely many joints, where $\tilde X^{\circ '} := X^{\circ '} \times_{X^\circ} \tilde X^\circ$.
\end{itemize}
Then the \textbf{basic WKB cameral network} associated to $a \in \cA^\diamondsuit$ is the data of $X^{\circ '}$, the Stokes curves in $\tilde \cW'_{WKB}$, and their labels.
\end{definition}

\begin{remark}
\label{rem:finite_network}
In Proposition \ref{prop:condition_R_finite}, we will show that a subset $X^{\circ '}$ satisfying the conditions of Definition \ref{def:wkb_cameral} exists if we make the following assumptions on the point in the Hitchin base:
\begin{itemize}
    \item The WKB construction doesn't end with an error.
    \item The set $\tilde J$ of joints of the WKB construction doesn't have an accumulation point in $\tilde X^\circ$ (we allow an accumulation point in $\tilde{X}^{c}$).
    \item The point in the Hitchin base satisfies Condition R, introduced in Definition \ref{definition: Condition R}; this condition holds away from a real codimension 1 locus in the Hitchin base.
\end{itemize}
\end{remark}



\begin{conjecture}
\label{conjecture: open dense set}
We conjecture that with respect to the classical topology there is a dense, open set $U\subset \cA$, such that there is a basic WKB cameral network associated to each $a\in U$.
\end{conjecture}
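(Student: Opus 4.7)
The plan is to start from the open dense subset $U_0 := \cA_{SF,0}^{\diamondsuit}\cap \cA_R^{\diamondsuit}$ of the Hitchin base, which is open and dense by Definition \ref{def: saddle free} and Definition \ref{definition: Condition R}. On $U_0$, Proposition \ref{prop: open sets around D} provides the trapping discs $B_d$ near each $d \in D$, so the construction can be restricted to $\tilde X^{\circ'}$, and Corollary \ref{corollary: behaviour on SF locus} gives a full classification of the long-time behaviour of individual Stokes curves. Together these immediately rule out error \ref{item: error: runs into ramification point} for primary Stokes curves and reduce error \ref{item: error: denseness} to a statement about finitely many curves starting at joints.

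First I would handle the combinatorial error conditions \ref{item: error: cycle} and \ref{item: error: non convex}, together with the finiteness of $\tilde \cJ \cap \tilde X^{\circ'}$ required by Definition \ref{def:wkb_cameral}. I would proceed by induction on the iteration step $k$ of Construction \ref{constr:wkb_cameral}. At each step the positions of the joints depend real-analytically on $a \in U_0$, and each prescribed combinatorial failure (a specified oriented cycle in $\tilde \cW_{\le k}$, a non-convex set of incoming labels at a specified joint, or an accidental coincidence of tangent rays at an otherwise transverse joint) is cut out by a real-analytic subset of $U_0$ of positive real codimension. The union over the finitely many possible joint configurations at step $k$ is therefore closed and nowhere dense in $U_0$, and Baire category applied to the countable union over $k$ produces a residual, hence dense, subset $U \subset U_0$ on which all combinatorial conditions simultaneously hold.

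Next I would upgrade density to openness. For $a_0 \in U$ the network $\tilde \cW_{WKB}'$ consists of finitely many Stokes curves meeting transversally at finitely many joints. Transversality together with the trapping behaviour near $D$ makes the combinatorial type of the restricted network locally constant under small classical perturbations of $a_0$: no new joints can enter $\tilde X^{\circ'}$ from the $B_d$, and no existing joints can either collide or bifurcate into non-convex label configurations. It follows that $U$ is open, and combining with the previous step gives the conjecture.

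The main obstacle will be ruling out accumulation of joints inside $\tilde X^{\circ'}$ for a dense set of $a$. While each $\omega_\alpha$ is individually controlled on the saddle-free locus by Strebel's decomposition of $\tilde X^c_\alpha$ into horizontal strips, half-planes, and ring domains, the WKB construction mixes the foliations $V_\alpha$ for all $\alpha \in \Phi$ through iterated intersection, and it is conceivable that an infinite cascade of joints accumulates in $\tilde X^{\circ'}$ even in the saddle-free regime. Handling this will likely require either a global monotonicity estimate (some quantity, perhaps a sum of periods of the $\chi_\alpha$, that strictly increases under the WKB iteration and bounds the number of joints in any compact piece of $\tilde X^{\circ'}$), or a perturbative argument exploiting the $\C^\times$-action on $\cA$ to destroy any accumulation by arbitrarily small deformations of $a$. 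I expect this dynamical control to be the decisive step and the reason that the statement currently appears only as a conjecture.
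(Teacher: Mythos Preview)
This statement is a \emph{conjecture} in the paper and is not proved there. The paper's discussion after the conjecture explicitly identifies the outstanding obstacles: ruling out oriented cycles (error \ref{item: error: cycle}) and ruling out accumulation of joints in $\tilde X^{\circ'}$. It notes that errors \ref{item: error: denseness} and \ref{item: error: runs into ramification point} (the latter for primary curves only) are handled on $\cA_{SF,0}^{\diamondsuit}$, and sketches in Remark \ref{remark: open set around a } that openness holds around any $a$ already known to produce a basic network with only pairwise joint incidences---but it does not claim this locus is nonempty in general.

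Your proposal correctly isolates the accumulation problem as the decisive obstruction, but the rest of the argument has gaps that go beyond that acknowledged difficulty. The real-analyticity step is not justified: you assert that each prescribed combinatorial failure (a given cycle, a non-convex incoming label set, a tangent-ray coincidence) is cut out by a positive-codimension real-analytic subset of $U_0$, but this is exactly the content to be proved. A cycle in $\tilde\cW_{\le k}$ is a global condition on the flow of several vector fields $V_\alpha$ simultaneously, and there is no a priori reason its locus cannot contain an open set. Similarly, non-convexity of incoming labels at a joint is a discrete condition on which roots appear, not obviously the zero set of a nontrivial analytic function. The Baire argument, even if granted, yields only a residual (hence dense) set, not an open one; your separate openness step assumes the restricted network is already finite and transverse, which presupposes what the Baire step was supposed to establish. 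Finally, the trapping discs $B_d$ depend on $a$, so the claim that ``no new joints can enter $\tilde X^{\circ'}$ from the $B_d$'' under perturbation requires choosing the $B_d$ uniformly over a neighbourhood of $a_0$---the paper does this in Remark \ref{remark: open set around a }, but only after assuming a good network already exists at $a_0$.

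In short, your plan reproduces the paper's partial progress and its identification of the hard step, but the intermediate ``easy'' steps you outline are themselves open and are precisely why the statement remains a conjecture.
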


To prove this conjecture one would need to show that there was an open dense set for which none of the four conditions leading to an error in the WKB construction (\ref{constr:wkb_cameral}) occurred, and for which the set of joints $\tilde{J}$ does not have an accumulation point in $\tilde{X}^{\circ'}$.  As already noted the error type \ref{item: error: denseness} (of Construction \ref{constr:wkb_cameral}) does not occur on an open, dense set $\cA_{SF,0}^{\diamondsuit}\subset \cA$ (see Definition \ref{def: saddle free}) by Corollary \ref{corollary: behaviour on SF locus}.  Furthermore in Corollary \ref{corollary: behaviour on SF locus} for $a\in \cA^{\diamondsuit}_{SF,0}$ error type \ref{item: error: runs into ramification point} does not occur for primary Stokes lines We expect it to be difficult to understand the locus where error condition \ref{item: error: cycle} occurs, and the locus where $\tilde{J}$ has limit points in $\tilde{X}^{\circ}$.  We sketch an argument that a certain subset of the set $U$ is open, but not necessarily nonempty in Remark \ref{remark: open set around a }.

\begin{remark}
\label{remark:  Existence of generic WKB cameral networks}
The authors are aware of the following results in this direction of Conjecture \ref{conjecture: open dense set}:
\begin{itemize}
    \item For the group $G=SL(2)$ Lemma 4.11 of \cite{bridgeland2015quadratic} immediately implies that, as long as $D\neq 0$, the set of points of the Hitchin base that produce basic cameral networks, in the sense of Definition \ref{def:wkb_cameral}, is a dense open set in the Hitchin base $\Gamma(X^{c}, K_{X^{c}}(D)^{\otimes 2})$ with respect to the classical topology. 
    \item There is empirical evidence for groups of type A in \cite{gaiotto2013spectral}, and for groups of type ADE in \cite{longhi2016ade, longhiloom}. In particular, \cite{longhiloom} is a piece of software for drawing spectral networks for groups of type ADE. This can be used to produce many examples of basic WKB cameral networks.  
\end{itemize}
\end{remark}

\begin{proposition}
\label{prop:basic_WKB_is_basic_abstract}
Basic WKB cameral networks (Definition \ref{def:wkb_cameral}) are examples of basic abstract cameral networks.
\end{proposition}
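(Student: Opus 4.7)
The plan is to verify conditions (1)--(5) of Definition \ref{defin:basic_abstract} for the data produced by Construction \ref{constr:wkb_cameral} under the hypotheses in Definition \ref{def:wkb_cameral} (no error, and existence of $X^{\circ'}$ with finite restriction). The ambient data (the subset $X^{\circ'}$, the finite collection of curves $\tilde \cW'_{WKB}$, the finite set of joints, and the root labels) is part of the definition of a basic WKB cameral network, so the content is in checking the five axioms.

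For equivariance (condition \ref{item:equivariance}), I would exploit the $W$-equivariance of $\tilde a : \tilde X \to \fr t_{K_{X^c}(D)}$ from Diagram \ref{eq:def_cameral}. Combined with the identity $\alpha \circ w = w^{-1}\alpha$ on roots, this gives $w^*\chi_\alpha = \chi_{w^{-1}\alpha}$, and hence (using the description of $V_\alpha$ in Remark \ref{rem:tangent_preimage}) $w_* V_\alpha = V_{w\alpha}$ as oriented projective vector fields on $\tilde X\setminus R$. This equivariance then propagates through the initialization step of Construction \ref{constr:wkb_cameral} (since $W$ permutes $R$ and the local roots $\pm\alpha_r$) and through each iteration (since the outgoing-label set $\mathrm{Conv}^\N_{C_{in}}$ depends only on $C_{in}$, which is transported $W$-equivariantly).

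For the local picture around a ramification point (condition \ref{item:local_ramification}), I would invoke Lemma \ref{lem:form_local}: in a suitable coordinate $z$ at $r$, $\chi_{\pm\alpha_r} = \pm z^2 dz$. Examining the integral curves of the vector fields $V_{\pm\alpha_r}$ (equivalently, of the horizontal trajectories of the quadratic differential $z^2 dz^2$ lifted to its double cover) gives exactly three trajectories incident to $r$ for each of $\pm\alpha_r$, with the six trajectories alternating around $S^1_r$ as in Figure \ref{fig:ramif1}. For the local picture at a joint (condition \ref{item:local_joint}), error condition \ref{item: error: non convex} from Construction \ref{constr:wkb_cameral} ensures that $C_{in}$ is convex and that all tangent rays at the joint are distinct; the iteration step then adds exactly one new outgoing curve for each label in $\mathrm{Conv}^\N_{C_{in}} \setminus C_{in}$, while the continuations of incoming curves contribute the labels in $C_{in}$, so the labelled tangent rays at the joint realise an undecorated 2D scattering diagram in the sense of Definition \ref{def:scattering_diagram}. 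Acyclicity (\ref{item:acyclic}) is immediate from error condition \ref{item: error: cycle}, and non-denseness (\ref{item:non-denseness}) follows since a finite union of smooth embedded curves meets any transversal segment in a discrete set locally.

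The main obstacle is verifying the topological shape required of each Stokes curve $\ell$, namely that $\ell$ is homotopic to $[0,1)$ and that $\overline \ell \setminus \ell$ is a single point on some boundary circle $S^1_d$. Between consecutive joints this is immediate because $\ell$ is a smooth integral curve of $V_\gamma$ on the complement of $R$ in $\tilde X^{\circ'}$; what requires work is the terminal behaviour. Running into a ramification point is excluded by error condition \ref{item: error: runs into ramification point}, and escaping to infinity without a limit is excluded by appealing to Proposition \ref{prop: open sets around D}, which produces trapping neighbourhoods near each $d \in D$ so that the tail of $\ell$, once it enters such a neighbourhood, converges to a single point on $S^1_d$; combined with the finiteness hypothesis of Definition \ref{def:wkb_cameral}, this yields the desired shape.
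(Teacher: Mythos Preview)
Your approach matches the paper's: both check the five conditions of Definition \ref{defin:basic_abstract}, with equivariance and the ramification-point local picture established via the arguments the paper records as Lemmas \ref{lem:equivariance} and \ref{lem:wkb_local} (your inline sketches reproduce those proofs), and the remaining three conditions handled by the error clauses of Construction \ref{constr:wkb_cameral} together with the finiteness hypothesis in Definition \ref{def:wkb_cameral}.

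One caveat: in your final paragraph you invoke Proposition \ref{prop: open sets around D} to control the terminal behaviour of Stokes curves, but that proposition assumes Condition R (Definition \ref{definition: Condition R}), which is not part of Definition \ref{def:wkb_cameral}. The paper's own proof does not explicitly address the ``semi-infinite with single limit point on $S^1_d$'' clause of the data in Definition \ref{defin:basic_abstract} at all, so you are being more thorough than the paper here but appealing to a hypothesis you do not have.
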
 

\begin{proof}
There are five conditions to check in Definition \ref{defin:basic_abstract}. Acyclicity, and the local picture around joints are explicitly satisfied by the requirement that Construction \ref{constr:wkb_cameral} doesn't end with an error.  Non-denseness follows from the condition that the WKB condition does not end in an error which in particular implies that each Stokes line has a discrete set of limit points, and by the additional assumption of Definition \ref{def:wkb_cameral} that there are only finitely many Stokes curves on $X^{\circ'}$.  It remains to check the local picture around ramification points, and the equivariance. We do this in Lemma \ref{lem:wkb_local} and Lemma \ref{lem:equivariance}, respectively.
\end{proof}

\begin{lemma}
\label{lem:wkb_local}
Consider a basic WKB cameral network $\tilde \cW_{WKB}'$.
For each $r \in R$ of order 2, let $ \alpha$ be a root such that the 1-form $\chi_\alpha$ vanishes at $r$. Then, in an infinitesimal neighborhood of $r$, the Stokes curves and their labels are as in Figure \ref{fig:ramif1}.
\end{lemma}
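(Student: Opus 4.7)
The plan is to carry out a purely local calculation using the normal-form coordinate provided by Lemma~\ref{lem:form_local}. Since $a \in \cA^{\diamondsuit}$, the image $\tilde a(r)$ lies on exactly one root hyperplane $H_{\alpha}$, and by Lemma~\ref{lem:form_local}, $\pm\alpha$ are the only roots for which $\chi_\beta$ vanishes at $r$. Hence only integral curves of $V_{\pm\alpha}$ can be incident to $r$, and Lemma~\ref{lem:form_local} gives a local coordinate $z$ centered at $r$ in which $\chi_{\pm\alpha}(z) = \pm z^2\,dz$.

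Next I would solve the defining equation $\chi_\alpha(V_\alpha) \in \R_+$ explicitly. Introducing the new coordinate $w := z^3/3$, the condition $z^2 \dot z \in \R_+$ is equivalent to $\dot w \in \R_+$, so the oriented integral curves of $V_\alpha$ are exactly the preimages under $z \mapsto z^3/3$ of leftward-to-rightward horizontal half-lines in the $w$-plane. The integral curves incident to $z=0$ correspond to the single horizontal line through $w=0$ (oriented in the positive real direction), whose preimage consists of the three half-lines with $\arg z \in \{0,\ 2\pi/3,\ 4\pi/3\}$ emanating from $z=0$. The same calculation applied to $V_{-\alpha}$ (replacing $w$ by $-w$) yields three half-lines with $\arg z \in \{\pi/3,\ \pi,\ 5\pi/3\}$.

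Finally, I would translate this picture to the oriented real blow-up $\tilde X^\circ$. The ramification point $r$ is replaced by the boundary circle $S^1_r$, the six half-lines above lift to six oriented real curves meeting $S^1_r$ transversely at the six equally spaced points with angular coordinates $k\pi/3$, $k = 0, \dots, 5$, and the labels $\pm\alpha$ alternate as one moves around $S^1_r$. This is exactly the configuration of Figure~\ref{fig:ramif1}.

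The only subtlety is to ensure that no other primary Stokes curves of $V_{\pm\alpha}$ are incident to $r$, but this is automatic from the $w$-coordinate description: every integral curve of $V_{\pm\alpha}$ is a component of the preimage of some horizontal line in the $w$-plane under $z \mapsto z^3/3$, and only one such horizontal line passes through $w = 0$. I do not anticipate any serious obstacle beyond bookkeeping of orientations and angles.
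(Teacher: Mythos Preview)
Your proposal is correct and follows essentially the same approach as the paper: both use the normal form $\chi_{\pm\alpha} = \pm z^2\,dz$ from Lemma~\ref{lem:form_local} and solve $\chi_\alpha(V_\alpha)\in\R_+$ to find the six directions at angles $k\pi/3$. The paper evaluates $\chi_\alpha$ on the radial vector $z\partial_z + \bar z\partial_{\bar z}$ to get the condition $z^3\in\R_+$ directly, whereas you straighten the flow via $w=z^3/3$; these are equivalent computations, and your version has the minor bonus of making it transparent that no other integral curves of $V_{\pm\alpha}$ reach $r$.
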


\begin{proof}
From Lemma \ref{lem:form_local} we know that there exists a local coordinate $z$ in a neighborhood of $r$ such that $\chi_\alpha$ has the local form:
\[   \chi_\alpha(z) =   z^2  dz . \]
A tangent vector at $z = x + iy$, oriented away from $0$, is:
\[
V = x \frac{\p}{\p x} + y \frac{\p}{\p y} = z \frac{\p}{\p z} + \bar z \frac{\p}{\p \bar z} .
\]
Then: 
\[
\chi_\alpha( V) =  z^3 .
\]
Condition \ref{eq:diff_eq} is that $ z^3 \in \R_+$.
So there are three directions of Stokes curves labeled by $\alpha$ corresponding to $z=re^{i\theta}$ with $\theta$ in the set:
\[   \{ 2\pi j /3 \}_{j\in\{0,1,2\}},   \]
and three directions labeled by $-\alpha$ corresponding as above to angles:
\[   \{ \pi + 2\pi j /3 \}_{j\in\{0,1,2\}}.   \]
\end{proof}

\begin{lemma}
\label{lem:equivariance}
Basic WKB cameral networks satisfy the equivariance condition \ref{item:equivariance} from Definition \ref{defin:basic_abstract}.
\end{lemma}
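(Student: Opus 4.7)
The strategy is to show that every geometric object used in Construction \ref{constr:wkb_cameral} is $W$-equivariant in a compatible way, and then to induct on the iteration step $k$.

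The key preliminary observation is that the map $\tilde{a}: \tilde{X}^c \to \ft_{K_{X^c}(D)}$ is $W$-equivariant by construction (Definition \ref{def: cameral cover}), and therefore so is its derivative-interpretation $\tilde{a}': \tilde{X} \to \ft_{\pi^*(K_{X^c}(D))}$. From this I first claim that for every $w \in W$ and every root $\alpha$, we have the identity of meromorphic $1$-forms on $\tilde{X}$:
\[
w^* \chi_\alpha \;=\; \chi_{w^{-1}\alpha}.
\]
This is a direct unwinding of Construction \ref{def:form_root}, using the contragredient convention for the $W$-action on $\ft^\vee$, namely $\alpha(w\cdot v) = (w^{-1}\alpha)(v)$. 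From the defining condition $\chi_\alpha(V_\alpha) \in \R_+$ and the uniqueness of $V_\alpha$ as a projective oriented vector field, the analogous identity for vector fields follows at once:
\[
w_* V_\alpha \;=\; V_{w\alpha}.
\]
In particular, $w$ takes oriented integral curves of $V_\alpha$ bijectively to oriented integral curves of $V_{w\alpha}$.

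Next I handle the primary Stokes curves (the initialization step of Construction \ref{constr:wkb_cameral}). The ramification locus $R \subset \tilde X$ is $W$-invariant. For $r \in R$, the two roots $\pm \alpha_r$ attached to it are characterized by $\tilde a(r) \in H_{\alpha_r}$; since $\tilde a$ is $W$-equivariant and $W$ permutes root hyperplanes via $w(H_\alpha) = H_{w\alpha}$, the roots attached to $w(r)$ are $\pm w(\alpha_r)$. Combined with the identity $w_* V_{\pm\alpha_r} = V_{\pm w(\alpha_r)}$ and the fact that $w$ sends ramification points to ramification points, this shows that $\tilde{\cW}_0$ is $W$-invariant and that the labels transform correctly: $\alpha_{w(\ell)} = w(\alpha_\ell)$ for every primary Stokes curve $\ell$.

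Now I induct on $k \geq 1$, assuming equivariance of $\tilde{\cW}_{k-1}$ together with the label compatibility $\alpha_{w(\ell)} = w(\alpha_\ell)$. Since $W$ acts by homeomorphisms of $\tilde X^\circ$, it permutes intersections of curves in $\tilde{\cW}_{k-1}$, so $\tilde{\cJ}_k$ is $W$-invariant. For $J \in \tilde{\cJ}_k$ with incoming labels $\{\alpha_1,\dots,\alpha_n\}$, the joint $w(J) \in \tilde{\cJ}_k$ has incoming labels $\{w\alpha_1, \dots, w\alpha_n\}$ by the inductive hypothesis, and the set of new labels at $w(J)$ is
\[
\mathrm{Conv}^{\N}_{w\alpha_1,\dots,w\alpha_n}\setminus\{w\alpha_1,\dots,w\alpha_n\}
\;=\; w\bigl(\mathrm{Conv}^{\N}_{\alpha_1,\dots,\alpha_n}\setminus\{\alpha_1,\dots,\alpha_n\}\bigr)
\]
because $w$ acts linearly on $\ft^\vee$ and preserves $\Phi$. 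The new Stokes curve labeled by $\gamma_j$ at $J$ is the unique oriented integral curve of $V_{\gamma_j}$ starting at $J$; by the identity $w_* V_{\gamma_j} = V_{w\gamma_j}$ this is mapped by $w$ to the unique oriented integral curve of $V_{w\gamma_j}$ starting at $w(J)$, which is the new Stokes curve at $w(J)$ labeled by $w\gamma_j$. Hence $\tilde{\cW}_k$ is $W$-invariant with matching labels, completing the induction.

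Taking the union over $k$, the full basic WKB cameral network $\tilde{\cW}_{WKB}$ (and its restriction to $\tilde X^{\circ'}$, provided $X^{\circ'}$ is chosen $W$-invariantly, which we may do by replacing it with $\bigcap_{w\in W} w(X^{\circ'})$ if necessary) satisfies the equivariance condition \ref{item:equivariance} of Definition \ref{defin:basic_abstract}. The only nontrivial input is the identity $w_* V_\alpha = V_{w\alpha}$; everything else is a routine induction, and I expect no serious obstacle.
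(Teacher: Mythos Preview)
Your proof is correct and follows essentially the same approach as the paper: derive $w_* V_\alpha = V_{w\alpha}$ from the $W$-equivariance of $\tilde a$, then propagate this through the primary curves and the iterative step using $w\bigl(\Conv^\N_{\alpha_1,\dots,\alpha_n}\bigr) = \Conv^\N_{w\alpha_1,\dots,w\alpha_n}$. Your version is more explicit about the induction; one small point is that your aside about intersecting $w(X^{\circ'})$ is unnecessary, since $X^{\circ'}$ is a subset of $X^\circ$ (not of $\tilde X^\circ$) and $W$ acts on the fibers of $\tilde X^\circ \to X^\circ$, so $\tilde X^{\circ'} = \tilde X^\circ \times_{X^\circ} X^{\circ'}$ is automatically $W$-stable.
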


\begin{proof}
The action of $W$ on $\fr t^\vee\ni \alpha$, is dual to that on $\fr{t}$. It follows that, for every $w \in W$ and $r \in R$, $\tilde a \big(w(r)\big) \in H_{w(\pm \alpha)}$. Therefore the primary Stokes curves starting at $w(r)$ are labeled by $w(\alpha)$.

By definition, $\chi_\alpha = (1 \times \alpha) \circ \tilde a$.
The $W$-equivariance of $\tilde{a}$ implies that:
\begin{align*}
    \chi_{w\alpha} &= (1\times w\alpha) \circ \tilde a\\
    &= (1\times \alpha)\circ w\tilde{a}\\
    &= (w^{-1})^{*}\chi_{\alpha} .
\end{align*}
It follows that the projective vector fields satisfy $w_*(V_\alpha) = V_{w(\alpha)}$. For the integral curves, this means $w(\ell_\alpha) = \ell_{w(\alpha)}$.

Whenever $\ell_1, \ell_2$ intersect at $J$, then $w(\ell_1), w(\ell_2)$ intersect at $w(J)$. The restricted convex hulls satisfy $\Conv^\N_{\alpha_{w(\ell_1),w(\ell_2)}} = w \big(\Conv^\N_{\alpha_{\ell_1,\ell_2}} \big)$
\end{proof}

\subsubsection{Convexity at joints}
\label{subsubsec: convexity}

A consequence of Remark \ref{rem:tangent_preimage} is the following lemma:

\begin{lemma}
\label{lem:2d_sector}
Assume that, at a joint $x \in \tilde \cJ$, there are outgoing Stokes curves labeled by $\alpha, \alpha + \beta, \beta \in \Phi$. Let $v_\alpha, v_{\alpha + \beta}, v_{\beta} \in T_x\tilde X^\circ$ denote tangent vectors to the three outgoing curves. Then there exist $c_\alpha, c_\beta \in \R_+$ such that $v_{\alpha + \beta} = c_\alpha v_\alpha + c_\beta v_\beta$.
\end{lemma}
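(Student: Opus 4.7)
The plan is to work in a local holomorphic coordinate at the joint $x$ and reduce the statement to a convexity fact about complex numbers in the plane. First I would note that $x$ cannot be a ramification point of $\pi$: ramification points are sources of the directed graph of the network (Construction \ref{constr:wkb_cameral}, step \ref{item: initialization}) and a joint has at least one incoming edge, so identifying the two would produce an oriented path ending at its own starting vertex, violating part \ref{item:acyclic} of Definition \ref{defin:basic_abstract}. Thus $\chi_\alpha$ and $\chi_\beta$ are nonvanishing holomorphic $1$-forms near $x$, and if $z$ is a local coordinate at $x$ I may write $\chi_\alpha = f_\alpha\, dz$, $\chi_\beta = f_\beta\, dz$, with $\chi_{\alpha+\beta} = (f_\alpha+f_\beta)\, dz$ by the linearity of $\gamma \mapsto \chi_\gamma$ in the root (Construction \ref{def:form_root}).

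Next I would translate the defining condition $\chi_\gamma(v_\gamma) \in \R_+$ into a condition on complex numbers. Identify $T_x \tilde X^{\circ'}$ with $\C$ as real vector spaces via $V = \mu\, \partial_z + \bar\mu\, \partial_{\bar z} \leftrightarrow \mu$; this is an $\R$-linear isomorphism, so the relation $v_{\alpha+\beta} = c_\alpha v_\alpha + c_\beta v_\beta$ in $T_x \tilde X^{\circ'}$ corresponds to the analogous linear relation among the complex numbers $\mu_\gamma$ associated to $v_\gamma$. A short calculation shows that $\chi_\gamma(V) \in \R_+$ is equivalent to $\mu \in \R_{>0} \cdot \bigl(1/f_\gamma(0)\bigr)$. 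So, after normalizing the $v_\gamma$ (only the ray matters, and this normalization does not affect the sign of $c_\alpha, c_\beta$), I may take $\mu_\alpha = 1/f_\alpha(0)$, $\mu_\beta = 1/f_\beta(0)$, $\mu_{\alpha+\beta} = 1/\bigl(f_\alpha(0)+f_\beta(0)\bigr)$, and the claim becomes: $\mu_{\alpha+\beta}$ lies strictly inside the open convex cone $\R_{>0}\mu_\alpha + \R_{>0}\mu_\beta \subset \C$.

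The key input, and really the only substantive step, is the $\R$-linear independence of $f_\alpha(0)$ and $f_\beta(0)$. This follows from the distinct-tangent-rays clause of condition \ref{item:local_joint} in Definition \ref{defin:basic_abstract}: if $f_\beta(0) = r f_\alpha(0)$ for some $r \in \R^\times$, then $\mu_\beta$ is a real multiple of $\mu_\alpha$, so $v_\beta$ is parallel to $\pm v_\alpha$; moreover $\chi_{\alpha+\beta} = (1+r)\chi_\alpha$ then forces $v_{\alpha+\beta}$ to also be parallel to $\pm v_\alpha$ (or to be ill-defined when $r=-1$), contradicting distinctness of the three tangent rays at the joint. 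Granting independence, a standard planar observation says that $f_\alpha(0)+f_\beta(0)$ has argument strictly between $\arg f_\alpha(0)$ and $\arg f_\beta(0)$ in the minor arc, and passing to reciprocals (which negates arguments) places $\mu_{\alpha+\beta}$ strictly inside the open cone $\R_{>0}\mu_\alpha+\R_{>0}\mu_\beta$. This produces the required $c_\alpha, c_\beta \in \R_+$. The main obstacle is this $\R$-linear independence; once it is in hand, the rest is bookkeeping.
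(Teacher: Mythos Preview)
Your proof is correct and rests on the same key ingredient as the paper's: the linearity $\chi_{\alpha+\beta}=\chi_\alpha+\chi_\beta$ together with the fact that $T_x\tilde X^{\circ}$ is one complex dimension, so that the three tangent directions are governed by three complex numbers whose reciprocals satisfy an additive relation.

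The execution differs, and the comparison is instructive. You choose a coordinate, set $\mu_\gamma=1/f_\gamma(0)$, and argue geometrically that $\mu_{\alpha+\beta}$ lies in the open cone $\R_{>0}\mu_\alpha+\R_{>0}\mu_\beta$ by tracking arguments under reciprocation; this requires $f_\alpha(0)$ and $f_\beta(0)$ to be $\R$-linearly independent, which you extract from the distinct-tangent-rays clause of Definition~\ref{defin:basic_abstract}~\ref{item:local_joint}. The paper instead normalizes so that $\gamma(\tilde a_x^\vee(v_\gamma))=1$, writes $v_\alpha=\eta_\alpha v_{\alpha+\beta}$ and $v_\beta=\eta_\beta v_{\alpha+\beta}$, derives $\eta_\alpha^{-1}+\eta_\beta^{-1}=1$, and then simply observes that
\[
\frac{1}{|\eta_\alpha|^2}v_\alpha+\frac{1}{|\eta_\beta|^2}v_\beta
=(\bar\eta_\alpha^{-1}+\bar\eta_\beta^{-1})\,v_{\alpha+\beta}=v_{\alpha+\beta},
\]
giving the explicit positive coefficients $c_\gamma=1/|\eta_\gamma|^2$. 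In your coordinates this is the identity $\dfrac{|f_\alpha|^2}{|f_\alpha+f_\beta|^2}\cdot\dfrac{1}{f_\alpha}+\dfrac{|f_\beta|^2}{|f_\alpha+f_\beta|^2}\cdot\dfrac{1}{f_\beta}=\dfrac{1}{f_\alpha+f_\beta}$. The paper's route is therefore slightly sharper: it produces explicit $c_\alpha,c_\beta$ and does not need to invoke the distinct-rays hypothesis at all, whereas your cone argument needs that hypothesis to make the cone two-dimensional. Both are valid; yours is more pictorial, the paper's more algebraic.
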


In particular this means that the tangent vector $v_{\alpha+\beta}$ to the outgoing Stokes curve labelled by $\alpha+\beta$ is in the cone spanned by the tangent vectors $v_{\alpha}$ and $v_{\beta}$.

\begin{proof}[Proof of Lemma \ref{lem:2d_sector}]
For each $\gamma \in \{\alpha, \alpha + \beta, \beta\}$, $v_\gamma$ is defined, up to scaling by $\R_+$, by:
\[
\gamma \big( \tilde a_x^{\vee} (v_\gamma) \big) \in \R_+,
\]
with $\tilde a_x^{\vee}$ as in Remark \ref{rem:tangent_preimage}. Since the result of this lemma is independent of rescaling $v_\gamma$ by $\R_+$, we can assume without loss of generality that:
\[
\gamma \big( \tilde a_x^{\vee} (v_\gamma) \big) = 1.
\]
Moreover, since $T_x\tilde X^\circ$ is 1-dimensional over $\C$, there exist $\eta_\alpha, \eta_\beta \in \C$ such that:
\begin{align*}
v_\alpha &= \eta_\alpha v_{\alpha + \beta},
\\
v_\beta &= \eta_\beta v_{\alpha + \beta}.
\end{align*}
Then:
\begin{align*}
1 
&=
(\alpha + \beta) \big( \tilde a_x(v_{\alpha + \beta})\big)
\\
&=
\alpha \big( \tilde a_x(v_{\alpha + \beta})\big) + \beta \big( \tilde a_x(v_{\alpha + \beta})\big)
\\
&= 
\eta_\alpha^{-1} \alpha \big( \tilde a_x(v_{\alpha})\big)
+ \eta_\beta^{-1} \beta \big( \tilde a_x(v_{\beta})\big)
\\
&= 
\eta_\alpha^{-1} + \eta_\beta^{-1}.
\end{align*}
Consequently, $\bar \eta_\alpha^{-1} + \bar \eta_\beta^{-1} = 1$. Then, using the fact that $\bar \eta \eta = |\eta|^2$, for all $\eta \in \C$:
\begin{align*}
\frac{1}{|\eta_\alpha|^2} v_\alpha 
+ \frac{1}{|\eta_\alpha|^2} v_\beta 
&=
\frac{1}{|\eta_\alpha|^2} \eta_\alpha v_{\alpha +\beta}
+ \frac{1}{|\eta_\alpha|^2} \eta_\beta v_{\alpha +\beta} 
\\
&= 
(\bar \eta_\alpha^{-1} + \bar \eta_\beta^{-1}) v_{\alpha + \beta} 
\\
&=
v_{\alpha + \beta}.
\end{align*}
So we can take $c_\alpha = 1/|\eta_\alpha|^2$ and $c_\beta = 1/ |\eta_\beta|^2$.
\end{proof}

\subsection{Examples of Spectral networks}

In the section we describe the relation between $SL(2)$ spectral networks and trajectories of a quadratic differential in Example \ref{example: SL2 and quadratic differentials}.  Then in Example \ref{ex:bnr} we revisit the example of \cite{bnr1982new} where new Stokes curves were first introduced.

\begin{example}[$G=SL(2)$ and Quadratic differentials]
\label{example: SL2 and quadratic differentials}
For $G=SL(2)$, so that $\ft \cong \C$ and $W = \Z/2$. The Hitchin base is:
\[
\cA
=
\Gamma\big(X^{c}, \ft_{K_{X^{c}}(D)} \sslash W\big) 
\cong 
\Gamma\big(X^c, (K_{X^c}(D))^{\otimes 2} \big).
\]
We call an element $\omega \in \Gamma\big(X^{c}, (K_{X^{c}}(D))^{\otimes 2}\big)$ a quadratic differential.

The commutative diagram that defines the cameral cover of $X$ associated to $a$;
\[
\begin{tikzcd}
\tilde X^{c}
\arrow[swap]{d}{\pi} \arrow{r}{\tilde a}
& 
\ft_{ K_{X^c(D)}}
\arrow{d}
\\
X^{c}
\arrow{r}{a}
&
\ft_{ K_{X^c(D)}} \sslash W
\end{tikzcd}
\]
provides a section $\tilde a \in \Gamma\big(\tilde X^{c}, K_{X^c}(D)\times_{\bbG_{m}}\ft\big)$. If we locally (in the classical topology) pick a section of $s$ of $\tilde{X}$ the pullback $s^{*}(\tilde a)$ on $X$ only depends up to sign on the section $s$ chosen, and $\big(s^*(\tilde a)\big)^{\otimes 2} = \omega$.

Each $\omega$, determines a foliation of $X$ by curves $\gamma : \bbR_{t}\rightarrow X$ defined by the condition:
\begin{equation}
\label{eq:foliation_qd}
\frac{d}{ds}\int^{s}_{t=t_0}\pm \sqrt{\omega}(\gamma(t))\in \bbR  .
\end{equation}
These curves don't come with a canonical parametrization, because condition \ref{eq:foliation_qd} determines the tangent vectors of $\gamma$ only up to a scaling action of $\R^*$. These curves also do not have a canonical orientation.  The leaves of this foliation are called \textbf{trajectories of the quadratic differential} $\omega$. (See the classical text \cite{strebel1984quadratic}, as well as the more recent works \cite{bridgeland2015quadratic, iwaki2014exact, iwaki2015exact}.) 

Since $\big(s^*(\tilde a)\big)^{\otimes 2} = \omega$, the trajectories of $\omega$ which start from branch points of $\pi$ agree with the Stokes curves in the WKB construction associated to $a$, after we forget the orientation and label of the latter.

Examples of trajectories of a quadratic differential can be found in figure \ref{figure: various quadratic differentails}.
\begin{figure}[h]
    \centering
    \begin{minipage}{.33\textwidth}
      \centering
      \includegraphics[scale = 0.3]{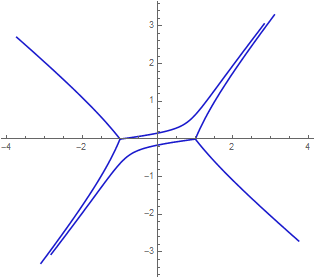}
      \label{fig:qd-}
    \end{minipage}%
    \begin{minipage}{.33\textwidth}
      \centering
      \includegraphics[scale = 0.3]{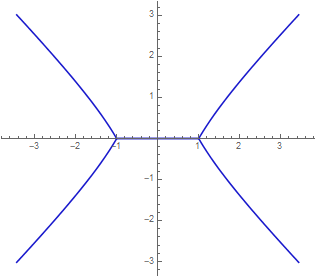}
      \label{fig:qd0}
    \end{minipage}%
    \begin{minipage}{.33\textwidth}
      \centering
      \includegraphics[scale = 0.3]{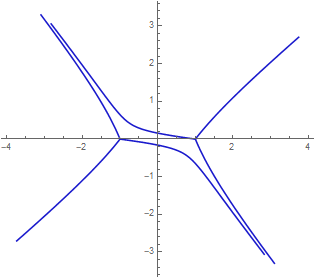}
      \label{fig:qd+}
    \end{minipage}
    \caption{Trajectories of the quadratic differentials $\omega = e^{i\pi/5}(1-z^2)dz^{\otimes 2}$, $\omega = (1-z^2)dz^{\otimes 2}$ and $\omega = e^{-i\pi/5}(1-z^2)dz^{\otimes 2}$, respectively.}
    \label{figure: various quadratic differentails}
    \end{figure}
\end{example}

\begin{example}[$SL(3)$, BNR example \cite{bnr1982new}.  See Figure \ref{fig:bnr_network}]
\label{ex:bnr}
Recall from Example \ref{eg:convex_hull_typeA} that, for Lie algebras of type A, given any two roots $\alpha, \beta$, the restricted convex hull satisfies:
\[
\Conv^\N_{\alpha,\beta} 
\subset 
\{\alpha, \alpha+\beta, \alpha\} ,
\]
(see Figure \ref{fig:a2root}).
As such, there is at most one new Stokes curve generated at each joint where two Stokes lines intersect.

Consider $G = SL_3(\C)$, which has the root system $A_2$. 
Consider $X = \bbP^1$ with a single puncture at $\infty$. We consider the following characteristic polynomial as a point in a Hitchin base for $X$:
\[  P(z) =  \lambda^3 + 3\lambda + 2i z  = 0.  \]
The discriminant of $P(z)$ vanishes at $z = \pm 1$, which are the branch points of the associated cameral cover. We will choose a $W$-framing, away from two branch cuts starting at $z=1$ (and $z=-1$) and tending to $\infty$ along a path slightly above the locus where $z$ is real and  negative (respectively real and positive).   With respect to the $W$-framing we choose let $s_{\alpha_{1}}\in W$ be the element of $W$ fixing the ramification point\footnote{That is to say if we take the ramification point corresponding to the identity branch of the cameral cover, this point is fixed by $s_{\alpha_{1}}$.} at $z=1$, and $s_{\alpha_{2}}$ fixes the ramification point at $z=-1$.  This creates 3 primary Stokes curves starting from $z=1$, labeled by $\pm \alpha_1$, and three starting from $z=-1$, labeled by $\pm \alpha_2$, where we are using the chosen framing to label the curves by roots as explained in Remark \ref{rem:root_noncanonical}. As can be seen in Figure \ref{fig:bnr_network}, there are two intersections of primary Stokes curves. If we choose the $W$-framing such that the two primary Stokes lines in the upper half of Figure \ref{fig:bnr_network} are labelled by $\alpha_{1}$ and $\alpha_{2}$ then the new lines generated at the intersection are labeled by $\pm (\alpha_1 + \alpha_2)$ with respect to our chosen $W$-framing.

\begin{figure}[h]
\label{fig:bnr_network}
\includegraphics[scale=0.5]{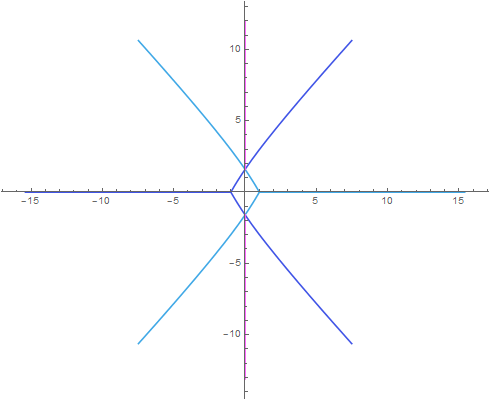}
\caption{The BNR spectral network. Light blue is $\pm \alpha_1$, dark blue is $\pm \alpha_2$, purple is $\pm(\alpha_1 + \alpha_2)$.}
\end{figure}

This example first appeared in \cite{bnr1982new}, which was, to the best of our knowledge, the first source to mention new Stokes curves at the intersections of old ones. The context in \cite{bnr1982new} was the WKB analysis of solutions to differential equations.
\end{example}







\subsection{Trajectories near $D\subset X^{c}$}
\label{subsec: trajectories near D}

In this subsection we introduce a condition (Definition \ref{definition: Condition R}) 
(which we call Condition R) on points in the Hitchin base 
$\cA:= \Gamma(X^{c}, \ft_{K_{X^{c}}(D)}\sslash W)$, 
which allows us to consider WKB spectral networks with only a finite number of Stokes curves.
In general, as long as Stokes curves labeled by different roots are asymptotic to the same $d \in D$, they can have infinitely many intersections, each of which generates new Stokes curves. See Figure \ref{fig:joints_accumulate}, Remark \ref{remark: infinitely many new Stokes curves}, and the discussion in Section 5.5 of \cite{gaiotto2013spectral}.

\begin{figure}[h]
    \centering
    \includegraphics[width = 0.5 \textwidth]{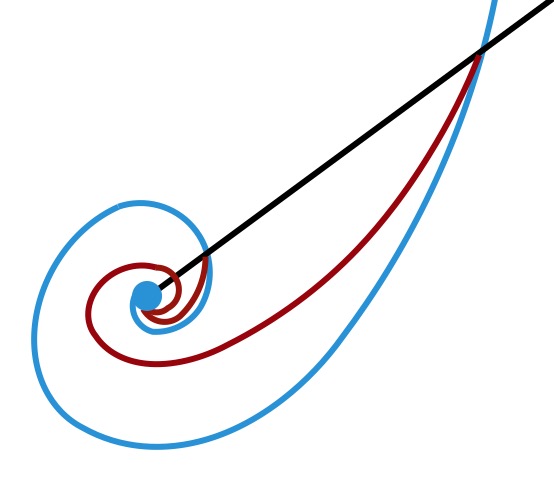}
    \caption{Two Stokes curves (blue and black) whose intersections generate infinitely many new Stokes curves (red).  We have truncated the diagram (with the large blue circle) so that only a finite number of intersections can be seen.}
    \label{fig:joints_accumulate}
\end{figure}

By restricting the spectral network to an appropriate subset $X^\circ \setminus \coprod_{d\in D} B_d$, we will be free to ignore this behaviour, under the assumption that the set of joints $\cJ$ is discrete in $X^{\circ}$.

\begin{definition}
\label{def:residue_hitchin_point}
Let $a\in \Gamma(X^{c}, \ft_{K_{X^{c}}(D)}\sslash W)$ be a point in the Hitchin base. In an open set $U_{d}\ni d$, for each $d\in D$, we can write:
\[
a(z) = r_{d}(z)\frac{dz}{z},
\]
where $z$ is a local coordinate around $d$ in $U_{d}$, and $r_{d}$ is a function $r_d: U_{d}\rightarrow \ft\sslash W$. We call any lift $\tilde r_d$ of $r_d(0)$ to $\ft$ a \textbf{residue} of $a$ at $d$.
\end{definition}


\begin{figure}[h]
    \centering
    \begin{minipage}{.33\textwidth}
      \centering
      \includegraphics[width = 0.9\textwidth]{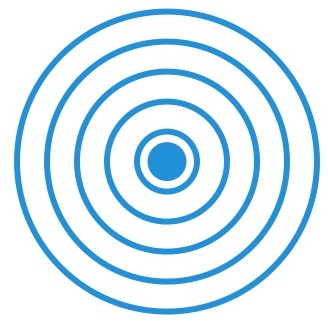}
      \caption{$\tilde r_d \in i\R$}
      \label{fig:residue_imag}
    \end{minipage}%
    \begin{minipage}{.33\textwidth}
      \centering
      \includegraphics[width = 0.9\textwidth]{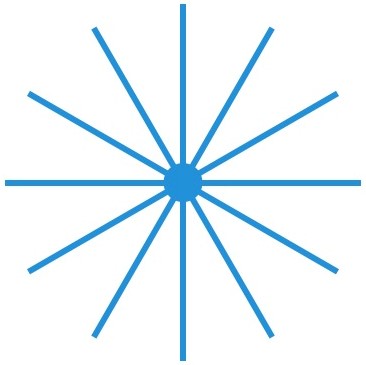}
      \caption{$\tilde r_d \in \R$}
      \label{fig:residue_real}
    \end{minipage}%
    \begin{minipage}{.33\textwidth}
      \centering
      \includegraphics[width = 0.9\textwidth]{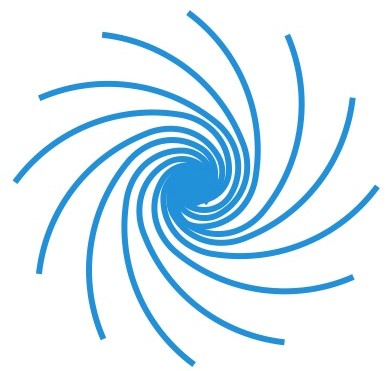}
      \caption{$\tilde r_d \in \C \setminus (i\R\cup \R)$}
      \label{fig:residue_generic}
    \end{minipage}
\end{figure}

Recall from Example \ref{example: SL2 and quadratic differentials} the connection between points in the Hitchin base for $SL(2)$ and quadratic differentials. We use the residue of Definition \ref{def:residue_hitchin_point} to understand the behaviour of the trajectories of the quadratic differential near $d \in D$.  It is well known (see e.g. Section 6.2 of \cite{strebel1984quadratic}) that there is a neighbourhood $B_{d}$ of a point $d\in D$ for which the trajectories look like one of the Figures \ref{fig:residue_imag} -- \ref{fig:residue_generic}, depending on whether $\alpha(\tilde r_d)$ is in $\R$, $i\R$ or $\C \setminus (\R \cup i\R)$, respectively.  This means that in the case that $\alpha(\tilde r_d) \in \C \setminus i \R$, each trajectory of $\omega$ in $B_{d}$ has one end that converges to $d$.  Because we will be working with multiple trajectories simultaneously we need the following slightly stronger result:

\begin{lemma}[Asymptotics of trajectories of quadratic differentials]
\label{lem:qd_at_pole}
In the case of $G = SL(2)$, let $\tilde r_d$ be a non-zero residue of $a \in \Gamma(X^c, \ft_{K_{X^c}(D)}\sslash W)$ at $d\in D$. Let $\omega \in \Gamma(X^c, (K_{X^c}(D))^{\otimes 2})$ be the quadratic differential associated to $a$, as in Example \ref{example: SL2 and quadratic differentials}. Let $z$ be a local coordinate around $d\in D$ which vanishes at $d$. For any real number $r>0$ we define the set $B_{d,r,z}:=\{x\in X||z(x)|<r\}\subset X^{c}$, which also depends on the choice of coordinate $z$.

If $\tilde{r}_{d}\in \bbC\backslash i\bbR$ then there is a real number $r_{0}>0$ (depending on $z$), such that for all $r<r_{0}$ the open set $B_{d,r,z} \subset X^c$, has the property that all the trajectories of $\omega$ that enter $B_{d,r,z}$ have one direction that converges to $d$, and never leaves the open set $B_{d,r,z}$.
\end{lemma}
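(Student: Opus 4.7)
The plan is to reduce to a local normal form for $\omega$ near the double pole $d$, analyze the dynamics of trajectories in that normal form, and then transfer the information back to the given coordinate $z$ by a direct computation. First, the classical theory of quadratic differentials (see, e.g., Strebel \cite{strebel1984quadratic}, Chapter 6) guarantees that, in a sufficiently small neighbourhood of $d$, there exists a holomorphic coordinate $w$ with $w(d)=0$ such that
\[
\omega \;=\; \Bigl(c\,\frac{dw}{w}\Bigr)^{\!\otimes 2},
\qquad c^{2} \;=\; \tilde r_d^{2}.
\]
In this normal form coordinate, a trajectory of $\omega$ is defined by the condition that $c\,dw/w$ is real along the curve; parametrising so that $c\,\dot w/w \equiv 1$, we obtain the linear ODE $\dot w = w/c$, whose solutions are $w(t) = w(0)\exp(t/c)$. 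Writing $c = \alpha + i\beta$, this gives $\frac{d}{dt}|w|^{2} = (2\alpha/|c|^{2})|w|^{2}$, so $|w|$ is strictly exponentially monotonic along each trajectory whenever $\alpha \ne 0$, i.e.\ whenever $\tilde r_d \not\in i\R$. In this case each trajectory has a unique direction along which it converges to $w=0$.

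Second, I transfer this monotonicity statement to the given coordinate $z$. The normal form coordinate is a biholomorphism $w = w(z)$ with $w(0)=0$ and $w'(0)\ne 0$, so the expansion $w(z) = w'(0)z(1+O(z))$ gives $w/w'(z) = z(1+O(z))$. Hence, along a trajectory parametrised as above,
\[
\dot z \;=\; \frac{\dot w}{w'(z)} \;=\; \frac{w(z)}{c\,w'(z)} \;=\; \frac{z}{c}\bigl(1+O(z)\bigr),
\]
and therefore
\[
\frac{d}{dt}|z|^{2}
\;=\; 2\,\mathrm{Re}(\bar z\,\dot z)
\;=\; \frac{2\alpha}{|c|^{2}}\,|z|^{2} \;+\; O(|z|^{3}).
\]
Because $\tilde r_d \not\in i\R$, we have $\alpha \ne 0$, so the leading term is non-zero with a definite sign depending on the direction of parametrisation. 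Choose $r_{0}>0$ so small that the $O(|z|^{3})$ remainder in the last display is strictly dominated in absolute value by the leading term whenever $|z|<r_{0}$; such an $r_{0}$ exists because the remainder is continuous and the coefficient of $|z|^{2}$ is non-zero.

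Third, the conclusion follows easily. Fix any $r<r_{0}$ and suppose a trajectory enters the open set $B_{d,r,z} = \{|z|<r\}$. It crosses the boundary $\{|z|=r\}$ inward, meaning that at the crossing point the parameter direction is the one for which $\frac{d}{dt}|z|^{2}<0$ (by the sign computation above, applied at $|z|=r<r_{0}$). Since the same strict inequality holds at every point of $B_{d,r,z}$ in the same parameter direction, $|z(t)|$ is strictly monotonically decreasing for all subsequent $t$, so the trajectory cannot leave $B_{d,r,z}$ and satisfies $|z(t)|\to 0$, i.e.\ converges to $d$. The main technical point is the combination of the normal-form reduction with the explicit remainder control in the $z$-coordinate: the normal form only gives trapping in $|w|$-disks, whereas we need trapping in $|z|$-disks, and the expansion $\dot z = z/c + O(z^{2})$ provides precisely the quantitative statement needed. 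The hypothesis $\tilde r_{d}\notin i\R$ enters solely to ensure that $\mathrm{Re}(1/c)\ne 0$; in the excluded case $\tilde r_d\in i\R$ the leading term vanishes and trajectories are (approximately) closed curves around $d$, which is exactly the picture of Figure~\ref{fig:residue_imag}.
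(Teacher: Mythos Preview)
Your proof is correct. The approach differs from the paper's in a meaningful but minor way, so a brief comparison is warranted.

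The paper stays entirely in the given coordinate $z$: it expands $\pm\sqrt{\omega} = \pm\tilde r_d\,dz/z + (\text{holomorphic})\,dz$, defines the asymptotic projective vector field $V_\omega^{\text{asymp}} = \pm\tilde r_d^{-1} z\,\partial_z$, observes that its integral curves are logarithmic spirals meeting every circle $|z|=r$ at a constant nonzero angle, and then argues that since the direction of the true field $V_\omega$ is uniformly close to that of $V_\omega^{\text{asymp}}$ for small $|z|$, the trapping property persists. This is a geometric angle-comparison argument.

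You instead pass to Strebel's normal form $\omega = (c\,dw/w)^{\otimes 2}$, solve the trajectory ODE exactly there, and then transfer back via a Lyapunov-function computation for $|z|^2$. Your estimate $\frac{d}{dt}|z|^2 = \frac{2\alpha}{|c|^2}|z|^2 + O(|z|^3)$ makes the trapping in $|z|$-disks completely explicit, and you correctly flag that trapping in $|w|$-disks alone would not suffice for the statement as phrased. This is arguably the cleaner of the two arguments. One small observation: the normal form detour is not actually needed for your method, since the expansion $\dot z = (z/\tilde r_d)(1+O(z))$ follows directly from $\sqrt\omega = \tilde r_d\,dz/z + O(1)\,dz$ without ever introducing $w$; the Lyapunov computation then proceeds identically.
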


\begin{proof}
By hypothesis, $a$ has a simple pole at $d$, so square roots of $\omega$ have the following expansion around $d$:
\[
\pm \sqrt{\omega} = \pm \tilde r_d \frac{dz}{z} + \sum_{i=0}^\infty c_i z^i dz .
\]
Recall from Equation \ref{eq:foliation_qd} that trajectories of $\omega$ are integral curves of the real projective vector field $V_\omega$ which satisfies:
\[
\pm \sqrt{\omega}(V_\omega) \in \R.
\]
Let $V_\omega^{\text{asymp}}$ be the projective vector field which satisfies:
\begin{equation}
\label{eq:v_asymp}
\pm \tilde r_d \frac{dz}{z} (V_\omega^{\text{asymp}}) \in \R.
\end{equation}
Equation \ref{eq:v_asymp} implies:
\[
V^{\text{asymp}}_\omega = \pm (\tilde r_d)^{-1} z \p_z .
\]

The trajectories of $V_{\omega}^{\text{asymp}}$ are logarithmic spirals, which can be parameterized by $t\in \bbR$ using the equation $T(t):=\exp((ic+t)/\tilde{r}_{d})$ for some constant $c\in \bbR$.  In particular this means that the angle at which they intersect each circle $|z|=r$ does not depend on $r$.  These trajectories are plotted in Figures \ref{fig:residue_real} and \ref{fig:residue_generic}.

Then, using the fact that $\pm \tilde r_d \frac{dz}{z}$ is the dominant term in $\pm \sqrt{\omega}$ as $z \to 0$, $V_\omega$ is asymptotic to $V_\omega^{\text{asymp}}$ in a sufficiently small open set $B_d \ni d$.  This means that the difference in the direction of $V_{\omega}$ and $V_\omega^{\text{asymp}}$ can be made arbitrarily small by selecting sufficiently small values of $r$.  The trajectories of $V_{\omega}^{\text{asymp}}$ have the desired property.  Hence as these trajectories are logarithmic spirals, any projective vector field $V'$ such that the difference in direction between $V'$ and $V_{\omega}^{\text{asymp}}$ is sufficiently small near $d$ would also have this property.  Hence $V_\omega$ has this property, that is to say there exists $r_{0}$ with the property that for all $r<r_{0}$ the open set $B_{d,r,z} \subset X^c$, has the property that all the trajectories of $\omega$ that enter $B_{d,r,z}$ has one direction that converges to $d$, and never leaves the open set $B_{d,r,z}$.



\end{proof}

Motivated by Lemma \ref{lem:qd_at_pole}, we define:

\begin{definition}[Condition R]
\label{definition: Condition R}
We say a point $a\in \Gamma(X^{c}, \ft_{K_{X^{c}}(D)}\sslash W)$ satisfies \textbf{condition R} if for each $d \in D$, the residue $\tilde r_d$ (as above) satisfies:
\[
\tilde r_d\in \ft \setminus \left(\bigcup_{\alpha \in \Phi}\alpha^{-1}(i\bbR)\right).
\]
This condition does not depend on the choice of lift $\tilde{r}_d$.

We denote the set of $a\in \cA^{\diamondsuit}$ satisfying condition R by $\cA^{\diamondsuit}_{R}$.
\end{definition}

The locus where Condition R is not satisfied is real codimension one in $\cA^{\diamondsuit}$, so this condition is generically satisfied in the Hitchin base.  The following proposition can be seen as formalizing an empirical observation of \cite{gaiotto2013spectral}.

\begin{proposition}
\label{prop: open sets around D}
Assume that $a\in \cA^{\diamondsuit}_{R}$ (Definition \ref{definition: Condition R}), For each $d\in D$, there exists an open disc $B_{d}$, with $d\in B_d \subset X^{c}$, such that any trajectory $\ell$ produced by the WKB construction (\ref{constr:wkb_cameral}) entering $\pi^{-1}(B_d)$, or produced as a new Stokes curve starting inside $\pi^{-1}(B_d)$, 
will not leave the disc $\pi^{-1}(B_d)$.
\end{proposition}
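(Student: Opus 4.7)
The plan is to apply Lemma~\ref{lem:qd_at_pole} to each of the quadratic differentials $\omega_\alpha$ of Construction~\ref{construction: Quadratic form alpha}, and then use the iterative structure of Construction~\ref{constr:wkb_cameral} together with the positivity of coefficients in $\Conv^\N$ to handle new Stokes curves.

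First I would set up local coordinates. Fix $d\in D$ and a local coordinate $z$ on $X^c$ vanishing at $d$. By Convention~\ref{convention:cameral} the cameral cover is unramified over $D$, so $d$ has $|W|$ preimages $\tilde d_i\in\tilde X^c$, none of which is fixed by any $s_\alpha$; consequently each quotient $p_\alpha:\tilde X^c\to\tilde X^c_\alpha$ is unramified at $\tilde d_i$. Pulling $z$ back via $\pi$, the $W$-equivariance of $\tilde a$ yields $\chi_\alpha=\alpha(\tilde r_d^{(i)})\,\frac{dz}{z}+(\text{holomorphic})$ with $\tilde r_d^{(i)}=w_i(\tilde r_d)$ for some $w_i\in W$. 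Since $p_\alpha^*\omega_\alpha=\chi_\alpha^{\otimes 2}$ and $p_\alpha$ is a local isomorphism at $\tilde d_i$, the quadratic differential $\omega_\alpha$ corresponds to a point in the $SL(2)$ Hitchin base on $\tilde X^c_\alpha$ whose residue at $p_\alpha(\tilde d_i)$ in the sense of Definition~\ref{def:residue_hitchin_point} equals $\alpha(\tilde r_d^{(i)})=(w_i^{-1}\alpha)(\tilde r_d)$, which by condition R lies in $\C\setminus i\R$. Lemma~\ref{lem:qd_at_pole} therefore gives $r_{i,\alpha}>0$ such that trajectories of $\omega_\alpha$ entering the disc of radius $r_{i,\alpha}$ about $p_\alpha(\tilde d_i)$ never leave, and converge to $p_\alpha(\tilde d_i)$ in one direction. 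Setting $r_0:=\min_{i,\alpha}r_{i,\alpha}$ over the finite index set, I take $B_d$ to be the disc of radius $r_0$ about $d$ in $X^c$; then $\pi^{-1}(B_d)=\coprod_i B_{\tilde d_i}$, with each $B_{\tilde d_i}$ mapped isomorphically onto the corresponding trapping disc in $\tilde X^c_\alpha$ by $p_\alpha$.

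Next, I would argue by induction on the step $k$ of Construction~\ref{constr:wkb_cameral}, with the hypothesis: every Stokes curve produced by step $k$ that enters or starts inside $B_{\tilde d_i}$ has label $\alpha$ with $\Re\bigl(\alpha(\tilde r_d^{(i)})\bigr)<0$, converges to $\tilde d_i$, and does not leave $B_{\tilde d_i}$. In the base case, if a primary Stokes curve $\ell$ labeled by $\alpha$ enters $B_{\tilde d_i}$ in its forward direction, then $p_\alpha(\ell)$ enters the trapping disc, so Lemma~\ref{lem:qd_at_pole} forces it to stay inside and converge to $p_\alpha(\tilde d_i)$; pulling back via the local isomorphism $p_\alpha|_{B_{\tilde d_i}}$ gives the corresponding statement for $\ell$. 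Since asymptotically $V_\alpha$ is a positive real multiple of $\alpha(\tilde r_d^{(i)})^{-1}z\partial_z$, whose flow $|z(t)|=|z(0)|\exp\bigl(s\Re(1/\alpha(\tilde r_d^{(i)}))t\bigr)$ decays iff $\Re(\alpha(\tilde r_d^{(i)}))<0$, convergence of $\ell$ to $\tilde d_i$ forces the sign. For the inductive step, let $J\in B_{\tilde d_i}$ be a joint with incoming labels $\{\alpha_k\}$; by hypothesis $\Re(\alpha_k(\tilde r_d^{(i)}))<0$ for each $k$. Any new Stokes curve at $J$ has label $\gamma=\sum_k n_k\alpha_k\in\Conv^\N_{\alpha_1,\ldots,\alpha_n}$ with $n_k\in\N\cup\{0\}$ not all zero, so $\Re(\gamma(\tilde r_d^{(i)}))=\sum_k n_k\Re(\alpha_k(\tilde r_d^{(i)}))<0$. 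The same asymptotic computation shows the flow of $V_\gamma$ converges to $\tilde d_i$, and Lemma~\ref{lem:qd_at_pole} applied to $\omega_\gamma$ yields that the new Stokes curve does not leave $B_{\tilde d_i}$, closing the induction.

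The main obstacle I expect is the careful bookkeeping between the three curves $X^c$, $\tilde X^c$ and $\tilde X^c_\alpha$: in particular, one must verify $W$-equivariantly that the scalar $\alpha(\tilde r_d^{(i)})$ arising in the local expansion of $\chi_\alpha$ coincides with the $SL(2)$ residue of $\omega_\alpha$ to which Lemma~\ref{lem:qd_at_pole} is applied. Once this identification is in place, the combinatorial heart of the argument—that positive integral combinations of roots with $\Re(\alpha_k(\tilde r_d^{(i)}))<0$ inherit the sign—is immediate from the definition of $\Conv^\N$, and Lemma~\ref{lem:qd_at_pole} does the rest.
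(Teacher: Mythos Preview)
Your proof is correct and follows the same overall strategy as the paper: apply Lemma~\ref{lem:qd_at_pole} to each quadratic differential $\omega_\alpha$ near the preimages of $d$, intersect the resulting trapping discs, and then argue inductively that new Stokes curves produced at joints inside $\pi^{-1}(B_d)$ are also trapped. The one genuine difference is in the inductive step. The paper handles the orientation of new Stokes curves geometrically, invoking Lemma~\ref{lem:2d_sector} to say that the forward tangent of a new curve lies in the positive cone spanned by the forward tangents of the incoming curves, and hence also points ``towards $d$''. You instead extract the scalar invariant $\Re\bigl(\alpha(\tilde r_d^{(i)})\bigr)$, show that a forward trajectory of $V_\alpha$ converges to $\tilde d_i$ precisely when this real part is negative, and then observe that negativity is preserved under the non-negative integer combinations defining $\Conv^\N$. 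Your argument is more self-contained (it does not need Lemma~\ref{lem:2d_sector}) and makes the link between orientation and convergence explicit; the paper's version is more geometric but leaves implicit the passage from ``tangent points towards $d$ at the joint'' to ``the whole forward curve converges to $d$'', which your asymptotic sign computation handles directly.
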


\begin{proof}
Firstly, condition R implies that the residue $r(0)\in \ft\sslash W$ does not lie on the hyperplane $\left(\cup_{\alpha}H_{\alpha}\right)\sslash W$.  Hence the covering $\tilde{X}^{c}\rightarrow X^{c}$ is trivial when restricted to some neighbourhood $V_{d}$ of $d\in D$. Then $\pi^{*}K_{X^{c}}|_{\pi^{-1}(V_{d})}\cong K_{\tilde{X}^{c}}|_{\pi^{-1}(V_{d})}$. 


Recall from Construction \ref{construction: Quadratic form alpha} we have a quadratic differential on each $\tilde{X}_{\alpha}=\tilde X^c / s_\alpha$, that is to say a section of $\big(K_{\tilde{X}^{c}/s_{\alpha}}(\pi_{\alpha}^{-1}(D))\big)^{\otimes 2}$. The (unoriented) foliation on $\tilde{X}$ associated to $\alpha$ is the inverse image by $p_\alpha$ of the foliation on $\tilde{X}_{\alpha}$ associated to the quadratic differential.
We choose a local coordinate $z$ of $X^{c}$ in a neighbourhood of $d$ which vanishes at $d$.  By pullback along $\pi_{\alpha}$ this gives a local coordinate at each preimage of $d$ in $\tilde{X}_{\alpha}^{c}$.  These are the coordinates we will use when we apply Lemma \ref{lem:qd_at_pole}, and we will denote them by $\tilde{z}$.

Lemma \ref{lem:qd_at_pole} provides, for each $\alpha \in \Phi$ and each lift $\tilde{d}$ of $d$ to $\tilde{X}_{\alpha}^{c}$, a real number $r_{0}>0$, such that the set $B_{\tilde{d},\tilde{z},r,\alpha}$ (we add $\alpha$ to the notation of Lemma \ref{lem:qd_at_pole}) has the desired property for Stokes curves labelled by $\alpha$, for every positive real number $r<r_{0}$.  We also pick $r_{0}$ sufficiently small such that $p_{\alpha}$ is unramified on $B_{\tilde{d},\tilde{z},r_{0},\alpha}$, and $\pi_{\alpha}$ is single valued on $B_{\tilde{d},\tilde{z},r,\alpha}$.  Define $B_{d}:=\cap_{\tilde{d}, \alpha} \pi_{\alpha}(B_{\tilde{d},\tilde{z},r_{0}/2,\alpha})$ where we take the intersection over all roots $\Phi$, and all lifts $\tilde{d}$ of $d$ to $\tilde{X}_{\alpha}$.  The set $B_{d}$ has the property that every preimage of it in $\tilde{X}_{\alpha}$ (for any root $\alpha$) is of the form $B_{\tilde{d},\tilde{z},r,\alpha}$ for some positive real $r<r_{0}$.  

The set $B_{d}$ hence has the property that any trajectory of the vector field $\chi_{\alpha}$ (of Construction \ref{def:form_root} for any root $\alpha$) entering $\pi^{-1}(B_{d})$ has the property one direction of this trajectory converges to $d$ and never leaves $\pi^{-1}(B_{d})$.



It remains to discuss the orientation of Stokes curves. We work inductively on the total order on $\tilde \cJ$ that the acyclicity assumption in the WKB construction guarantees.
\begin{itemize}
    \item Base step: For the Stokes curves entering $B_d$, the orientation is necessarily towards $d$.
    \item Inductive step: At each $x \in \tilde \cJ \cap \pi^{-1}(B_d)$, the inductive hypothesis says that the incoming Stokes curves are oriented towards $d$. By Lemma \ref{lem:2d_sector}, this is also the case for the new Stokes curve.
\end{itemize} 
\end{proof}


\begin{proposition}
\label{prop:condition_R_finite}
For a point in the Hitchin base satisfying condition $R$, such that the WKB construction $\tilde \cW_{WKB}$ for $a$ does not end with an error, and the joints of $\tilde \cW_{WKB}$ do not have accumulation points in $\tilde X^\circ$, the restriction:
\[
\tilde \cW_{WKB}' := \tilde \cW_{WKB}|_{X^\circ \setminus \cup_{d\in D}B_{d}}
\]
consists of finitely many Stokes curves, intersecting at finitely many joints.
\end{proposition}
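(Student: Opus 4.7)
The plan is to use compactness of the restricted region together with the no-accumulation hypothesis on joints, and to use Proposition \ref{prop: open sets around D} to confine all problematic behaviour to the removed discs.

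First I would establish that $\tilde X^{\circ '}$ is compact. Since each $B_d \subset X^c$ is an open neighborhood of $d$, the complement $X^c \setminus \cup_{d\in D} B_d$ is closed in the compact surface $X^c$, hence compact. The space $X^{\circ '}$ differs from this compact set only by replacing each branch point $p \in P$ with a boundary circle $S^1_p$ (the marked oriented real blowup), so $X^{\circ '}$ is a compact surface with boundary. Finally, $\tilde X^{\circ '} = \tilde X^\circ \times_{X^\circ} X^{\circ '}$ is a proper finite cover of $X^{\circ '}$ (branched at ramification points, each replaced by a boundary circle $S^1_r$), hence also compact.

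Next, I would count joints. The hypothesis states that $\tilde \cJ$ has no accumulation point in $\tilde X^\circ$. Hence $\tilde \cJ \cap \tilde X^{\circ '}$ is a discrete subset of the compact space $\tilde X^{\circ '}$, and is therefore finite.

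Then I would count Stokes curves of $\tilde \cW_{WKB}|_{\tilde X^{\circ '}}$ by separating them into two types.
For primary Stokes curves, condition (3) of Definition \ref{defin:basic_abstract} produces exactly six of them at each ramification point $r \in R$, and $R$ is finite (it is the ramification divisor of the proper map $\pi : \tilde X^c \to X^c$). So there are finitely many primary Stokes curves in total, and thus finitely many that appear in $\tilde \cW_{WKB}|_{\tilde X^{\circ '}}$.
For new Stokes curves, each is generated at some joint $J \in \tilde \cJ$. If $J \in \pi^{-1}(B_d)$ for some $d \in D$, then Proposition \ref{prop: open sets around D} guarantees that the resulting curve never leaves $\pi^{-1}(B_d)$ and therefore does not appear in $\tilde \cW_{WKB}|_{\tilde X^{\circ '}}$. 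Hence the new Stokes curves contributing to the restriction are exactly those produced at joints in $\tilde \cJ \cap \tilde X^{\circ '}$, and at each such joint the number of new Stokes curves is bounded by $|\Phi|$, since their labels lie in the restricted convex hull of the finitely many incoming labels, a finite subset of the finite root system $\Phi$. Combining the finiteness of $\tilde \cJ \cap \tilde X^{\circ '}$ with this per-joint bound, there are only finitely many new Stokes curves in $\tilde \cW_{WKB}|_{\tilde X^{\circ '}}$.

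The only mild obstacle is checking the compactness of $\tilde X^{\circ '}$ carefully, in particular that the blowup boundary circles $S^1_p$ above $P$ and $S^1_r$ above $R$ do not interfere: they lie inside $\tilde X^{\circ '}$, contribute nothing extra to the set of joints (joints live in the interior of $\tilde X^\circ$), and are themselves compact. Once this is in hand, both counts are finite and the proposition follows.
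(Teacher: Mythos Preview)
Your proof is correct and follows essentially the same strategy as the paper's: compactness of the restricted region together with the no-accumulation hypothesis forces finitely many joints, and since each joint produces at most $|\Phi|$ new curves while primary curves are finite in number, the total is finite. The paper compresses this into a three-line contradiction argument (infinitely many curves $\Rightarrow$ infinitely many joints in $\tilde X^{\circ'}$ $\Rightarrow$ an accumulation point), whereas you argue directly and make explicit two points the paper leaves implicit: the compactness of $\tilde X^{\circ'}$, and the use of Proposition~\ref{prop: open sets around D} to ensure that joints lying in $\pi^{-1}(B_d)$ cannot contribute curves to the restricted region.
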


\begin{proof}
If there are infinitely many Stokes curves in $X^\circ \setminus \cup_{d\in D}B_{d}$, there must be infinitely many joints $x\in \tilde \cJ\cap \left(X^\circ \setminus \cup_{d\in D}B_{d}\right)$ to produce them. These joints must then have an accumulation point. This contradicts the non-denseness assumption.
\end{proof}

\begin{remark}
\label{remark: infinitely many new Stokes curves}
In \cite{gaiotto2013spectral} it was observed that two Stokes lines approaching a point $d\in D$ can produce infinitely many new Stokes lines.  The considerations in the proof of Proposition \ref{prop: open sets around D} show why this happens.  If two Stokes lines approaching $d\in D$ have different labels then they will each be asymptotic to logarithmic spirals.  When these asymptotic spirals have different angles of intersection with the circles $|z|=r$ they will intersect infinitely many times. 
\end{remark}

\begin{remark}
\label{remark: open set around a }
Using the above proposition one can show that the property of $a\in \cA^{\diamondsuit}_{R}$ defining a basic WKB cameral network such that at all joints there are precisely two incoming lines, is open in the classical topology.  This is because small modifications of the point $a\in \cA^{\diamondsuit}$ will not change the topology of a cameral network on $X^{\circ '}$ with this property, where we have chosen sets $B_{d}$ that satisfy the specified requirements not only for $a$, but for a small open containing $a$. However recall that following Remark \ref{remark:  Existence of generic WKB cameral networks} this open set is only known to be non-empty in some examples.  Furthermore the restriction that there are precisely two incoming lines at each joint is a significant restriction.  It is not clear that we should expect the set of $a\in \cA$ such that the network has these properties to be non-empty in general.
\end{remark}

\subsection{Individual Trajectories}
\label{subsection: denseness of individual trajectories}

In this section we review some well known properties of quadratic differentials following \cite{strebel1984quadratic, bridgeland2015quadratic}. We then use the relation to Stokes curves to deduce that, with respect to the classical topology, there is an open, dense set $\cA^{\diamondsuit}_{SF,0}\subset \cA^{\diamondsuit}$ (Definition \ref{def: saddle free}) for which each Stokes curve has a finite limit set, and hence error condition \ref{item: error: denseness} in Construction \ref{constr:wkb_cameral} does not occur when we start from a point in this set. 

We start by reviewing the classification of trajectories of a meromorphic quadratic differential $\omega$ (as defined in Example \ref{example: SL2 and quadratic differentials}).

\begin{definition}(See e.g. Section 3.4 in \cite{bridgeland2015quadratic})
\label{definition: types trajectories}
For $\omega$ a meromorphic quadratic differential on a compact Riemann surface $X^{c}$ we define:
\begin{itemize}
    \item The set $Crit_{<\infty}(\omega)$ of \textbf{finite critical points} of $\omega$ as the set of poles of $\omega$ of order 1, and zeros of $\omega$ of any order.
    \item The set $Crit_{\infty}(\omega)$ of \textbf{infinite critical points} of $\omega$ as the set of poles of $\omega$ of order $\geq 2$.
\end{itemize}
We call a trajectory of $\omega$:
\begin{enumerate}
    \item A \textbf{saddle trajectory} if the trajectory tends to a finite critical point in each direction (these two points may be the same).
    \item A \textbf{separating trajectory} if the trajectory tends to a finite critical point in one direction, and an infinite critical point in the other direction.
    \item A \textbf{closed trajectory} if it is a closed curve.
    \item A \textbf{recurrent trajectory} if it is recurrent in either direction.  By a direction being recurrent we mean that there is a parametrization of a ray of the trajectory by a map $T:[0,\infty)\rightarrow X$ such that $T([0,\infty))\subset \lim_{a\rightarrow \infty}\overline{T([a,\infty))}$.
    \item A \textbf{generic trajectory} if the trajectory tends to an infinite critical point in each direction (these two points may be the same).
\end{enumerate}
\end{definition}

Note that a trajectory can not pass through a critical point.


\begin{proposition}(See Sections 9-11 of \cite{strebel1984quadratic} for proof)
\label{proposition : types of trajectories}
Every trajectory of a meromorphic quadratic differential $\omega$ on a compact Riemann surface $X^{c}$ is one of the five types of Definition \ref{definition: types trajectories}.
\end{proposition}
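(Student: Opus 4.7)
The plan is to classify trajectories by analyzing their natural parametrization and the structure of their limit sets, following the classical argument of Strebel. First I would observe that away from the critical set $Crit_{<\infty}(\omega) \cup Crit_{\infty}(\omega)$, a local primitive $w = \int \sqrt{\omega}$ provides flat coordinates on $X^c$ in which horizontal trajectories (i.e.\ integral curves satisfying Equation \ref{eq:foliation_qd}) are horizontal line segments. This equips every trajectory $\gamma$ with a canonical parametrization by a maximal open interval $I \subset \R$, well-defined up to translation and reflection.

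Next I would perform a case analysis on the endpoints of $I$. Suppose first that $I$ is bounded above, say $\sup I = t_0 < \infty$. Compactness of $X^c$ forces $\gamma(t)$ to accumulate at some point $p \in X^c$ as $t \to t_0^-$, and $p$ must lie in the critical locus, since at any regular point one can glue on a flow box and extend $\gamma$ beyond $t_0$. A short local computation shows that near an infinite critical point the integral $\int \sqrt{\omega}$ diverges, so one cannot reach such a point in finite natural parameter; hence $p \in Crit_{<\infty}(\omega)$. Applying this at both ends yields the saddle case (both ends land on finite critical points) and the forward half of the separating case. Symmetrically, if $\sup I = +\infty$ and $\gamma(t)$ converges in $X^c$, the limit is forced to lie in $Crit_{\infty}(\omega)$, producing the remaining half of separating and generic trajectories.

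The core of the argument, and the main obstacle, is handling the remaining case in which $\gamma$ is defined on a semi-infinite (or bi-infinite) interval and does not converge to any critical point. Here I would study the $\omega$-limit set $\Lambda^+(\gamma) := \bigcap_{t_0 \in I} \overline{\gamma([t_0, \infty))} \subset X^c$. Pick any regular point $q \in \Lambda^+(\gamma)$ and take a small flow-box neighborhood $U$ of $q$ foliated by short horizontal arcs transverse to a segment $\tau$; because $q \in \Lambda^+(\gamma)$, the trajectory returns to $U$ infinitely often and the return points form a monotone sequence on $\tau$ (monotonicity follows from orientability of the horizontal foliation away from critical points together with the Poincaré--Bendixson type argument for oriented foliations on surfaces). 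Two subcases arise: if the return sequence is eventually periodic, the trajectory is a closed curve; otherwise the returns accumulate on a sub-interval of $\tau$, and saturating this sub-interval by the horizontal flow produces an open subset of $X^c$ in which $\gamma$ is dense, giving the recurrent case. The delicate step here is ruling out pathological accumulation behavior caused by critical points on the boundary of the saturated set, which is handled by showing that saddle connections separate the surface into subregions on each of which the foliation has an invariant transverse measure.

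Finally, I would combine these observations by running the analysis independently for the forward and backward ends of $\gamma$: the possibilities are \emph{(i)} both ends land on finite critical points (saddle), \emph{(ii)} one end is finite critical and the other infinite critical or recurrent (separating, absorbed into recurrent if needed), \emph{(iii)} both ends land on infinite critical points (generic), \emph{(iv)} $\gamma$ closes up (closed), or \emph{(v)} at least one end is recurrent (recurrent). This exhausts the five types of Definition \ref{definition: types trajectories} and establishes the proposition.
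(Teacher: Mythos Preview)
The paper gives no proof of its own here; it simply defers to Strebel. Your sketch follows the classical structure of that argument --- natural parameter from $\int\sqrt{\omega}$, the finite-versus-infinite critical point dichotomy via divergence of this integral at poles of order $\ge 2$, and a limit-set analysis for non-convergent rays --- so at the level of strategy you are aligned with the cited source.

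There is, however, one genuine gap. In the non-convergent case you assert that successive returns of $\gamma$ to a short transversal $\tau$ form a \emph{monotone} sequence, appealing to orientability and a Poincar\'e--Bendixson type argument. Monotonicity of returns is a consequence of the Jordan curve theorem and is specific to genus zero; on a surface of positive genus the first return to a transversal is an interval exchange transformation, and orbits are typically not monotone (already for an irrational linear flow on a flat torus). Moreover the horizontal foliation of $\omega$ is globally orientable only when $\omega$ is the square of an abelian differential, so your orientability premise also requires passing to the canonical double cover. What actually drives the dichotomy ``closed or recurrent'' in Strebel's treatment is the transverse invariant measure $|\operatorname{Im}\sqrt{\omega}|$: it forbids holonomy, hence forbids a trajectory spiralling onto a closed leaf distinct from itself, and underlies the decomposition of $X^c$ into ring domains, spiral (minimal) domains, and end domains near $Crit_\infty(\omega)$. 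You invoke the invariant transverse measure only for a boundary subtlety, but it should be the central mechanism replacing your Poincar\'e--Bendixson step.
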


\begin{example}
\label{eg:net_separating_generic_trajectories}
Consider the meromorphic quadratic differential on $\bbP^1$:
\begin{equation}
\label{eq:separating_generic_trajectories}
\omega(x) = \frac{x(x-4)}{(x-1)^2(x-2)^2(x-3)^2} dx \otimes dx .
\end{equation}
This has simple zeros at $0, 4$ and double poles at $1, 2, 3$. 
The trajectory structure of $\omega$ on $\bbP^1\backslash \infty$, is sketched in Figure \ref{fig:separating_generic_trajectories}.

The solid curves are separating trajectories; there are three of them for each simple zero. They subdivide $\bbP^1$ into 3 regions, each of which is filled by generic trajectories having both ends tend to a double pole.

\begin{figure}[h]
    \centering
    \includegraphics{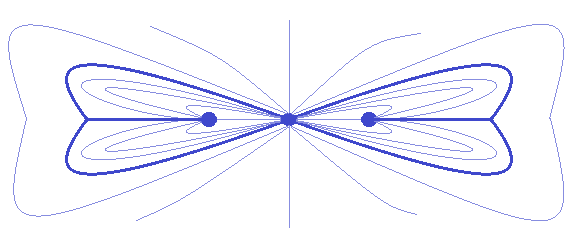}
    \caption{Separating trajectories (solid) and generic trajectories (transparent), for the quadratic differential from Equation \ref{eq:separating_generic_trajectories}. The three blue circles are the double poles.}
    \label{fig:separating_generic_trajectories}
\end{figure}
\end{example}

\begin{definition}
\label{definition: Condition 0}
We denote  by $\cA^{\diamondsuit}_{0}$ the set of $a\in \cA^{\diamondsuit}$ such that the residues (Definition \ref{def:residue_hitchin_point}) satisfy $\alpha(\tilde r_d)\neq 0$ for all $d\in D$ and $\alpha\in \Phi$.
\end{definition}

\begin{example}
\label{example: finite and infinite critical points of omega alpha}
Consider the quadratic differential $\omega_{\alpha}$ on $\tilde{X}^{c}_{\alpha}$ of Construction \ref{construction: Quadratic form alpha}, associated to $a\in \cA^{\diamondsuit}_{0}\subset \cA^{\diamondsuit}$.  Then the preimage $\pi_{\alpha}^{*}(D)$ (using the notation of Equation \ref{equation: factoring pi}) gives the infinite critical points of $\omega$.  By the discussion after Construction \ref{construction: Quadratic form alpha} the branch points of $p_{\alpha}$ and the ramification points of $\omega_{\alpha}$ are the finite critical points of $\omega_{\alpha}$. The union of these points is simply the preimage of the branch locus of $\pi:\tilde{X}^{c}\rightarrow X^{c}$ under the map $\pi_{\alpha}$.
\end{example}

\begin{definition}
\label{def: saddle free}
We call a quadratic differential \textbf{saddle--free} if if has no saddle trajectories.  We call a point $a\in \cA^{\diamondsuit}$ \textbf{saddle--free} if for all roots $\alpha\in \Phi$ the quadratic differential $\omega_{\alpha}$ of Construction \ref{construction: Quadratic form alpha} is saddle-free. 

We denote by $\cA_{SF}^{\diamondsuit}\subset \cA^{\diamondsuit}$ the subset consisting of the points $a\in \cA^{\diamondsuit}$ that are saddle free.

We denote $\cA_{SF,0}^{\diamondsuit}:=\cA_{SF}^{\diamondsuit}\cap \cA_{0}^{\diamondsuit}$.
\end{definition}

\begin{remark}
The WKB cameral network associated to a saddle--free point of the Hitchin base can still have paths between ramification points, provided that these ramification points are labelled by different pairs of roots.
\end{remark}

The following gives a condition for there to be no recurrent or closed trajectories:

\begin{proposition}[Sections 9-11 of \cite{strebel1984quadratic}; Lemma 3.1 of \cite{bridgeland2015quadratic}; Section 6.3 of \cite{gaiotto2013wallHitchin}]
\label{prop: no recurrent or closed}
Let $\omega$ be a saddle-free meromorphic quadratic differential with at least one finite critical points, and at least one infinite critical point.  Then no trajectories of $\omega$ are recurrent or closed.
\end{proposition}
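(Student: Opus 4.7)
The plan is to argue by contradiction in both cases, invoking the Strebel structure theory for trajectories of meromorphic quadratic differentials on a compact Riemann surface (as collected in Sections 9--11 of \cite{strebel1984quadratic} and summarized in Section 3 of \cite{bridgeland2015quadratic}). The strategy is to show that each of the two forbidden trajectory types forces the existence of a saddle connection, so that the saddle-free hypothesis produces the desired contradiction. The hypotheses on existence of a finite and of an infinite critical point are used to rule out the ``trivial'' configurations in which a ring domain or spiral domain fills essentially all of $X^c$.

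First I would dispatch the recurrent case. The closure of a recurrent trajectory contains an open \emph{spiral domain} $\Sigma \subset X^c$ (Strebel, Theorem 11.2), maximal among open sets foliated by the closures of recurrent trajectories of $\omega$. Its frontier $\partial \Sigma$ is a nonempty compact set consisting of finitely many critical trajectories, possibly meeting at finite critical points. A key structural fact from \emph{loc.\ cit.} is that each boundary trajectory of a spiral domain is necessarily \emph{bi-critical at finite critical points}, i.e., is a saddle connection, for the following reason: a trajectory on $\partial \Sigma$ which ran off to an infinite critical point would force nearby trajectories inside $\Sigma$ to also run off in that direction (contradicting recurrence). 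Thus at least one saddle connection exists, contradicting saddle-freeness.

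Next I would handle the closed case. Any closed trajectory lies in a maximal \emph{ring domain} $R \subset X^c$, an open annulus foliated by closed horizontal trajectories all of the same $\omega$-length $L$. The two boundary components $\partial_1 R, \partial_2 R$ of $\overline{R}$ are compact connected sets. For each $i$, the analysis of trajectory behavior as one approaches $\partial_i R$ (Strebel, Section 11) forces $\partial_i R$ to be either (a)~a single critical point of $\omega$, or (b)~a Jordan curve composed of a nonempty finite union of saddle trajectories together with their finite critical endpoints. The saddle-free hypothesis eliminates case (b), so each $\partial_i R$ is a single point. Locally near any finite critical point, the trajectory structure is a ``rose'' of $m+2 \geq 3$ critical trajectories (for a zero of order $m$) or a single distinguished direction (for a simple pole); in no case can this local model match a ring-domain boundary which is a single point foliated by closed trajectories of fixed length $L$. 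Hence each $\partial_i R$ is a single infinite critical point. Consequently $\overline{R}$ is homeomorphic to $S^2$, forcing $X^c = \overline{R}$ (since $X^c$ is connected) and hence $\omega$ has \emph{no} finite critical points on $X^c$, contradicting the standing hypothesis.

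The main obstacle is the careful local analysis needed at each type of critical point to justify that a ring-domain boundary cannot be a single \emph{finite} critical point in the saddle-free setting. This is where one needs the explicit local models for $\omega$ near zeros and simple poles, together with the observation that the horizontal foliation cannot extend continuously across such a point while preserving the closed-trajectory structure of constant period. All of the technical lemmas required are contained in Sections 9--11 of \cite{strebel1984quadratic}, and we would invoke them directly rather than reprove them.
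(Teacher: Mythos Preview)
The paper does not give its own proof of this proposition; it is stated with attribution to Strebel, Bridgeland--Smith, and Gaiotto--Moore--Neitzke and used as a black box. Your sketch reproduces essentially the argument behind Lemma~3.1 of \cite{bridgeland2015quadratic}: enclose a recurrent (resp.\ closed) trajectory in a maximal spiral (resp.\ ring) domain via Strebel's structure theory, observe that boundary trajectories of such a domain cannot tend to an infinite critical point (else nearby interior trajectories would too, contradicting recurrence or closedness), and then use the existence of both a finite and an infinite critical point to exclude the degenerate configurations where the domain fills $X^c$. So your approach is correct and is the standard one from the cited references.
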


By Proposition \ref{proposition : types of trajectories} this means that all trajectories of $\omega$ are either separating or generic.


\begin{definition}[See \S 2.5 of \cite{bridgeland2015quadratic}\footnote{Our $n$ corresponds to the $n$-tuplet $m=(2,...,2)$ in \emph{loc. cit.}}]
\label{definition: Quad(g,n)}
Let $g,n\in \mathbb{N}$ with $2g-2+n>0$.  

Informally we describe $Quad(g,n)$ as the moduli stack of pairs $(X,\omega)$ of Riemann surfaces $X$ with $n$ marked (unlabelled) points together with a meromorphic quadratic differential $\omega$ with a pole of order $2$ at each marked point, and only simple zeros.

More formally let $\cM_{g,n}$ be the moduli stack of Riemann surfaces of genus $g$ with $n$ marked points.  Let $\cH_{g,n}\rightarrow \cM_{g,n}/S_{n}$ denote the vector bundle with fiber over $(X, \{p_{1},...,p_{n}\})$ being $H^{0}(X, K_{X}^{\otimes 2}(2p_{1}+...+2p_{n}))$.

Define $Quad(g,n)\hookrightarrow \cH_{g,n}$ to be the Zariski open substack consisting of sections with simple zeros located away from $\{p_{1},...,p_{n}\}$, and having at least one zero.  We consider this as a Deligne--Mumford stack in the $C^{\infty}$-setting, or as a differential orbifold.
\end{definition}

The group $S^{1}$ acts on $Quad(g,n)$, with $\theta\in S^{1}$ acting by the mapping $(X,\omega)\mapsto (X, e^{i\theta}\omega)$. 

\begin{proposition}[Lemma 4.11 of \cite{bridgeland2015quadratic}]
\label{prop: SF dense in each S1 orbit}
For $n\geq 1$ the subset of saddle free differentials $Quad(g,n)^{SF}\subset Quad(g,n)$ is open and dense in $Quad(g,n)$ in the classical topology.  Furthermore the intersection of each $S^{1}$-orbit with $Quad(g,n)^{SF}$ is open and dense in the $S^{1}$-orbit.
\end{proposition}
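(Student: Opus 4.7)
The plan is to prove openness and orbit-density separately, with full density in $Quad(g,n)$ following immediately from orbit-density since every point lies in some $S^1$-orbit.

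For openness, I would fix $(X_{0}, \omega_{0}) \in Quad(g,n)^{SF}$. Its finite critical points form a finite set $Z_{0}$ of simple zeros, and its infinite critical points form the finite set of double poles $P_{0}$. Each simple zero emits three horizontal separating trajectories, giving $3|Z_{0}|$ such trajectories in total, and by assumption each escapes to a point of $P_{0}$. Using an analogue of Lemma \ref{lem:qd_at_pole} applied to $\omega_{0}$ (which describes the spiralling capture region around each double pole), I would find small open discs $B_{p}$ around each $p \in P_{0}$ such that every trajectory entering $B_{p}$ is trapped and tends to $p$. The separating trajectories of $\omega_{0}$ then traverse a compact region $X^{c} \setminus \bigcup_{p} B_{p}$ before entering these traps. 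Continuity of solutions to ODEs with respect to coefficients guarantees that for $(X, \omega)$ close to $(X_{0}, \omega_{0})$ (working in local charts, and using continuity of the zero and pole loci), the deformed separating trajectories stay close to the originals on this compact region and end up in the same traps. Hence no new saddle trajectories appear under small deformation, so saddle-freeness is an open condition.

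For density within each $S^1$-orbit, I fix $(X, \omega)$ and study $(X, e^{i\theta}\omega)$ as $\theta$ varies. A horizontal saddle trajectory of $e^{i\theta}\omega$ connecting two zeros $z_{1}, z_{2} \in Z$ in a relative homotopy class $[\gamma]$ of paths in $X \setminus Z$ corresponds to the realness condition
\[
\int_{\gamma} \sqrt{e^{i\theta}\omega} \;=\; e^{i\theta/2} \int_{\gamma} \sqrt{\omega} \;\in\; \R,
\]
after fixing a branch of the square root along $\gamma$. Equivalently (when the integral is nonzero), $e^{i\theta/2}$ must lie in the one-dimensional real locus determined by $\int_\gamma \sqrt{\omega}$, which cuts out finitely many values of $\theta \in S^1$ per homotopy class. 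Since $X \setminus Z$ has countably many relative homotopy classes of arcs between pairs of points in the finite set $Z$, the set of $\theta$ for which some saddle trajectory exists is at most countable, and its complement is dense in $S^1$. Combined with the openness statement above, this shows $Quad(g,n)^{SF}$ meets each $S^1$-orbit in an open dense subset.

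Density in $Quad(g,n)$ is immediate from orbit-density, since every $(X, \omega) \in Quad(g,n)$ lies on an $S^1$-orbit within which saddle-free points are dense, and rotations through small $\theta$ give small perturbations in $Quad(g,n)$. The main technical obstacle I foresee is rigorously executing the continuity argument for openness: one must handle the fact that the underlying Riemann surface also varies in moduli, so identifying trajectories of $\omega_{0}$ with trajectories of $\omega$ requires a choice of diffeomorphism varying continuously with the deformation, and one must verify that the capture discs $B_{p}$ can be chosen uniformly on a small neighbourhood in $Quad(g,n)$. A secondary subtlety in the density argument is ensuring that every potential saddle trajectory is accounted for by a single relative homotopy class, which uses the fact that a saddle trajectory is a compact embedded arc joining two points of $Z$ and thus represents a well-defined element of the relative homotopy set of $X \setminus Z$.
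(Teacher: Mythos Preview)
The paper does not give its own proof of this proposition: it is stated as a citation of Lemma~4.11 of \cite{bridgeland2015quadratic} and used as a black box. Your sketch is essentially the standard argument (and indeed the one Bridgeland gives): openness via continuous dependence of the finitely many critical trajectories on the data, combined with the trapping behaviour near order-two poles; and orbit-density via the observation that a saddle connection forces the period $\int_\gamma\sqrt{\omega}$ to be real, so each of the countably many relative homotopy (or hat-homology) classes contributes at most two bad phases $\theta$, making the non--saddle-free locus in each $S^1$-orbit at most countable. The technical caveats you flag (uniform trapping discs as the surface moves in moduli, and accounting for saddle connections via a countable set of classes) are exactly the places where care is needed, but neither is an obstruction; Bridgeland handles the second point by passing to the hat-homology lattice of the double cover, which you could equally well use in place of relative homotopy classes.
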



See Theorem 1.4 of \cite{aulicino2018cantor} for an analogous statement in the case of higher order poles.

\begin{proposition}
\label{prop: saddle-free points}
The subset $\cA^{\diamondsuit}_{SF,0}$ of saddle-free points of $\cA^{\diamondsuit}_{0}$ (for a line bundle $K_{X^{c}}(D)$, $D$ a reduced divisor with $deg(D)>2$) is open and dense in the classical topology.

Hence we also have that it's intersection with $\cA^{\diamondsuit}_{R}$ is open and dense in $\cA$ with respect to the classical topology.
\end{proposition}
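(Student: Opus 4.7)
The plan is to establish the result in three main steps, and then derive the final statement about the intersection with $\cA^{\diamondsuit}_R$. First, $\cA^{\diamondsuit}$ is Zariski open in $\cA$ and nonempty (by Proposition \ref{proposition: A diamond non empty}), hence classically open and dense. Condition $0$ imposes $\alpha(\tilde r_d)\neq 0$ for the finitely many pairs $(d,\alpha)\in D\times \Phi$; each such condition is Zariski open on the residue of $a$ at $d$ (it avoids a finite union of hyperplanes in $\ft$), so $\cA^{\diamondsuit}_0$ is Zariski open and dense in $\cA^{\diamondsuit}$. It thus remains to show $\cA^{\diamondsuit}_{SF,0}$ is open and dense in $\cA^{\diamondsuit}_0$.

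For openness, I would use that for each positive root $\alpha$ the assignment $a\mapsto (\tilde X^c_{\alpha,a},\omega_{\alpha,a})$ is continuous (in the classical topology) into an appropriate moduli space of pairs of a compact curve equipped with a meromorphic quadratic differential whose pole divisor is the preimage of $D$ with pole orders exactly two (the latter being guaranteed by Condition $0$). Having a saddle trajectory is a closed condition in such a moduli space: each saddle trajectory corresponds to a zero of an analytic family of complex saddle periods, and a small perturbation of $\omega_{\alpha}$ shifts these periods continuously, destroying the trajectory. Intersecting the resulting open preimages over the (finite) set of positive roots, and using $\omega_\alpha$ and $\omega_{-\alpha}$ define the same unoriented foliation, yields openness of $\cA^{\diamondsuit}_{SF,0}$.

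For density, I would exploit the $\C^\times$-action on $\cA$ induced by fiberwise scaling on $\ft_{K_{X^c}(D)}$. Replacing $a$ by $\lambda a$ leaves the cameral cover unchanged up to canonical isomorphism, rescales $\chi_\alpha$ by $\lambda$, and hence rescales $\omega_\alpha$ by $\lambda^2$. Thus the circle $\{\lambda a : \lambda\in S^1\}$ maps, for each $\alpha$, into a single $S^1$-orbit in the corresponding moduli space of quadratic differentials. Proposition \ref{prop: SF dense in each S1 orbit} implies the saddle-free locus is open and dense in each such $S^1$-orbit. Intersecting these open dense subsets of $S^1$ over all positive $\alpha\in \Phi$, we obtain (by the Baire category theorem on $S^1$) a dense subset, so there are $\lambda\in S^1$ arbitrarily close to $1$ with $\lambda a\in \cA^{\diamondsuit}_{SF,0}$. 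Combined with openness of $\cA^{\diamondsuit}_0$, this gives density.

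The main obstacle I expect is that $\omega_{\alpha,a}$ does not strictly lie in $Quad(g,n)$ as in Definition \ref{definition: Quad(g,n)}: by the discussion after Construction \ref{construction: Quadratic form alpha} its zeros arising from the ramification points of $\pi_\alpha$ are of order two rather than simple. To handle this I would re-run the saddle-period argument in Bridgeland--Smith in this slightly more general setting: each homotopy class of arc on $\tilde X^c_\alpha$ between finite critical points of $\omega_\alpha$ contributes a single holomorphic function (its saddle period) whose phase must vanish to produce a saddle trajectory, so at most countably many rotations $e^{i\theta}$ are excluded, which still gives an open dense subset of every $S^1$-orbit. Finally, $\cA^{\diamondsuit}_R$ is open and dense in $\cA^{\diamondsuit}$ because its complement is cut out by the real codimension-one conditions $\alpha(\tilde r_d)\in i\R$ at finitely many pairs $(d,\alpha)$; the intersection $\cA^{\diamondsuit}_{SF,0}\cap \cA^{\diamondsuit}_R$ of two open dense subsets of $\cA$ is therefore open and dense.
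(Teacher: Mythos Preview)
Your proposal is correct and follows essentially the same route as the paper: define for each root $\alpha$ the map $a\mapsto(\tilde X^c_{\alpha,a},\omega_{\alpha,a})$, use $S^1$-equivariance ($\omega_{\alpha,e^{i\theta}a}=e^{2i\theta}\omega_{\alpha,a}$), invoke Proposition~\ref{prop: SF dense in each S1 orbit} on each $S^1$-orbit, and intersect over the finitely many roots. You are in fact more careful than the paper on one point: the paper simply writes $\xi_\alpha:\cA^{\diamondsuit}_0\to Quad(g_\alpha,n)$ without commenting on the double zeros of $\omega_\alpha$ along the ramification locus of $\pi_\alpha$, whereas you flag this discrepancy with Definition~\ref{definition: Quad(g,n)} and indicate how the Bridgeland--Smith argument (countably many excluded phases per $S^1$-orbit) still goes through.
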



\begin{proof}
Let $\xi_{\alpha}$ be the map $\cA^{\diamondsuit}_{0}\rightarrow Quad(g_{\alpha}, n)$ where $g_{\alpha}$ is the genus of $\tilde{X}_{\alpha,a}^{c}=\tilde{X}_{a}^{c}/s_{\alpha}$ for $a\in \cA^{\diamondsuit}_{0}$ (note that this is independent of $a$), and $n$ the degree of $\pi_{\alpha}^{*}D$ (see Equation \ref{equation: factoring pi} for notation), given by
\[\cA^{\diamondsuit}_{0} \ni a\mapsto (\tilde{X}_{\alpha, a}, \omega_{\alpha,a}),\]
where $\omega_{\alpha, a}$ was defined in Construction \ref{construction: Quadratic form alpha}.

Consider the scaling action of $\bbC^{*}$ on $\ft$, this induces a weighted action of $\bbC^{*}$ on $\ft\sslash W$, which gives an action of $S^{1}\hookrightarrow \bbC^{*}$ on $\cA=\Gamma(X^{c}, \ft_{K_{X^{c}}(D)}\sslash W)$.  This action preserves the isomorphism class of $\tilde{X}_{a}$, and $\tilde{X}_{\alpha,a}$.  This action preserves $\cA^{\diamondsuit}_{0}$, in fact $\cA^{\diamondsuit}_{0}=S^{1}\bullet \cA^{\diamondsuit}_{R}$.  Clearly the map $\xi_{\alpha}$ is equivariant with respect to the given $S^{1}$ action on $\cA^{\diamondsuit}$, and the square of the action of $S^{1}$ on $Quad(g,n)$ specified after Definition \ref{definition: Quad(g,n)}.  That is to say for $e^{i\theta}\in S^{1}$, and $a\in \cA^{\diamondsuit}$ we have 
\[\xi_{\alpha}(e^{i\theta}\cdot a)=e^{2i\theta}\cdot \xi_{\alpha}(a),\]
or equivalently
\[\omega_{\alpha, e^{i\theta}a}=e^{2i\theta}\cdot \omega_{\alpha, a}.\]
Hence by Proposition \ref{prop: SF dense in each S1 orbit} a dense open subset of the image of $\xi_{\alpha}$ consists of saddle--free quadratic differentials.

By definition the locus of saddle free differentials in $\cA^{\diamondsuit}$ is given by \[\cA^{\diamondsuit}_{SF,0}:=\cap_{\alpha\in \Phi}\xi_{\alpha}^{-1}(Quad(g,n)^{SF}).\]
Hence there is an open dense subset $\cA^{\diamondsuit}_{SF,0}\subset \cA^{\diamondsuit}_{0}$ consisting of saddle--free differentials.
\end{proof}

Each Stokes curve in the WKB construction (\ref{constr:wkb_cameral}) forms part of the pullback to $\tilde{X}$ of a trajectory of a quadratic differential on $\tilde{X}_{\alpha}$.  

\begin{lemma}
\label{lemma: omega has finite critical point}
Let $X^{c}$ be a connected compact Riemann surface, and $D$ a divisor with $\text{deg}(D)>2$.  Then for $a\in \cA^{\diamondsuit}_{0}$ and any root $\alpha$ the quadratic form $\omega_{\alpha,a}$ has at least one finite critical point.  
\end{lemma}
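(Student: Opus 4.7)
The plan is to prove the lemma by a degree count on the line bundle $L := (K_{\tilde{X}^{c}_{\alpha}}(\pi_{\alpha}^{*}D))^{\otimes 2}$ of which $\omega_{\alpha}$ is a global section. First I will argue that $\omega_{\alpha}$ is not identically zero: the assumption $a\in \cA^{\diamondsuit}_{0}$ says precisely that $\alpha(\tilde r_d)\neq 0$ for every $d\in D$, which, via the defining composition of Construction \ref{construction: Quadratic form alpha}, forces $\omega_{\alpha}$ to have a pole of order exactly $2$ at every point of $\pi_{\alpha}^{*}D$. In particular $\omega_{\alpha}$ does not vanish identically, and, viewed as a section of $L$, it does not vanish at any point of $\pi_{\alpha}^{*}D$ (the pole of the quadratic differential is exactly absorbed by the twist).

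Next I will compute $\deg L$. By Convention \ref{convention:cameral}, the cameral cover $\pi$ is unramified over $D$ and $D$ is reduced, so $\pi_{\alpha}\colon \tilde{X}^{c}_{\alpha}\to X^{c}$ is also unramified over $D$, giving
\[
\deg \pi_{\alpha}^{*}D \;=\; [\tilde{X}^{c}_{\alpha}:X^{c}]\cdot \deg D \;=\; \tfrac{|W|}{2}\deg D.
\]
Consequently $\deg L = 4g_{\alpha}-4 + 2\deg(\pi_{\alpha}^{*}D)$, where $g_{\alpha}$ is the genus of $\tilde{X}^{c}_{\alpha}$. Because $\omega_{\alpha}$ is a nonzero section of $L$, its zero divisor $Z$ (in $L$) has degree $\deg L$; by the previous paragraph $Z$ is supported away from $\pi_{\alpha}^{*}D$, so the points of $Z$ are exactly the finite critical points (zeros) of $\omega_{\alpha}$ as a meromorphic quadratic differential.

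It therefore remains to show $\deg L > 0$. Since $\deg D > 2$ and $|W|\geq 2$ whenever $\Phi\neq \emptyset$ (the case $\Phi=\emptyset$ being vacuous), we have $\deg(\pi_{\alpha}^{*}D)>|W|\geq 2$. If $g_{\alpha}\geq 1$ then $\deg L \geq 2\deg(\pi_{\alpha}^{*}D)>0$ trivially. If $g_{\alpha}=0$ then $\deg L = 2\deg(\pi_{\alpha}^{*}D)-4 > 2|W|-4 \geq 0$; here one needs a tiny bit of care when $|W|=2$ (e.g. $G=SL(2)$), in which case $\deg \pi_{\alpha}^{*}D = \deg D > 2$ still gives $\deg L > 0$. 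I do not anticipate any real obstacle: the only mild subtlety is keeping the two viewpoints on $\omega_{\alpha}$ (a section of $L$ with zero divisor, versus a meromorphic quadratic differential with zeros and poles) straight, and checking the edge case $g_{\alpha}=0$ carefully using the hypothesis $\deg D>2$ from Convention \ref{convention:cameral}.
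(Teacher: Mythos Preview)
Your proof is correct and rests on the same idea as the paper's: a nonzero section of a line bundle of positive degree on a compact curve must vanish somewhere, and one then checks the zero is not at a pole. The paper carries this out on $\tilde{X}^c$ with the 1-form $\chi_\alpha \in \Gamma\big(\tilde{X}^c, K_{\tilde{X}^c}(\pi^{-1}D)\big)$ rather than with $\omega_\alpha$ on $\tilde{X}_\alpha^c$: the single inequality $\deg K_{\tilde{X}^c}(\pi^{-1}D) \geq \deg K_{X^c}(D) > 0$ (from $\deg D > 2$) suffices, and the relation $p_\alpha^* \omega_\alpha = \chi_\alpha^{\otimes 2}$ together with the earlier description of where $\chi_\alpha$ vanishes then yields the finite critical point. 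This sidesteps your case split on $g_\alpha$ and the computation of $\deg \pi_\alpha^* D$; conversely, your version works directly with the object named in the lemma and makes the role of the hypothesis $a \in \cA_0^\diamondsuit$ (forcing genuine order-$2$ poles, hence no zeros of the section over $\pi_\alpha^{-1}D$) fully explicit.
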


\begin{proof}
It is enough to show that the form $\chi_{\alpha}$ of Construction \ref{def:form_root} has a zero.  This is immediate because $\text{deg}(K_{\tilde{X^{c}}}(\pi^{-1}(D)))\geq \text{deg}(K_{X^{c}}(D))>0$.
\end{proof}

It is immediate that $\omega_{\alpha}$ has at least one infinite critical point, because the preimage of $D$ in $\tilde{X}_{\alpha}$ consists of infinite critical points for $a\in \cA^{\diamondsuit}_{0}$.  An alternative approach to showing Lemma \ref{lemma: omega has finite critical point} in the case of a simply laced group is by using that the cameral cover is connected, as is proved under these assumptions in e.g. Proposition 4.5.4 of \cite{ngo2010lemme}.

\begin{corollary}
\label{corollary: behaviour on SF locus}
For $a\in \cA_{SF,0}^{\diamondsuit}$ (which is a dense open set of the Hitchin base for a connected compact Riemann surface $X^{c}$, the line bundle $K_{X^{c}}(D)$, and $\text{deg}(D)>2$) a Stokes curve labelled by $\alpha$ in the WKB construction (\ref{constr:wkb_cameral}) for $a$ has one of the following behaviours::
\begin{itemize}
\item It is a primary Stokes line, starts at the pullback to $\tilde{X}$ of a finite critical point of $\omega_{\alpha}$ (a ramification point), and tends to the pullback to $\tilde{X}^{c}$ of an infinite critical point of $\omega_{\alpha}$ (That is to say a point in $\pi^{-1}(D)$).
\item It starts at a joint and tends to the pullback to $\tilde{X}^{c}$ of an infinite critical point of $\omega_{\alpha}$ (That is to say a point in $\pi^{-1}(D)$).
\item It starts at a joint and tends to the pullback to $\tilde{X}$ of a finite critical point of $\omega_{\alpha}$, that is to say a ramification point of $\pi:\tilde{X}\rightarrow X$.
\end{itemize}
It is hence the case that the image in $X^{c}$ of a Stokes curve constructed in the WKB Construction (\ref{constr:wkb_cameral}) for $a\in \cA^{\diamondsuit}_{SF,0}$ has one of the following behaviours:
\begin{itemize}
    \item It is a primary Stokes curve, starts at a branch point (of $\tilde{X}_{a}\rightarrow X$), and tends to a point in $D$.
    \item It starts at a joint and tends to a point in $D$.
    \item It starts at a joint and tends to a branch point.
\end{itemize}
\end{corollary}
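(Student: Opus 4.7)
The strategy is to transport everything to the quotient $\tilde X_\alpha^c = \tilde X^c / s_\alpha$ and invoke the classification of trajectories of the quadratic differential $\omega_\alpha$ of Construction \ref{construction: Quadratic form alpha}. The crucial link, already used implicitly throughout Section \ref{sec: Cameral Networks}, is the identity $p_\alpha^*\omega_\alpha = \chi_\alpha^{\otimes 2}$: the projective vector field $V_\alpha$ on $\tilde X \setminus R$ satisfying $\chi_\alpha(V_\alpha)\in \R_+$ projects via $p_\alpha$ to a direction field on $\tilde X_\alpha \setminus \mathrm{Crit}(\omega_\alpha)$ whose integral curves are precisely the trajectories of $\omega_\alpha$ in the sense of Example \ref{example: SL2 and quadratic differentials}. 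Consequently, every Stokes curve labelled by $\alpha$ is (the lift via $p_\alpha$ of) a ray of a trajectory of $\omega_\alpha$.

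The plan is first to show that the only possible trajectory types for $\omega_\alpha$ are separating and generic. By definition of $\cA^{\diamondsuit}_{SF,0}$, $\omega_\alpha$ is saddle-free. Example \ref{example: finite and infinite critical points of omega alpha} identifies the finite critical points of $\omega_\alpha$ with $\pi_\alpha^{-1}(P)$ and the infinite critical points with $\pi_\alpha^{-1}(D)$; the first set is nonempty by Lemma \ref{lemma: omega has finite critical point}, and the second is nonempty because $\deg(D)>2>0$. Proposition \ref{prop: no recurrent or closed} therefore excludes recurrent and closed trajectories, and together with saddle-freeness and Proposition \ref{proposition : types of trajectories} this forces every trajectory of $\omega_\alpha$ to be either separating (one finite and one infinite critical limit) or generic (two infinite critical limits).

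Next I would run the classification against the two kinds of Stokes curves. If $\ell$ is a primary Stokes curve, it starts at some $r\in R$, and $p_\alpha(r)$ is a branch point of $p_\alpha$, hence a finite critical point of $\omega_\alpha$. The underlying trajectory therefore has $p_\alpha(r)$ as one limit, so it cannot be generic and must be separating; the other limit is an infinite critical point, whose preimage in $\tilde X^c$ lies in $\pi^{-1}(D)$. This gives the first bullet. If instead $\ell$ starts at a joint $J\in \tilde \cJ$, then $J\notin R$ and $p_\alpha(J)$ is a regular point of $\omega_\alpha$, so it lies on either a separating or a generic trajectory; the oriented ray $p_\alpha(\ell)$ follows one of the two directions of that trajectory, and its forward limit (in $\tilde X_\alpha^c$) is either an infinite critical point or a finite critical point of $\omega_\alpha$. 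Pulling back by $p_\alpha$ gives the second or third bullet respectively. The statement about images in $X^c$ follows at once by composing with $\pi_\alpha$, as $\pi_\alpha$ sends infinite critical points to $D$ and finite critical points to branch points of $\pi$.

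The argument is essentially a careful bookkeeping of classical facts from \cite{strebel1984quadratic, bridgeland2015quadratic} plus the identifications established earlier in the paper; I do not anticipate any genuinely hard step. The only point requiring some care is making sure that the closures of Stokes curves in $\tilde X^\circ$ (or in $\tilde X^c$) match the trajectory-theoretic limits in $\tilde X_\alpha^c$, which amounts to unpacking the oriented real blowup construction from Convention \ref{convention:blowup}; this is automatic because $p_\alpha$ is finite and restricts to an étale cover away from $R$ and from the branch locus of $p_\alpha$, which is contained in $\pi_\alpha^{-1}(P)$.
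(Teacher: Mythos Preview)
Your proposal is correct and follows essentially the same approach as the paper: reduce to trajectories of the quadratic differential $\omega_\alpha$ via $p_\alpha^*\omega_\alpha = \chi_\alpha^{\otimes 2}$, invoke Lemma \ref{lemma: omega has finite critical point} together with $\deg(D)>0$ to guarantee both types of critical points, and apply Propositions \ref{proposition : types of trajectories} and \ref{prop: no recurrent or closed} to conclude that every trajectory is separating or generic. The paper's proof is terser, compressing your case analysis into ``The result follows,'' but the logical skeleton is identical.
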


\begin{proof}
Each Stokes curve of $a\in \cA^{\diamondsuit}_{SF,0}$ corresponds to a trajectory of the saddle-free quadratic differential $\omega_{\alpha,a}$ for some root $\alpha$.  By Lemma \ref{lemma: omega has finite critical point} $\omega_{\alpha,a}$ has at least one finite critical point. As noted after this lemma, it also has at least one infinite critical point.  Hence by Proposition \ref{proposition : types of trajectories} and Proposition \ref{prop: no recurrent or closed} this trajectory is either separating or generic.  The result follows.
\end{proof}

\section{Non-Abelianization}
\label{sec: flat Donagi Gaitsgory}

In this section we introduce the objects and moduli spaces involved in non-abelianization, and provide the non-abelianization construction.  In Section \ref{subsec: N-shifted moduli spaces} we introduce $N$-shifted weakly $W$-equivariant $T$-local systems on $\tilde{X}^{\circ}$, and show their equivalence to $N$-local systems on $X^{\circ}$ corresponding to the cameral cover $\tilde{X}^{\circ}$.  In Section \ref{subsec: alpha monodromy} we introduce a restriction on monodromy which we call the $S$-monodromy condition.  Finally in Section \ref{subsec: Non abelianiazation via Cameral Networks} we provide the non-abelianization construction (Construction \ref{thm:bulk_map_exists}).

\subsection{$N$-shifting and $T$-local systems on cameral covers}
\label{subsec: N-shifted moduli spaces}

In the case $G=GL(n)$, a line bundle on an unramified $n$-sheeted spectral cover $\overline{X}\rightarrow X$ describes an $N$-bundle over $X$.  
In \cite{donagi2002gerbe}, an analogue of this is given for arbitrary groups:  an $N$-bundle on $X$ is the same as an ``$N$-shifted weakly $W$-equivariant'' $T$-bundle on an unramified cameral cover $\tilde{X}\rightarrow X$.

In this section we introduce these ideas in the framework of local systems, where the (simple unramified case of the) work \cite{donagi2002gerbe} applies with no significant modifications resulting in Theorem \ref{thm:flat_DG}.  In this setting, for a weakly $W$-equivariant $T$-local system (see Definition \ref{def: weakly W-equivariant}) on $\tilde{X}^{\circ}$ to admit the structure of an $N$-shifted local system on $X^{\circ}$ is a non-trivial restriction, as shown in Example \ref{example: Consequences for T monodromy}. 
  
The setting we work in is that of a map $\tilde{X}^{\circ} \rightarrow 
X^{\circ}$, which is a $W$-bundle (as in convention \ref{convention:blowup}).  It is not a problem that the real blow up is not a scheme, because we will consider \emph{topological objects} on $X^{\circ}$, $\tilde{X}^{\circ}$, namely local systems with respect to the classical topology.

We first introduce the moduli spaces in terms of $N$-local systems.  Informally, we define $\Loc_{N}^{\tilde{X}^{\circ}}(X^{\circ})$ as the moduli stack of $N$-local systems  $\cE$, such that the associated $W$-bundle $\cE/T$ is equipped with an isomorphism to $\tilde{X}^{\circ}$.  More formally:

\begin{definition}
\label{defin: N local systems corresponding to a cameral cover}
We define
\[
\Loc_{N}^{\tilde{X}^{\circ}}(X^{\circ})
:=
\Loc_{N}(X^{\circ})\times_{\Loc_{W}(X^{\circ})}\{\tilde{X}^{\circ}\}.
\]

Recall that $\Loc_{N}(X^{\circ})\cong \Map_{dSt}(X^{\circ}_{B}, BN)$, (we are here considering $X^{\circ}$ as a topological space) and the map to $\Loc_{W}(X^{\circ})$ is given by postcomposing with the map $BN\rightarrow BW$ that comes from quotienting $N$ by $T$.
\end{definition}

There is a clearly a map $Loc_{N}^{\tilde{X}^{\circ}}(X^{\circ})\rightarrow Loc_{T}(X^{\circ})$ coming from mapping an $N$ local system $\cE_{N}\rightarrow X^{\circ}$ to the $T$-local system $\cE_{N}\rightarrow \cE_{N}/T\cong \tilde{X}^{\circ}$.

The condition that a $T$-local system comes from an $N$-local system imposes non-trivial constraints on the $T$-local system, as can be seen in the following example:

\begin{example}[Consequences for $T$-monodromy]
\label{example: Consequences for T monodromy}
Consider the case where $G=SL(2)$, $X=\bbA^{1}_{z}$, and $\tilde{X}=\bbA^{1}_{y}$, with the map $\tilde{X}\rightarrow X$ given by $z\mapsto z^{2}$.

Then $\Loc_{N}^{\tilde{X}^{\circ}}(X^{\circ})\xrightarrow{\cong} (nT)/N$, where $n\in N$ is a representative of the $T$-coset $N \setminus T$, and the map takes a local system to its monodromy around $0$ (specified up to conjugation by elements of $N$).

Let $x_{1}, x_{2}$ be the preimages of $x\in X$  in $\tilde{X}$.   Suppose that $\cL$ is a $T$-local system on $\tilde{X}$, which comes from an $N$-local system.   Then the monodromy of $\cL$ considered as a $T$-local system on $\tilde{X}$ must be $-1$, because for all $g\in Im(nT)$, $g^{2}=-Id$.











This corresponds to the condition on monodromy of the $\bbG_{m}$ local systems considered on the spectral cover in \cite{gaiotto2013spectral},  and in Proposition \ref{proposition : Reason for alpha monodromy condition}.
\end{example}

We will now develop a description of the moduli of $N$-local systems on $X^{\circ}$ with associated $W$-local system $\tilde{X}^{\circ}$ purely in terms of $T$-local systems on $\tilde{X}^{\circ}$ equipped with additional structure. 
  
\begin{definition}[Modification of a definition in \S 5 of \cite{donagi2002gerbe}]
\label{def: Automorpshisms of T system  on X}  
Let $\tilde{X}^{\circ}\rightarrow 
X^{\circ}$ be as above.  Let $\cL$ be a $T$-local system on $\tilde{X}^{\circ}$.  We define:
\[
\Aut_{X^{\circ}}(\cL) 
= 
\left\{(w,\phi) | w\in W, \phi : w^*\cL \overset{\cong}{\to} \cL \right\}.
\]

This sheafifies over (the classical topology of) $X^{\circ}$, as a locally constant sheaf $\mathcal{A}ut_{X^{\circ}}(\cL)$.
\end{definition}

\begin{definition}[Weakly $W$-Equivariant $T$-local systems on $\tilde{X}^\circ$; cf. Definition 5.7 of \cite{donagi2002gerbe}]
\label{def: weakly W-equivariant}
We call a $T$-local system $\cL\rightarrow \tilde{X}^{\circ}$ \textbf{weakly $W$-equivariant} if the morphism:
\[
\Aut_{X^{\circ}}(\cL) \rightarrow W
\]
is surjective.
\end{definition}

The sheaf $\mathcal{A}ut_{X^{\circ}}(\cL)$ is part of a sequence of sheaves (on $X^{\circ}$ with respect to the classical topology), which is exact if $\cL$ is weakly $W$-equivariant:

\begin{equation}
\label{equation: Automorphism short exact sequence}
1\rightarrow \mathcal{H}om_{lc}(\tilde{X}^{\circ}, T)\rightarrow \mathcal{A}ut_{X^{\circ}}(\cL) \rightarrow W\rightarrow 1,
\end{equation}
where $\mathcal{H}om_{lc}(\tilde{X}^{\circ}, T)$ refers to the sheaf of locally constant morphisms $\tilde{X}^{\circ}\rightarrow T$ (with global sections denoted by $\Hom_{lc}(\tilde{X}^{\circ},T)$).  Defining this sequence used the fact that $T$ is abelian, and hence we can globally identify automorphisms of $\cL$ on $\tilde{X}^{\circ}$ with locally constant maps $\tilde{X}^{\circ}\rightarrow T$.

\begin{remark}
\label{rem:W-equivariant}
The short exact sequence \ref{equation: Automorphism short exact sequence} splits if and only if the weakly $W$-equivariant $T$-local system is $W$-equivariant.
\end{remark}

\begin{definition}[$N$-shifted Weakly $W$-equivariant $T$-local system, cf. \cite{donagi2002gerbe}]
\label{defn: N shifted Weakly W equivariant}
An \textbf{$N$-shifted weakly $W$-equivariant $T$-local system} on $\tilde{X}^{\circ}$ is a $T$-local system $\cL\rightarrow \tilde{X}^{\circ}$ together with a map $\gamma:N\rightarrow \Aut_{X^{\circ}}(\cL)$ of $\infty$-groups in derived stacks (in the non-derived setting we can see this as a map of group schemes) making the following diagram commute:
\begin{equation}
\label{diag:n_twisted}
\begin{tikzcd}
1 \arrow{r}\arrow{d} & T \arrow{r} \arrow{d}{\Delta} & N \arrow{r} \arrow{d}{\gamma} & W \arrow{r} \arrow{d}{Id} & 1\arrow{d} \\
1 \arrow{r} & \Hom_{lc}(\tilde{X}^{\circ},T) \arrow{r} & \Aut_{X^{\circ}}(\cL) \arrow{r} & W \arrow{r} & 1,
\end{tikzcd}
\end{equation}
where $\Delta$ is the diagonal map taking an element $t\in T$ to the corresponding constant map in $\Hom_{lc}(\tilde{X}^{\circ},T)$.

\end{definition}



We are not able to directly realize the moduli space of $N$-shifted $T$-local systems as a stack.  Instead we describe it as a functor. 


\begin{definition}
\label{defn: Moduli N shifted weakly W equivariant}
We now define  $\Loc_{T}^{N}(\tilde{X}^{\circ})$ to be the moduli functor of $N$-shifted weakly $W$-equivariant $T$-local systems on $\tilde{X}^{\circ}$.

Let $S\in dSch$ be a derived scheme, given an $S$-point $s:S\rightarrow Loc_{T}(\tilde{X}^{\circ})$, denote by $\Hom^{N-Shift}(N, \Aut_{X^{\circ}}(s^{*}\cL))$ the fiber product 
\[\Scale[0.84]{Hom^{inv}_{\infty-Grp(dSt_{/S})}(N, Aut_{X^{\circ}}(s^{*}\cL))\times_{\Hom_{\infty-Grp(dSt_{/S})}^{inv}(T,\Hom_{lc}(\tilde{X}^{\circ}))\times Hom_{\infty-Grp(dSt_{/S})}^{inv}(W,W)} \{(\Delta,Id)\},}\]
where $\cL$ is the universal $T$-local system over $\Loc_{T}^{N}(\tilde{X}^{\circ})\times X^{\circ}$, and the $inv$ superscript refers to invertible morphisms.  We are using $\infty-Grp(\cC)$ to denote the category of infinity groups in a given $\infty$-category $\cC$, as introduced in Section 7.2.2 of \cite{lurie2009higher}.

We define a functor $dSch^{op}\rightarrow \cS$ (where $\cS$ denotes the $\infty$-category of Kan complexes), informally by 
\[\Loc_{T}^{N}(\tilde{X}^{\circ})(S)=\{(s, \rho)|s\in Loc_{T}(\tilde{X}^{\circ})(S), \rho\in \Hom^{N-Shift}(N, \Aut_{X^{\circ}}(s^{*}\cL))\}.\]

More formally there is a functor from the $\infty$-groupoid $Loc_{T}(\tilde{X}^{\circ})(S)$ to $\cS$ taking $s\in Loc_{T}(\tilde{X}^{\circ})(S)$ to the space of morphisms $\Hom^{N-Shift}(N, \Aut_{X^{\circ}}(s^{*}\cL))$.  By the Gr\"{o}thendieck construction this corresponds to a fibration $F(S)\rightarrow Loc_{T}(\tilde{X}^{\circ})(S)$.  We define the functor  $\Loc_{T}^{N}(\tilde{X}^{\circ})$ to be the functor mapping $S\in dSch^{op}$ to the space $F(S)$.  

\end{definition}

When not working in the derived setting, one can simply use groups rather than $\infty$-groups.

We will show that $\Loc_{T}^{N}(X^{\circ})$ is in fact a stack by providing an isomorphism to the moduli stack of $N$-local systems associated to $\tilde{X}^{\circ}$ that we introduced earlier.

\begin{theorem}[$N$-local systems as $N$-shifted weakly $W$-equivariant $T$ local systems, minor modification of \cite{donagi2002gerbe}]
\label{thm:flat_DG}
Consider the unramified map $\pi^{\circ} : \tilde X^{\circ} \to X^{\circ}$, where we have removed the branch points and their preimages. Then there is an equivalence of categories between:
\begin{enumerate}
\item $N$-shifted, weakly $W$-equivariant,  $T$-local systems $\cL$ on $\tilde X^\circ$;
\item $N$-local systems $\cE$ on $X^\circ$, equipped with an isomorphism $\cE/T \cong \tilde X^\circ$ of $W$-bundles on $X^\circ$.
\end{enumerate}
This gives an isomorphism of functors;
\begin{equation}
    \label{eq: Map of Moduli spaces}
   \Loc_{N}^{\tilde{X}^{\circ}}(X^{\circ}) \cong \Loc_{T}^{N}(\tilde{X}^{\circ}),
\end{equation}
which in particular realizes $\Loc_{T}^{N}(X^{\circ})$ as a stack.
\end{theorem}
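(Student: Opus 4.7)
The plan is to construct quasi-inverse functors in both directions and then upgrade the categorical equivalence to an isomorphism of moduli functors. The construction mirrors the unramified case of Donagi--Gaitsgory in \cite{donagi2002gerbe}, with the category of principal bundles replaced by that of local systems, and has no genuinely new ingredients at the level of categories; the content of the statement is really that this bundle-theoretic dictionary is compatible with the derived moduli functors in play.

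For the functor from (2) to (1), start with an $N$-local system $\cE$ on $X^\circ$ together with an identification $\cE/T \cong \tilde X^\circ$. View $\cE \to \cE/T \cong \tilde X^\circ$ as a principal $T$-bundle; this is the underlying $T$-local system $\cL$. For each $n \in N$, the action of $n$ on the total space of $\cE$ covers the action of $q(n) \in W$ on $\tilde X^\circ$, and therefore defines an isomorphism $q(n)^*\cL \xrightarrow{\sim} \cL$ of $T$-local systems; assembling these yields a group homomorphism $\gamma: N \to \Aut_{X^\circ}(\cL)$ whose image covers all of $W$, so $\cL$ is weakly $W$-equivariant. The restriction of $\gamma$ to $T \subset N$ is by construction the fiberwise $T$-action on the $T$-torsor $\cL$, which is precisely the diagonal $\Delta$ of Diagram \ref{diag:n_twisted}, so the $N$-shifting diagram commutes.

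For the functor from (1) to (2), start with $(\cL, \gamma)$ and take $\cE$ to be the total space of $\cL$, fibered over $X^\circ$ through the composition $\cL \to \tilde X^\circ \xrightarrow{\pi^\circ} X^\circ$. The $N$-action on $\cE$ is defined by letting $n \in N$ act via $\gamma(n): q(n)^*\cL \to \cL$, interpreted as a lift of the $W$-action on $\tilde X^\circ$ to the total space of $\cL$. The fact that $\gamma$ is a group homomorphism makes this a group action, and the diagonal condition on $\gamma|_T$ ensures that the restriction to $T \subset N$ recovers the tautological $T$-torsor action on $\cL \to \tilde X^\circ$. To verify that $\cE \to X^\circ$ is then a principal $N$-bundle, it suffices to trivialize locally: over a small open $U \subset X^\circ$ where both $\tilde X^\circ$ and $\cL$ trivialize, one obtains $\cE|_U \cong U \times W \times T$, and the combined $N$-action (together with the diagonal condition) identifies this with $U \times N$ as an $N$-torsor. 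Quotienting $\cE$ by $T$ recovers $\tilde X^\circ$ tautologically, providing the required isomorphism $\cE/T \cong \tilde X^\circ$. That the two constructions are mutually inverse follows by direct inspection.

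To upgrade to an isomorphism of moduli functors it suffices to note that both constructions are manifestly functorial in the base $S \in dSch^{op}$, so they produce natural transformations between the functors of Definitions \ref{defin: N local systems corresponding to a cameral cover} and \ref{defn: Moduli N shifted weakly W equivariant}. The main obstacle is checking the coherences in the derived/$\infty$-categorical setting: one must verify that the equivalence of $1$-categories described above upgrades to an equivalence of $\infty$-groupoids of $S$-points, naturally in $S$. This reduces to unwinding the definitions: the fiber product defining $\Loc_N^{\tilde X^\circ}(X^\circ)$ encodes exactly the data of a map $X^\circ_B \to BN$ together with an identification of its composite with $BN \to BW$ with $\tilde X^\circ$, which via the above bundle-theoretic dictionary is the same as the Grothendieck-fibered data of a $T$-local system on $\tilde X^\circ$ together with a lift of the $W$-action to an $N$-action via $\gamma$ satisfying the diagonal condition, exactly as in Definition \ref{defn: Moduli N shifted weakly W equivariant}. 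Once this is established, $\Loc_T^N(\tilde X^\circ)$ inherits its derived Artin stack structure from $\Loc_N^{\tilde X^\circ}(X^\circ)$, which is visibly a fiber product of such.
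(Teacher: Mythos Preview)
Your proof is correct and follows essentially the same approach as the paper: construct the functor $(2)\to(1)$ by viewing $\cE\to\cE/T\cong\tilde X^\circ$ as a $T$-local system with $N$-action giving $\gamma$, construct the inverse by taking the total space of $\cL$ with the $N$-action from $\gamma$, and then observe that these work in families. The paper's proof is considerably terser (leaving the local-triviality check and the family/derived upgrade as ``clearly''), so your added detail on these points is a genuine expansion rather than a different argument.
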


\begin{proof}
Let $\cE\rightarrow X^{\circ}$ be a $N$-local system on $X^{\circ}$, then $\cE\rightarrow \cE/T\cong \tilde{X}^{\circ}$ is a $T$-local system on $\tilde{X}^{\circ}$ (the $T$-action is specified by $T\hookrightarrow N$). The action of $N$ on $\cE$ gives a map $N\xrightarrow{\gamma} \Aut_{X^{\circ}}(\cE)$.  This fits into a commutative diagram of the form of diagram \ref{diag:n_twisted}.

This has an inverse map, in that if we have a weakly $W$-equivariant, $N$-shifted $T$-local system $\cL$ on $\tilde{X}^{\circ}$, then the map $N\rightarrow \Gamma(X,\Aut_{X}(\cL))$ allows us to consider $Tot(\cL)\rightarrow X^{\circ}$ as an $N$-local system.  Clearly we have an isomorphism of $W$-bundles; $Tot(\cL)/T\cong \tilde{X}^{\circ}$.

Clearly the above constructions work in families and hence give the stated isomorphism of functors when applied to the universal families.
\end{proof}

\begin{remark}
\label{remark: N-twisting at a point}
For $X$ connected providing a map $N\xrightarrow{\gamma} \Aut_{X^{\circ}}(\cL)$ is equivalent to, for some $x\in X$, providing a map 
\[
N\rightarrow \mathcal{A}ut_{X^{\circ}}(\cL)|_{x}^{\pi_{1}(X^{\circ},x)}.
\]
\end{remark}

\subsection{The S-monodromy condition}
\label{subsec: alpha monodromy}

We introduce a condition on the monodromy of an $N$-local system, which we call the S-monodromy condition. This corresponds to a natural condition on the monodromy in the case of the spectral cover description, as explained in Proposition \ref{proposition : Reason for alpha monodromy condition} and the following paragraph.

Consider the case of $G=SL(n)$ and a ramified spectral cover $\overline{X}\rightarrow X$.  Let $R_{\rho}\subset \overline{X}$ be the ramification locus, and $P\subset X$ the branch locus.  Unlike the case of a cameral cover, the preimage of the branch locus is not in general the ramification locus.  Because of this, rank one local systems on $\overline{X}\backslash R_{\rho}$ do \emph{not} correspond to arbitrary $N$-local systems on $X\backslash P$, but rather a subset of these satisfying a restriction on the monodromy around the branch locus $P$.  In this section we formalize a version of this restriction for $N$-local systems, calling it the S-monodromy condition.  We show that this corresponds precisely to the above mentioned condition coming from spectral covers in Proposition \ref{proposition : Reason for alpha monodromy condition}.


We now define the $S$-monodromy condition for $N$-local systems.  Fix a smooth cameral cover $\tilde{X}\rightarrow X$, and let $\tilde{X}^{\circ}\rightarrow X^{\circ}$ be the associated unramified cover of Convention \ref{convention:blowup}.  For each $p\in P$ let $\Lambda_{p}\subset \Phi$ be the subset of roots such that the $W$-conjugacy class corresponding to the monodromy around $S_{1}^{p}$ of $\tilde{X}^{\circ}$ is given by $\{s_{\alpha}|\alpha\in \Lambda_{p}\}$.

\begin{definition}
\label{def:alpha_monodromy_N}Let $\cE_{N}\rightarrow X^{\circ}$, be an $N$-local system, equipped with an isomorphism of $W$-bundles, $\cE_{N}/T\xrightarrow{\cong}\tilde{X}^{\circ}$.  We say that $\cE_{N}$ satisfies the \textbf{S-monodromy condition} if for each $p\in P$, the monodromy of $\cE_{N}$ around $S_{p}^1$ is contained in $\bigcup_{\alpha\in \Lambda_{p}}n_{\alpha}T_{\alpha}$ (with respect to any choice of trivialization).
\end{definition}

\begin{definition}
\label{definition: Moduli N bundles S monodromy}
We define the moduli space of $N$-local system corresponding to $\tilde{X}^{\circ}$ satisfying the S-monodromy condition by:
\[ \Loc_{N}^{\tilde{X}^{\circ},S}(X^{\circ}):=\Loc_{N}^{\tilde{X}^{\circ}}(X^{\circ})
\times_{\left(\prod_{p\in P}N/N\right)}
\prod_{p\in P}\Bigg(\bigg(\coprod_{\alpha\in \Lambda_{p}}n_{\alpha}T^{\alpha}\bigg)/N\Bigg),\]
where the map $\Loc_{N}^{\tilde{X}^{\circ}}(X^{\circ})\rightarrow \prod_{p\in P}N/N$ comes from restricting to the boundary circles $S^{1}_{p}$, $p\in P$.
\end{definition}

There is not quite such a simple description of the $S$-monodromy condition in terms of $N$-shifted weakly $W$-equivariant $T$-local systems, however we can still describe the moduli space, fairly simply.

\begin{definition}[The S-monodromy condition on $N$-shifted, weakly $W$-equivariant, $T$ local systems]
\label{alpha monodromy T bundle side}
We say that an $N$-shifted, weakly $W$-equivariant, $T$-local system satisfies the \textbf{S-monodromy condition}, if the $N$-local system produced associated to it under the map of Theorem \ref{thm:flat_DG} satisfied the $S$-monodromy condition for $N$-local systems of definition \ref{def:alpha_monodromy_N}.
\end{definition}

Note that if $n\in n_{\alpha}T^{\alpha}$ we have that $n^{2}=I_{\alpha}(-Id)$.  

\begin{lemma}
\label{lemma: connected component of N/N}
The subscheme $\coprod_{\alpha\in \Lambda_{p}}(n_{\alpha}T^{\alpha})/N$ is a connected component of the scheme $\{n\in N|n^{2}=I_{\alpha}(-Id)\text{ for }\alpha\in \Lambda_{p}\}/N=(N\times_{T}\{I_{\alpha}(-Id)\})/N$ (where the map $N\rightarrow T$ is given by $n\mapsto n^{2}$).
\end{lemma}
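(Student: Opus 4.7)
The plan is to describe the scheme $Z := \{n \in N : n^2 \in \{I_\alpha(-\id) : \alpha \in \Lambda_p\}\}$ explicitly as a disjoint union of $T_\alpha$-cosets in $N$, show that $Z_0 := \bigcup_{\alpha \in \Lambda_p} n_\alpha T_\alpha$ is a union of some of these cosets which is moreover $N$-invariant, and then check that $Z_0/N$ is a single connected component of $Z/N$.

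First, I would stratify $Z$ by the Weyl element $q(n) = s_\alpha$ for $\alpha \in \Lambda_p$, and for each such $\alpha$ write $n = n_\alpha t$ with $t = t_\alpha t_{\ker\alpha}$ as in Lemma \ref{lem:torus_decomposition}. A direct computation using Lemma \ref{lem:commutation_t_alpha} (which gives $n_\alpha t_\alpha n_\alpha^{-1} = t_\alpha^{-1}$), Lemma \ref{lem:t_ker_comm} (so that $T_{\ker\alpha}$ commutes with $n_\alpha$), and the matrix identity $n_\alpha^2 = I_\alpha(-\id)$ in $SL(2)$ yields $n^2 = t_{\ker\alpha}^2 \cdot I_\alpha(-\id)$. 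Thus $n \in Z$ iff $t_{\ker\alpha}^2 = e$, exhibiting $Z \cap n_\alpha T$ as a disjoint union of translates $n_\alpha T_\alpha \cdot s$, as $s$ ranges over representatives of the $2$-torsion subgroup of $T_{\ker\alpha}$ modulo the at most $2$-element group $T_\alpha \cap T_{\ker\alpha}$ from Lemma \ref{lem:torus_decomposition}. (A small check is required that the condition $t_{\ker\alpha}^2 = e$ is independent of the chosen decomposition, which follows from the explicit form of the kernel in that lemma.) Each such translate is a torsor for the connected torus $T_\alpha \cong \bbG_m$, hence is a connected component of $Z$; the subset $Z_0$ is exactly the sub-collection corresponding to $s = e$, so $Z_0$ is open and closed in $Z$.

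Next, I would verify that $Z_0$ is $N$-invariant. For $m \in N$ with $q(m) = w$, Lemma \ref{lem:commutation_n} provides some $\tau \in T_{w(\alpha)}$ such that $\Ad_m(n_\alpha) = \Ad_\tau(n_{w(\alpha)})$, which lies in $n_{w(\alpha)} T_{w(\alpha)}$ by Lemma \ref{lem:commutation_t_alpha}; moreover $\Ad_m(h_\alpha) = h_{w(\alpha)}$ implies $\Ad_m(T_\alpha) = T_{w(\alpha)}$. Since $\Lambda_p$ is $W$-stable (it is the preimage under $\alpha \mapsto s_\alpha$ of a single $W$-conjugacy class of reflections), $\Ad_m(n_\alpha T_\alpha) \subseteq Z_0$, which gives $N$-invariance.

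It follows that $Z_0/N$ is open and closed in $Z/N$, hence a union of connected components. To show it is exactly one, I would use transitivity of the $W$-action on $\Lambda_p$: any $n_\alpha T_\alpha$ with $\alpha \in \Lambda_p$ is $N$-conjugate to a fixed $n_{\alpha_0} T_{\alpha_0}$, so $Z_0/N$ equals the image of the single connected set $n_{\alpha_0} T_{\alpha_0}$ in $Z/N$, and is therefore connected. The main technical obstacle will be carrying out the $n^2$ computation and handling the (at most $2$-to-$1$) ambiguity in Lemma \ref{lem:torus_decomposition} carefully enough to make the stratification of $Z$ well-defined.
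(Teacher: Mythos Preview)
Your proposal is correct and follows essentially the same approach as the paper: both decompose $n = n_\alpha t_\alpha t_{\ker\alpha}$ via Lemma~\ref{lem:torus_decomposition} and compute $n^2 = I_\alpha(-\id)\, t_{\ker\alpha}^2$, concluding that the solutions form a discrete union of $T_\alpha$-cosets. Your version is more thorough---you explicitly verify $N$-invariance of $Z_0$ via Lemma~\ref{lem:commutation_n} and connectedness of $Z_0/N$ via $W$-transitivity on $\Lambda_p$---whereas the paper's proof stops at the discreteness of $t_{\ker\alpha}$ and leaves these points implicit.
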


\begin{proof}
The map $T_{\alpha}\times T_{Ker(\alpha)}\rightarrow T$ has a finite kernel.  We can hence write any element in the $N$ such that $q(N)=s_{\alpha}$, where $q$ is the map $N\xrightarrow{q} W$, can be written in finitely many different ways as $n=n_{\alpha}t_{\alpha}t_{Ker(\alpha)}$ with $t_{\alpha}\in T_{\alpha}$, and $t_{Ker(\alpha)}\in T_{Ker(\alpha)}$.  If $n^{2}=I_{\alpha}(-Id)$ we have that 
\begin{align*}
    (n_{\alpha}t_{\alpha}t_{Ker(\alpha)})^{2}&= (n_{\alpha}t_{\alpha})^{2}t_{Ker(\alpha)}^{2}\\
    &= I_{\alpha}(-Id)t_{Ker(\alpha)}^{2}.
\end{align*}
The result follows, as there are a discrete set of possible values of $t_{Ker(\alpha)}$.
\end{proof}

\begin{lemma}
\label{lemma: N local systems with S monodromy connected}
If $deg(D)>0$ then the stack $Loc_{N}^{\tilde{X}^{\circ}, S}(X^{\circ})$ is connected.
\end{lemma}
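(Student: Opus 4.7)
The plan is to describe $\Loc_{N}^{\tilde{X}^{\circ},S}(X^{\circ})$ explicitly using the presentation of $\pi_{1}(X^{\circ})$, and then exploit the hypothesis $\deg(D)>0$ to absorb any obstruction into the loops around points of $D$, which are valued in the connected group $T$.

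First I would reduce to the classical truncation, since connectedness of a derived stack depends only on its classical $\pi_{0}$. Fix a homomorphism $\rho_{W}:\pi_{1}(X^{\circ})\to W$ representing the $W$-bundle $\tilde{X}^{\circ}$. Using Definitions \ref{defin: N local systems corresponding to a cameral cover} and \ref{definition: Moduli N bundles S monodromy}, the stack $\Loc_{N}^{\tilde{X}^{\circ},S}(X^{\circ})$ is the quotient by $N$ (acting by conjugation) of the variety $V$ of tuples $(\tilde{\alpha}_{i},\tilde{\beta}_{i},\tilde{\gamma}_{d},\tilde{\gamma}_{p})_{i,d,p}$ of elements in $N$ such that: (i) their images in $W$ are $\rho_{W}(\alpha_{i}),\rho_{W}(\beta_{i}),\rho_{W}(\gamma_{d}),\rho_{W}(\gamma_{p})$; (ii) $\tilde{\gamma}_{p}\in n_{\alpha_{p}}T^{\alpha_{p}}$ for the root $\alpha_{p}$ determined near $p$ by $\rho_{W}$; (iii) the relation of Equation \ref{equation: Relation fundamental group} holds in $N$. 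Since quotienting by the (not necessarily connected) group $N$ preserves connectedness of stacks, it suffices to show $V$ is connected.

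Next I would analyze $V$. By Convention \ref{convention:cameral}, $\tilde{X}^{c}\to X^{c}$ is unramified over $D$, hence $\rho_{W}(\gamma_{d})=1_{W}$ for every $d\in D$, so $\tilde{\gamma}_{d}\in T$ is unconstrained by (i). The other cosets defining (i) are each $T$-torsors, hence connected. By Lemma \ref{lemma: connected component of N/N} (together with the fact that $T^{\alpha_{p}}$ is connected), each factor $n_{\alpha_{p}}T^{\alpha_{p}}$ arising in (ii) is connected. So the ``unconstrained'' parameter space
\[
V_{0} \;=\; \prod_{i}\bigl(\rho_{W}(\alpha_{i})T\times \rho_{W}(\beta_{i})T\bigr) \times \prod_{p\in P} n_{\alpha_{p}}T^{\alpha_{p}} \times \prod_{d\in D} T
\]
is connected. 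The $W$-image of the relation in (iii) is automatic from the fact that $\rho_{W}$ is a homomorphism, so the relation is equivalent to a single equation $\mathcal{R}(\tilde{\alpha},\tilde{\beta},\tilde{\gamma}_{d},\tilde{\gamma}_{p})=1$ in $T$.

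Finally I would use $\deg(D)\geq 1$. Fixing all data other than the $\tilde{\gamma}_{d}$, the relation becomes a single equation in $T$ on the product $\prod_{d}\tilde{\gamma}_{d}$. The multiplication map $T^{|D|}\to T$ is a surjective morphism of connected algebraic groups with connected fibers (each a torsor over the kernel, itself isomorphic to $T^{|D|-1}$). Therefore the projection $V\to V_{0}'$, where $V_{0}'$ is the factor of $V_{0}$ omitting the $\tilde{\gamma}_{d}$, is a surjection with connected fibers onto a connected base, so $V$ is connected. There is no serious obstacle here: the only point requiring care is confirming that the $W$-monodromy around each $d\in D$ is trivial (so that $\tilde{\gamma}_{d}$ is free in the connected group $T$), which is exactly the unramifiedness assumption in Convention \ref{convention:cameral}.
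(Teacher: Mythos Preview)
Your argument is correct and follows essentially the same route as the paper: present the framed moduli as tuples in $N$ lifting a fixed $\rho_{W}$ subject to the single relation, then use $\deg(D)>0$ to solve that relation for one $T$-valued generator $\tilde{\gamma}_{d}$, exhibiting the framed space as (a fibration with connected fibers over) a product of connected $T$-cosets and the connected sets $n_{\alpha_{p}}T^{\alpha_{p}}$. One small imprecision: once you fix a single representative $\rho_{W}$, conjugation by $N$ does not preserve $V$ (it carries $V$ to the copies attached to $W$-conjugate representatives), so the stack is $[V/T]$, or equivalently $[(\coprod_{w}V_{w})/N]$ as in the paper; this does not affect your connectedness conclusion.
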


\begin{proof}
Pick a base point $x\in X^{\circ}$ and a set of generators $\{\alpha_{i}, \beta_{i}, \gamma_{j}\}_{1\leq i \leq g, 1\leq j \leq \text{deg}(D)+\text{deg}(P)}$ of $\pi_{1}(X^{\circ})$ of $\pi_{1}(X,x)$ as in Equation \ref{equation: Relation fundamental group}, with the property that the generators $\gamma_{i}$, $\text{deg}(D)<i\leq \text{deg}(D)+\text{deg}(P)$ correspond to loops around points $p\in P$.

For each choice of trivialization of $\tilde{X}|_{x}$ and an element $\alpha\in \pi_{1}(X^{\circ},x)$ we gain an element $w_{\alpha}\in W$ corresponding to the monodromy around $\alpha$ with respect to this trivialization.  We let $t_{l}$ parameterize the $W$-orbit of such $(2g+\text{deg}(D)+\text{deg}(P))$-tuples of elements in $W$, and we denote the corresponding tuple by \[(w_{t_{l},\alpha_{1}}, w_{t_{l},\beta{1}},...,w_{t_{l},\alpha_{g}}, w_{t_{l},\beta{g}},w_{t_{l},\gamma{1}},..., w_{t_{l},\gamma_{deg(D)+deg(P)}}). \] 
For each $\gamma_{i}$, $\text{deg}(D)<i\leq \text{deg}(D)+\text{deg}(P)$ pick a root $\alpha_{t_{l},\gamma_{i}}$, such that $w_{t_{l},\gamma_{i}}=s_{\alpha_{t_{l},\gamma_{i}}}$.

For each $w\in W$ pick an element $n_{w}\in q^{-1}(w)\subset N$.  We then have that the truncation of $\Loc_{N}^{\tilde{X}^{\circ},S}(X^{\circ})$ is given by
\[t_{0}(\Loc_{N}^{\tilde{X}^{\circ},S}(X^{\circ}))=\left( \coprod_{t_{l}} Y_{t_{l}}\right)/N,\]
where $Y_{t_{l}}\hookrightarrow T^{2g+\text{deg}(D)}\times T_{SL(2)}^{\text{deg}(P)}$ is cut out by the equation 
\[Id=\prod_{i=1}^{2g}[n_{w_{t_{l},\alpha_{i}}}t_{\alpha_{i}}, n_{w_{t_{l},\beta_{i}}}t_{\beta_{i}}]\prod_{i=1}^{deg(D)}n_{w_{t_{l},\gamma_{i}}}t_{\gamma_{i}}\prod_{i=deg(D)+1}^{deg(D)+deg(P)}n_{\alpha_{t_{l},\gamma_{i}}}I_{\alpha}(t_{\gamma_{i}})\]

It is clear that the map $Y_{t_{l}}\rightarrow T^{2g+\text{deg}(D)-1}\times T_{SL(2)}^{\text{deg}(P)}$ given by forgetting the element $\gamma_{\text{deg}(D)}$ is an isomorphism.  Hence $Y_{t_{l}}$ is connected, hence $Loc_{N}^{\tilde{X}^{\circ}}(X^{\circ})$ is connected.

\end{proof}

We can thus see the moduli space of $N$-shifted weakly $W$-equivariant $T$-local systems satisfying the $S$-monodromy condition as a connected component of the moduli space of $N$-shifted weakly $W$-equivariant $T$-local systems that have monodromy $I_{\alpha}(-Id)$ around the ramification points where the root $\alpha$ vanishes.

\begin{definition}
\label{def: locT-1}
We define the stack  
\[\Loc_{T}^{N, -1}(\tilde{X}^{\circ}):=\Loc_{T}^{N}(\tilde{X}^{\circ})\times_{\prod_{r\in R}T/T} \prod_{r\in R} \{ I_{\alpha(r)}(-Id)\}/T,\]
where the map to $\prod_{r\in R}T/T$ comes from restricting the $T$-local system to $S^{1}_{r}$, and $\alpha(r)$ is either of the two roots such that $s_{\alpha(r)}$ fixes the point $r$.
\end{definition}

\begin{theorem}
\label{theorem: connected component of LocT-1}
For $\deg(D)>0$ the isomorphism of stacks $Loc_{N}^{\tilde{X}^{\circ}}(X^{\circ})\xrightarrow{\cong} Loc_{T}^{N}(\tilde{X}^{\circ})$ restricted to $Loc_{N}^{\tilde{X}^{\circ},S}(X^{\circ})$ induces an isomorphism of stacks between $Loc_{N}^{\tilde{X}^{\circ},S}(X^{\circ})$ and a connected component of $Loc_{T}^{N,-1}(\tilde{X}^{\circ})$.
\end{theorem}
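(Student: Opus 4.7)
The plan is to combine Theorem \ref{thm:flat_DG} with Lemmas \ref{lemma: connected component of N/N} and \ref{lemma: N local systems with S monodromy connected}, after establishing that the equivalence of Theorem \ref{thm:flat_DG} intertwines the $N$-monodromy on $X^\circ$ with the $T$-monodromy on $\tilde X^\circ$ via squaring.

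First I would verify the monodromy compatibility. By Convention \ref{convention:cameral} the ramification is of order $2$, so each $p \in P$ has a unique preimage $r \in R$, and $S^1_r \to S^1_p$ is a connected double cover. Under the correspondence of Theorem \ref{thm:flat_DG}, $\cL = \cE_N$ is viewed as a $T$-torsor over $\cE_N/T \cong \tilde X^\circ$, and a generator of $\pi_1(S^1_r)$ pushes forward to twice a generator of $\pi_1(S^1_p)$. Hence if the $N$-local system $\cE_N$ has monodromy $n$ around $S^1_p$ (with $q(n) = s_\alpha$, so $n^2 \in T$), then $\cL$ has monodromy $n^2$ around $S^1_r$. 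If moreover $n \in n_\alpha T_\alpha$ as in the S-monodromy condition, Lemma \ref{lem:commutation_t_alpha} (giving $n_\alpha t n_\alpha^{-1} = t^{-1}$ for $t \in T_\alpha$) yields $(n_\alpha t)^2 = n_\alpha^2 = I_\alpha(-\mathrm{Id})$. The $W$-monodromy of $\tilde X^\circ$ around $S^1_p$ must fix the unique lift $r$, forcing $\alpha = \pm \alpha(r)$; and $I_{\alpha(r)}(-\mathrm{Id}) \in T$ is independent of the sign since $-\mathrm{Id}$ has order $2$ in $SL(2)$. This shows that the isomorphism of Theorem \ref{thm:flat_DG} restricts to a well-defined morphism $\Loc_N^{\tilde X^\circ, S}(X^\circ) \to \Loc_T^{N,-1}(\tilde X^\circ)$.

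To upgrade this to an isomorphism onto a connected component, I would argue as follows. The stack $\Loc_T^{N,-1}(\tilde X^\circ)$ corresponds under the iso of Theorem \ref{thm:flat_DG} exactly to the substack of $\Loc_N^{\tilde X^\circ}(X^\circ)$ on which the monodromy around each $S^1_p$ lies in $(N \times_T \{I_{\alpha(r)}(-\mathrm{Id})\})/N$, because of the squaring compatibility above. Lemma \ref{lemma: connected component of N/N} identifies $\bigsqcup_{\alpha\in \Lambda_p} (n_\alpha T_\alpha)/N$ as a connected component of this quotient scheme. Taking preimages of these open-and-closed subsets under the monodromy maps at each $p \in P$ shows that $\Loc_N^{\tilde X^\circ, S}(X^\circ)$ is a union of connected components of $\Loc_T^{N,-1}(\tilde X^\circ)$. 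Lemma \ref{lemma: N local systems with S monodromy connected}, which uses the hypothesis $\deg(D) > 0$, then ensures that this union is connected, hence is a single connected component; an open-and-closed embedding of stacks onto a connected component is automatically an isomorphism onto it, yielding the desired conclusion.

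The main obstacle is the first step: carefully tracking how the iso of Theorem \ref{thm:flat_DG} relates the $N$-monodromy on $X^\circ$ to the $T$-monodromy on $\tilde X^\circ$, including choosing basepoints and trivializations compatible with the double covering $S^1_r \to S^1_p$, and checking that the assignment of the specific element $I_{\alpha(r)}(-\mathrm{Id}) \in T$ is indeed the one picked out by the S-monodromy condition, regardless of trivialization choices. Once this compatibility is in hand, the remaining arguments are essentially formal consequences of Lemmas \ref{lemma: connected component of N/N} and \ref{lemma: N local systems with S monodromy connected}.
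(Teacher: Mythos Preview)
Your approach is the same as the paper's: the proof there is a one-line appeal to Theorem~\ref{thm:flat_DG} together with Lemmas~\ref{lemma: connected component of N/N} and~\ref{lemma: N local systems with S monodromy connected}, and you have correctly unpacked the monodromy-squaring compatibility that makes these fit together.

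One correction: the claim that each $p\in P$ has a \emph{unique} preimage $r\in R$ is false in general. The cameral cover has degree $|W|$, so over a branch point $p$ there are $|W|/2$ ramification points, and correspondingly $|W|/2$ boundary circles $S^1_r$, each a connected double cover of $S^1_p$. Your computation $(n_\alpha t)^2=I_\alpha(-\mathrm{Id})$ is still correct and applies verbatim at any one such $r$; the condition at the remaining $r'=w\cdot r$ then follows from the weak $W$-equivariance built into the $N$-shifted structure, since $\mathrm{Ad}_{n_w}(I_{\alpha(r)}(-\mathrm{Id}))=I_{w(\alpha(r))}(-\mathrm{Id})=I_{\alpha(r')}(-\mathrm{Id})$. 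With that adjustment the argument goes through.
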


\begin{proof}
This follows from Lemmas \ref{lemma: connected component of N/N}, \ref{lemma: N local systems with S monodromy connected} and Theorem \ref{thm:flat_DG}.
\end{proof}

\begin{definition}
\label{moduli alpha monodromy T bundle side}
For $\deg(D)>0$ we define $\Loc_{T}^{N,S}(\tilde{X}^{\circ})$ to be the connected component of $\Loc_{T}^{N, -1}(\tilde{X}^{\circ})$ that is isomorphic to $Loc_{N}^{\tilde{X}^{\circ},S}(X^{\circ})$.
 \end{definition}

It is now tautological that for $\deg(D)>0$ there is a commutative diagram:
\begin{equation}
\label{equation: commutative diagram S monodromy for both T bundles and N bundles}
\begin{tikzcd}
\Loc_{T}^{N, S}(\tilde{X}^{\circ}) \arrow{r} \arrow{d}{\cong} &  \Loc_{T}^{N}(\tilde{X}^{\circ}) \arrow{d}{\cong}\\
\Loc_{N}^{\tilde{X}, S}(X^{\circ}) \arrow{r} & \Loc^{\tilde{X}}_{N}(X^{\circ})
\end{tikzcd}
\end{equation}
with both vertical arrows being isomorphisms.

\subsection{The Non-abelianization Map}
\label{subsec: Non abelianiazation via Cameral Networks}

The main result of this paper is:

\begin{construction}
\label{thm:bulk_map_exists}
The data of a basic abstract spectral network determines a morphism of algebraic stacks:
\[
\begin{tikzcd}
\Loc_N^{\tilde X^\circ,S}(X^\circ)
\arrow{r}{\nonab} &
\Loc_G(X).
\end{tikzcd}
\]
\end{construction}

Before we provide details for this construction we will provide an informal overview.

Firstly we replace $X^\circ$ with the homotopy equivalent subset $X^{\circ '}$ (see Definition \ref{def:wkb_cameral}), in order to work with finitely many Stokes curves. We start with an $N$-local system on $X^{\circ '}$ (corresponding to a cameral cover $\tilde{X}^{\circ '}$), which we induce to a $G$-local system on $X^{\circ '}$.  We will ``cut and reglue" this $G$-local system (in the sense of Example \ref{ex: modifying local systems on S1}) along $\cW\backslash \cJ$ to provide a new $G$-local system on $X^{\circ '}\backslash \cJ$.  We will show that this extends to a $G$-local system on $X$.

In order to do this we need to provide locally constant automorphisms of the $G$-local system along each connected component of $\cW\backslash \cJ$.  This is done using the filtration on the connected components of $\cW\backslash \cJ$ provided at the start of Construction \ref{construction of swkb}. For the components in $F_{0}(\cW\backslash \cJ)$ these are provided by in Construction \ref{construction of swkb}, Lemma \ref{lemma: monodromy to Stokes factors}, and Lemma \ref{lemma: N equivariant map to Stokes factors initial}.  For components not in $F_{0}(\cW\backslash \cJ)$ these automorphisms are provided by Lemma \ref{lem:assignment_stokes_newlines_equivariant}.

The referenced lemmas show that the resultant $G$-local system extends to $X$, because the monodromy around each joint $J\in \cJ$, and the monodromy around each $S^{1}_{p}$ ($p\in P$) is the identity.

We now move on to the formal construction of the non-abelianization morphism.
Let $\cW$ be a basic abstract cameral network as in Definition \ref{defin:basic_abstract}.
Let $\tilde{X}^{\circ '}:=\tilde{X}^{\circ}\times_{X^{\circ}}X^{\circ '}$. Let $\cW\backslash \cJ=\cup_{i\in I} c_{i}$ be the decomposition of $\cW\backslash \cJ$ into connected components.

\begin{remark}
\label{rem:passing_to_'_iso}
Because $X^{\circ '}$ and $X^\circ$ are assumed to be homotopy equivalent, there are isomorphisms $\Loc_{G}(X^{\circ '})\cong \Loc_{G}(X^{\circ})$, and $\Loc_{N}^{\tilde{X}^{\circ '}, S}(X^{\circ '})\cong \Loc_{N}^{\tilde{X}^{\circ},S}(X^{\circ})$.
\end{remark}

\begin{definition}
\label{defn: stack G bundles with automorphisms}
Informally, we define the stack:
\[
\Aut_{\cW, G}(X^{\circ '})
\]
to be the moduli stack parametrizing $G$-local systems $\cE$ on $X^{\circ '}$ together with a section $S_{i}$ of the locally constant sheaf $\Aut(\cE)|_{c_{i}}$ for each $i\in I$.

More rigorously, let $un: \Loc_{G}(X^{\circ '})\times X^{\circ '}_{B}\rightarrow BG$ be the map corresponding to the universal local system. Let $A=\Hom_{dSt/BG}(\pt, \pt)$ be the automorphisms of the universal $G$-bundle on $BG$.  We then define $\Aut_{\cW, G}(X^{\circ '})$ as the fiber product in the diagram:
\[
\begin{tikzcd}
\Aut_{\cW,G}(X^{\circ'}) \arrow{d}\arrow{r} & \Gamma
\Big(\coprod_{i\in I}(c_{i})_{B}\times \Hom\big(\coprod_{i\in I}(c_{i})_{B}, BG\big), un^{*}A\Big)\arrow{d}\\
\Hom\big(X^{\circ '}_{B}, BG\big)
\arrow{r}
&
\Hom\big(\coprod_{i\in I}(c_{i})_{B}, BG\big)
\end{tikzcd}
\]
where: $\Gamma\left(\coprod_{i\in I}(c_{i})_{B}\times \Hom\big(\coprod_{i\in I}(c_{i})_{B}, BG\big), un^{*}A\right)$ is the moduli stack of sections of $un^{*}A\rightarrow \coprod_{i\in I}(c_{i})_{B}\times \Hom\big(\coprod_{i\in I}(c_{i})_{B}, BG\big)$.
\end{definition}

There is a forgetful map $\Aut_{\cW, G}(X^{\circ'})\rightarrow \Loc_{G}(X^{\circ'})$.

\begin{definition}
\label{def: Regluing stack with N structure}
We define the stack:
\[
\Aut_{\cW, N, G}^{\tilde{X}^{\circ'}}(X^{\circ'}):=\Loc_{N}^{\tilde{X}^{\circ'}}(X^{\circ'})\times_{\Loc_{G}(X^{\circ'})}\Aut_{\cW, G}(X^{\circ'}).
\]
where the map $\Loc_{N}^{\tilde{X}^{\circ'}}(X^{\circ'})\rightarrow\Loc_{G}(X^{\circ'})$ induces a $G$-local system on $X^{\circ'}$ from an $N$-local system on $X^{\circ'}$.
\end{definition}

Let $X^{\circ'}\backslash \cW=\cup_{k\in \cK} V_{k}$ be the decomposition of $X^{\circ'}\backslash \cW$ into connected components. Let $V_{k}'=\overline{V}_k\backslash (\overline{V}_k\cap \cJ)$ where $\cJ$ is the set of joints of the network as defined in Definition \ref{defin:basic_abstract}, and $\overline{V}_k$ is the closure of $V_k\hookrightarrow X^{\circ'}$.  Let $p:\coprod_{k\in \cK}V_{k}'\rightarrow X^{\circ '}$ be the inclusions.  It is essential that we here use the disjoint union.

For each $i\in I$ there are two distinct topological spaces $V_{k_{1}(i)}'$ and $V_{k_{2}(i)}'$, such that $c_{i}\backslash (c_{i}\cap \cJ)\in p(V_{k_{1}(i)}), p(V_{k_{2}(i)})$.  We define $c_{i,1}^{'}$ and $c_{i,2}^{'}$ to be the lifts of $c_{i}\backslash (c_{i}\cap \cJ)$ to $V_{k_{1}(i)}'$ and $V_{k_{2}(i)}'$ respectively.  The orientations on $X$ and $c_{i}$ give a preferred normal direction to $c_{i}$. We choose $V_{k_{1}(i)}'$ and $V_{k_{2}(i)}'$ such that this normal direction is directed from $V_{k_{1}(i)}'$ to $V_{k_{2}(i)}'$.

\begin{definition}[Regluing Map]
\label{defn: regluing map}
We define the regluing map 
\begin{equation}
\label{eq: Reglue map}
\reglue:  
\Aut_{\cW, G}(X^{\circ '})
\rightarrow 
\Loc_{G}(X^{\circ'} \backslash \cJ)
\end{equation}
as follows. Over $\Aut_{\cW,G}(X^{\circ '})$ we can construct the local system $\cE'$ as:
\[
\cE':=(p^{*}\cE)/\left\{p^{*}\cE|_{c_{i,1}^{'}}\xrightarrow{S_{i}} p^{*}\cE|_{c_{i,2}^{'}}\right\}.
\]
This gives a local system on $\left(\coprod_{k} V_{k}'\right)/\left\{c_{i,1}^{'} \sim c_{i,2}^{'}\right\}\cong X^{\circ}\backslash \cJ$, and hence by the universal property defines the map in Equation \ref{eq: Reglue map}.

Informally, we can see the regluing map as regluing a local system $\cE$ by the data of the automorphisms $S_{i}$ over $c_{i}$.
\end{definition}

\begin{proof}[Details for Construction \ref{thm:bulk_map_exists}]
We will provide a map $\Loc_{N}^{\tilde{X}^{\circ '}, S}(X^{\circ '})\to \Loc_{G}(X)$, which by the isomorphisms from Remark \ref{rem:passing_to_'_iso} is equivalent to a map 
\[
\begin{tikzcd}
\Loc_N^{\tilde X^\circ,S}(X^\circ)
\arrow{r}{\nonab} &
\Loc_G(X).
\end{tikzcd}
\]

Inducing $N$-local systems to $G$-local systems gives a map $ind: \Loc_N^{\tilde X^{\circ '},S}(X^{\circ '})\rightarrow \Loc_{G}(X^{\circ '})$.
Given a basic abstract cameral network, Construction \ref{construction of swkb} below gives a lift  $s_{WKB}: \Loc_N^{\tilde X^{\circ '},S}(X^{\circ '})\rightarrow \Aut_{\cW, G}(X^{\circ '})$ of $ind$, so that the following diagram commutes:
\[
\begin{tikzcd}
 & \Aut_{\cW, G}(X^{\circ '})\arrow{d}{forget}\\
 \Loc_N^{\tilde X^{\circ '},S}(X^{\circ '}) \arrow{r}{ind} \arrow{ru}{s_{WKB}} & \Loc_{G}(X^{\circ '}).
\end{tikzcd}
\]




Lemmas \ref{lemma: monodromy to Stokes factors} and \ref{lem:assignment_stokes_newlines_equivariant} below show that the map $\reglue \circ s_{WKB}$ factors as shown in the following commutative diagram, which defines the map $\nonab$:
\[
\begin{tikzcd}
\Aut_{\cW, G}(X^{\circ '}) \arrow{r}{\reglue} & \Loc_{G}(X^{\circ '}\backslash \cJ)
\\
\Loc_N^{\tilde X^{\circ '},S}(X^{\circ '})\arrow{u}{s_{WKB}} &
\\
\Loc_N^{\tilde X^{\circ},S}(X^{\circ}) \arrow{u}{\cong} \arrow[dashed]{r}{\nonab} & \Loc_{G}(X) \arrow[hookrightarrow]{uu}
\end{tikzcd}
\]
\end{proof}





We first construct $s_{WKB}$:

\begin{construction}[Construction of $s_{WKB}$]
\label{construction of swkb}
Recall the acyclicity assumption in Definition \ref{defin:basic_abstract}, which provides a total order on the set of joints $\cJ$, as proved in Lemma \ref{lem:topological_ordering}. We can thus define an increasing filtration $F_\bullet (\cW \setminus \cJ)$, such that:
\begin{itemize}
    \item $F_0 (\cW)$ contains those Stokes curve segments starting from the boundary circles (which correspond to the branch points).
    \item $F_i (\cW)$ contains those Stokes curve segments starting at joints with index $\leq i$. 
\end{itemize}

We first construct a map 
\[
\Loc_{N}^{\tilde{X}^{\circ'},S}(X^{\circ'})
\overset{s_{WKB}^0}{\rightarrow} 
\Aut^{\tilde X^{\circ '}}_{F_0(\cW),N, G}(X^{\circ '}).
\]

Let us first consider the case of primary lines.  Consider the case of a boundary circle $S^1_{p}$.  We have identifications $\Aut_{F_0(\cW_{1})\cap S^1_{p},N, G}(S^1_{p})\cong [(N\times G^{3})/N]$, and $\Aut_{F_0(\cW_{1})\cap S^1_{p}, G}(S^1_{p})\cong [G^{4}/G]$, where $N$ and $G$ are acting by simultaneous conjugation. This is because if we fix a trivialization of the $G$ bundle at a point, we can identify the space with $G^{4}$, where three copies of $G$ come from the specified automorphisms\footnote{Where we identify these with elements of $G$ using the parallel transport of the trivialization in the direction specified by the orientation of $S^{1}_{p}$.}, and one comes from the monodromy around $S^1_{p}$.

For $\Lambda \subset \Phi$ an orbit of the $W$ action on roots, we have $N$-equivariant maps (from Lemma \ref{lemma: N equivariant map to Stokes factors initial}): 
\[
\left(\coprod_{\alpha \in \Lambda}n_{\alpha}T_{\alpha}\right) 
\overset{S_{\pm}}{\longrightarrow} 
\coprod_{\alpha \in \Lambda} U_{\pm \alpha}.
\]
This induces a map:
\[
\left[ \left(\coprod_{\alpha\in \Lambda}n_{\alpha}T_{\alpha}\right) \middle/N\right]
\rightarrow 
\left[\left(\coprod_{\alpha\in \Lambda}n_{\alpha}T_{\alpha}\times U_{\alpha}\times U_{ -\alpha}\times U_{\alpha}\right)\middle/N\right]
\rightarrow
[N\times G^{3}/N].
\]
Recall that $[N\times G^{3}/N]\cong \Aut_{F_{1}(\cW)\cap S^1_{p},N G}(S^1_{p})$.  The way that branches of the cameral cover correspond to choices of roots in $\Lambda$ is explained in Section \ref{subsubsection: Stokes factors}.

Doing this on each boundary circle, and inducing from an $N$-local system to a $G$-local system on $X$ provides the desired map:
\begin{equation}
\label{eq:first_swkb}
\Loc_N^{\tilde X^{\circ '},\alpha}(X^{\circ '})
\overset{s^0_{WKB}}{\rightarrow} 
\Aut^{\tilde X^{\circ '}}_{F_0(\cW),N, G}(X^{\circ'}).
\end{equation}

We now lift this to a map $s_{WKB}$.
We work inductively, by assuming we are given a map $s_{WKB}^{i}$ as in the below diagram, such that the Stokes factors $S_{c_{i}}$ are in $U_{\alpha}$ when we trivialize the $N$-local system in such a way that the line segment $c_{i}$ is labelled by $\alpha$. We can then construct a map $s_{WKB}^{i+1}$ such that the diagram below commutes.
\[
\begin{tikzcd}
 & 
 \Aut^{\tilde X^{\circ '}}_{F_{i+1}(\cW),N, G}(X^{\circ'}) 
 \arrow{d}{forget}
 \\
\Loc_N^{\tilde X^{\circ '},S}(X^{\circ '})
\arrow{ru}{s_{WKB}^{i+1}}
\arrow[swap]{r}{s_{WKB}^{i}}
& 
\Aut^{\tilde X^{\circ '}}_{F_{i}(\cW), N, G}(X^{\circ'})
\end{tikzcd}
\]
The base case $s_{WKB}^0$ is provided by the map in Equation \ref{eq:first_swkb}. The inductive step of obtaining $s_{WKB}^{i+1}$ from $s_{WKB}^i$ works by picking a trivialization of the $N$-local system at $\cJ$, and applying the $N$-equivariant map of Equation \ref{equation: New Stokes lines} in Lemma \ref{lem:assignment_stokes_newlines_equivariant} . This provides the Stokes factors $S_{l}$, for all Stokes curves generated at the $i+1^{\text{th}}$ joint, and hence the lift $s_{WKB}^{i+1}$.  

Because the number of curves in $\cW$ is finite, there is some integer $N$, such that $F_{N}(\cW)=\cW$.  We then define $s_{WKB}$ as the composition:
\[
\Loc_N^{\tilde X^{\circ '},S}(X^{\circ '})
\xrightarrow{s_{WKB}^{N}}
\Aut^{\tilde X^{\circ '}}_{F_{N}(\cW),N, G}(X^{\circ'})
\xrightarrow{forget}
\Aut^{\tilde X^{\circ '}}_{\cW, G}(X^{\circ'}).
\]
\end{construction}

\subsubsection{Stokes factors for primary curves}
\label{subsubsection: Stokes factors}

We now state and provide some lemmas used in the previous construction.


Fix a branch point $p\in P$, and let $x_p$ be the marked point on the boundary circle of $X^\circ$ corresponding to $p$. For the following construction, fix a trivialization $\phi_p : \tilde X^\circ|_{x_p} \cong W$. Under this identification, the monodromy of $\cE$ around $S^1_p$ takes values in:
\[
[n_\alpha \cdot T^{\alpha} / T],
\]
for some $\alpha$ where we have chosen a Chevalley basis, and we recall $n_{\alpha}$ was introduced in Equation \ref{eq:preferred_lift}, and $T_{\alpha}$ was introduced immediately after Equation \ref{eq:decomp_lie_algebra}.  In fact there is some ambiguity, this statement is true for a pair of roots $\pm \alpha$.  Consider the path around $S^{1}_{p}$ starting at $x_{p}$ and in the direction specified by the orientation of $X$.  The lift of this path starting at $\phi_{p}^{-1}(1_{W})$ ($1_{W}$ denoting the identity of $W$) crosses three Stokes curves.  We choose $\alpha$ such that the labels of these three Stokes curves are (in the order crossed) $\alpha$, $-\alpha$ and $\alpha$.

\begin{definition}
\label{def: monodromy to Stokes factors}
We define the morphisms:
\begin{align}
\begin{split}
n_\alpha \cdot T^\alpha 
&\overset{S_{\pm \alpha}}{\longrightarrow}
U_{\pm \alpha}
\\
n_\alpha t_{\alpha} 
&\longmapsto
(u_{\pm \alpha} ) ,
\end{split}
\end{align}
where:
\[
u_{\pm \alpha} 
= 
\Ad_{t_{\alpha}^{-1/2}}\exp( - e_{\pm \alpha}).
\]
Here $e_{\pm \alpha}$ are Chevalley basis elements, and the choice of square root $t_{\alpha}^{1/2}$ does not affect the adjoint action $\Ad_{t_{\alpha}^{-1/2}}$. 
\end{definition}

Remark \ref{rem:chevalley_basis_independence} explains that the morphisms in Definition \ref{def: monodromy to Stokes factors} do not depend on the choice of Chevalley basis.

Informally, $S_{\pm \alpha}$ send the monodromy of an $N$-local system $\cE$ to the Stokes factors that will be used in regluing. The factors $S_{\pm \alpha}$ have the following properties:

\begin{lemma}
\label{lemma: monodromy to Stokes factors}
For all $t_\alpha \in T^\alpha$, and $u_{\pm \alpha}$ as in Definition \ref{def: monodromy to Stokes factors}, the following relation holds in $G$:
\begin{equation}
\label{eq:monodromy_cancels}
n_\alpha t_\alpha u_\alpha u_{-\alpha} u_\alpha = \id .
\end{equation}

Moreover, $S_{\pm \alpha}$ are equivariant with respect to the adjoint action of $T$. As such, they descend to morphisms of stacks:
\[
[n_\alpha \cdot T^\alpha / T]
\longrightarrow
[U_{\pm \alpha} / T] ,
\]
where the action of $T$ is by conjugation.
\end{lemma}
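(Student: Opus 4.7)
The plan is to verify both parts of the lemma by direct algebraic manipulation, reducing everything to statements already established in Section \ref{subsect:chevalley} together with one trivial matrix identity in $SL(2)$.

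For Equation \ref{eq:monodromy_cancels}, I will first unfold the definition $u_{\pm\alpha}=t_\alpha^{-1/2}\exp(-e_{\pm\alpha})t_\alpha^{1/2}$; the interior factors $t_\alpha^{1/2}$ and $t_\alpha^{-1/2}$ telescope, so that the left-hand side of Equation \ref{eq:monodromy_cancels} becomes
\[
n_\alpha t_\alpha^{1/2}\cdot\exp(-e_\alpha)\exp(-e_{-\alpha})\exp(-e_\alpha)\cdot t_\alpha^{1/2}.
\]
Taking inverses in Lemma \ref{lem:triple_product} identifies the middle product with $n_\alpha^{-1}$. Then Lemma \ref{lem:commutation_t_alpha} applied to $t_\alpha^{1/2}\in T_\alpha$ gives $n_\alpha t_\alpha^{1/2}n_\alpha^{-1}=t_\alpha^{-1/2}$, and the entire expression collapses to $t_\alpha^{-1/2}t_\alpha^{1/2}=\id$. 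The aside in Definition \ref{def: monodromy to Stokes factors} about square roots will need to be justified first: two choices of $t_\alpha^{1/2}$ differ by an element of order two in $T_\alpha$, which Lemma \ref{lem:torus_decomposition} identifies with $\{\id,I_\alpha(-\id_{SL(2)})\}$, and $\Ad_{I_\alpha(-\id_{SL(2)})}$ acts trivially on $e_{\pm\alpha}$ by the elementary $SL(2)$ identity $(-\id)e_{\pm\alpha}(-\id)^{-1}=e_{\pm\alpha}$ pushed through $I_\alpha$.

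For the equivariance claim, given $t\in T$ I will use the near-surjection $T_\alpha\times T_{\Ker(\alpha)}\to T$ of Lemma \ref{lem:torus_decomposition} to write $t=t'_\alpha\, t_{\Ker(\alpha)}$; by Lemma \ref{lem:t_ker_comm} the $T_{\Ker(\alpha)}$-factor commutes with both $n_\alpha$ and with $e_{\pm\alpha}$, so it can be ignored for all the adjoint actions appearing in the computation. A short calculation combining this with Lemma \ref{lem:commutation_t_alpha} gives
\[
\Ad_t(n_\alpha t_\alpha)=n_\alpha\cdot\big((t'_\alpha)^{-2}t_\alpha\big),
\]
which substituted into the definition of $S_{\pm\alpha}$ yields $\Ad_{t'_\alpha\cdot t_\alpha^{-1/2}}\exp(-e_{\pm\alpha})$. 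On the other hand $\Ad_t\big(S_{\pm\alpha}(n_\alpha t_\alpha)\big)=\Ad_{t\cdot t_\alpha^{-1/2}}\exp(-e_{\pm\alpha})$, and the two agree by a further application of Lemma \ref{lem:t_ker_comm}. Once $T$-equivariance is in hand, descent to the stack quotients $[n_\alpha T_\alpha/T]\to[U_{\pm\alpha}/T]$ is formal.

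The main obstacle I anticipate is not any single computation but rather the combined bookkeeping of the square-root ambiguity, the finite kernel in the decomposition $T_\alpha\times T_{\Ker(\alpha)}\to T$, and the sign produced by the conjugation identity $n_\alpha t'_\alpha n_\alpha^{-1}=(t'_\alpha)^{-1}$. All of these are absorbed by the fact that $\Ad_{I_\alpha(-\id_{SL(2)})}$ fixes $e_{\pm\alpha}$, so provided one sets up the signs once and then tracks them carefully, both statements reduce to the structural results already available in Section \ref{subsect:chevalley} without any additional ingredient.
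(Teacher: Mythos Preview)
Your argument is correct and follows essentially the same route as the paper: both reduce Equation~\eqref{eq:monodromy_cancels} to Lemmas~\ref{lem:triple_product} and~\ref{lem:commutation_t_alpha} via the same telescoping/$\Ad_{t_\alpha^{-1/2}}$ manipulation, and both obtain $T$-equivariance from the decomposition $T=T_\alpha\cdot T_{\Ker(\alpha)}$ together with Lemma~\ref{lem:t_ker_comm} (the paper packages this as the ``manifestly equivariant'' form~\eqref{eq:map_to_stokes_t_equivariant}, whereas you compute $\Ad_t(n_\alpha t_\alpha)=n_\alpha(t'_\alpha)^{-2}t_\alpha$ directly, but the content is identical). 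The only slip is that Lemma~\ref{lem:torus_decomposition} describes the kernel of $T_\alpha\times T_{\Ker(\alpha)}\to T$, not the order-two subgroup of $T_\alpha$, so it is not the right citation for the square-root independence---but the paper itself simply asserts that point in Definition~\ref{def: monodromy to Stokes factors} without further argument, so your treatment is no less complete.
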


\begin{proof}
In light of Lemma \ref{lem:commutation_t_alpha} applied to a square root $t_\alpha^{1/2}$:
\[
n_\alpha t_\alpha^{1/2} n_\alpha^{-1} = t_\alpha^{-1/2},
\]
the relation $n_\alpha t_\alpha  = \Ad_{t_\alpha^{-1/2}} n_\alpha$ holds. Then we can write the maps $S_{\pm \alpha}$ in the manifestly $T^\alpha$-equivariant form:
\begin{equation}
\label{eq:map_to_stokes_t_equivariant}
\Ad_{t_{\alpha}^{-1/2}} n_\alpha 
\mapsto
\Ad_{t_{\alpha}^{-1/2}}\exp( - e_{\pm \alpha}).
\end{equation}
Due to Lemma \ref{lem:t_ker_comm}, the adjoint action of $T^{\Ker \alpha}$ on $e_{\pm \alpha}$ and $n_\alpha$ is trivial, so that $S_{\pm \alpha}$ are actually $T$-equivariant.

It remains to demonstrate Equation \ref{eq:monodromy_cancels} holds:
\begin{align*}
n_\alpha t_\alpha u_\alpha u_{-\alpha} u_\alpha
=&
\Ad_{t_{\alpha}^{-1/2}} n_\alpha
\Ad_{t_{\alpha}^{-1/2}}\exp( - e_{ \alpha})
\Ad_{t_{\alpha}^{-1/2}}\exp( - e_{- \alpha})
\Ad_{t_{\alpha}^{-1/2}}\exp( - e_{ \alpha})
\\
=&
\Ad_{t_{\alpha}^{-1/2}} \Big(
n_\alpha \exp(-e_{\alpha}) \exp(-e_{-\alpha}) \exp(-e_{\alpha})
\Big)
\\
=&
\Ad_{t_\alpha^{-1/2}} \big(n_\alpha n_\alpha^{-1}\big)
\\
=&
\id
\end{align*}
In the calculation above, the third equality follows from Lemma \ref{lem:triple_product}.
\end{proof}

\begin{remark}
\label{rem:chevalley_basis_independence}
Definition \ref{def: monodromy to Stokes factors} uses a choice of Chevalley basis. Due to the $T$-equivariance proved in Lemma \ref{lemma: monodromy to Stokes factors}, different choices of Chevalley basis produce the same map.

Concretely, since root spaces are 1-dimensional, and since all scalar multiples of $e_{\pm \alpha}$ can be obtained from the adjoint action of $T_\alpha$ on $e_{\pm \alpha}$ (see Lemma \ref{lem:adj_scalar}), in a new basis we have:
\begin{align*}
    e'_\alpha &= \ad_{t_1}e_\alpha 
    \\
    e'_{-\alpha} &= \ad_{t_2}e_{-\alpha}
\end{align*}
for some $t_1, t_2 \in T_\alpha$. Now, if we assume that $\ad_{t_1}e_{-\alpha} = \ad_{t_2}e_{-\alpha}$, then the relation:
\[
n_\alpha = \exp(e_\alpha) \exp(e_{-\alpha}) \exp(e_\alpha)
\]
shows that $n'_\alpha = \ad_{t_1}(n_\alpha)$. Thus, $T$-equivariance implies that the definition of $S_{\pm \alpha}$ using the new basis:
\[
\Ad_t n'_\alpha \mapsto \Ad_t \exp(e'_{\pm \alpha})
\]
agrees with the definition of $S_{\pm \alpha}$.

A Chevalley basis is constrained to satisfy:
\[
[e_\alpha, e_{-\alpha}] = -h_\alpha ,
\]
where $h_\alpha$ is fixed. Therefore, in two distinct Chevalley bases, we must have $[e_\alpha, e_{-\alpha}] = [e'_\alpha, e'_{-\alpha}]$. This forces $\ad_{t_1}e_{-\alpha} = \ad_{t_2}e_{-\alpha}$, justifying the assumption we had made in the previous paragraph.

\end{remark}

Next, we remove the trivialization $\phi_p : \tilde X^\circ|_{x_p} \cong W$. The monodromy of $\cE$ around $S^1_p$ takes values in:
\[
\left[\left(
\coprod_{\alpha} n_\alpha T^\alpha
\right)
\middle/ N \right],
\]
where the disjoint union is over roots in the $W$-orbit. In this setting, we can refine Lemma \ref{lemma: monodromy to Stokes factors} as follows.

\begin{lemma}
\label{lemma: N equivariant map to Stokes factors initial}
Let $\Lambda$ denote an orbit of the $W$ action on $\Phi$. The maps $S_{\pm \alpha}$ of definition \ref{def: monodromy to Stokes factors}, for $\alpha \in \Lambda$, give rise to two $N$-equivariant maps:
\begin{equation}
    \label{equation: N-equivariant map to Stokes factors initial}
    S_{\pm} : \left(\coprod_{\alpha \in \Lambda} n_{\alpha}T^{\alpha}\right)
    \rightarrow 
    \coprod_{\alpha \in \Lambda} U_{\pm \alpha}.
\end{equation}
As such, there are induced maps of stacks:
\[
\left[\left(\coprod_{\alpha \in \Lambda} n_{\alpha}T^{\alpha}\right)\middle/N\right]
\rightarrow 
\left[\left(\coprod_{\alpha \in \Lambda} U_{\alpha}\right)\middle/N \right].
\]
\end{lemma}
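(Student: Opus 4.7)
The plan is to leverage the $T$-equivariance already established in Lemma \ref{lemma: monodromy to Stokes factors} and upgrade it to full $N$-equivariance, using Lemma \ref{lem:commutation_n} to control the adjoint action of elements $n\in N$ with nontrivial image $w=q(n)\in W$. The key observation is that such an $n$ cyclically permutes the components indexed by the $W$-orbit $\Lambda$, both on the source (since $\Ad_n$ should send $n_\alpha T_\alpha$ to $n_{w(\alpha)} T_{w(\alpha)}$) and on the target (since $\Ad_n(U_{\pm\alpha}) = U_{\pm w(\alpha)}$ as root spaces are permuted by the Weyl group), so the problem reduces to showing that $S_{\pm \alpha}$ and $S_{\pm w(\alpha)}$ intertwine these actions.

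First I will verify that the source $\coprod_{\alpha\in \Lambda} n_\alpha T_\alpha$ is stable under the adjoint action of $N$. For $n\in N$ with $q(n)=w$, setting $\alpha':=w(\alpha)$, Lemma \ref{lem:commutation_n} produces an element $t\in T_{\alpha'}$ with $\Ad_n(n_\alpha) = \Ad_t(n_{\alpha'})$; Lemma \ref{lem:commutation_t_alpha} simplifies this to $\Ad_n(n_\alpha) = n_{\alpha'}\cdot t^{-2}$. Since the orthogonal decomposition $\ft = \ft_\alpha \oplus \ft_{\Ker(\alpha)}$ is $\Ad_n$-equivariantly carried to the analogous decomposition at $\alpha'$, we have $\Ad_n(t_\alpha)\in T_{\alpha'}$, hence
\[
\Ad_n(n_\alpha t_\alpha) = n_{\alpha'}\,\tilde t_{\alpha'}, \qquad \tilde t_{\alpha'} := t^{-2}\,\Ad_n(t_\alpha)\in T_{\alpha'}.
\]

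Next I will compute both sides of the desired equivariance identity $S_{\pm\alpha'}\circ \Ad_n = \Ad_n\circ S_{\pm\alpha}$. On one hand, $S_{\pm\alpha'}(\Ad_n(n_\alpha t_\alpha)) = \Ad_{\tilde t_{\alpha'}^{-1/2}}\exp(-e_{\pm\alpha'})$ by definition. On the other, using Lemma \ref{lem:commutation_n} also for the root vectors, $\Ad_n(S_{\pm\alpha}(n_\alpha t_\alpha)) = \Ad_{\Ad_n(t_\alpha^{-1/2})\cdot t}\exp(-e_{\pm\alpha'})$. Since $T$ is abelian, $t\cdot \Ad_n(t_\alpha^{-1/2})$ squares to $t^2\,\Ad_n(t_\alpha)^{-1} = \tilde t_{\alpha'}^{-1}$, so it is a choice of $\tilde t_{\alpha'}^{-1/2}$; the two sides therefore agree. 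Together with $T$-equivariance, this proves $N$-equivariance, and passage to the stack quotient is then formal.

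The main technical point to watch is the combined ambiguity of square roots and of the choice of $t$ in Lemma \ref{lem:commutation_n}. The square-root ambiguity is innocuous because the formula for $S_{\pm\alpha}$ only involves the adjoint action $\Ad_{t_\alpha^{-1/2}}$, which is insensitive to a sign; and the element $t$ is unique up to the finite group $T_{\alpha'}\cap T_{\Ker(\alpha')}$ identified in Lemma \ref{lem:torus_decomposition}, whose action is trivial on $e_{\pm\alpha'}$ and $n_{\alpha'}$ by Lemma \ref{lem:t_ker_comm}, so different choices produce the same Stokes factor. These observations ensure the construction is well defined and the equivariance calculation is unambiguous.
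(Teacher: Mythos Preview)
Your proof is correct and relies on the same key inputs as the paper's proof: Lemma~\ref{lem:commutation_n} to control $\Ad_n$ on $n_\alpha$ and on $e_{\pm\alpha}$, and Lemma~\ref{lem:commutation_t_alpha} to rewrite $\Ad_t(n_{\alpha'})$ as $n_{\alpha'} t^{-2}$. The only difference is in packaging: rather than verifying $S_{\pm\alpha'}\circ\Ad_n = \Ad_n\circ S_{\pm\alpha}$ directly as you do, the paper first reparametrizes the entire source $\coprod_{\alpha\in\Lambda} n_\alpha T_\alpha$ as $\{\Ad_n n_\alpha : n\in N\}$ for a single fixed $\alpha$, and then observes that the map $\Ad_n n_\alpha \mapsto \Ad_n \exp(-e_\alpha)$ is manifestly $N$-equivariant. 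Your direct computation and the paper's reparametrization are unwound versions of each other, so the two arguments are essentially the same.
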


\begin{proof}
In light of Lemma \ref{lem:commutation_t_alpha}, for every $\alpha \in \Phi$ and $t_\alpha \in T^\alpha$, we have $n_\alpha t_\alpha = \Ad_{t_\alpha^{-1/2}} n_\alpha$, This means that we can parametrize $n_\alpha T^\alpha$ by $\Ad_{t_\alpha^{-1/2}} n_\alpha$, where $t_\alpha$ ranges over $T^\alpha$. Moreover, we can parametrize $\coprod_\alpha n_\alpha T^\alpha$ by $\Ad_n n_\alpha$, where $\alpha$ is fixed and $n$ ranges over $N$.

Let $[n]$ denote the image of $n$ under the projection to $W$, and $\alpha' = [n](\alpha)$. Then, according to Lemma \ref{lem:commutation_n}, there exists $t_0 \in T_{\alpha'}$ such that:
\begin{itemize}
    \item $\ad_{n_0} (e_{\pm \alpha}) = \ad_{t_0} (e_{\pm \alpha'})$;
    \item $\Ad_{n_0} (n_\alpha) = \Ad_{t_0}(n_{\alpha'})$.
\end{itemize}

Then the map is:
\[
\Ad_{n} n_\alpha 
\overset{\text{Lem. } \ref{lem:commutation_n}}{=} \Ad_{t_0}(n_{\alpha'})
\overset{\text{Eq. } \ref{eq:map_to_stokes_t_equivariant}}{\longmapsto}
\Ad_{t_0} \exp(-e_{\alpha'} )
\overset{\text{Lem. } \ref{lem:commutation_n}}{=}
\Ad_n \exp(- e_\alpha ) ,
\]
which is manifestly $N$-equivariant.
\end{proof}

\subsubsection{Stokes factors for new Stokes curves}
\label{sect:nonab_new_stokes}

Consider a basic abstract cameral network $\tilde \cW$, as introduced in Definition \ref{defin:basic_abstract}, and let $\cW$ be the induced basic spectral network. We work at a joint $x$ of $\cW$, and want to prove that the Stokes factors for incoming Stokes curves uniquely determine those for outgoing Stokes curves. 

Due to Definition \ref{defin:basic_abstract}, the tangent vectors to the Stokes curves at $x$, together with the labels $C_{in}$, $C_{out}$ of incoming and outgoing curves and a choice of branch of the cameral cover at $x$,  provide the data of an undecorated 2D scattering diagram (see Definition \ref{def:scattering_diagram}). In the presence of a trivialization
$\phi_x : \cE_x \cong N$, we can identify the tuple of Stokes factors for incoming curves with elements $(u_\gamma)_{\gamma \in C_{in}} \in \prod_{\gamma \in C_{in}} U_\gamma$.
Then, according to Theorem \ref{thm:assignment_stokes_intersection}, the scattering diagram has a unique solution, i.e. there is a unique morphism of schemes:
\begin{align*}
\prod_{\gamma \in C_{in}} U_{\gamma} 
&\longrightarrow 
\prod_{\gamma \in C_{out}} U_{\gamma}
\\
(u_\gamma)_{\gamma \in C_{in}} 
&\longmapsto
(u'_\gamma)_{\gamma \in C_{out}}
\end{align*}
such that the product of all $u$ and $u'$, taken in clockwise order around the joint $x$, is the identity. 

Then we define the outgoing Stokes factors as the preimages $(u'_\gamma)_{\gamma \in C_{out}}$ under the identification:
\[
\Aut(\cE_x \times_N G) \cong G
\]
determined by the trivialization $\phi_x$.

It remains to show that this definition does not depend on $\phi_x$. This follows from the next lemma.

\begin{lemma}
\label{lem:assignment_stokes_newlines_equivariant}
The morphism:
\begin{align*}
\prod_{\gamma \in C_{in}} U_{\gamma} 
&\longrightarrow 
\prod_{\gamma \in C_{out}} U_{\gamma}
\\
(u_\gamma)_{\gamma \in C_{in}} 
&\longmapsto
(u'_\gamma)_{\gamma \in C_{out}}
\end{align*}
from Theorem \ref{thm:assignment_stokes_intersection} is $N$-equivariant with respect to the adjoint action of $N$ acting diagonally. As a consequence, it descends to a morphism of stacks:
\begin{equation}
\label{equation: New Stokes lines}
\left[\left(
\coprod
\prod_{\gamma \in C_{in}} U_{\gamma}
\right) \middle/ N\right]
\to 
\left[\left(
\coprod
\prod_{\gamma \in C_{out}} U_{\gamma}
\right) \middle/ N\right],
\end{equation}
where the disjoint union is taken over the orbit of the diagonal action of $W$.
\end{lemma}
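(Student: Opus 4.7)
The plan is to reduce $N$-equivariance to the uniqueness clause of Theorem \ref{thm:assignment_stokes_intersection}. The first step is to understand how the diagonal adjoint action of $n \in N$ interacts with the indexing by roots. Let $w = q(n) \in W$. Since $\Ad_n$ sends $U_\gamma$ isomorphically onto $U_{w(\gamma)}$, the diagonal action carries $\prod_{\gamma \in C_{in}} U_\gamma$ to $\prod_{\gamma \in w(C_{in})} U_\gamma$, and analogously for the outgoing factors. Because the restricted convex hull is manifestly $W$-equivariant, we have $w(C_{out}) = \Conv_{w(C_{in})}^{\N}$, so the targets match up correctly.

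Next I would observe that the underlying undecorated 2D scattering diagram depends only on the geometric data at the joint (the tangent rays, their cyclic order and in/out orientations); the action of $N$ on the fiber of the underlying $N$-local system at the joint does not change this geometry, only the assignment of labels to rays, which is permuted via $w$. Denote by $s_{C_{in}}$ the morphism Theorem \ref{thm:assignment_stokes_intersection} associates to the original labeled diagram, and by $s_{w(C_{in})}$ the one it associates to the diagram with labels permuted by $w$. What we must establish is
\[
s_{w(C_{in})}\bigl((\Ad_n u_\gamma)_{\gamma \in C_{in}}\bigr) \;=\; \bigl(\Ad_n u'_\gamma\bigr)_{\gamma \in C_{out}},
\]
where $(u'_\gamma)_{\gamma \in C_{out}} = s_{C_{in}}\bigl((u_\gamma)_{\gamma \in C_{in}}\bigr)$.

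The key step is then to verify that the tuple on the right-hand side satisfies the defining equation $u_{C_{out}} = \id$ for the relabeled scattering diagram. But this is immediate: applying the group homomorphism $\Ad_n$ to the original identity
\[
\overrightarrow{\prod}_{\ell \in C_{in} \cup C_{out}} (u_\ell^{(')})^{\pm 1} \;=\; \id
\]
yields precisely the analogous clockwise-ordered product for the transformed Stokes factors, with the same exponents (since orientations of rays are unaffected by $\Ad_n$). By the uniqueness assertion of Theorem \ref{thm:assignment_stokes_intersection}, $(\Ad_n u'_\gamma)_{\gamma \in C_{out}}$ must therefore coincide with $s_{w(C_{in})}\bigl((\Ad_n u_\gamma)_{\gamma \in C_{in}}\bigr)$, giving the required $N$-equivariance.

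Finally, to pass from schemes to stacks, one notes that the morphism descends automatically from an $N$-equivariant map of $N$-schemes. The disjoint union over the $W$-orbit of $C_{in}$ (respectively $C_{out}$) is exactly what is needed so that the domain and codomain carry well-defined $N$-actions, and the $N$-equivariance established above is exactly the condition for the map to factor through the stack quotients. The only real obstacle is bookkeeping: ensuring that the permutation of factors induced by $\Ad_n$ matches the Weyl-action on labels, and that the cyclic order of tangent rays at the joint is preserved. Both points are routine once one has made precise that the scattering diagram datum is unaffected by $N$ except through the relabeling $w = q(n)$, so no further work beyond the application of uniqueness is required.
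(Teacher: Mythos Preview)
Your proof is correct and follows essentially the same approach as the paper: apply $\Ad_n$ to the defining identity $\overrightarrow{\prod} (u_\gamma^{(')})^{\pm 1} = \id$ and invoke the uniqueness clause of Theorem~\ref{thm:assignment_stokes_intersection}. You have simply been more explicit than the paper about the bookkeeping involving the Weyl-group relabeling $w = q(n)$ and the passage to stack quotients.
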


\begin{proof}
Theorem \ref{thm:assignment_stokes_intersection} asserts that there is a unique tuple $(u'_\gamma)_{\gamma \in C_{out}}$ such that the following equation holds:
\[
\overrightarrow{\prod}_{\gamma \in C_{in}\coprod C_{out}} u_\gamma^{\pm 1} = \id ,
\]
where the product is ordered clockwise around the joint, and the exponent is $-1$ for incoming curves and $+1$ for outgoing curves. Applying $\Ad_n$ to this equation, and using the uniqueness of the solution, the result follows.
\end{proof}

\subsubsection{Borel Structures along $D$}
\label{subsubsec: Borel structures arbitrary G}

As outlined for $G=SL(n)$ or $GL(n)$ in \cite{gaiotto2013spectral}, and made precise for $SL(2)$ in \cite{hollands2016spectral} the nonabelianization map constructed in this section, can be enhanced to a map to the moduli of $G$-local system on $X$ together with a reduction of structure to a $B$-local system around\footnote{More precisely we can say on the intersection of $X$ with contractible neighbourhoods in $X^{c}$ of each $d\in D$.} $d\in D$ for $B$ a Borel subgroup of $G$, when we are working with a basic WKB Spectral network associated to $a\in \cA_{R}^{\diamondsuit}$.  

Let $a\in \cA^{\diamondsuit}_{R}$ be such that there is a basic spectral network $\cW$ associated to it, and let $d\in D$.  Pick a point $d\neq x_{d}\in B_{d}$ (the set defined in Proposition \ref{prop: open sets around D}) for each $d\in D$.  Pick a trivialization at each $x_{d}$, corresponding to a point $\tilde{x}_{d}\in \tilde{X}_{a}|_{x_{d}}$.  Let $\tilde{B_{d}}$ be the connected component of the preimage of $B_{d}$ containing $\tilde{x}_{d}$.   Suppose that we have an $N$-local system $E_{N}$ on $X^{\circ}$ with associated cameral cover $\tilde{X}_{a}^{\circ}$.  Let $E_{T}$ be the associated $T$-local system on $\tilde{X}_{a}^{\circ}$.  We have that
\[E_{N}|_{x_{d}}\times_{N}G\cong (E_{T}|_{\tilde{x}_{d}}\times_{T}N)\times_{N}G\cong E_{T}|_{\tilde{x}_{d}}\times_{T}G.\]

Define $\Phi_{+}$ to be the set of roots $\alpha\in \Phi$, such that any trajectories of $V_{\alpha}$ starting in $B_{d}$ end at $d\in D$.  Define $\Phi_{-}=-\Phi_{+}$.  The inductive step in the proof of Proposition \ref{prop: open sets around D} shows that $\Phi_{+}$ is closed under addition.  Condition R and the fact that $V_{-\alpha}$ is equivalent to $V_{\alpha}$ with the orientation reversed, means that this gives a partition $\Phi=\Phi_{+}\cup \Phi_{-}$.  Hence by Corollary 1 of Proposition 20 in \S 1.7 of Chapter VI of \cite{bourbaki2002Lie} we have that $\Phi=\Phi_{+}\cup \Phi_{-}$ is a polarization of $\Phi$.  This polarization, together with the maximal torus $T\subset G$ specifies a Borel subgroup $T\subset B\subset G$.  Furthermore the Stokes curves that enter $B_{d}$ are labelled (on the sheet of $\tilde{X}$ specified by $\tilde{x}_{d}$) by a subset of $\Phi_{+}$.

We hence have that the procedure of ``cutting and regluing'' preserves the reduction of structure of $E_{G}=E_{N}\times_{N}G$ to $B$ on $B_{d}$ given by \[E_{B}:=E_{T}|_{\tilde{B}_{d}}\times_{T}B\hookrightarrow E_{T}|_{\tilde{B}_{d}}\times_{T}G\cong E_{N}|_{B_{d}}\times_{N}G.\]  
This Borel structure does not depend on the choice of trivialization $\tilde{x}_{d}$.  This is because if $\tilde{x}_{d,1}$ and $\tilde{x}_{d,2}$ are two such choices, we have $w\cdot (\tilde{x}_{d,1})=\tilde{x}_{d,2}$ for some $w\in W$.  The two lifts of the residue $\tilde{r}_{d,1}$ and $\tilde{r}_{d,2}$ are then related by the equation $w(\tilde{r}_{d,1})=\tilde{r}_{d,2}$.  We hence have that the two choices of polarization are related by $w({\Phi_{+}}_{1})={\Phi_{+}}_{2}$.

Let $\tilde{B_{d,}}_{1}$ and $\tilde{B_{d,}}_{2}$ be the two branches of $\tilde{X}|_{B_{d}\cap X}$  containing $\tilde{x}_{d,1}$ and $\tilde{x}_{d,2}$ respectively.  Let $B_{1}$ and $B_{2}$ denote the Borels corresponding to $T$ and the polarizations ${\Phi_{+}}_{1}$ and ${\Phi_{+}}_{2}$ respectively.  Noting that for any $n\in q^{-1}(w)$ we have an isomorphism $w^{-1}(E_{T}|_{B_{d,1}})\cong E_{T}|_{B_{d,2}}$, and that $w$ is acting both by pullback, and by modifying the action of $T$ it is clear that this induces an isomorphism
\[w^{*}(E_{T}|_{B_{d,2}})\times_{T}B_{2}\cong (E_{T}|_{B_{d,1}})\times_{T}B_{1}.\]

This shows that the Borel structure does not depend on the choice of the lifts $\tilde{x}_{d}$.  Clearly this construction works in families, and as such allows one to modify the nonabelianization construction (for basic spectral networks arising from $a\in \cA^{\diamondsuit}_{R}$) to give a $G$-local system on $X$ together with a reduction of structure to a Borel around $D$ (in the precise sense specified previously).

\begin{remark}
This procedure is slightly clearer in the case of $G=GL(n)$ or $G=SL(n)$.  Here the labels of the Stokes lines on the curve $X$ (see section \ref{sec: Spectral Decomposition} for more details on the relation between the spectral cover and the cameral cover) can be seen as corresponding to ordered pairs of branches of the spectral curve. We can also interpret the projective oriented vector fields on the cameral curve around $\pi^{-1}(x_{d})$ as descending to projective oriented vector fields on the curve $B_{d}$ labelled by ordered pairs of branches of the spectral curve.  As such Proposition \ref{prop: open sets around D} can be seen as specifying a subset of ordered pairs of branches of the spectral curve.  These are the labels for the projective oriented vector fields such that all trajectories starting in $B_{d}$ terminate at $d$.  This subset has the property that it is precisely the set of pairs of the form $(b_{1}, b_{2})$, where $b_{1}<_{R}b_{2}$ for some total order $<_{R}$ of the branches of the spectral curve restricted to $B_{d}$.  The ordering of these branches gives a Borel structure on the pushforward of a local system of one dimensional vector spaces on the spectral cover, which is preserved under the ``cutting and regluing'' operation.

\end{remark}

\section{Spectral Decomposition and Non-abelianization}
\label{sec: Spectral Decomposition}

In this section we describe some of the spaces and morphisms involved
in non-abelianization in terms of spectral rather than cameral covers.  We hope that these descriptions, particularly those in Section \ref{subsec: explicit descrtions for defining representations of classical groups} will be clearer to people not familiar with cameral covers.  In Section \ref{subsec: Spectral covers and associated local systems} we review the construction in \cite{donagi1993decomposition} of a curve we call the non-embedded spectral cover associated to a cameral cover $\tilde{X}$ and a representation $\rho$ that is closely related to the spectral cover that would be associated to the cameral cover via the representation $\rho$ as explained in Remark \ref{remark: spectral cover and non embedded spectral cover}.  We show how $N$-local systems with associated $W$-local system $\tilde{X}\backslash R$ give rise to local systems of vector spaces on the associated non-embedded spectral covers\footnote{With the preimages of branch points removed.}.  In Section \ref{sec: exponential path rules} we describe the relationship between the construction of non-abelianization in Section \ref{subsec: Non abelianiazation via Cameral Networks} and the path detour rules of \cite{gaiotto2013spectral, longhi2016ade}.  
Finally in Section \ref{subsec: explicit descrtions for defining representations of classical groups} for classical groups we provide explicit descriptions of the moduli spaces of the relevant $N$-local systems in terms of moduli spaces of line bundles on spectral covers equipped with extra structure.  These descriptions are analogous to the descriptions of Hitchin fibers in \cite{hitchin1987stable, donagi1993decomposition, donagi1995spectral}.

\subsection{Spectral Covers, Cameral covers, and Local systems}
\label{subsec: Spectral covers and associated local systems}

Let $\rho: G\rightarrow GL(V)$ be a representation of $G$.  This induces a map from local systems, and Higgs bundles for the group $G$ to local systems and Higgs bundles for the group $GL(V)$.  We show that to a cameral cover $\tilde{X}$, the representation associates a spectral cover $\overline{X}_{\rho}$.  This breaks up as a union of curves, corresponding to the $W$-orbits of the weights of the representation $\rho$.  We then show that an $N$-local system with associated $W$-local system $\tilde{X}\backslash R$ induces a local system of vector spaces on each of these curves.

\subsubsection{The associated spectral cover}

Consider a maximal torus $T\hookrightarrow G$.  This gives a weight space decomposition $V=\oplus_{\omega\in \Omega_{\rho}} V_{\omega}$ of the representation $V$, where $\Omega_{\rho}$ parametrizes the weights of the representation $\rho$.  There is a $W$-action on the weights.

\begin{definition}
\label{defn: non-embedded spectral cover}
We define the \textbf{associated non-embedded spectral cover} of a smooth cameral cover $\tilde{X}$ as 

\[\overline{X}_{\rho}^{ne}:=\tilde{X}\times_{W}\Omega_{\rho}\]
\end{definition}

This is equipped with a map $\pi_{\rho}: \overline{X}_{\rho}^{ne}\rightarrow X$.  We denote the ramification locus of this map by $R_{\rho}\subset \overline{X}_{\rho}^{ne}$.

\begin{remark}
\label{remark: spectral cover and non embedded spectral cover}
If we take a Higgs $G$-bundle $\varphi\in \Gamma(X, ad(E)\otimes \cL)$ with reduced cameral cover $\tilde{X}$, then the spectral cover $\overline{X}\hookrightarrow Tot(\cL)$ associated to $\rho(\varphi)$ has the property that $\overline{X}_{red}$ and $\overline{X}_{\rho}^{ne}$ are birational.   The difference between $\overline{X}$ and $\overline{X}_{\rho}^{ne}$ is due to the phenomenon of accidental singularities in the sense of \cite{donagi1993decomposition}, differences at the ramification points of the cover, and the fact that the weight spaces may not be one dimensional.  The cases of the classical groups and defining representations are reviewed in Section \ref{subsec: explicit descrtions for defining representations of classical groups}, cf. \cite{donagi1993decomposition, hitchin1987stable}.
\end{remark}

The set of weights $\Omega_{\rho}$ is a union of $W$-orbits of weights, which we write as $\Omega_{\rho}=\coprod_{o\in or}\Omega^{o}_{\rho}$, where the disjoint unit is over the set $or$ of $W$-orbits of weights of $\rho$.  Defining $\overline{X}_{\rho}^{ne,o}:=\tilde{X}\times_{w}\Omega^{o}_{\rho}$ we have the decomposition 
\begin{equation}
\label{equation: decomposition into W orbits}
\overline{X}_{\rho}^{ne}:=\coprod_{o\in or}\overline{X}_{\rho}^{ne, o}.\end{equation}
We call each $\overline{X}_{\rho}^{ne, o}$ a non-embedded spectral cover.

We also introduce the following variants:

\begin{definition}
\label{definition: blown up spectral covers}
We define:
\[
\overline{X}^{ne, \circ_{P}}_{\rho}:=\tilde{X}^{\circ}\times_{W}\Omega_{\rho}.
\]
\[
\overline{X}^{ne, \circ_{R}}_{\rho}:=ReBl_{R_{\rho}}(\overline{X}^{ne}_{\rho}),
\]
where this refers to the oriented real blow up of $\overline{X}^{ne}_{\rho}$ along the ramification locus $R_{\rho}$ of  $\overline{X}^{ne}_{\rho}\rightarrow X$.
\end{definition}

The space $\overline{X}^{ne, \circ_{P}}_{\rho}$ is equipped with a map to $X^{\circ}$.  The space $\overline{X}^{ne, \circ_{R}}_{\rho}$ is not equipped with such a map, but does have a map $\overline{X}^{ne, \circ_{R}}_{\rho}\rightarrow X$.

\subsubsection{The associated local systems}
\label{subsubsec: The associated local system}

Not only does $N$ act on $V=\oplus_{\omega\in \Omega_{\rho}}V_{\omega}$ via $\rho$, but we further have an action on the disjoint union of weight spaces:
\[
N \acts \coprod_{\omega\in \Omega_{\rho}}V_\omega.
\]
This action lifts the action $W\acts \Omega_{\rho}$. More precisely, consider the quotient map $q:N\rightarrow W$; then for each $w\in W$, and $\omega\in \Omega_{\rho}$ there is a unique map $\bar \rho_{w,\omega} : q^{-1}(w) \rightarrow \Hom(V_{\omega}, V_{w(\omega)})$ which makes the following diagram commute:

\begin{equation*}
\begin{tikzcd}
V \arrow{r}{\rho(n)} & V 
\\
V_{\omega}\arrow[hook]{u} \arrow{r}{\bar \rho_{w,\omega} (n)} &
V_{w(\omega)} .\arrow[hook]{u}
\end{tikzcd}
\end{equation*}
As a consequence, $\bar \rho$ identifies composition of morphisms with the product in $N$, in the sense that $\bar{\rho}_{w_{1},w_{2}(\omega)}(n_{1})\bar{\rho}_{w_{2},\omega}(n_{2})=\bar{\rho}_{w_{1}w_{2},\omega}(n_{1}n_{2})$, for $w_{1},w_{2}\in W$, and $n_{i}\in q^{-1}(w_{i})$.

We define $\LocVect(\overline{X})$ to be the moduli space of local systems of vector spaces on $\overline{X}$, where we do not require the dimension of the local system to be the same on different components of $\overline{X}$.  This is an ind-stack, but in practice we will only be considering a fixed component of it.

\begin{construction}
\label{construction: associated local system of vector bundles on spectral}
Consider the unramified map $\tilde{X}^{\circ} \rightarrow X^{\circ}$.  We have a morphism
\[Sp_{\rho}: \Loc_{N}^{\tilde{X}^{\circ}}(X^{\circ})\rightarrow \LocVect(\overline{X}_{\rho}^{ne,\circ_{P}})\]
given on points by 
\[E_{N}\xmapsto{Sp_{\rho}} E_{N}\times_{N}\left(\coprod_{\omega\in \Omega_{\rho}}V_{\omega}\right).\]
The right hand side is a local system on $\overline{X}_{\rho}^{ne,\circ_{P}}$ because the map $\coprod_{\omega\in \Omega_{\rho}}V_{\omega}\rightarrow \Omega_{\rho}$ induces a map 
\[
E_{N}\times_{N}\left(\coprod_{\omega\in \Omega_{\rho}}V_{\omega}\right)\rightarrow \overline{X}_{\rho}^{ne,\circ_{P}},
\] 
and this has the structure of a local system.  The map on moduli spaces is given by doing this to the universal $N$-local system.
\end{construction}

\begin{remark}
It is clear that in general the maps $Sp_{\rho}$ are not surjective on a component of $\LocVect(\overline{X}_{\rho}^{ne,\circ_{P}})$.  For classical groups and the defining representation we describe the image of this map, and how to interpret the S-monodromy condition in terms of spectral data in Section \ref{subsec: explicit descrtions for defining representations of classical groups}.
\end{remark}

\subsection{Path Detour Rules}
\label{sec: exponential path rules}

In this section we describe the path detour rules of \cite{gaiotto2013spectral, longhi2016ade}, which provide an alternative version of non-abelianization involving locally constant vector bundles and $\bbG_{m}$-local systems on the non-embedded spectral cover associated to a minuscule representation of a simply laced reductive algebraic group $G$. By ``simply laced'' we mean that the Lie algebra of $G$ is a direct sum of abelian and simply laced Lie algebras.  

We then show that, for a minuscule representation, the path detour rules are compatible with the non-abelianization we defined in Section \ref{sec: flat Donagi Gaitsgory} as is formulated precisely in Theorem \ref{conj: exp path rules minuscule}.

Recall:

\begin{definition}
A \textbf{minuscule representation} of $G$ is an irreducible representation $\rho$ such that the Weyl group of $G$ acts transitively on the weights of $\rho$.
\end{definition}


\begin{construction}[Path Detour Non-abelianization, essentially that of \cite{gaiotto2013spectral, longhi2016ade}]
\label{construction: exponential path rule non abelianization}
Pick a simply laced reductive algebraic group $G$.  Pick a basic abstract spectral network on $X$, with the property that:
\begin{itemize}
    \item \label{item: Condition for path detour nonabelianization} At each joint if $\{\alpha_{1},...,\alpha_{n}\}$ are the roots labelling the incoming Stokes curves (with respect to some trivialization of the cameral cover at the joint), there is at most one root $\beta\in \Conv^{\bbN}_{\alpha_{1},..,\alpha_{n}}$ such that $\beta$ is not contained in a one dimensional face of $\Conv^{\bbN}_{\alpha_{1},..,\alpha_{n}}$.
\end{itemize}
Pick a minuscule representation $G\rightarrow GL(V)$.

The strategy is as follows.  Recall that Construction \ref{thm:bulk_map_exists} proceeded by composing the morphisms
\[
\Loc_{N}^{\tilde{X}^{\circ}, S}(X^{\circ})
\xrightarrow{s_{WKB}}
\Aut_{\cW, G}(X^{\circ '})
\xrightarrow{reglue} 
\Loc_{G}(X\backslash (P\cup \cJ))
\]
and noting that this factors through $\Loc_{G}(X)$.  Here, for a fixed minuscule representation $\rho: G\rightarrow GL(V)$ we provide a map $s_{PD}:\Loc_{\bbG_{m}}(\overline{X}_{\rho}^{ne, \circ_{R}})\rightarrow \Aut_{GL(V), \cW}(X^{\circ '})$. We then define a map as the composition $nonab_{PD}':= reglue\circ s_{PD}$, where we are using the regluing map of Equation \ref{eq: Reglue map}.  We will then define the nonabelianization map $nonab_{PD}$ to be a modification of this map.

Let $\Loc_{\bbG_{m}}^{fr}(\overline{X}_{\rho}^{ne, \circ_{R}})$ be the finite unramified cover of $\Loc_{\bbG_{m}}(\overline{X}_{\rho}^{ne, \circ_{R}})$ such that sheets correspond to trivializations of the cameral cover at the points $x_{p}\in S^{1}_{p}\subset X^{\circ}$  (as introduced in Convention \ref{convention:blowup}).

We now define a map $s_{PD}': \Loc_{\bbG_{m}}^{fr}(\overline{X}_{\rho}^{ne, \circ_{R}})\rightarrow \Aut_{\cW, GL(V)}(X^{\circ '})$.  Firstly we get a $GL(V)$-local system $\cE$ on $X^{\circ }$ by pushing forward the one dimensional local system on $\overline{X}_{\rho}^{ne,\circ_{R}}$, restricted away from $\pi_{\rho}^{-1}(P)$, to a local system on $X\backslash P$ which is homotopic to $X^{\circ '}$.

Firstly we describe the Stokes factors associated to primary lines, that is to say, using the notation of Section \ref{subsec: Non abelianiazation via Cameral Networks}, we provide a map:
\[
\Loc_{\bbG_{m}}^{fr}(\overline{X}_{\rho}^{ne, \circ_{R}})\rightarrow \Aut_{F_{0}(\cW), GL(V)}(X^{\circ '}).
\]

The trivializations of the cameral cover at each $x_p$ identify the monodromy of the cameral cover around $p\in P$ with some $s_{\alpha}\in W$.  The associated spectral cover $\overline{X}_{\rho}^{ne,\circ_{R}}|_{x}:=\tilde{X}\times_{W}\Omega_{\rho}|_{x}$ (for $x\notin P$) then has branches $x'\in \overline{X}_{\rho}^{ne,\circ_{R}}|_{x}$ labelled by weights which we denote by $l(x')\in \Omega_{\rho}$.

We parallel transport this labelling around the boundary circle $S^1_p$, in the direction specified by the orientation of $S^1_p$, induced from the orientation of $X^\circ$. The trivialization at $x_{p}$ also determines a labeling by a root associated to each primary Stokes curve starting at $p$.  We let $\gamma_{p}$ be the path starting and ending at $x_{p}$ that generates the fundamental group of $S^{1}_{p}$ and is in the direction opposite to that specified by the orientation of $S^{1}_{p}$ (which is induced by the orientation of $X$). Let $\cE$ be a $\bbG_{m}$-bundle on $\overline{X}_{\rho}^{ne, \circ_{R}}$.  We will write $(\pi_{\rho})_{*}\cE$ for the local system of vector spaces on $X\backslash P$ (or the homotopic $X^{\circ}$) given by the pushforward of the associated one dimensional local system on  $\overline{X}_{\rho}^{ne}\backslash \pi^{-1}(P)$.  For each pair of distinct preimages $x_{1}, x_{2}\in \overline{X}_{\rho}^{ne,\circ_{R}}|_{x}$ of $x_{s}\in X$ on a Stokes line, such that $s_{\alpha}(x_{1})=x_{2}$, with the property that $l(x_{1})-l(x_{2})=\alpha$ we let $d_{1,2}: \cE|_{x_{1}}\rightarrow \cE|_{x_{2}})$ be the morphism given by parallel transport along the lift of the path $\gamma_{p}$ to $\overline{X}_{\rho}^{ne,\circ_{R}}$ which starts at $x_{1}$ and finished at $x_{2}$. We define:
\begin{equation}
\label{eq: path detour Stokes Factors}
S_{PD} := \exp\left(\sum d_{i,j}\right)\in GL((\pi_{\rho})_{*}\cE|_{x_{s}}) ,
\end{equation}
where we are summing over pairs of branches of $\overline{X}^{ne,\circ_{R}}_{\rho}$ satisfying the above condition.  We are abusing notation to refer to $d_{i,j}$ as an element of $GL(((\pi_{\rho})_{*}\cE)|_{x_{s}})$ -- we are referring to the composition $((\pi_{\rho})_{*}\cE)|_{x_{s}}\rightarrow \cE|_{x_{1}}\xrightarrow{d_{1,2}} \cE|_{x_{2}}\hookrightarrow ((\pi_{\rho})_{*}\cE)|_{x_{s}}$ where the first map is the projection coming from the description $((\pi_{\rho})_{*}\cE)|_{x_{s}}=\oplus_{x\in \pi_{\rho}^{-1}(x_{s})}\cE|_{x}$.  Clearly this makes sense for families.

Consider a joint $J\in \cJ$ with incoming Stokes curves labelled by (with respect to a choice of trivialization of $\tilde{X}$ at $J$) $\alpha_{1},...,\alpha_{n}$.  For each Stokes curve labelled by $\beta$ that lies in a one dimensional face of $\Conv^{\bbN}_{\alpha_{1},...,\alpha_{n}}$, we must have $\beta\in \{\alpha_{1},...,\alpha_{n}\}$, and as such we can make the automorphism attached to this curve equal to the parallel transport of the automorphism associated to the incoming curve labelled by $\beta$.  By our assumption on joints there is at most one outgoing Stokes curve such that its label $\beta$ is not contained in a one dimensional face of $\Conv^{\bbN}_{\alpha_{1},...,\alpha_{n}}$, hence we can assign to this Stokes curve the unique automorphism such that upon regluing by these automorphisms the monodromy around $J$ will be trivial.  We can interpret this as corresponding to the parallel transport in the ``reglued" local system along the path that goes around the other side of $J$.  One can iteratively apply this procedure to interpret this element as the sum of monodromies, and exponentials of monodromies, of the original local systems along certain paths, see Figure \ref{Figure: Path Detour Nonabelianization 1}.  This provides a lift

\[
\begin{tikzcd}
 & 
 \Aut^{\tilde X^{\circ '}}_{F_{i+1}(\cW), GL(V)}(X^{\circ'}) 
 \arrow{d}{forget}
 \\
\Loc_{\bbG_{m}}(\overline{X}_{\rho}^{ne, \circ_{R}})
\arrow[dashed]{ru}
\arrow[swap]{r}
& 
\Aut^{\tilde X^{\circ '}}_{F_{i}(\cW), GL(V)}(X^{\circ'})
\end{tikzcd}
\]
and thus allows construction of the map $s_{PD}$ as in Construction \ref{construction of swkb} of $s_{WKB}$.

The map $s_{PD}'$ factors through the map
\[
\Loc_{\bbG_{m}}^{fr}(\overline{X}_{\rho}^{ne, \circ_{R}})\rightarrow \Loc_{\bbG_{m}}(\overline{X}_{\rho}^{ne, \circ_{R}}).
\]
This is because if with respect to one trivialization two branches are labelled by weights $\chi_{1}$, $\chi_{2}$, with $\chi_{1}-\chi_{2}=\alpha$ for some root $\alpha$, then in another trivialization related to the original trivialization by $w\in W$, we have that $w(\chi_{1})-w(\chi_{2})=w(\alpha)$.   Hence the choice of trivialization does not affect the map $s_{PD}$.   Thus this gives a map $ \Loc_{\bbG_{m}}(\overline{X}_{\rho}^{ne, \circ_{R}})\xrightarrow{s_{PD}} \Aut_{GL(V), \cW}(X^{\circ '})$.

This gives a map 
\[nonab_{PD}':=reglue\circ s_{PD}: \Loc_{\bbG_{m}}(\overline{X}_{\rho}^{ne, \circ_{R}})\rightarrow Loc_{GL(V)}(X^{\circ}).\]
Denote by:
\[\Loc_{\bbG_{m}}^{-1}(\overline{X}^{ne,\circ_{R}}_{\rho}):=\Loc_{\bbG_{m}}(\overline{X}^{ne,\circ_{R}}_{\rho})\times_{(\bbG_{m}/\bbG_{m})^{\# R_{\rho}}}(-1/\bbG_{m})^{\# R_{\rho}},\]
the moduli space of $\bbG_{m}$-local systems on $\overline{X}^{ne,\circ_{R}}_{\rho}$ with monodromy $-1$ around each ramification point $r\in R_{\rho}$.

Lemma \ref{lemma: path detour rule factors} shows the restriction of the map $nonab_{PD}'$ to $\Loc_{\bbG_{m}}^{-1}(\overline{X}^{ne,\circ_{R}}_{\rho})$ factors through $\Loc_{G}(X)$, and we denote the map $\Loc_{\bbG_{m}}^{-1}(\overline{X}^{ne,\circ_{R}}_{\rho})\rightarrow \Loc_{G}(X)$ by $nonab_{PD}$ (see Equation \ref{equation: nonabPD map factors}).
\end{construction}

\begin{figure}
    \centering
    \def\svgwidth{200pt}
\begingroup%
  \makeatletter%
  \providecommand\color[2][]{%
    \errmessage{(Inkscape) Color is used for the text in Inkscape, but the package 'color.sty' is not loaded}%
    \renewcommand\color[2][]{}%
  }%
  \providecommand\transparent[1]{%
    \errmessage{(Inkscape) Transparency is used (non-zero) for the text in Inkscape, but the package 'transparent.sty' is not loaded}%
    \renewcommand\transparent[1]{}%
  }%
  \providecommand\rotatebox[2]{#2}%
  \newcommand*\fsize{\dimexpr\f@size pt\relax}%
  \newcommand*\lineheight[1]{\fontsize{\fsize}{#1\fsize}\selectfont}%
  \ifx\svgwidth\undefined%
    \setlength{\unitlength}{595.27559055bp}%
    \ifx\svgscale\undefined%
      \relax%
    \else%
      \setlength{\unitlength}{\unitlength * \real{\svgscale}}%
    \fi%
  \else%
    \setlength{\unitlength}{\svgwidth}%
  \fi%
  \global\let\svgwidth\undefined%
  \global\let\svgscale\undefined%
  \makeatother%
  \begin{picture}(1,1)%
    \lineheight{1}%
    \setlength\tabcolsep{0pt}%
    \put(0,0){\includegraphics[width=\unitlength,page=1]{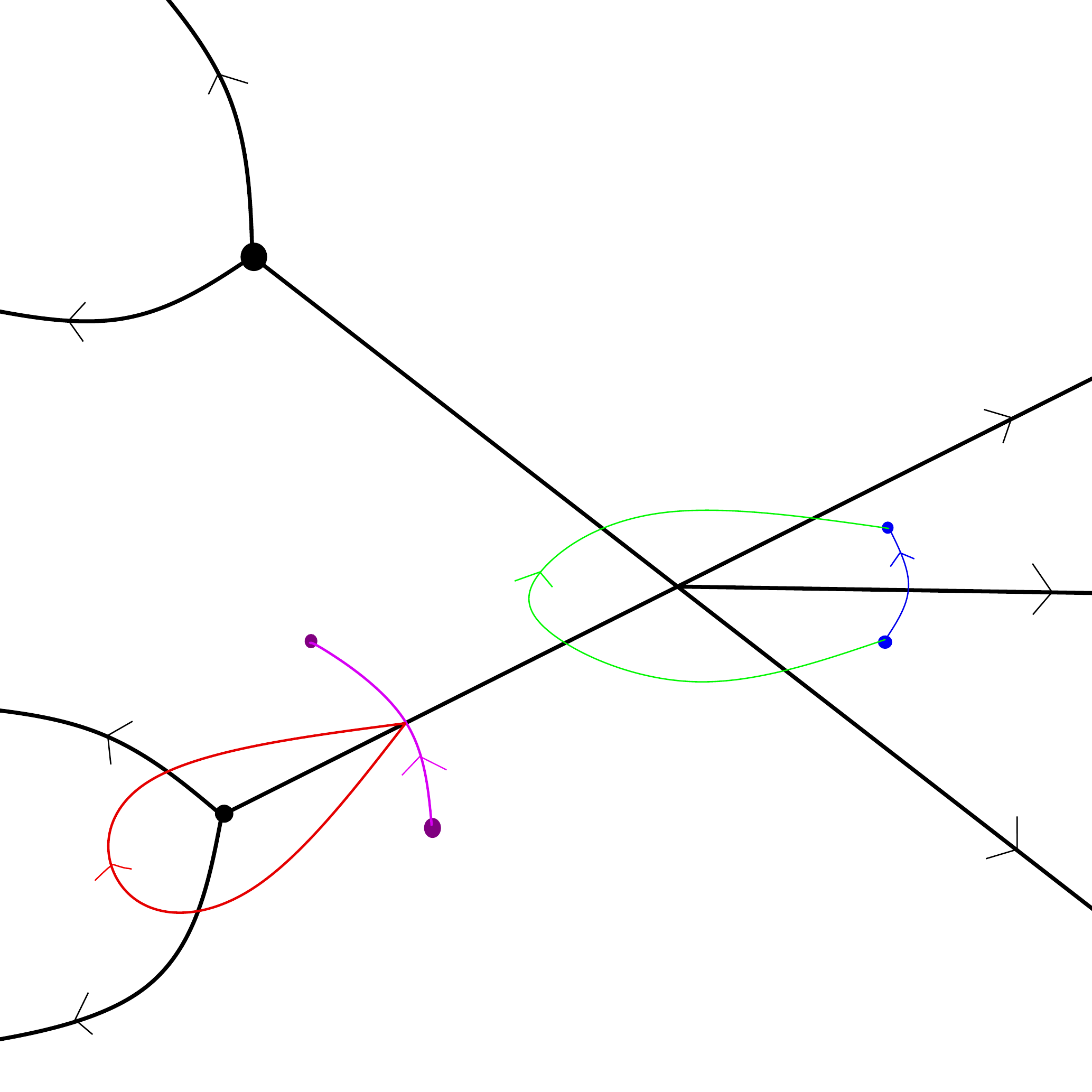}}%
    \put(0.44139422,0.29627373){\color[rgb]{0,0,0}\makebox(0,0)[lt]{\lineheight{1.25}\smash{\begin{tabular}[t]{l}$\gamma_{1}$\end{tabular}}}}%
    \put(0.35199901,0.47293189){\color[rgb]{0,0,0}\makebox(0,0)[lt]{\lineheight{1.25}\smash{\begin{tabular}[t]{l}$\gamma_{2}$\end{tabular}}}}%
    \put(0.85618555,0.49882573){\color[rgb]{0,0,0}\makebox(0,0)[lt]{\lineheight{1.25}\smash{\begin{tabular}[t]{l}$\gamma_{3}$\end{tabular}}}}%
    \put(0.00894026,0.18443807){\color[rgb]{0,0,0}\makebox(0,0)[lt]{\lineheight{1.25}\smash{\begin{tabular}[t]{l}$\gamma_{4}$\end{tabular}}}}%
  \end{picture}%
\endgroup%

    \caption{Interpreting monodromy along paths on $X$.  The parallel transport along $\gamma_{1}$ (purple) is modified along the intersection with the spectral network by the exponential of the monodromy of certain lifts to the spectral cover of the path $\gamma_{4}$.  The parallel transport along $\gamma_{3}$ is replaced by the modified parallel transport along $\gamma_{2}$ (green).  The modification of the parallel transport along $\gamma_{2}$ is analogous to the modification of $\gamma_{1}$.}
    \label{Figure: Path Detour Nonabelianization 1}
\end{figure}

See Remark \ref{remark: link to Longhi--Park path detour} for the precise link to the path detour rules of \cite{gaiotto2013spectral} and \cite{longhi2016ade}, which provide a slightly different description of this procedure.

\begin{remark}
We can modify the above construction to deal with local systems of vector spaces with additional structure;  see Section  \ref{subsec: equivalence for GL(n) and SL(n)}.

\end{remark}

\begin{remark}
The simply laced condition, and the restriction on the joints appearing in the spectral network are required as it is not clear how to define the Stokes automorphisms for new Stokes lines in terms of monodromy of paths without these conditions.
\end{remark}

\begin{lemma}
\label{lemma: path detour rule factors}
The restriction of the map $nonab_{PD}'$ of Construction \ref{construction: exponential path rule non abelianization} to local systems with monodromy $-1$ around each ramification point $r\in R$ factors through $\Loc_{G}(X)$, forming the following commutative diagram which defines the map $nonab_{PD}$;
\begin{equation}
\label{equation: nonabPD map factors}
\begin{tikzcd}
\Loc_{\bbG_{m}}(\overline{X}^{ne,\circ_{R}}_{\rho}) \arrow{r}{nonab_{PD}'} & \Loc_{GL(V)}(X^{\circ})\\
\Loc_{\bbG_{m}}^{-1}(\overline{X}^{ne,\circ_{R}}_{\rho})\arrow[hook]{u}\arrow{r}{nonab_{PD}} & \Loc_{GL(V)}(X)\arrow[hook]{u}
\end{tikzcd}
\end{equation}
where the vertical arrows are inclusions.
\end{lemma}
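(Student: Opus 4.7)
The plan is to show that for any $\bbG_m$-local system $\cE$ on $\overline{X}^{ne,\circ_R}_\rho$ whose monodromy around each ramification point $r\in R_\rho$ is $-1$, the image $nonab_{PD}'(\cE)$ has trivial monodromy around each boundary circle $S^1_p$, $p\in P$, and therefore extends from $X^\circ$ to a local system on $X$. The construction of $s_{PD}$ at joints already enforces triviality of monodromy around each $J\in\cJ$ by construction, so this is the only obstruction left to check. The question is local on a contractible neighborhood of each $p\in P$, so I would fix one such $p$ and let $\alpha\in\Phi$ (up to sign) be the root with $\tilde a(r)\in H_\alpha$ for the associated ramification point $r$ of $\tilde X$.

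The next step is to use the minuscule hypothesis: $\langle\omega,\alpha^\vee\rangle\in\{-1,0,1\}$ for every weight $\omega\in\Omega_\rho$, so $s_\alpha$ partitions $\Omega_\rho$ into orbits of size $1$ (the fixed weights) and size $2$ (pairs $\{\omega,\omega-\alpha\}$). Correspondingly, the pushforward $(\pi_\rho)_*\cE$ near $p$ splits as a direct sum indexed by these orbits. On size-$1$ summands, the cover is unramified, the original monodromy is already trivial, and the path-detour Stokes factors from Equation~\ref{eq: path detour Stokes Factors} act as the identity (since the sum in $S_{PD}$ only involves pairs $(x_1,x_2)$ with $l(x_1)-l(x_2)=\pm\alpha$, which necessarily lie on size-$2$ orbits). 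So only the rank-$2$ summands require work.

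For each size-$2$ orbit, the corresponding rank-$2$ summand is the pushforward from a genuine double cover, exhibiting the situation as an $SL(2)$-subproblem sitting inside $GL(V)$ via $I_\alpha$. The $-1$ condition on $\cE$ at $r\in R_\rho$ translates, via the pushforward, into the statement that the monodromy of $(\pi_\rho)_*\cE$ around $S^1_p$ on this summand lies in $n_\alpha T^\alpha$ (compare Example~\ref{example: Consequences for T monodromy} and Lemma~\ref{lemma: connected component of N/N}). I would then identify the three path-detour factors $S_{PD}$ attached to the primary Stokes curves emerging from $p$ on this summand with the factors $u_{\pm\alpha}\in U_{\pm\alpha}$ of Definition~\ref{def: monodromy to Stokes factors}: each $d_{i,j}$ appearing in Equation~\ref{eq: path detour Stokes Factors} is parallel transport along a loop on the double cover, which acts on the rank-$2$ summand as a multiple of $e_{\pm\alpha}$, with coefficient determined by the $T^\alpha$-part of the monodromy as in Equation~\ref{eq:map_to_stokes_t_equivariant}. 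Given this identification, the vanishing of the monodromy of the reglued local system reduces to Equation~\ref{eq:monodromy_cancels}: $n_\alpha t_\alpha\, u_\alpha u_{-\alpha} u_\alpha=\id$.

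The main obstacle is the identification in the previous paragraph: matching the Lie-algebra-level Stokes factors of Definition~\ref{def: monodromy to Stokes factors} with the Lie-group-level parallel-transport factors defining $S_{PD}$, keeping careful track of orientations, normalizations, and the choice of square root $t_\alpha^{1/2}$. Once this identification is established on each rank-$2$ summand, summing over all $W$-orbits in $\Omega_\rho$ and all $p\in P$ yields triviality of monodromy around every boundary circle, which is exactly the factorization through $\Loc_{GL(V)}(X)$ recorded in the diagram.
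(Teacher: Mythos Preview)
Your proposal is correct and follows essentially the same route as the paper: reduce to checking triviality of monodromy around each $S^1_p$, use the minuscule hypothesis to decompose $(\pi_\rho)_*\cE$ into summands of rank $1$ and $2$ under $s_\alpha$ (this is exactly the content of Lemma~\ref{lemma:  Properties of sl2 representations}), and handle the rank-$2$ summands by the $SL(2)$ computation encoded in Equation~\ref{eq:monodromy_cancels}. The paper's own proof is terser---it cites Lemma~\ref{lemma:  Properties of sl2 representations} and says ``direct computation,'' noting afterward that one way to carry this out is via Theorem~\ref{conj: exp path rules minuscule}---whereas you have spelled out that computation and correctly flagged the identification of $S_{PD}$ with the Stokes factors of Definition~\ref{def: monodromy to Stokes factors} as the point requiring care.
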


We prove this lemma using the following lemma about reducing minuscule representations to $SL(2)$ representations.

\begin{lemma}
\label{lemma:  Properties of sl2 representations}
Let $G$ be a simply laced reductive algebraic group.

Let $c: SL(2)\xrightarrow{I_{\alpha}} G \xrightarrow{\rho} GL(V)$ be the composition of an $SL(2)$-triple associated to a root $\alpha$ with a minuscule representation $\rho$.  This gives a finite dimensional representation $c$ of $SL(2)$.  

The representation $c$ is a direct sum of one and two dimensional irreducible representations of $SL(2)$.
\end{lemma}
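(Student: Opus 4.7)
The plan is to analyze the decomposition of $V$ as an $I_\alpha(SL(2))$-representation by examining its weights. First I would recall from Definition \ref{defin:chevalley_basis} that the Cartan element $h_\alpha$ satisfies $\gamma(h_\alpha) = 2(\alpha,\gamma)/(\alpha,\alpha) = \langle \gamma, \alpha^\vee\rangle$ for every $\gamma \in \ft^\vee$. Consequently the weights of $c$ as an $SL(2)$-representation are precisely the integers $\langle \omega, \alpha^\vee\rangle$, as $\omega$ ranges over the $G$-weights of $V$. Since the irreducible $(n+1)$-dimensional representation of $SL(2)$ has weights $\{-n, -n+2, \dots, n\}$, it suffices to show that
\[
\langle \omega, \alpha^\vee\rangle \in \{-1, 0, 1\}
\]
for every weight $\omega$ of $V$. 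Granting this, the only irreducible $SL(2)$-summands that can appear are the trivial and the defining representations, of dimensions $1$ and $2$ respectively, which is the desired conclusion.

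The main obstacle is establishing the bound on the pairing, and this is where minusculity (and, implicitly, the equal-length property of roots in the simply laced case) is used. I would argue as follows. Let $\omega$ be a weight of $V$ and set $k := \langle \omega, \alpha^\vee\rangle$; assume $k \geq 1$. Because $\rho$ is minuscule, the Weyl group $W$ acts transitively on the weights of $V$, so all weights lie in a single $W$-orbit and therefore share the same length with respect to any $W$-invariant symmetric bilinear form $(\cdot,\cdot)$ on $\ft^\vee$. Now $s_\alpha(\omega) = \omega - k\alpha$ is itself a weight, and by the standard $\fr{sl}_2$-theory of $\alpha$-strings in representations of semisimple Lie algebras, every $\omega - j\alpha$ with $0 \leq j \leq k$ is a weight of $V$. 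In particular $\omega - \alpha$ is a weight, so that
\[
(\omega, \omega) \;=\; (\omega - \alpha, \omega - \alpha) \;=\; (\omega, \omega) - 2(\omega, \alpha) + (\alpha, \alpha),
\]
which gives $2(\omega, \alpha)/(\alpha, \alpha) = 1$, i.e. $\langle \omega, \alpha^\vee\rangle = 1$, contradicting $k \geq 2$. The case $k \leq -1$ is symmetric (or follows by applying $s_\alpha$), yielding $\langle \omega, \alpha^\vee \rangle = -1$. Hence every $SL(2)$-weight of $c$ lies in $\{-1, 0, 1\}$, and the decomposition of $c$ into one- and two-dimensional irreducibles follows.
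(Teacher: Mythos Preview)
Your proof is correct and shares the paper's overall strategy: identify the $SL(2)$-weights of $c$ with the integers $\langle \omega, \alpha^\vee\rangle$ and show these lie in $\{-1,0,1\}$. The difference lies in how this bound is obtained. The paper first reduces from reductive to simple $G$ by writing $G = (T_1 \times \prod_i G_i)/Z$ and decomposing the minuscule representation accordingly, and then for simple $G$ cites Corollary~6.6.6 of \cite{green2013combinatorics} for the statement $s_\alpha(\omega) \in \{\omega, \omega \pm \alpha\}$. Your argument is more elementary and self-contained: you observe that transitivity of $W$ on the weights forces them all to have equal length under any $W$-invariant form, and combine this with the $\alpha$-string property to force $\langle \omega, \alpha^\vee\rangle = 1$ whenever the pairing is positive. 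This avoids both the citation and the reduction-to-simple step; as a bonus, your argument nowhere actually uses that $G$ is simply laced (the parenthetical about equal root lengths is never invoked---only equal \emph{weight} lengths are). One minor expository point: the phrasing ``assume $k \geq 1$ \ldots contradicting $k \geq 2$'' would read more cleanly as ``if $k \geq 1$ then the equal-length computation forces $k = 1$''.
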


\begin{proof}
We first consider the case that the group $G$ is simple.  In this case by Corollary 6.6.6 of \cite{green2013combinatorics} the weights $\lambda$ of the Lie algebra representation $Lie(\rho)$ have the property that $s_{\alpha}(\lambda)\in \{\lambda, \lambda+\alpha, \lambda-\alpha\}$. 


Identifying $\Lambda_{char}\hookrightarrow \ft_{SL(2)}$ with $\bbZ\hookrightarrow \bbC$ identifies the maps $\ft_{G}^{\vee}\rightarrow \ft_{SL(2)}^{\vee}$ with the map $\ft_{G}^{\vee}\xrightarrow{\alpha^{\vee}}\ft_{SL(2)}^{\vee}$ given by evaluation on the coroot $\alpha^{\vee}$.  This is because the dual maps are $\ft_{SL(2)}\rightarrow \ft_{G}$, $h_{\alpha, SL(2)}\mapsto h_{\alpha}$, and $\bbC\rightarrow \ft_{G}$, $1\mapsto \alpha^{\vee}$, and $h_{\alpha}=\alpha^{\vee}$.  Now $\alpha^{\vee}(\alpha)=2$, hence this is saying that the weights $\lambda_{\mathfrak{sl}(2)}$ of the $\mathfrak{sl}(2)$ representation $c$ satisfy $s_{\alpha}(\lambda_{\mathfrak{sl}(2)})\in \{\lambda, \lambda+2, \lambda-2\}$.  Hence we have that the $SL(2)$ representation $\rho \circ I_{\alpha}$ is a sum of one and two dimensional representations of $SL(2)$.


A general reductive group can be written as a product $(T_{1}\times \Pi_{i}G_{i})/Z$, where $T_{1}$ is commutative, $G_{i}$ is a simple factor for each $i$, and $Z$ is finite.  The Weyl group does not depend on $Z$ because it is determined by the Lie algebra, and is hence the product $W=\Pi W_{i}$ where $W_{i}$ is the Weyl group of $G_{i}$.  The highest weight classification of representations of Lie algebras then makes it clear that a miniscule representation of $G$ must lift to a representation $V_{T_{1}}\otimes\bigotimes_{i} V_{i}$ of $(T_{1}\times \Pi_{i}G_{i})$, where $V_{i}$ is miniscule representation of $G_{i}$ for each $i$. and $V_{T_{1}}$ is a one dimensional representation of $T_{1}$.  The result then follows immediately for the case where the group $Z$ is trivial, because any root $\alpha$ of $(T_{1}\times \Pi_{i}G_{i})$ corresponds to a root of $G_{i}$ for some $i$, and hence we still have $s_{\alpha}(\lambda)\in \{\lambda, \lambda+\alpha, \lambda-\alpha\}$.  Now suppose the result holds for $T_{1}\times \Pi_{i}G_{i}$, it immediately follows for $T_{1}\times \Pi_{i}G_{i}/Z$, upon observing that the result only depends on the associated Lie algebra representation.
\end{proof}

\begin{proof}[Proof of Lemma \ref{lemma: path detour rule factors}]
We only need to show that for local systems in $\Loc_{\bbG_{m}}^{-1}(\overline{X}^{ne,\circ_{R}}_{\rho})$, Construction \ref{construction: exponential path rule non abelianization} produces a local system on $X^{\circ}$ with trivial monodromy around the preimage of each branch point under the map $X^{\circ}\rightarrow X$.  

Lemma \ref{lemma:  Properties of sl2 representations} shows that we need only consider a single branch point for $G=SL(2)$.  In this case the result follows by direct computation.
\end{proof}

One way to do this computation is by using the relation to the nonabelianization of Section \ref{subsec: Non abelianiazation via Cameral Networks} expressed in Theorem \ref{conj: exp path rules minuscule}.

\begin{lemma}
\label{lemma: getting local systems with -1 monodromy }
For $\rho$ a miniscule representation, the restriction of the map $Sp_{\rho}$ to $\Loc_{N}^{\tilde{X}^{\circ}, S}(X^{\circ})$ factors through $\Loc_{\bbG_{m}}^{-1}(\overline{X}^{ne,\circ_{R}}_{\rho})$, as shown in the following commutative diagram:
\[
\begin{tikzcd}
\Loc_{N}^{\tilde{X}^{\circ}, S}(X^{\circ}) \arrow{rr}{Sp_{\rho}|_{\Loc_{N}^{\tilde{X}^{\circ}, S}(X^{\circ})}} \arrow{rd} & & \Loc_{\bbG_{m}}(\overline{X}^{ne,\circ_{R}}_{\rho})\\
& \Loc_{\bbG_{m}}^{-1}(\overline{X}^{ne,\circ_{R}}_{\rho}) \arrow{ru} & 
\end{tikzcd}
\]
\end{lemma}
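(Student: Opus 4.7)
The plan is to verify two things: (i) the output of the map $Sp_\rho$ from Construction \ref{construction: associated local system of vector bundles on spectral} extends from $\overline{X}^{ne,\circ_P}_\rho$ to a well-defined $\bbG_m$-local system on $\overline{X}^{ne,\circ_R}_\rho$, and (ii) the monodromy around each boundary circle $S^1_r$ ($r\in R_\rho$) equals $-1$. For (i), note first that since $\rho$ is minuscule, every weight space $V_\omega$ is one-dimensional, so $Sp_\rho(\cE_N)$ is already a $\bbG_m$-local system on $\overline{X}^{ne,\circ_P}_\rho$, and by the homotopy equivalence $\overline{X}^{ne}_\rho\setminus \pi_\rho^{-1}(P) \simeq \overline{X}^{ne,\circ_P}_\rho$ we may think of it as a $\bbG_m$-local system on $\overline{X}^{ne}_\rho\setminus \pi_\rho^{-1}(P)$. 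Extending to $\overline{X}^{ne,\circ_R}_\rho \simeq \overline{X}^{ne}_\rho\setminus R_\rho$ then amounts to extending across the unramified preimages of $P$, which in turn reduces to checking triviality of certain local monodromies.

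First I would carry out the local analysis near a branch point $p\in P$. Fix $\alpha\in\Lambda_p$; then the $s_\alpha$-action on $\Omega_\rho$ partitions $\pi_\rho^{-1}(p)$ into fixed weights (unramified preimages) and 2-cycles (the ramification points above $p$). Since $\rho$ is minuscule, $\langle\omega,\alpha^\vee\rangle\in\{-1,0,1\}$ for all $\omega\in \Omega_\rho$, with $\langle\omega,\alpha^\vee\rangle=0$ on the unramified sheets and $\pm 1$ on the ramified ones. The analogue of Lemma \ref{lemma:  Properties of sl2 representations} (whose argument only uses the minuscule pairing bound, not the simply-laced hypothesis) shows that $\rho\circ I_\alpha$ decomposes into trivial and 2-dimensional standard $SL(2)$-summands, with $V_\omega$ lying in a trivial summand precisely when $\langle\omega,\alpha^\vee\rangle=0$.

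Next I would compute the two monodromies. Let $g\in n_\alpha T_\alpha$ be a representative of the monodromy of $\cE_N$ around $S^1_p$ allowed by the $S$-monodromy condition. On a $s_\alpha$-fixed sheet labelled by $\omega$, the monodromy of $Sp_\rho(\cE_N)$ around the corresponding circle above $p$ equals $\rho(g)|_{V_\omega}$; since $V_\omega$ sits in a trivial $SL(2)$-summand, this is the identity, and hence $Sp_\rho(\cE_N)$ extends across the unramified preimages of $p$, giving a $\bbG_m$-local system on $\overline{X}^{ne,\circ_R}_\rho$. For a ramification point $r$ with weight orbit $\{\omega,s_\alpha(\omega)\}$, the loop generating $\pi_1(S^1_r)$ pushes down to a loop that traverses $S^1_p$ twice, so the monodromy around $S^1_r$ equals $\rho(g^2)|_{V_\omega}$. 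A short $SL(2)$-computation (invoking Lemma \ref{lem:commutation_t_alpha} to absorb the $t_\alpha$-factor via conjugation) gives $g^2 = I_\alpha(-\id)=\exp(\pi i h_\alpha)$; this acts on $V_\omega$ by $\exp(\pi i \langle \omega,\alpha^\vee\rangle)=-1$, since on ramified sheets $\langle\omega,\alpha^\vee\rangle=\pm 1$. This produces the required factorization through $\Loc_{\bbG_m}^{-1}(\overline{X}^{ne,\circ_R}_\rho)$, and functoriality in $\cE_N$ makes the construction a morphism of stacks.

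The main obstacle I anticipate is organizational rather than substantive: carefully disentangling the three variant spectral covers $\overline{X}^{ne}_\rho$, $\overline{X}^{ne,\circ_P}_\rho$, $\overline{X}^{ne,\circ_R}_\rho$ and their homotopy-equivalent interior loci, and correctly tracking the fact that a small loop around $S^1_r$ in $\overline{X}^{ne,\circ_R}_\rho$ wraps twice around $S^1_p$ (which is what produces the crucial square in $g^2$). Once this book-keeping is fixed, the two monodromy computations—triviality on $s_\alpha$-fixed sheets and the $-1$ eigenvalue of $I_\alpha(-\id)$ on ramified sheets—are immediate consequences of the minuscule $SL(2)$-reduction and Lemma \ref{lem:commutation_t_alpha}.
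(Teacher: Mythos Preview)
Your proposal is correct and follows essentially the same route as the paper: reduce to $SL(2)$ via the minuscule decomposition of $\rho\circ I_\alpha$ (Lemma \ref{lemma:  Properties of sl2 representations}), and then read off the $-1$ from the square of the monodromy. The paper is terser: it conjugates to make the monodromy exactly $n_\alpha$ (valid since $n_\alpha t_\alpha=\Ad_{t_\alpha^{-1/2}}n_\alpha$) and leaves the extension across unramified sheets implicit, whereas you handle a general $g\in n_\alpha T_\alpha$, spell out the triviality on $s_\alpha$-fixed sheets, and note (correctly) that the simply-laced hypothesis is not actually used in the $SL(2)$-reduction.
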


\begin{proof}
Pick a trivialization such that the monodromy of the $N$-local system around a branch point is $n_{\alpha}$.  We then have that the monodromy of the $\bbG_{m}$-local system on $\overline{X}^{ne,\circ_{R}}_{\rho}$ (considered as a local system on $X$) is given by $c(\left(\begin{matrix}0 & 1 \\ -1 & 0\end{matrix}\right))$.  The representation $c$ of Lemma \ref{lemma:  Properties of sl2 representations} breaks up into a direct sum of one dimensional representations (corresponding to branches of the spectral cover which are ramified at this point), and two dimensional irreducible representations (corresponding to the ramified branches of the spectral cover). Hence the monodromy of the $\bbG_{m}$-local system (on $\overline{X}^{ne,\circ_{R}}_{\rho}$) around the ramification points is -1, because
\[\left(\begin{matrix}0 & 1 \\ -1 & 0\end{matrix}\right)^{2}=-Id.\]


\end{proof}

\begin{theorem}[Compatibility between non-abelianization and path detour non-abelianization]
\label{conj: exp path rules minuscule}
Let $G$ be a simply laced reductive algebraic group.  Let $\rho:G\rightarrow GL(V)$ be a minuscule representation.  The following diagram commutes, where we have denoted $Sp_{\rho}^{S}:=Sp_{\rho}|_{\Loc_{N}^{\tilde{X}^{\circ}, S}(X^{\circ})}$:
\begin{equation}
\label{eq:nonab_pathrule_commute}
\begin{tikzcd}
\Loc_{N}^{\tilde{X}^{\circ}, S}(X^{\circ}) \arrow{r}{Sp_{\rho}^{S}} \arrow{d}{nonab} &  \Loc_{\bbG_{m}}^{-1}(\overline{X}_{\rho}^{ne, \circ_{R}}) \arrow{d}{nonab_{PD}}\\
\Loc_{G}(X)\arrow{r}{\rho} & \Loc_{GL(V)}(X)
\end{tikzcd}
\end{equation}

\end{theorem}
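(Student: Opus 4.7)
The plan is to show that both maps, $\rho \circ \nonab$ and $\nonab_{PD} \circ Sp_\rho^S$, prescribe isomorphic $GL(V)$-local systems on $X$ by verifying they prescribe the same Stokes factors on every connected component $c \subset \cW \setminus \cJ$. Under the identification $\cE_N \times_N GL(V) \cong (\pi_\rho)_* \cE$ between the induced $GL(V)$-local system and the pushforward of the spectral $\bbG_m$-local system (valid because $\rho$ is minuscule and all weight spaces are $1$-dimensional), both maps factor through the regluing map of Definition \ref{defn: regluing map}. Hence it suffices to compare the lifts $\rho \circ s_{WKB}$ and $s_{PD} \circ Sp_\rho^S$ to the stack $\Aut_{\cW, GL(V)}(X^{\circ'})$.

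First, I would reduce to checking compatibility on primary Stokes curves. Both assignment schemes are inductive along the filtration $F_\bullet(\cW \setminus \cJ)$ from Construction \ref{construction of swkb}, and at each joint the outgoing Stokes factors are the unique ones making the monodromy around the joint trivial after regluing. For $s_{WKB}$, this uniqueness is Theorem \ref{thm:assignment_stokes_intersection}, which applies because the restriction on joints in Construction \ref{construction: exponential path rule non abelianization} ensures at most one outgoing root lies strictly inside $\Conv^{\N}_{\alpha_1,\dots,\alpha_n}$. For $s_{PD}$, the outgoing Stokes factor at a joint is defined precisely as that unique element which, after exponentiating and regluing, interprets as parallel transport along a detour path. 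Therefore, once the base case is established, the inductive step follows mechanically from the uniqueness of the 2D scattering diagram solution.

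For the base case at a primary Stokes curve labelled by $\alpha$, I would invoke Lemma \ref{lemma:  Properties of sl2 representations}: the composition $c = \rho \circ I_\alpha : SL(2) \to GL(V)$ decomposes as a direct sum of $1$- and $2$-dimensional irreducible $SL(2)$-representations. By Definition \ref{def: monodromy to Stokes factors}, the Stokes factor is $u_\alpha = \Ad_{t_\alpha^{-1/2}} \exp(-e_\alpha)$, and under $\rho$ this decomposes blockwise according to the $SL(2)$-isotypic decomposition of $V$. On the $1$-dimensional summands $\rho(e_\alpha)$ acts as zero, so $\rho(u_\alpha)$ acts as the identity; correspondingly, no detour $d_{i,j}$ contributes in the path detour formula for these summands, because the associated weights $\lambda$ satisfy $\lambda + \alpha \notin \Omega_\rho$. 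On each $2$-dimensional summand, $\rho(e_\alpha)$ is a rank-$1$ nilpotent with $\rho(e_\alpha)^2 = 0$, so $\rho(\exp(-e_\alpha)) = I - \rho(e_\alpha)$, which matches the path-detour expression $\exp(d_{1,2}) = I + d_{1,2}$ for the single detour contributing to that summand, once one identifies $d_{1,2}$ with the corresponding matrix element of $-\rho(e_\alpha)$. The $-1$ monodromy around the ramification point on the spectral cover, guaranteed by Lemma \ref{lemma: getting local systems with -1 monodromy }, is precisely what is needed to make this identification consistent when the detour traverses the lift of $\gamma_p$ that swaps the two sheets.

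The main obstacle I anticipate is the bookkeeping to match conventions: identifying the parallel transports $d_{i,j}$ with the appropriate matrix entries of $-\Ad_{t_\alpha^{-1/2}}\rho(e_\alpha)$ under the trivializations chosen, while correctly tracking the $T$-shift factor $\Ad_{t_\alpha^{-1/2}}$, the labelling of branches of $\overline{X}_\rho^{ne,\circ_R}$ by weights after parallel transport around $S^1_p$, and the sign arising from the $-1$ monodromy condition. The combinatorics should ultimately be controlled by Lemma \ref{lem:triple_product} (which ensures the $SL(2)$ triple product $\exp(e_\alpha)\exp(e_{-\alpha})\exp(e_\alpha) = n_\alpha$ holds, guaranteeing the cancellation of monodromy around each $S^1_p$) together with Remark \ref{rem:chevalley_basis_independence} (ensuring the construction does not depend on the choice of Chevalley basis).
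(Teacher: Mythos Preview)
Your proposal is correct and follows essentially the same approach as the paper: reduce to primary Stokes curves via the uniqueness at joints, then invoke Lemma \ref{lemma:  Properties of sl2 representations} to decompose $\rho \circ I_\alpha$ into $1$- and $2$-dimensional $SL(2)$-summands and check the identification blockwise. The paper streamlines your anticipated bookkeeping by choosing a trivialization in which the monodromy around $\gamma_p$ is exactly $n_\alpha^{-1}$ (so $t_\alpha = 1$), then reading off directly that $\rho(n_\alpha^{-1}) = \rho(-e_\alpha) + \rho(-e_{-\alpha}) + \mathrm{Id}$ on the zero-weight summand, which yields $\rho(-e_\alpha) = \sum d_{i,j}$ in one line; this is worth adopting in place of tracking the $\Ad_{t_\alpha^{-1/2}}$ shift explicitly.
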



\begin{proof}[Proof of Theorem \ref{conj: exp path rules minuscule}]
We need to show that the automorphisms associated to each Stokes line are the same under the nonabelianization of Construction \ref{thm:bulk_map_exists}, and the exponential path rule version of non-abelianization are equal.

Firstly we consider primary Stokes lines.  We only need to consider one branch point.  We start with an $N$-local system $\cE$ on $X^{\circ}$, with associated $W$-local system $\tilde{X}^{\circ}$, which satisfies the $S$-monodromy condition. We use a path $\gamma_{p}$ with base point $x\in \gamma_{p}$ around a boundary circle $S_{p}^{1}\subset X^{\circ}$ (as in Construction \ref{construction: exponential path rule non abelianization}, where the direction is opposite to that specified by the orientation of $X$).  We pick a trivialization of $\cE|_{x}$, such that the monodromy around $\gamma_{p}$ is given by $n_{\alpha}^{-1}=I_\alpha(n_{\alpha, SL(2)}^{-1})$ (for some root $\alpha$).

We then have the monodromy of the associated $GL(V)$ local system $\rho(\cE)$ is given by 
\[\rho(n_{\alpha}^{-1}):\oplus_{\omega \in \Omega_{V}}V_{\omega}\rightarrow \oplus_{\omega\in \Omega_{V}}V_{\omega},\]
where $w$ ranges over the weights of the representation $V=\oplus_{\omega}V_{\omega}$.  By reducing the representation to an $SL(2)$ representation following Lemma \ref{lemma:  Properties of sl2 representations} we will have that the monodromy fixes the branches corresponding to the weight $0$ of $SL(2)$, while swapping the branches corresponding to the two weights of each irreducible two dimensional sub-representation.

We then have that 
\[\rho(n_{\alpha}^{-1})=\rho(-e_{\alpha})+\rho(-e_{-\alpha})+Id_{\oplus_{\omega| \omega(\alpha)=0}V_{\omega}},\]
which gives the identification $\rho(-e_{\alpha})=\sum d_{i,j}$ where we are summing over the pairs of branches specified immediately before Equation \ref{eq: path detour Stokes Factors}.

Hence 
\begin{equation}
\label{equation: path detour factor as exponential}S_{PD}=exp(\sum d_{i,j})=exp(\rho(-e_{\alpha}))=\rho(exp(-e_{\alpha})),\end{equation} for the line labelled by $\alpha$ with respect to the parallel transport of the chosen trivialization at $x_{p}$.   Similarly for the line labelled by $-\alpha$ we have agreement between the automorphisms specified as in Section \ref{subsubsection: Stokes factors}, and those specified in Construction \ref{construction: exponential path rule non abelianization}.

Hence for primary rays the exponential path detour rule, agrees with the assignment of Stokes factors in Construction \ref{construction of swkb}, Lemma \ref{lemma: monodromy to Stokes factors}, and Lemma \ref{lemma: N equivariant map to Stokes factors initial}.  For non-primary Stokes rays Lemma \ref{lem:assignment_stokes_newlines_equivariant} shows that the Stokes factors assigned by that lemma agree with those of the exponential path detour rules.  The result follows.
\end{proof}

\begin{remark}[Link to path detour rules of \cite{gaiotto2013spectral} and \cite{longhi2016ade}]
\label{remark: link to Longhi--Park path detour}
For $G$ of type $ADE$, and a minuscule representation $\rho$ this agrees with the path detour rules of \cite{longhi2016ade}, which in turn agrees with that of \cite{gaiotto2013spectral} for $G=SL(n)$, and $\rho$ the defining representation.

The path detour rules suggested in \cite{longhi2016ade} assigned to a primary Stokes line the automorphism 
\begin{align*}
S_{\alpha} 
=& 
Id+\sum d_{i,j} =\rho\big(I_{\alpha}(Id-e)\big) =\rho\Big(I_{\alpha}\big(\exp(-e)\big)\Big) =\exp\Big(\rho\big(I_{\alpha}(-e)\big)\Big)=
\\
=&
\exp\big(\sum d_{i,j}\big),
\end{align*}
where $e=\left(\begin{matrix}0 & 1\\ 0 & 0\end{matrix}\right)$ is the Chevalley basis element of $\mathfrak{sl}_{2}$, and we are using the results of Lemma \ref{lemma:  Properties of sl2 representations}, and the proof of Theorem \ref{conj: exp path rules minuscule}.  The term on the right is the automorphism assigned to the line by Construction \ref{construction: exponential path rule non abelianization}. 

Furthermore as in \cite{gaiotto2013spectral} we can also interpret the Stokes factor when we cross a primary Stokes line in the opposite direction to that specified by the orientation of $X$ in terms of the parallel transport along certain paths.  We have that
\[
S_{\alpha}^{-1}=\rho\big(\exp(e_{\alpha})\big)^{-1}=\rho\big(\exp(e_{\alpha})^{-1}\big)=\rho\big(\exp(-e_{\alpha})\big).
\]
At the Stokes lines for each pair of distinct preimages $x_{1}, x_{2}\in \overline{X}_{\rho}^{ne,\circ_{R}}|_{x}$ of $x_{s}$ on a Stokes line, interchanged by $s_{\alpha}$, with the property that $l(x_{1})-l(x_{2})=\alpha$ we let $f_{1,2}: \cE|_{x_{1}}\rightarrow \cE|_{x_{2}}$ be the morphism given by parallel transport along the lift of the path $\overline{\gamma_{p}}$ (ie. $\gamma_{p}$ with orientation reversed) to $\overline{X}_{\rho}^{ne,\circ_{R}}$ which starts at $x_{1}$ and finishes at $x_{2}$.
Hence we can interpret $S_{\alpha}^{-1}$ in terms of parallel transport along paths via the equation
\[S_{\alpha}^{-1}=Id+\sum_{l,k}f_{l,k},\]
where the sum is over all pairs of distinct preimages specified above).  See Figure \ref{figure: Path Detour Rules 2} for a diagram.

\begin{figure}[h]
    \centering
    \def\svgwidth{100pt}
\begingroup%
  \makeatletter%
  \providecommand\color[2][]{%
    \errmessage{(Inkscape) Color is used for the text in Inkscape, but the package 'color.sty' is not loaded}%
    \renewcommand\color[2][]{}%
  }%
  \providecommand\transparent[1]{%
    \errmessage{(Inkscape) Transparency is used (non-zero) for the text in Inkscape, but the package 'transparent.sty' is not loaded}%
    \renewcommand\transparent[1]{}%
  }%
  \providecommand\rotatebox[2]{#2}%
  \newcommand*\fsize{\dimexpr\f@size pt\relax}%
  \newcommand*\lineheight[1]{\fontsize{\fsize}{#1\fsize}\selectfont}%
  \ifx\svgwidth\undefined%
    \setlength{\unitlength}{595.27559055bp}%
    \ifx\svgscale\undefined%
      \relax%
    \else%
      \setlength{\unitlength}{\unitlength * \real{\svgscale}}%
    \fi%
  \else%
    \setlength{\unitlength}{\svgwidth}%
  \fi%
  \global\let\svgwidth\undefined%
  \global\let\svgscale\undefined%
  \makeatother%
  \begin{picture}(1,1)%
    \lineheight{1}%
    \setlength\tabcolsep{0pt}%
    \put(0,0){\includegraphics[width=\unitlength,page=1]{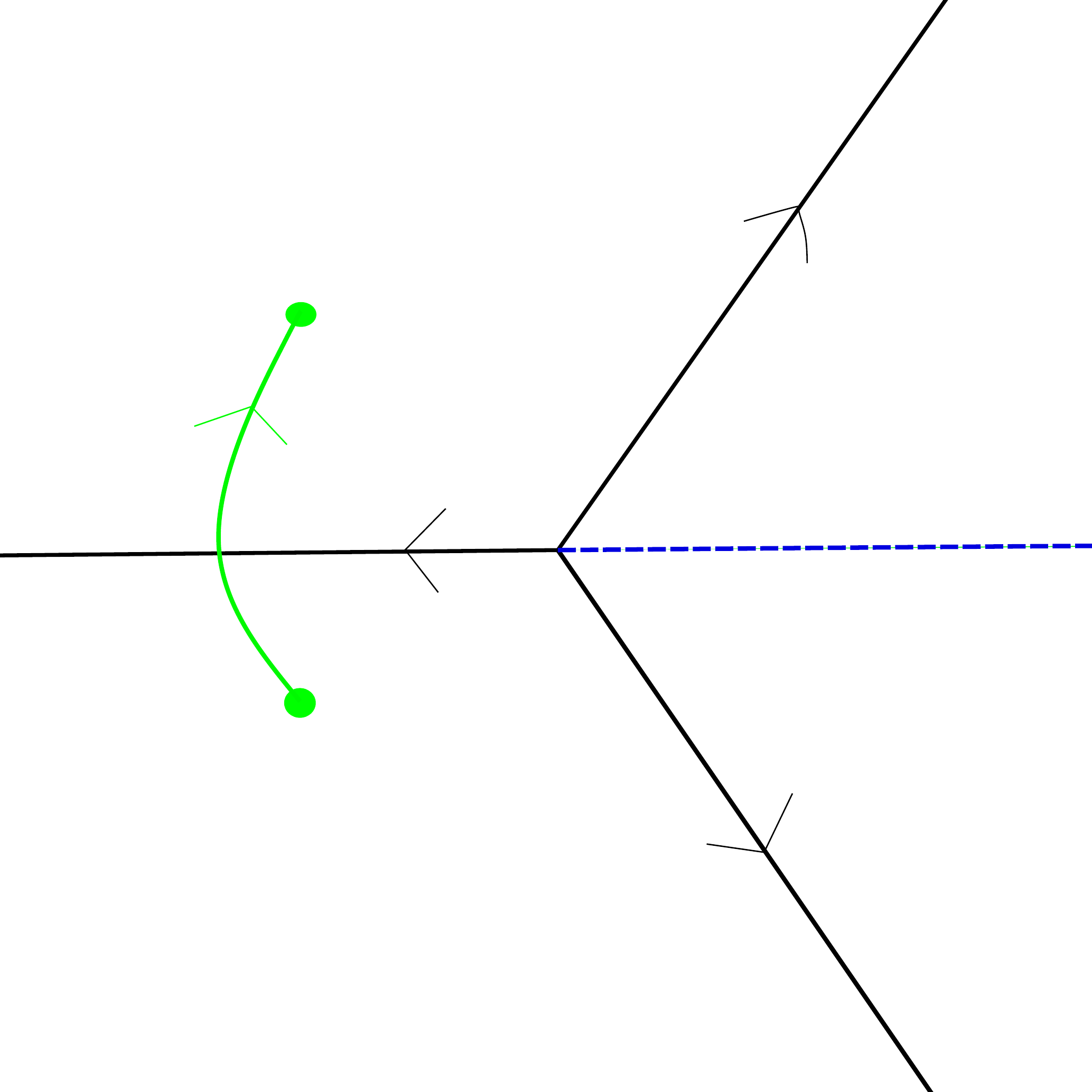}}%
    \put(0.83893912,0.20457219){\color[rgb]{0,0,0}\makebox(0,0)[lt]{\lineheight{1.25}\smash{\begin{tabular}[t]{l}$21$\end{tabular}}}}%
    \put(0.81972068,0.77463003){\color[rgb]{0,0,0}\makebox(0,0)[lt]{\lineheight{1.25}\smash{\begin{tabular}[t]{l}$21$\end{tabular}}}}%
    \put(0.04517432,0.56288187){\color[rgb]{0,0,0}\makebox(0,0)[lt]{\lineheight{1.25}\smash{\begin{tabular}[t]{l}$12$\end{tabular}}}}%
  \end{picture}%
\endgroup%

  \caption{Above we have a path crossing a line of an $SL(2)$ spectral network on $X$.  We have chosen a trivialization of the spectral cover away from a branch cut (dotted line), and this has allowed us to label the Stokes curves by roots, or equivalently by the ordered pairs $12$ or $21$.  The modified parallel transport along the path corresponds to the sum of the parallel transport along the three paths on the spectral cover shown below.  We have drawn the image of these paths in $X$, together with a labelling of the end points by the sheet of the spectral cover the end point lies on.}
    \begin{minipage}{.33\textwidth}
      \centering
     \def\svgwidth{100pt}
\begingroup%
  \makeatletter%
  \providecommand\color[2][]{%
    \errmessage{(Inkscape) Color is used for the text in Inkscape, but the package 'color.sty' is not loaded}%
    \renewcommand\color[2][]{}%
  }%
  \providecommand\transparent[1]{%
    \errmessage{(Inkscape) Transparency is used (non-zero) for the text in Inkscape, but the package 'transparent.sty' is not loaded}%
    \renewcommand\transparent[1]{}%
  }%
  \providecommand\rotatebox[2]{#2}%
  \newcommand*\fsize{\dimexpr\f@size pt\relax}%
  \newcommand*\lineheight[1]{\fontsize{\fsize}{#1\fsize}\selectfont}%
  \ifx\svgwidth\undefined%
    \setlength{\unitlength}{595.27559055bp}%
    \ifx\svgscale\undefined%
      \relax%
    \else%
      \setlength{\unitlength}{\unitlength * \real{\svgscale}}%
    \fi%
  \else%
    \setlength{\unitlength}{\svgwidth}%
  \fi%
  \global\let\svgwidth\undefined%
  \global\let\svgscale\undefined%
  \makeatother%
  \begin{picture}(1,1)%
    \lineheight{1}%
    \setlength\tabcolsep{0pt}%
    \put(0,0){\includegraphics[width=\unitlength,page=1]{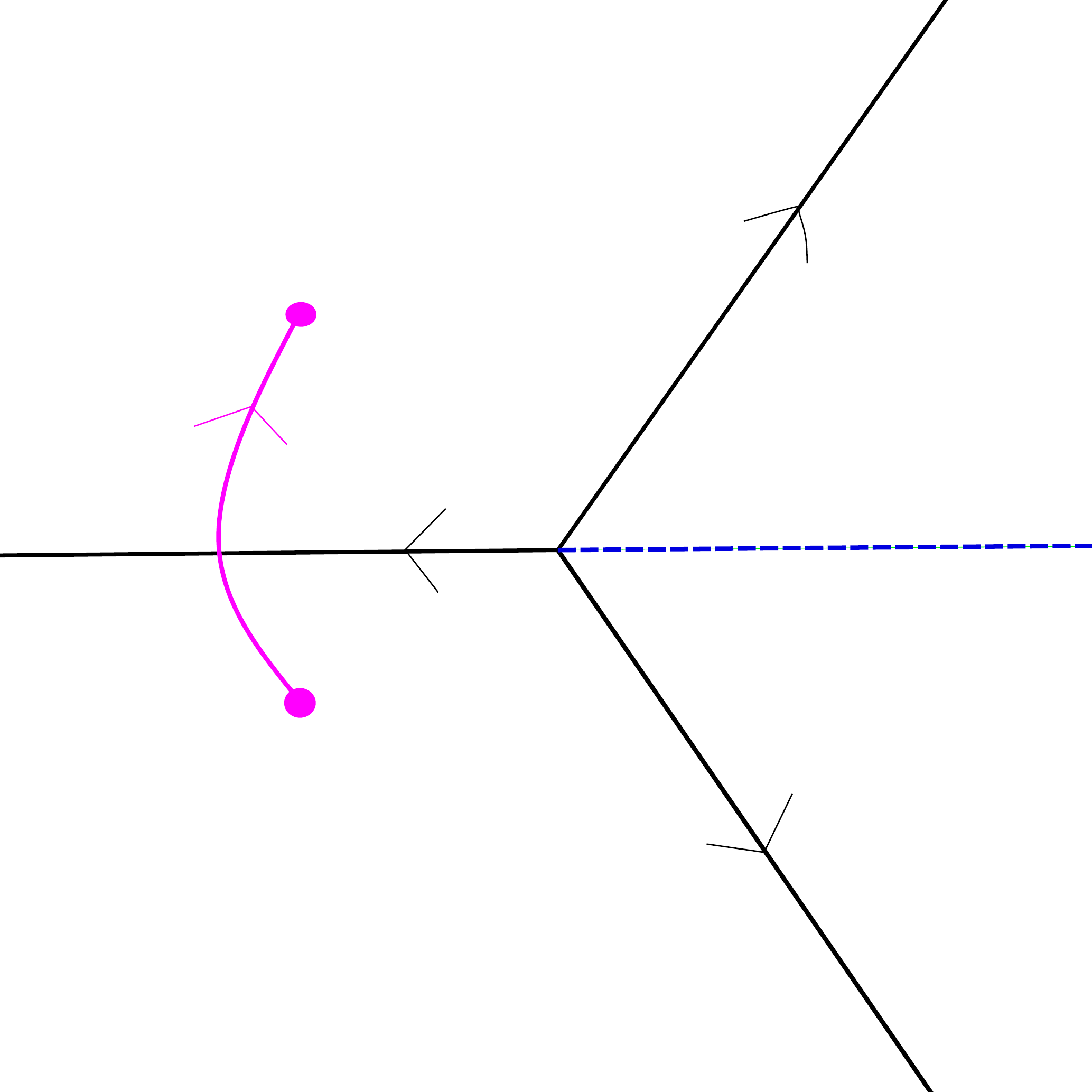}}%
    \put(0.83893912,0.20457219){\color[rgb]{0,0,0}\makebox(0,0)[lt]{\lineheight{1.25}\smash{\begin{tabular}[t]{l}$21$\end{tabular}}}}%
    \put(0.81972068,0.77463003){\color[rgb]{0,0,0}\makebox(0,0)[lt]{\lineheight{1.25}\smash{\begin{tabular}[t]{l}$21$\end{tabular}}}}%
    \put(0.04517432,0.56288187){\color[rgb]{0,0,0}\makebox(0,0)[lt]{\lineheight{1.25}\smash{\begin{tabular}[t]{l}$12$\end{tabular}}}}%
    \put(0.27004384,0.24212617){\color[rgb]{0,0,0}\makebox(0,0)[lt]{\lineheight{1.25}\smash{\begin{tabular}[t]{l}$1$\end{tabular}}}}%
    \put(0.28818087,0.74651041){\color[rgb]{0,0,0}\makebox(0,0)[lt]{\lineheight{1.25}\smash{\begin{tabular}[t]{l}$1$\end{tabular}}}}%
  \end{picture}%
\endgroup%

      \label{fig:path 1}
    \end{minipage}%
    \begin{minipage}{.33\textwidth}
      \centering
      \def\svgwidth{100pt}
\begingroup%
  \makeatletter%
  \providecommand\color[2][]{%
    \errmessage{(Inkscape) Color is used for the text in Inkscape, but the package 'color.sty' is not loaded}%
    \renewcommand\color[2][]{}%
  }%
  \providecommand\transparent[1]{%
    \errmessage{(Inkscape) Transparency is used (non-zero) for the text in Inkscape, but the package 'transparent.sty' is not loaded}%
    \renewcommand\transparent[1]{}%
  }%
  \providecommand\rotatebox[2]{#2}%
  \newcommand*\fsize{\dimexpr\f@size pt\relax}%
  \newcommand*\lineheight[1]{\fontsize{\fsize}{#1\fsize}\selectfont}%
  \ifx\svgwidth\undefined%
    \setlength{\unitlength}{595.27559055bp}%
    \ifx\svgscale\undefined%
      \relax%
    \else%
      \setlength{\unitlength}{\unitlength * \real{\svgscale}}%
    \fi%
  \else%
    \setlength{\unitlength}{\svgwidth}%
  \fi%
  \global\let\svgwidth\undefined%
  \global\let\svgscale\undefined%
  \makeatother%
  \begin{picture}(1,1)%
    \lineheight{1}%
    \setlength\tabcolsep{0pt}%
    \put(0,0){\includegraphics[width=\unitlength,page=1]{DiagramPD212.pdf}}%
    \put(0.83893912,0.20457219){\color[rgb]{0,0,0}\makebox(0,0)[lt]{\lineheight{1.25}\smash{\begin{tabular}[t]{l}$21$\end{tabular}}}}%
    \put(0.81972068,0.77463003){\color[rgb]{0,0,0}\makebox(0,0)[lt]{\lineheight{1.25}\smash{\begin{tabular}[t]{l}$21$\end{tabular}}}}%
    \put(0.04517432,0.56288187){\color[rgb]{0,0,0}\makebox(0,0)[lt]{\lineheight{1.25}\smash{\begin{tabular}[t]{l}$12$\end{tabular}}}}%
    \put(0.27004384,0.24212617){\color[rgb]{0,0,0}\makebox(0,0)[lt]{\lineheight{1.25}\smash{\begin{tabular}[t]{l}$2$\end{tabular}}}}%
    \put(0.28818087,0.74651041){\color[rgb]{0,0,0}\makebox(0,0)[lt]{\lineheight{1.25}\smash{\begin{tabular}[t]{l}$2$\end{tabular}}}}%
  \end{picture}%
\endgroup%

      \label{fig:path 2}
    \end{minipage}%
    \begin{minipage}{.33\textwidth}
      \centering
      \def\svgwidth{100pt}
\begingroup%
  \makeatletter%
  \providecommand\color[2][]{%
    \errmessage{(Inkscape) Color is used for the text in Inkscape, but the package 'color.sty' is not loaded}%
    \renewcommand\color[2][]{}%
  }%
  \providecommand\transparent[1]{%
    \errmessage{(Inkscape) Transparency is used (non-zero) for the text in Inkscape, but the package 'transparent.sty' is not loaded}%
    \renewcommand\transparent[1]{}%
  }%
  \providecommand\rotatebox[2]{#2}%
  \newcommand*\fsize{\dimexpr\f@size pt\relax}%
  \newcommand*\lineheight[1]{\fontsize{\fsize}{#1\fsize}\selectfont}%
  \ifx\svgwidth\undefined%
    \setlength{\unitlength}{595.27559055bp}%
    \ifx\svgscale\undefined%
      \relax%
    \else%
      \setlength{\unitlength}{\unitlength * \real{\svgscale}}%
    \fi%
  \else%
    \setlength{\unitlength}{\svgwidth}%
  \fi%
  \global\let\svgwidth\undefined%
  \global\let\svgscale\undefined%
  \makeatother%
  \begin{picture}(1,1)%
    \lineheight{1}%
    \setlength\tabcolsep{0pt}%
    \put(0,0){\includegraphics[width=\unitlength,page=1]{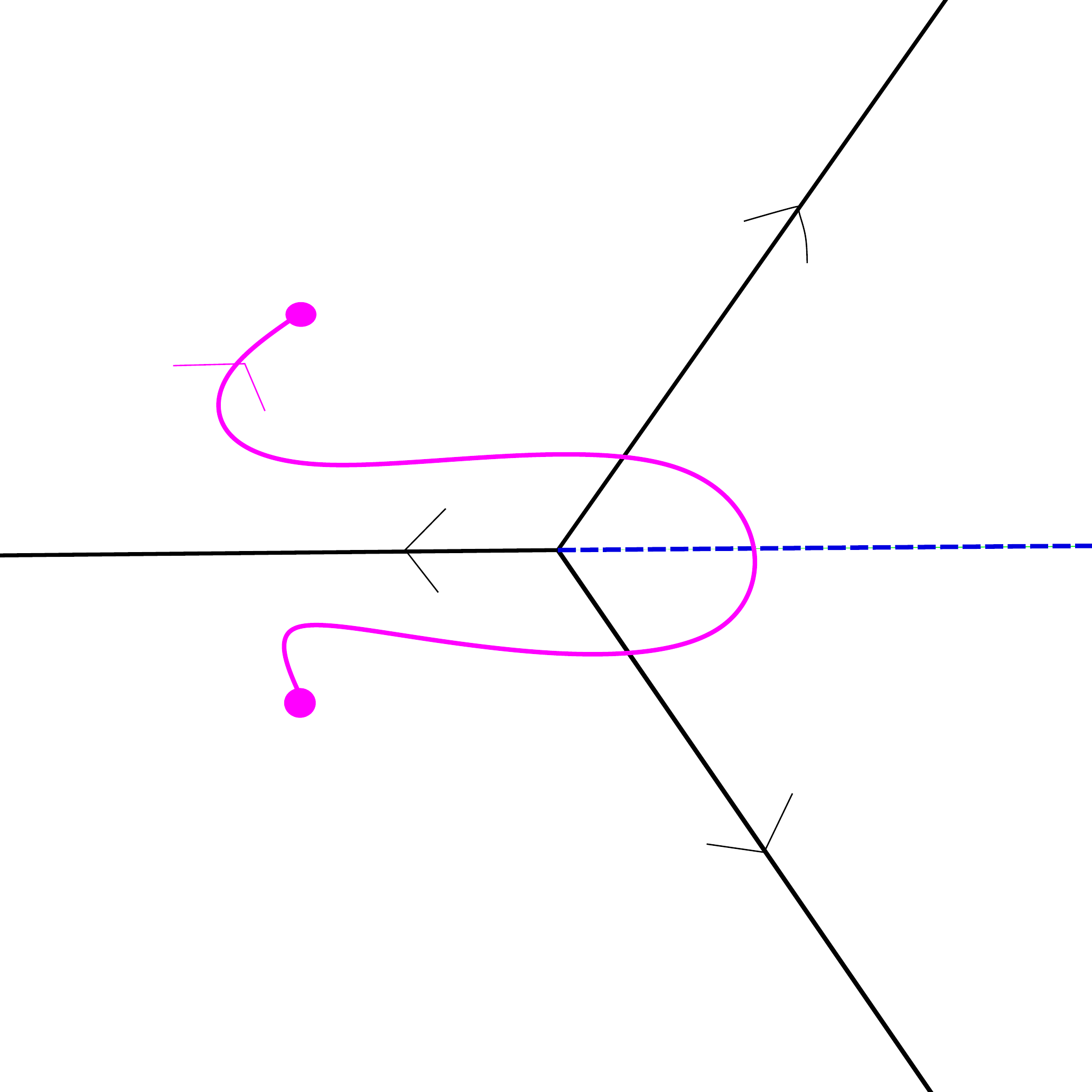}}%
    \put(0.84670744,0.11798931){\color[rgb]{0,0,0}\makebox(0,0)[lt]{\lineheight{1.25}\smash{\begin{tabular}[t]{l}$21$\end{tabular}}}}%
    \put(0.84132719,0.8455317){\color[rgb]{0,0,0}\makebox(0,0)[lt]{\lineheight{1.25}\smash{\begin{tabular}[t]{l}$21$\end{tabular}}}}%
    \put(0.03945904,0.54490197){\color[rgb]{0,0,0}\makebox(0,0)[lt]{\lineheight{1.25}\smash{\begin{tabular}[t]{l}$12$\end{tabular}}}}%
    \put(0.27161117,0.27571622){\color[rgb]{0,0,0}\makebox(0,0)[lt]{\lineheight{1.25}\smash{\begin{tabular}[t]{l}$1$\end{tabular}}}}%
    \put(0.28927943,0.77974376){\color[rgb]{0,0,0}\makebox(0,0)[lt]{\lineheight{1.25}\smash{\begin{tabular}[t]{l}$2$\end{tabular}}}}%
  \end{picture}%
\endgroup%

      \label{fig:path 12}
    \end{minipage}
  
    \label{figure: Path Detour Rules 2}
\end{figure}



\end{remark}


\subsubsection{Path Detour Non-abelianization with additional structure}
\label{subsec: equivalence for GL(n) and SL(n)}
For a specific group and a specific minisule representation we can modify $nonab_{PD}$ to apply to local systems with extra structure, of the sort considered in Section \ref{subsec: explicit descrtions for defining representations of classical groups} for defining representations of classical groups.  In this section we will use the results of Section \ref{subsec: explicit descrtions for defining representations of classical groups}.

We will here sketch this for $GL(n)$, $SL(n)$, and $SO(2n)$ for their defining representations.  This provides a precise relation to the nonabelianization of \cite{gaiotto2013spectral} for $GL(n)$ and $SL(n)$.  For $GL(n)$ this identifies $nonab$ and $nonab^{PD}$, while for $SL(n)$ and $SO(2n)$ it identifies $nonab$ with modifications $nonab^{SL(n)}_{PD}$ and $nonab^{SO(2n)}_{PD}$ of $nonab_{PD}$.  We expect analogous statements could be made for other groups and miniscule representations.

For $GL(n)$ Proposition \ref{prop: alpha monodromy GL(n)} shows that the commutative diagram of Theorem \ref{conj: exp path rules minuscule} collapses to the identification of maps via the following commutative diagram:
\begin{equation}
\label{eq:nonab_pathrule_identify_GL(n)}
\begin{tikzcd}
\Loc_{N}^{\tilde{X}^{\circ}, S}(X^{\circ}) \arrow{rr}{\cong} \arrow{rd}{nonab} & & \Loc_{\bbG_{m}}^{-1}(\overline{X}_{\rho}^{ne, \circ_{R}}) \arrow{ld}{nonab_{PD}}\\
 & \Loc_{GL(n)}(X) &
\end{tikzcd}
\end{equation}

For $G=SL(n)$ we can define $nonab_{PD}^{SL(n)}$ as the modification of $nonab_{PD}$ that maps local systems equipped with a trivialization of the determinant of the pushforward, that is points of $\Loc_{\bbG_{m}}(\overline{X}^{ne,\circ_{R}}_{\rho})\times_{\Loc_{\bbG_{m}}(X^{\circ})}\{\ \underline{\bbC}_{X^{\circ}}\}$ (see Proposition \ref{proposition : Reason for alpha monodromy condition} for a precise definition) to $SL(n)$ local systems.  This exists because $\Loc_{SL(n)}(X)\cong \Loc_{GL(n)}(X)\times_{\Loc_{GL(1)}}(X)\{\underline{\bbC}_{X}\}$, where the map $\Loc_{GL(n)}(X)\rightarrow \Loc_{GL(1)}(X)$ maps a local system $\cE$ to its determinant local system $\cE^{\wedge n}$, and all the operations in construction \ref{construction: exponential path rule non abelianization} for $SL(n)$ and its defining representation preserve this extra data.  Hence in this setting Equation \ref{eq:nonab_pathrule_commute} can be modified (using Proposition \ref{proposition : Reason for alpha monodromy condition}) to the identification of $nonab$ and $nonab_{PD}^{SL(n)}$ given by;
\begin{equation}
\label{eq:nonab_pathrule_identify_SL(n)}
\begin{tikzcd}
\Loc_{N}^{\tilde{X}^{\circ}, S}(X^{\circ}) \arrow{r}{\cong} \arrow{d}{nonab} & \Loc_{\bbG_{m}}^{-1}(\overline{X}_{\rho}^{ne, \circ_{R}}) \arrow{d}{nonab_{PD}^{SL(n)}}\\
\Loc_{SL(n)}\arrow{r}{\cong} & \Loc_{GL(n)}(X)\times_{\Loc_{GL(1)}(X)}\{\underline{\bbC}_{X}\}. 
\end{tikzcd}
\end{equation}

Similarly for $SO(2n)$ we can define $nonab_{PD}^{SO(2n)}$ as the modification of the $nonab_{PD}$ that preserves the structure of both a trivialization of the associated determinant bundle, and of a symmetric bilinear form on the bundle.  This then gives a commutative diagram   

\begin{equation}
\label{equation:  path detour SO(2n)}
\begin{tikzcd}
\Loc_{N}^{\tilde{X}^{\circ}, S}(X^{\circ})
 \arrow{rr}{\cong}\arrow{rd}{nonab} & & \Loc_{\bbG_{m}}^{enh, SO(2n)}(\overline{X}^{\circ})\arrow{ld}{nonab_{PD}^{SO(2n)}}
\\
 &  Loc_{SO(2n)}(X) &\\
\end{tikzcd}
\end{equation}
where we define $\Loc_{\bbG_{m}}^{enh, SO(2n)}(\overline{X}^{ne,\circ_{R}})$ to be the fiber product
\[ \Loc_{\bbG_{m}}(\overline{X}^{ne,\circ_{R}})
\times_{\Loc_{\bbG_{m}}(\overline{X}^{ne,\circ_{R}}/i))\times (\bbG_{m}/\bbG_{m})^{\# R_{\rho,1}}}\left(\big\{\cM_{\overline{X}^{ne,\circ_{R}}/i}\big\}\times (-1/\bbG_{m})^{\# R_{\rho,1}}\right)\]
from Proposition \ref{prop: spectral cameral SO(2n+1) with alpha} and we are using the notation of Section \ref{subsubsec: SO(2n)}.

\subsection{Explicit descriptions for defining representations of classical groups}
\label{subsec: explicit descrtions for defining representations of classical groups}


In this section we use spectral covers to provide explicit descriptions of the moduli space of $N$-local systems, with associated $W$-local system $\tilde{X}^{\circ}$ and satisfying the S-monodromy condition, for the classical groups $G=SL(n)$, $GL(n)$, $Sp(2n)$, $SO(2n)$, and $SO(2n+1)$.  Except for the S-monodromy condition this is analogous to calculations in \cite{hitchin1987stable, donagi1993decomposition}.   We can use these descriptions to replace the moduli space of $N$-local systems used in Construction \ref{thm:bulk_map_exists} with moduli spaces of certain $\bbG_{m}$ local systems on spectral covers.


Similarly to the case of the non-embedded spectral cover, we will write $\overline{X}^{\circ_{P}}=ReBl_{\pi^{-1}(P)}(\overline{X})$ for the oriented real blow up of the spectral cover at the preimage of the branch locus, and $\overline{X}^{\circ_{R}}=ReBl_{R_{\rho}}(\overline{X})$ for the oriented blow up along the ramification locus $R_{\rho}$ of $\overline{X}\rightarrow X$.

\subsubsection{The cases $G=SL(n)$, $G=GL(n)$.}

In these cases the Weyl group is $S_{n}$.  Hence by the inclusion of $n-1$ elements into $n$ elements, we get inclusions $j_{i}: S_{n-1}\hookrightarrow S_{n}$ for $i=1,...,n$.  Quotienting by the image of $j_i$ gives $n$ maps 
\[
\pi_{i}: \tilde{X}\rightarrow \overline{X}_{\rho}^{ne}
\]
where $\rho$ is the defining representation.

\begin{proposition}[\cite{donagi1993decomposition}]
There is an isomorphism $\overline{X}_{\rho}^{ne}\cong \overline{X}$ where $\overline{X}_{\rho}^{ne}$ is the non-embedded spectral cover associated to the defining representation of $SL(n)$ or $GL(n)$, and a cameral cover $\tilde{X}$ associated to a point $a \in \cA$ the Hitchin base.  The right hand side $\overline{X}$ is the spectral cover associated to $a\in \cA$ (see Definition \ref{defn: spectral cover}). 
\end{proposition}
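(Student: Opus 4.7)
The plan is to exhibit two successive isomorphisms: first $\overline{X}_\rho^{ne} \cong \tilde{X}/S_{n-1}$ coming from the structure of the weights of the defining representation, and then $\tilde{X}/S_{n-1} \cong \overline{X}$ coming from a direct comparison of invariant rings. The first isomorphism is essentially formal once one identifies $\Omega_\rho$ as a $W$-set; the main content is the second.

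For the first step, recall that for the defining representation $\rho$ of $GL(n)$, the weights are the standard basis $\{e_1,\dots,e_n\}$ of $\ft^\vee$, on which $W = S_n$ acts by permutation. The action is transitive, and the stabilizer of $e_1$ (under the inclusion $j_1: S_{n-1} \hookrightarrow S_n$ fixing the first coordinate) is $S_{n-1}$. Hence $\Omega_\rho \cong W/S_{n-1}$ as a $W$-set. Substituting into Definition \ref{defn: non-embedded spectral cover} gives
\[
\overline{X}_\rho^{ne} \;=\; \tilde X \times_W \Omega_\rho \;\cong\; \tilde X \times_W (W/S_{n-1}) \;\cong\; \tilde X / S_{n-1}.
\]
For $SL(n)$ the same computation applies since the Weyl group is still $S_n$ and the weights of the defining representation still form a single orbit with stabilizer $S_{n-1}$.

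For the second step, I would argue as follows. By the defining pullback square \ref{eq:def_cameral}, $\tilde X = X^c \times_{\ft_\cL \sslash W} \ft_\cL$, so $\tilde X / S_{n-1} \cong X^c \times_{\ft_\cL \sslash W} (\ft_\cL / S_{n-1})$. It suffices therefore to identify $\ft_\cL / S_{n-1}$ with the universal spectral hypersurface. Over a point, $\C[\ft]^{S_{n-1}} = \C[a_1,\sigma_1,\dots,\sigma_n]/(q(a_1))$, where $a_1,\dots,a_n$ are the standard coordinates on $\ft$, $\sigma_i$ are the elementary symmetric polynomials generating $\C[\ft]^W$, and
\[
q(t) \;=\; t^n - \sigma_1 t^{n-1} + \sigma_2 t^{n-2} - \cdots + (-1)^n \sigma_n \;=\; \prod_{i=1}^n (t-a_i).
\]
Thus $\ft / S_{n-1}$ is the closed subscheme of $\bbA^1 \times (\ft \sslash W)$ cut out by $q$. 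Twisting by $\cL$ (with respect to the $\bbG_m$-action of weight $1$ on $\bbA^1$ and weight $i$ on $\sigma_i$) gives $\ft_\cL / S_{n-1}$ as the subscheme of $\mathrm{Tot}(\cL) \times_{X^c} (\ft_\cL \sslash W)$ cut out by the tautological polynomial. Pulling back along $a$ yields the subscheme of $\mathrm{Tot}(\cL)$ cut out by $\lambda^n - a_1(x) \lambda^{n-1} + \cdots + (-1)^n a_n(x)$, which agrees with $\overline X$ from Definition \ref{defn: spectral cover} up to the sign convention $a_i \mapsto (-1)^i a_i$; this rescaling is an automorphism of $\cA_{GL(n)}$ that does not affect the underlying scheme. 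For the $SL(n)$ case one simply restricts to the hyperplane $\sigma_1 = 0$ in $\ft \sslash W$, equivalently imposes $a_1 = 0$ in the polynomial.

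The main (minor) obstacle is keeping track of the conventions: verifying that the $\bbG_m$-twist by $\cL$ is compatible with the grading on $\C[\sigma_1,\dots,\sigma_n]$ (i.e.\ that $\deg \sigma_i = i$), and reconciling the sign convention between the characteristic polynomial $\prod(\lambda - a_i)$ and the polynomial $\lambda^n + a_1 \lambda^{n-1} + \cdots$ appearing in Definition \ref{defn: spectral cover}. Beyond this, the argument is essentially a formal manipulation of quotients and pullbacks and reduces to the invariant-theoretic statement that $\C[\ft]^{S_{n-1}}$ is generated over $\C[\ft]^{S_n}$ by any fixed coordinate, subject to that coordinate being a root of the universal monic polynomial with symmetric function coefficients.
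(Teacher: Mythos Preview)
Your proposal is correct and follows essentially the same approach as the paper: both identify $\overline{X}_\rho^{ne}$ with $\tilde X/S_{n-1}$ via the weight structure, and then use a coordinate projection $\ft\cong\bbC^n\to\bbC$ to identify $\tilde X/S_{n-1}$ with the spectral curve. Your version is considerably more detailed---in particular you spell out the invariant-ring computation $\bbC[\ft]^{S_{n-1}}\cong\bbC[\ft]^{S_n}[a_1]/(q(a_1))$ that justifies the isomorphism---whereas the paper simply asserts that the projection ``identifies $\tilde X/S_{n-1}\cong\overline X$''.
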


\begin{proof}
Follows immediately from identifying the map $\ft\rightarrow \ft\sslash W$ as mapping an ordered set of $n$ complex numbers, to the coefficients of the polynomial having these numbers as roots.  Choose an identification $\ft\cong \bbC^{n}$ that identifies $W$ with $S_{n}$.  A projection $\bbC^{n}\rightarrow \bbC$ induces a map $\ft_{\cL}\rightarrow Tot(\cL)$ that identifies $\overline{X}^{ne}:=\tilde{X}/S_{n-1}\cong \overline{X}$.
\end{proof}

Suppose that $\cL$ is the locally constant system of one dimensional vector spaces associated to a $\bbG_{m}$-local system on $\overline{X}_{\rho}^{\circ_{P}}$.
The product of the local systems $\pi_{i}^{*}\cL$ specifies a $T_{GL(n)}$-local system on $\tilde{X}^{\circ}$ that we denote by $\oplus_{i=1}^{n}\pi_{i}^{*}\cL$. 

\begin{warning}
The map $\cL\mapsto \oplus_{i=1}^{n}\pi_{i}^{*}\cL$ is \emph{not} the correct map to use if one wants to consider the relation between spectral and cameral descriptions of Higgs bundles in the case where ramification is present.
\end{warning}

For $G=GL(n)$ the short exact sequence
\[1\rightarrow T_{GL(n)}\rightarrow N_{GL(n)}\rightarrow W_{GL(n)}\cong S_{n}\rightarrow 1\] splits.  
Furthermore $\oplus_{i=1}^{n}\pi_{i}^{*}\cL$ is clearly $W$-equivariant.  Hence $\oplus_{i=1}^{n}\pi_{i}^{*}\cL$ is a $W$-equivariant $T$-local system on $\tilde{X}^{\circ}$, and hence an $N_{GL(n)}\cong T_{GL(n)}\rtimes S_{n}$ shifted $T_{GL(n)}$ local system on $X^{\circ}$.

Since the above construction can be done in families, and the identification of Theorem \ref{thm:flat_DG} is the inverse of Construction \ref{construction: associated local system of vector bundles on spectral}, we obtain the following result:

\begin{proposition}[Spectral and Cameral descriptions of local systems for $GL(n)$, without S-monodromy condition. cf. \cite{donagi2002gerbe, donagi1993decomposition}]
\label{prop: spectral cameral GL(n)}
There is an isomorphism of stacks 
\[
\Loc_{N}^{\tilde{X}^{\circ}}(X^{\circ}) \xrightarrow{\cong}  \Loc_{\bbG_{m}}\big(\overline{X}^{\circ_{P}}\big),
\]
where $\overline{X}\cong \overline{X}_{\rho}^{ne}$ is the spectral cover corresponding to the cameral cover $\tilde{X}$.
The morphism is given by Construction \ref{construction: associated local system of vector bundles on spectral}.\footnote{While in general construction \ref{construction: associated local system of vector bundles on spectral} provides a local system of vector spaces, for this representation the vector spaces are one dimensional.}  The inverse is given by 
\[
\cL\mapsto \oplus_{i}\pi_{i}^{*}\cL,
\]
and Theorem \ref{thm:flat_DG}.
\end{proposition}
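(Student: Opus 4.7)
The plan is to exhibit both the forward map $Sp_\rho$ and the proposed inverse $\cL \mapsto \oplus_i \pi_i^*\cL$ at the level of the relevant moduli functors, and then check they are mutually inverse using the data of the defining representation together with Theorem \ref{thm:flat_DG}. Since all constructions are manifestly functorial in the base, it suffices to work with geometric points; working in families then upgrades the bijection to an isomorphism of stacks.

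First I would make the forward map concrete. For the defining representation $\rho : GL(n) \to GL(V)$, the weight set $\Omega_\rho = \{\omega_1, \dots, \omega_n\}$ forms a single $W = S_n$-orbit, and each weight space $V_{\omega_i}$ is one-dimensional. The non-embedded spectral cover is $\overline{X}^{ne}_\rho = \tilde{X} \times_W \Omega_\rho$, which is identified with the spectral cover $\overline{X}$ via the standard permutation representation of $S_n$. The coproduct $\coprod_{\omega \in \Omega_\rho} V_\omega \to \Omega_\rho$ is an $N$-equivariant line bundle, so Construction \ref{construction: associated local system of vector bundles on spectral} applied to an $N$-local system $\cE_N$ with $\cE_N/T \cong \tilde{X}^\circ$ produces a $\bbG_m$-local system on $\overline{X}^{\circ_P}$. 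This gives $Sp_\rho$.

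Next I would construct the inverse. The quotient maps $\pi_i : \tilde{X}^\circ \to \overline{X}^{\circ_P}$ indexed by $i = 1, \dots, n$ correspond to the $n$ embeddings $j_i : S_{n-1} \hookrightarrow S_n$, and permutation by $w \in W$ interchanges the $\pi_i$ according to $\pi_i \circ w = \pi_{w^{-1}(i)}$. Hence for a $\bbG_m$-local system $\cL$ on $\overline{X}^{\circ_P}$, the sum $\oplus_{i=1}^n \pi_i^*\cL$ is canonically a weakly $W$-equivariant $T_{GL(n)}$-local system on $\tilde{X}^\circ$, with $W$-equivariance realized by the permutation of summands. The key additional step for $GL(n)$ is that the short exact sequence
\[
1 \to T \to N \to W \to 1
\]
splits via the standard permutation matrices $W \hookrightarrow N$, so a genuine $W$-equivariant structure lifts canonically to an $N$-shifted structure as in Definition \ref{defn: N shifted Weakly W equivariant}. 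Applying Theorem \ref{thm:flat_DG} then turns this into an $N$-local system on $X^\circ$ with associated $W$-bundle $\tilde{X}^\circ$.

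Then I would verify the compositions are the identity. Starting from $\cL$ on $\overline{X}^{\circ_P}$, forming $\oplus_i \pi_i^*\cL$, viewing it as an $N$-local system $\cE_N$ on $X^\circ$ via Theorem \ref{thm:flat_DG}, and applying $Sp_\rho$ returns $\cE_N \times_N (\coprod_i V_{\omega_i})$; because the stabilizer of $\omega_i \in \Omega_\rho$ acts on $V_{\omega_i}$ through the projection to the $i$-th coordinate of $T$, this product is canonically $\cL$ back. The other composition is similar: starting with $\cE_N$, the line bundle on $\cE_N/T \times_W \Omega_\rho \cong \overline{X}^{\circ_P}$ pulled back along each $\pi_i$ recovers the $i$-th $T$-weight component of $\cE_N$, and the $N$-shifted/weakly $W$-equivariant structure matches the one produced by the sum over $i$ because of the splitting $W \hookrightarrow N$.

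The step most at risk of subtlety is the identification of $N$-shifted structures on both sides, i.e.\ checking that the permutation action on $\oplus_i \pi_i^*\cL$ produces exactly the $N$-shifting data that Theorem \ref{thm:flat_DG} reads off from the original $N$-local system. This reduces to the splitting of $1 \to T \to N \to W \to 1$ for $GL(n)$ and is really just bookkeeping once the setup is correct. Everything in the argument is natural in the base, so the pointwise bijection between $N$-local systems (with fixed associated cameral cover) and $\bbG_m$-local systems on $\overline{X}^{\circ_P}$ upgrades to an isomorphism of moduli stacks.
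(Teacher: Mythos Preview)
Your proposal is correct and follows essentially the same approach as the paper: both rely on the splitting of $1 \to T_{GL(n)} \to N_{GL(n)} \to S_n \to 1$ to upgrade the manifest $W$-equivariance of $\oplus_i \pi_i^*\cL$ to an $N$-shifted structure, then invoke Theorem \ref{thm:flat_DG} and note that everything works in families. The paper's argument is terser (it simply asserts that Theorem \ref{thm:flat_DG} inverts Construction \ref{construction: associated local system of vector bundles on spectral}), whereas you spell out the verification that the two compositions are the identity, but there is no substantive difference.
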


Before considering the S-monodromy condition at ramification points we will consider the analogous relation for $G=SL(n)$.

\begin{proposition}[Spectral and Cameral descriptions of local systems for $SL(n)$, without S-monodromy condition. cf. \cite{donagi1993decomposition}]
\label{prop: SL(n) without alpha momodromy}
Construction \ref{construction: associated local system of vector bundles on spectral} factors through an isomorphism of stacks 
\[
\Loc_{N_{SL(n)}}^{\tilde{X}^{\circ}}(X^{\circ})\xrightarrow{\cong} \Loc_{\bbG_{m}}\big(\overline{X}^{\circ_{P}}\big)\times_{\Loc_{\bbG_{m}}(X^{\circ})}\{\underline{\bbC}_{X^{\circ}}\}
\]
where the map $\Loc_{\bbG_{m}}(\overline{X}^{\circ_{P}})\rightarrow \Loc_{\bbG_{m}}(X^{\circ})$ corresponds to the map on one dimensional local systems $\cL\mapsto \det(\pi_{*}\cL)=(\pi_{*}\cL)^{\wedge n}$, and $\underline{\bbC}_{X^{\circ}}$ is the constant local system on $X^{\circ}$.
\end{proposition}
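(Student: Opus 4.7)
The plan is to reduce to the $GL(n)$ case of Proposition \ref{prop: spectral cameral GL(n)} by exploiting the short exact sequence of group schemes
\[
1 \to N_{SL(n)} \to N_{GL(n)} \xrightarrow{\det} \mathbb{G}_m \to 1.
\]
Here one uses that the determinant character of $GL(n)$ is Weyl invariant (it is unchanged under permutation of the diagonal entries), so it genuinely factors through a character of $N_{GL(n)}$ with kernel $N_{SL(n)}$. Since $GL(n)$ and $SL(n)$ have the same Weyl group $S_n$ and the same cameral cover associated to a given point of the Hitchin base, applying $\Loc_{(-)}(X^\circ)$ to this sequence produces a Cartesian square
\[
\begin{tikzcd}
\Loc_{N_{SL(n)}}^{\tilde{X}^\circ}(X^\circ) \arrow{r}\arrow{d} & \Loc_{N_{GL(n)}}^{\tilde{X}^\circ}(X^\circ) \arrow{d}{\det_*}\\
\{\underline{\mathbb{C}}_{X^\circ}\} \arrow{r} & \Loc_{\mathbb{G}_m}(X^\circ),
\end{tikzcd}
\]
where $\det_*$ is the map induced by postcomposition with $\det$.

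Next I would invoke Proposition \ref{prop: spectral cameral GL(n)} to identify the upper right corner with $\Loc_{\mathbb{G}_m}(\overline{X}^{\circ_P})$ via Construction \ref{construction: associated local system of vector bundles on spectral}. What remains is to verify that under this isomorphism the vertical map $\det_*$ corresponds precisely to the determinant of pushforward map $\cL \mapsto \det(\pi_*\cL)$. Once this is done, substituting into the Cartesian square above gives the stated fiber product description of $\Loc_{N_{SL(n)}}^{\tilde{X}^\circ}(X^\circ)$, and the fact that $Sp_\rho$ of Construction \ref{construction: associated local system of vector bundles on spectral} factors through this fiber product is automatic from the universal property.

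The main technical point, and in my view the only nontrivial step, is showing the compatibility of $\det_*$ with $\cL \mapsto \det(\pi_*\cL)$. The cleanest route is to unpack Construction \ref{construction: associated local system of vector bundles on spectral} for the defining representation of $GL(n)$: the weights $\Omega_\rho = \{e_1,\dots,e_n\}$ form a single $W$-orbit, the stabilizer of $e_1$ is $S_{n-1}$, and each weight space $V_{e_i}$ is one dimensional. Consequently, for an $N$-local system $\cE_N$ with associated line bundle $\cL = Sp_\rho(\cE_N)$ on $\overline{X}^{\circ_P} = \tilde X^\circ/S_{n-1}$, the induced $GL(n)$-local system $\cE_N \times_{N} \mathbb{C}^n$ is canonically isomorphic to the pushforward $\pi_* \cL$, since both are described as the direct sum over preimages. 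Taking determinants on both sides yields the required compatibility. The remainder of the argument is functorial and works in families, so passes to the moduli stacks.
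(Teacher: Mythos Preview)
Your argument is correct and takes a genuinely different route from the paper. You work directly at the level of $N$-local systems on $X^\circ$: from the short exact sequence $1 \to N_{SL(n)} \to N_{GL(n)} \xrightarrow{\det} \mathbb{G}_m \to 1$ you obtain a Cartesian square identifying $\Loc_{N_{SL(n)}}^{\tilde X^\circ}(X^\circ)$ as the fiber of $\det_*$ over the trivial local system, and then feed in the $GL(n)$ isomorphism of Proposition~\ref{prop: spectral cameral GL(n)} together with the compatibility $\det_*\leftrightarrow\det(\pi_*-)$. The paper instead constructs an explicit inverse by passing through the cameral side: it takes $\cL$ to the $T_{GL(n)}$-local system $\oplus_i \pi_i^*\cL$ on $\tilde X^\circ$, uses the short exact sequence $1\to T_{SL(n)}\to T_{GL(n)}\to\mathbb{G}_m\to 1$ to recognize the trivialization of $\otimes_i\pi_i^*\cL$ as a reduction to $T_{SL(n)}$, and then checks the $N$-shifting. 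Your approach is shorter and avoids the cameral detour; the paper's approach is more hands-on and dovetails with its general framework of $N$-shifted weakly $W$-equivariant $T$-local systems. One small remark: your parenthetical about Weyl invariance of $\det$ is not needed for the existence of the exact sequence (the determinant is already a character of $GL(n)$ and hence of $N_{GL(n)}$), though it is relevant if one wanted to phrase things purely in terms of characters of the torus extending to the normalizer.
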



\begin{proof}
Firstly it is clear that the inclusion $N_{SL(n)}\hookrightarrow N_{GL(n)}$ gives rise to a commutative diagram, which defines the morphism $Sp_{\rho}^{enh}$ in this diagram:
\[
\begin{tikzcd}
\Loc_{N_{GL(n)}}^{\tilde{X}^{\circ}}(X^{\circ}) \arrow{r}{\cong} & \Loc_{\bbG_{m}}\big(\overline{X}^{\circ_{P}}\big)
\\
\Loc_{N_{SL(n)}}^{\tilde{X}^{\circ}}(X^{\circ})\arrow{r}{Sp_{\rho}^{enh}} \arrow{u} & \Loc_{\bbG_{m}}\big(\overline{X}^{\circ_{P}}\big)\times_{\Loc_{\bbG_{m}}(X^{\circ})}\{\bbC_{X}|_{X^{\circ}}\}\arrow{u}
\end{tikzcd}
\]
We have used that $\wedge^{n}\bbC^{n}$ is trivial as a representation of $G$, and hence in particular the induced $S_{n}$-action is trivial.  

It is clear that $Sp_{\rho}$ factors through $Sp_{\rho}^{enh}$.  It remains to show that the morphism $Sp_{\rho}^{enh}$ is an isomorphism of stacks.  We do this by constructing an inverse, as in the $GL(n)$ case.

Consider applying the map $\cL\rightarrow \oplus_{i=1}^{n}\pi_{i}^{*}\cL$, to a line bundle $\cL$ corresponding to a point of $\Loc_{\bbG_{m}}\big(\overline{X}^{\circ_{P}}\big)\times_{\Loc_{\bbG_{m}}(X^{\circ})}\{\bbC_{X}|_{X^{\circ}}\}$.  In this case we have the extra data of a trivialization $\otimes_{i=1}^{n}\pi_{i}^{*}\cL\xrightarrow{\cong}\underline{\bbC}_{\tilde{X}^{\circ}}$. 

Note that $\otimes_{i=1}^{n}\pi_{i}^{*}\cL$ is the image of $\oplus_{i=1}^n \pi_{i}^{*}\cL$ in the map $\Loc_{T_{GL(n)}}\rightarrow \Loc_{\bbG_{m}}$ induced by the product map $T\cong (\bbG_{m})^{n}\rightarrow \bbG_{m}$.  This fits into a short exact sequence:
\[
1\rightarrow T_{SL(n)}\rightarrow T_{GL(n)}\rightarrow \bbG_{m}\rightarrow 1,
\]
providing an identification:
\[
\Loc_{T_{SL(n)}}(\tilde{X}^{\circ})\cong \Loc_{T_{GL(n)}}(\tilde{X}^{\circ})\times_{\Loc_{\bbG_{m}}(\tilde{X}^{\circ})}\{\underline{\bbC}_{\tilde{X}^{\circ}}\}.
\]
Hence $\oplus_{i=1}^{n}\pi_{i}^{*}\cL$ determines a $T_{SL(n)}$-local system $\cE_{T_{SL(n)}}\rightarrow \tilde{X}$.  Furthermore if we denote $\cE_{T_{GL(n)}}$ the associated $T_{GL(n)}$ local system (ie. $\oplus_{i=1}^{n}\pi_{i}^{*}\cL$), we have a commutative diagram:
\[
\begin{tikzcd}
N_{SL(n)}\arrow{r}\arrow{d} & N_{GL(n)}\arrow{d}\\
\Aut_{X^{\circ}}(\cE_{T_{SL(n)}})\arrow{r} & \Aut_{X^{\circ}}(\cE_{T_{GL(n)}}),
\end{tikzcd}
\]
formed by the factoring of the map 
\[N_{SL(n)}\rightarrow N_{GL(n)}\rightarrow \Aut_{X}(\cE_{T_{GL(n)}}).\]
This provides the inverse of the map of Construction \ref{construction: associated local system of vector bundles on spectral}, which completes the proof.
\end{proof}

We now consider the effect of the S-monodromy condition. On the spectral curve side, this gives local systems which extend from $\bar X^{\circ_{P}}$ to $\bar X^{\circ_{R}}$.

\begin{proposition}[Spectral and Cameral descriptions of local systems for $SL(n)$ with S-monodromy condition.]
\label{proposition : Reason for alpha monodromy condition}
Let $\tilde{X}\rightarrow X$ be a smooth cameral cover. 

There is an isomorphism of stacks
\[
\Loc_{N_{SL(n)}}^{\tilde{X}^{\circ},S}(X^{\circ})\xrightarrow{\cong} \Loc_{\bbG_{m}}(\overline{X}^{\circ_{R}})\times_{\Loc_{\bbG_{m}}(X^{\circ})}\{\ \underline{\bbC}_{X^{\circ}}\}
\]
provided by Construction \ref{construction: associated local system of vector bundles on spectral}.
\end{proposition}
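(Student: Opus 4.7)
The plan is to leverage Proposition \ref{prop: SL(n) without alpha momodromy}, which supplies the isomorphism
\[
\Loc_{N_{SL(n)}}^{\tilde X^{\circ}}(X^{\circ}) \xrightarrow{\cong} \Loc_{\bbG_m}(\overline X^{\circ_P}) \times_{\Loc_{\bbG_m}(X^{\circ})} \{\underline{\bbC}_{X^{\circ}}\}
\]
without the $S$-monodromy condition, and then to identify how restricting to $\Loc_{N_{SL(n)}}^{\tilde X^{\circ}, S}$ on the left corresponds on the right to those $\bbG_m$-local systems on $\overline X^{\circ_P}$ which extend across the non-ramification preimages of $P$ to yield a local system on $\overline X^{\circ_R}$. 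Because $\overline X^{\circ_P}$ differs from $\overline X^{\circ_R}$ only by the addition of boundary circles at the non-ramification preimages of $P$, the natural map $\Loc_{\bbG_m}(\overline X^{\circ_R}) \to \Loc_{\bbG_m}(\overline X^{\circ_P})$ is an inclusion of stacks, cutting out the locus of local systems with trivial monodromy around these additional boundary circles.

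The analysis is local around each $p \in P$, since both the $S$-monodromy condition (Definition \ref{def:alpha_monodromy_N}) and the extendability condition are conditions on individual boundary circles. Fixing a trivialization $\phi_p : \tilde X^\circ|_{x_p} \cong W = S_n$ identifies the monodromy of $\tilde X^{\circ}$ around $S^1_p$ with a transposition $s_\alpha$; the preimages of $p$ under $\pi : \overline X \to X$ then correspond to orbits of $\langle s_\alpha \rangle$ on $\{1,\dots,n\}$, namely the ramification point $r$ (a single 2-cycle) and $n-2$ non-ramification preimages (the fixed points). Under the correspondence $\cL \mapsto \oplus_i \pi_i^*\cL$ of Proposition \ref{prop: SL(n) without alpha momodromy}, the monodromy of the associated $N$-local system around $S^1_p$, in a basis compatible with this orbit decomposition, is diagonal with entries $a_j$ on the $n-2$ fixed indices (where $a_j$ is the monodromy of $\cL$ at the corresponding non-ramification preimage) and acts as an off-diagonal block $\bigl(\begin{smallmatrix} 0 & b \\ c & 0 \end{smallmatrix}\bigr)$ on the swapped pair, with $bc$ equal to the monodromy of $\cL$ around $r$.

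Using the explicit description of $n_\alpha$ and $T^{\alpha}$ in the defining representation -- $n_\alpha$ is the identity on the $n-2$ fixed indices and equals $\bigl(\begin{smallmatrix} 0 & 1 \\ -1 & 0 \end{smallmatrix}\bigr)$ on the swapped pair, while $T^\alpha = \ker(\alpha)$ acts trivially on the swapped pair -- the $S$-monodromy condition (monodromy in $\bigcup_{\alpha \in \Lambda_p} n_\alpha T^\alpha$ up to conjugation by $N$) reduces to the requirement $a_j = 1$ for every non-ramification preimage of $p$, which is exactly the extendability of $\cL$ across these points. The trivialization of the pushforward determinant on the right-hand side of Proposition \ref{prop: SL(n) without alpha momodromy} then forces $-bc = 1$, i.e.\ the monodromy around $r$ equals $-1$, and this is precisely the remaining part of the $S$-monodromy coset condition on the $2 \times 2$ block. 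The map is realized, at the level of universal families, by restricting Construction \ref{construction: associated local system of vector bundles on spectral} and checking that the analysis above extends to arbitrary base schemes $S$. The main obstacle is bookkeeping: making sure that the local matching is independent of the choice of trivialization $\phi_p$ (equivariance under conjugation by $N$) and that the comparison is an isomorphism of stacks rather than of groupoids of points, which amounts to observing that both sides are cut out from the spaces in Proposition \ref{prop: SL(n) without alpha momodromy} by the same fiber product over the scheme of monodromies at the boundary circles.
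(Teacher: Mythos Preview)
Your overall strategy is correct and matches the paper's proof: both reduce to a local analysis of the monodromy around each $p\in P$, identifying the $S$-monodromy condition with the explicit matrix shape
\[
\left(\begin{array}{cc|c} 0 & a & 0\\ -1/a & 0 & 0 \\ \hline 0 & 0 & Id_{n-2}\end{array}\right)
\]
in a basis indexed by the sheets of the spectral cover, and observing that this is precisely the condition that $\cL$ extends across the non-ramification preimages of $p$. Your version is more detailed than the paper's (which simply writes down this matrix and declares the identification), and your remarks about independence of $\phi_p$ and the stack-level fiber product are welcome additions.

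However, your description of $T^\alpha$ is wrong, and the error is not self-correcting. In the paper's conventions $T_\alpha = \exp(\operatorname{Span} h_\alpha) = I_\alpha(T_{SL(2)})$, which in the defining representation of $SL(n)$ is $\{\operatorname{diag}(t,t^{-1},1,\dots,1)\}$: it acts \emph{nontrivially} on the swapped pair and \emph{trivially} on the $n-2$ fixed indices. It is certainly not $\ker(\alpha)$, which is the complementary torus $T_{\Ker(\alpha)}$ and acts nontrivially on all coordinates. Your sentence ``$T^\alpha=\ker(\alpha)$ acts trivially on the swapped pair'' is therefore doubly incorrect, and if taken literally would yield the opposite conclusion (arbitrary $a_j$ on the fixed indices). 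The correct statement --- that $T_\alpha$ is trivial on the fixed indices --- is exactly what forces $a_j=1$ and makes your conclusion go through. Once this is fixed, the rest of your argument (including the observation that, within $SL(n)$, the determinant constraint together with $a_j=1$ automatically gives $bc=-1$, matching the $2\times 2$ block of $n_\alpha T_\alpha$) is fine.
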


\begin{proof}
Consider a boundary circle $S^{1}_{p}$, for $p\in P$.  We then have that on the right hand side this is equivalent to restricting to $N$-local systems whose monodromy is of the form:
\[
\left(\begin{array}{cc|c} 0 & a & 0\\ -1/a & 0 & 0 \\ \hline 0 & 0 & Id_{n-2}\end{array}\right)
\]
where we are using a basis for the fiber at $p$ corresponding to the branches of the spectral cover.

The non-identity part of this corresponds to the two branches which are ramified above $p\in P$.  By the assumption that the cameral cover is smooth there is precisely one pair of branches which are ramified.

This restriction is the S-monodromy condition, and was in fact the motivation for the definition of the S-monodromy condition.
\end{proof}

\begin{proposition}[Spectral and Cameral descriptions of local systems for $GL(n)$ with S-monodromy condition]
\label{prop: alpha monodromy GL(n)}
Construction \ref{construction: associated local system of vector bundles on spectral} gives an isomorphism of stacks:
\[
\Loc_{N_{GL(n)}}^{\tilde{X}^{\circ}, S}(X^{\circ})\xrightarrow{\cong}\Loc_{\bbG_{m}}(\overline{X}^{\circ_{R}})\times_{(\bbG_{m}/\bbG_{m})^{\# R_{\rho}}}(\{-1\}/\bbG_{m})^{\# R_{\rho}},
\]
where the map 
\[
\Loc_{\bbG_{m}}(\overline{X}^{\circ_{R}})\rightarrow (\bbG_{m}/\bbG_{m})^{\# R_{\rho}}
\] is given by restricting to the preimage of the ramification divisor $R_{\rho}$ under the map $\overline{X}^{\circ_{R}}\rightarrow \overline{X}$.
\end{proposition}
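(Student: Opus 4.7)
The plan is to build on Proposition \ref{prop: spectral cameral GL(n)}, which provides the isomorphism $Sp_\rho : \Loc_N^{\tilde X^\circ}(X^\circ) \xrightarrow{\cong} \Loc_{\bbG_m}(\overline X^{\circ_P})$ in the absence of any monodromy restriction, and to show that under this correspondence the S-monodromy condition on the left hand side translates exactly into the condition defining the fiber product on the right hand side. First I would fix a branch point $p\in P$ and the associated root $\alpha\in\Phi$ for which the monodromy of $\tilde X^\circ$ around $S^1_p$ is $s_\alpha$; since the cameral cover is smooth this reflection swaps exactly two branches of $\overline X$ over $p$, so $\pi^{-1}(p)$ consists of a single ramification point $r \in R_\rho$ together with $n-2$ simple preimages $x_3,\dots,x_n$. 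Correspondingly, the preimage of $S^1_p$ in $\overline X^{\circ_P}$ is a disjoint union of $n-2$ boundary circles, one above each $x_i$, together with a single boundary circle above $r$ which double covers $S^1_p$.

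Next I would analyze what the S-monodromy condition imposes on the line bundle $\cL=Sp_\rho(\cE_N)$ around these boundary circles, using the explicit matrix form recorded already in the proof of Proposition \ref{proposition : Reason for alpha monodromy condition}. The monodromy of $\cE_N$ in the defining representation is
\[
M \;=\; \left(\begin{array}{cc|c} 0 & a & 0\\ -1/a & 0 & 0 \\ \hline 0 & 0 & \mathrm{Id}_{n-2}\end{array}\right),
\]
where $a\in \bbG_m$ parametrizes $n_\alpha T^\alpha$. The identity block shows that $\cL$ has trivial monodromy around each of the $n-2$ boundary circles over $x_3,\dots,x_n$; hence $\cL$ extends uniquely across these circles to a $\bbG_m$-local system on the space obtained from $\overline X^{\circ_P}$ by filling in those boundary circles, which is precisely $\overline X^{\circ_R}$. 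The double-cover circle above $r$ becomes $S^1_r$ in $\overline X^{\circ_R}$, and a loop around $S^1_r$ covers $S^1_p$ twice, so the monodromy of $\cL$ around $S^1_r$ is the upper-left block of $M^2$, which is $-\mathrm{Id}$. Thus the pushforward lands in the fiber product $\Loc_{\bbG_m}(\overline X^{\circ_R})\times_{(\bbG_m/\bbG_m)^{\# R_\rho}} (\{-1\}/\bbG_m)^{\# R_\rho}$.

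For the reverse direction, I would start with a $\bbG_m$-local system on $\overline X^{\circ_R}$ with monodromy $-1$ around each $S^1_r$, pull it back to $\overline X^{\circ_P}$ (automatically giving a line bundle with trivial monodromy on the boundary circles over unramified preimages), and apply the inverse construction $\cL \mapsto \bigoplus_i \pi_i^*\cL$ from Proposition \ref{prop: spectral cameral GL(n)}. A direct computation, reversing the matrix argument above, shows that the resulting $N$-local system has monodromy of the form $n_\alpha t_\alpha$ around each $S^1_p$, i.e. satisfies the S-monodromy condition. The two constructions are inverse on the underlying stacks by Proposition \ref{prop: spectral cameral GL(n)}, and the restriction to the respective substacks is compatible by the computations just outlined, so the equivalence descends to the desired isomorphism. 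The main technical point — that triviality of monodromy on the circles above unramified preimages is equivalent to $\cL$ extending from $\overline X^{\circ_P}$ to $\overline X^{\circ_R}$ — is an immediate consequence of the fact that oriented real blowups induce homotopy equivalences away from the blown-up points. Everything is algebraic in the parameter $a$, so the argument works in families and produces an isomorphism of stacks rather than just a bijection on points; I don't anticipate a serious obstacle beyond the careful bookkeeping of boundary circles and their degree-$2$ covering relations.
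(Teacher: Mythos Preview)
Your proposal is correct and follows essentially the same approach as the paper: both reduce the S-monodromy condition to the explicit $2\times 2$ block analysis from the $SL(n)$ case (Proposition~\ref{proposition : Reason for alpha monodromy condition}), observing that the identity block forces extension from $\overline X^{\circ_P}$ to $\overline X^{\circ_R}$ while the off-diagonal block squares to $-\mathrm{Id}$. The paper phrases the key computation as the group-theoretic characterization $i(N_{SL(2)}\setminus T_{SL(2)}) = \{a\in N_{GL(2)}\setminus T_{GL(2)} : a^2 = -\mathrm{Id}\}$ and then invokes Proposition~\ref{proposition : Reason for alpha monodromy condition}, whereas you spell out the boundary-circle geometry more explicitly; these are the same argument at slightly different levels of detail.
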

 
 \begin{proof}
Under the map $N_{SL(2)}\xhookrightarrow{i} N_{GL(2)}$, we can identify the image in $N_{GL(2)}$ of the non-identity coset of $T_{SL(2)}$ in $N_{SL(2)}$ as elements of the non-identity coset of $T_{GL(2)}$ in $N_{GL(2)}$ whose square is $-Id$.
 
That is to say
\[i(N_{SL(2)}\backslash T_{SL(2)})=\{a\in N_{GL(2)}\backslash T_{GL(2)}|a^{2}=-Id\},\]
where in both cases the symbol ``$\backslash$'' refers to the complement rather than the quotient.

The result follows by applying this to the preimage of the ramification divisor $R_{\rho}$ under the map $\overline{X}^{\circ_{R}}\rightarrow \overline{X}$, and applying the considerations of the proof of Proposition \ref{proposition : Reason for alpha monodromy condition}.
 \end{proof}
 
\subsubsection{The case $G=Sp(2n)$}

Here $T\cong \bbG_{m}^{n}$, and $W\cong S_{n}\ltimes \{\pm 1\}^{n}$. 

Firstly we can identify the non-embedded spectral covers and the spectral cover in this case.  The weights of the defining representation consist of a single $W$-orbit.

\begin{proposition}[Non-embedded and embedded spectral covers for $Sp(2n)$, \cite{donagi1993decomposition}]
\label{prop: Non embedded and embedded spectral covers for Sp2n}
There is an isomorphism $\overline{X}_{\rho}^{ne}\cong \overline{X}$ where $\overline{X}_{\rho}^{ne}$ is the non-embedded spectral cover associated to the defining representation of $Sp(2n)$, and a cameral cover $\tilde{X}$ associated to a point $a \in \cA$ the Hitchin base.  The right hand side $\overline{X}$ is simply the spectral cover associated to $a\in \cA$.
\end{proposition}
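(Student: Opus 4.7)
The plan is to mimic closely the construction used in the $GL(n)$ case (the proposition immediately preceding Proposition \ref{prop: Non embedded and embedded spectral covers for Sp2n}). First I would fix a standard identification $\ft \cong \bbC^n$ which identifies $W = S_n \ltimes \{\pm 1\}^n$ with permutations and sign flips of coordinates, and identifies $\ft^\vee$ with $\bbC^n$ so that the weights of the defining representation on $\bbC^{2n}$ are $\Omega_\rho = \{\pm e_i\}_{i=1}^n$, forming a single $W$-orbit of size $2n$.

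The key object is the evaluation map $\ft \times \Omega_\rho \to \bbC$, $(t,\omega) \mapsto \omega(t)$. I would verify directly on the generators of $W$ that this map is invariant under the diagonal $W$-action: for $\sigma \in S_n$, $\sigma \cdot(t,e_i) = (\sigma(t),e_{\sigma(i)})$, and evaluation gives $t_i$ either way; for the sign flip $\epsilon_j$, if $i\neq j$ nothing changes, and if $i=j$ both the evaluation point and the weight flip sign so the product is preserved. Hence the evaluation factors through $\ft \times_W \Omega_\rho$, and twisting by $\cL$ yields a map $\ft_\cL \times_W \Omega_\rho \to \mathrm{Tot}(\cL)$, which after pullback along $a : X^c \to \ft_\cL\sslash W$ produces the desired morphism $\overline{X}_\rho^{ne} = \tilde X \times_W \Omega_\rho \to \mathrm{Tot}(\cL)$.

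Next I would show this morphism factors through the spectral cover $\overline{X}\hookrightarrow \mathrm{Tot}(\cL)$. At a point $x\in X$ with $a(x)=[(t_1,\dots,t_n)]\in \ft_\cL\sslash W$, the defining representation realizes an element of $\ft$ as the diagonal matrix $\mathrm{diag}(t_1,\dots,t_n,-t_1,\dots,-t_n)$, whose characteristic polynomial is $\prod_{i=1}^n(\lambda^2 - t_i^2)$. The $2n$ images of $\omega\in \Omega_\rho$ in the fibre are $\{\pm t_i\}$, exactly the roots of this polynomial, so the image lies in $\overline{X}$. This gives a morphism $\Psi : \overline{X}_\rho^{ne} \to \overline{X}$ over $X$, and both schemes are finite of degree $2n$ over $X$. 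Over the dense open subset of $X$ where the eigenvalues $\pm t_i$ are distinct and nonzero (corresponding precisely to the complement of the branch locus), the fibre of $\overline{X}_\rho^{ne}$ is the $W$-orbit $\Omega_\rho$ (of size $2n$) and the fibre of $\overline{X}$ is the set of $2n$ distinct roots, with $\Psi$ visibly a bijection.

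The last step, which I expect to be the main obstacle, is promoting this generic bijection to a scheme-theoretic isomorphism, in particular at branch points of $\pi$. Since we are working under Convention \ref{convention:cameral} (so $a\in\cA^\diamondsuit$ and the cameral cover $\tilde X$ is smooth), $\overline{X}_\rho^{ne} = \tilde X \times_W \Omega_\rho$ is the quotient of a smooth scheme by a free action of the stabilizer $\mathrm{Stab}_W(e_1) \cong S_{n-1}\ltimes \{\pm 1\}^{n-1}$ on $\tilde X$ and hence is reduced; the spectral cover $\overline{X}$ is reduced under these hypotheses as well. Since $\Psi$ is a finite morphism of reduced schemes of the same degree over $X$ which is an isomorphism over a dense open, I would conclude $\Psi$ is an isomorphism. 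To make this last step clean I would construct an inverse locally in the classical topology: away from the branch locus, a local section of $\overline{X}\to X$ picks out a labelled eigenvalue $\pm t_i$, hence a labelling of a branch of $\tilde X$ by an element of $\Omega_\rho$, which defines a local section of $\overline{X}_\rho^{ne}$; these extend across the branch locus by reducedness and finiteness, giving the inverse map globally.
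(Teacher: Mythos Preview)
Your approach is essentially the same as the paper's: the paper identifies $\ft\cong\bbC^n$ so that $W\cong S_n\ltimes\{\pm1\}^n$, and then uses projection onto one coordinate $\ft_\cL\to\mathrm{Tot}(\cL)$ (equivalently, evaluation at a chosen weight, which factors through $\tilde X/\mathrm{Stab}_W(e_1)=\overline{X}_\rho^{ne}$) to produce the identification with $\overline X$. Your write-up simply spells out the $W$-invariance of the evaluation pairing and the verification that the resulting finite degree-$2n$ map is an isomorphism, details the paper omits in its one-line proof.
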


\begin{proof}
Identify $\ft\cong \bbC^{n}$ in a way that identifies the Weyl group with $S_{n}\ltimes \{\pm 1\}^{n}$.  Projecting onto one copy of $\bbC$ gives a map $\ft_{\cL}\rightarrow Tot(\cL)$ that provides the required identification.
\end{proof}

There is an involution $i:\overline{X}\rightarrow \overline{X}$, coming from the involution $x\mapsto -x$ in $\bbC$.  The induced involution of $Tot(\cL)$ preserves the spectral curve $\overline{X}\hookrightarrow Tot(\cL)$ because if $\lambda$ is an eigenvalue of $Sp(2n)$, $-\lambda$ is also an eigenvalue.  This induces involutions of $\overline{X}^{\circ_{P}}$ and $\overline{X}^{\circ_{R}}$ that we also denote by $i$.

We can consider the quotient map $\overline{X}^{\circ_{P}}\xrightarrow{p_{i}} \overline{X}^{\circ_{P}}/i$.

\begin{proposition}[Spectral and Cameral Descriptions for $Sp(2n)$ without S-monodromy condition. Cf. \cite{hitchin1987stable, donagi1993decomposition}]
\label{prop: spectral cameral SP(2n) no alpha}
Construction \ref{construction: associated local system of vector bundles on spectral} can be enhanced to an isomorphism of stacks
\begin{equation}
\label{equation:  map for Sp(2n) no boundary}
\begin{tikzcd}
\Loc_{N}^{\tilde{X}^{\circ}}(X^{\circ})
\arrow{d}{\cong}
\\
\Loc_{\bbG_{m}}\big(\overline{X}^{\circ_{P}}\big)\times_{\Loc_{\bbG_{m}}(\ \overline{X}^{\circ_{P}}/i)}\{\underline{\bbC}_{\overline{X}^{\circ_{P}}/i}\},
\end{tikzcd}
\end{equation}
where the map \[
\Loc_{\bbG_{m}}(\overline{X}^{\circ_{P}})\rightarrow \Loc_{\bbG_{m}}(\overline{X}^{\circ_{P}}/i)
\]
corresponds to the map $\cL\mapsto \det((p_{i})_{*}\cL)$ (again thinking of the local system of one dimensional vector spaces associated to a $\bbG_{m}$ local system). 
\end{proposition}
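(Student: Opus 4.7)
The plan is to adapt the argument used in Propositions \ref{prop: spectral cameral GL(n)} and \ref{prop: SL(n) without alpha momodromy}, by constructing an explicit inverse to the forward map coming from Construction \ref{construction: associated local system of vector bundles on spectral}.

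First I would verify that the forward map lands in the claimed fiber product. For an $N_{Sp(2n)}$-local system $\cE_N$, the weights of the defining representation $V$ form a single $W$-orbit and each weight space is one dimensional, so $Sp_\rho(\cE_N)$ is a $\bbG_m$-local system $\cL$ on $\overline X^{\circ_P} \cong \overline X^{ne,\circ_P}_\rho$. The symplectic form on $V$ restricts to a non-degenerate pairing $V_{e_j} \otimes V_{-e_j} \to \bbC$, and because the involution $i$ on $\overline X$ swaps the preimages of $e_j$ and $-e_j$ under $\tilde X \to \overline X$, this $T$-equivariant pairing descends to a non-degenerate pairing $\cL \otimes i^* \cL \cong \underline{\bbC}_{\overline X^{\circ_P}}$. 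Since $(p_i)_* \cL$ is generically a rank two local system on $\overline X/i$ whose determinant is the norm $\cL \otimes i^* \cL$, this pairing is exactly the datum of a trivialization of $\det((p_i)_* \cL)$ on $\overline X^{\circ_P}/i$.

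Next I would construct the inverse. Given $\cL$ together with a trivialization $\phi : \cL \otimes i^* \cL \cong \underline{\bbC}$, I choose one weight $e_j$ from each pair $\{\pm e_j\}$, obtaining projections $\pi_{e_j} : \tilde X^\circ \to \overline X^{\circ_P}$, and form the $T_{Sp(2n)}$-local system $\cE_T = \bigoplus_{j=1}^n \pi_{e_j}^* \cL$ on $\tilde X^\circ$. To equip $\cE_T$ with $N_{Sp(2n)}$-shifted weak $W$-equivariance in the sense of Definition \ref{defn: N shifted Weakly W equivariant}, note that $S_n \subset W$ permutes the weights $e_j$, so the $S_n$-action is the obvious permutation of summands, just as in the $GL(n)$ case. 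For the sign-flip generators $s_{e_j}$, which act on $T_{Sp(2n)}$ by inverting the $j$-th coordinate and send $e_j$ to $-e_j$, the identity $\pi_{-e_j} = i \circ \pi_{e_j}$ and the pulled-back trivialization $\pi_{e_j}^*\phi$ yield an isomorphism $s_{e_j}^*(\pi_{e_j}^* \cL) = \pi_{-e_j}^* \cL \cong (\pi_{e_j}^* \cL)^{-1}$, which is precisely the compatibility required. An invocation of Theorem \ref{thm:flat_DG} converts $(\cE_T, \text{equivariance})$ back into an object of $\Loc_N^{\tilde X^\circ}(X^\circ)$.

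Finally I would verify that the two constructions are mutually inverse. In one direction, starting from $\cE_N$, forming $\cL$ on $\overline X$ and then the sum $\bigoplus_j \pi_{e_j}^* \cL$ simply reassembles the weight space decomposition of $\cE_T = \cE_N \times_N \coprod_{\omega} V_\omega$ along the sheets of $\tilde X \to \overline X$, recovering $\cE_N$ by Theorem \ref{thm:flat_DG}; in the other direction, Construction \ref{construction: associated local system of vector bundles on spectral} applied to the $N$-local system produced above recovers $\cL$ from $\bigoplus_j \pi_{e_j}^* \cL$ via restriction to the appropriate sheet.

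The main obstacle is a careful bookkeeping issue rather than a conceptual one: matching the trivialization of $\det((p_i)_* \cL)$ on $\overline X^{\circ_P}/i$ with the isomorphism $i^* \cL \cong \cL^{-1}$ on $\overline X^{\circ_P}$, particularly in the presence of fixed points of $i$ over $P \subset X$, and then checking that the resulting map $N_{Sp(2n)} \to \Aut_{X^\circ}(\cE_T)$ is a genuine group homomorphism (not merely a map of sets) satisfying the cocycle conditions encoded by Diagram \ref{diag:n_twisted}. The non-degeneracy of $\phi$ is what makes the $s_{e_j}$-equivariance an isomorphism rather than merely a morphism, and the fact that the $n$ factors $\pi_{e_j}^* \cL$ are treated symmetrically under the $S_n$-action ensures compatibility of the sign-flip equivariances with the permutation equivariances in $W = S_n \ltimes \{\pm 1\}^n$.
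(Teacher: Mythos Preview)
Your proposal is correct in outline and follows the same pattern you used for $GL(n)$ and $SL(n)$: build a $T$-local system on the cameral cover, endow it with the right equivariance, and invoke Theorem \ref{thm:flat_DG}. The paper, however, takes a different route for $Sp(2n)$. Rather than pass through the Donagi--Gaitsgory equivalence, it constructs the $N_{Sp(2n)}$-local system directly as a ``frame bundle'': it identifies $N_{Sp(2n)}$ concretely as the group of automorphisms of $A=\coprod_{-n\leq j\leq n,\,j\neq 0}\bbC_{j}$ preserving the pairings $\bbC_{j}\wedge\bbC_{-j}\cong\bbC$ and lying over $W_{Sp(2n)}\subset S_{2n}$, and then defines the inverse as the sheaf $\Hom^{Sp}(A\times X^{\circ},\cL)$ of such framings of $\cL$.

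The trade-off is exactly the obstacle you flagged. For $GL(n)$ the sequence $1\to T\to N\to W\to 1$ splits, so $W$-equivariance suffices and your style of argument is clean. For $Sp(2n)$ the sequence does not split (already for $Sp(2)=SL(2)$), so your approach genuinely requires checking that the isomorphisms you write down assemble into an $N$-shifted structure in the sense of Diagram \ref{diag:n_twisted}, not merely a weak $W$-equivariance. This is doable but fiddly. The paper's frame-bundle construction sidesteps this entirely: the $N_{Sp(2n)}$-action is manifest from the description of $N_{Sp(2n)}$ as an automorphism group, and no cocycle verification is needed. Your approach has the virtue of staying within the cameral-cover formalism and making the role of the trivialization $\phi$ as the source of the sign-flip equivariance transparent; the paper's approach buys you a shorter and more self-contained inverse at the cost of introducing an auxiliary combinatorial model for $N_{Sp(2n)}$.
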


\begin{proof}

We can identify $\tilde{X}^{\circ}\times_{Stab(\{\alpha, -\alpha\})}(V_{\alpha}\wedge V_{-\alpha})\cong \underline{\bbC}_{\overline{X}^{ \circ_{P}}/i}$ (here we consider $\tilde{X}^{\circ}\rightarrow \overline{X}^{\circ_{P}}/i$ as an $Stab(\{\alpha, -\alpha\})$-bundle) because $Stab(\{\alpha, -\alpha\})\subset W$, the stabilizer of the unordered set of the pair of weights $\{\alpha, -\alpha\}$ of the defining representation of $Sp(2n)$, acts trivially on $V_{\alpha}\wedge V_{-\alpha}$.  

It is automatic that construction \ref{construction: associated local system of vector bundles on spectral} factors through a map as in Equation \ref{equation:  map for Sp(2n) no boundary}. 

We now construct an inverse.  Let $\cL\rightarrow \overline{X}^{\circ_{P}}$ be a locally constant line bundle on $\overline{X}^{\circ P}$ corresponding to a point of $\Loc_{\bbG_{m}}\big(\overline{X}^{\circ_{P}}\big)\times_{\Loc_{\bbG_{m}}(\ \overline{X}^{\circ_{P}}/i)}\{\underline{\bbC}_{\overline{X}^{\circ_{P}}/i}\}$.  

Consider the set $A:=\coprod_{-n\leq i\leq n, i\neq 0}\bbC_{i}$ consisting of the disjoint union of $2n$ copies of $\bbC$.  We can identify homomorphisms from $A\rightarrow A$ that consist of;
\begin{itemize}
    \item a permutation $\sigma$ of $\{-n,...,-1,1,...,n\}$,
    \item linear maps $\bbC_{i}\rightarrow \bbC_{\sigma(i)}$ for $i\in\{-n,...,-1,1,...,n\}$,
\end{itemize}
with $N_{GL(2n)}$ -- the normalizer of the torus in $GL(2n)$. 

Consider the isomorphisms $\bbC_{i}\wedge \bbC_{-i}\xrightarrow{\cong} \bbC$ that sends $1\wedge 1$ to $1$ for $i>0$ (and $1\wedge 1\mapsto -1$ for $i<0$). 

Consider the subset of these homomorphisms where:
\begin{itemize}
    \item The permutation satisfies $\sigma\in S_{n}\ltimes \{\pm 1\}^{n}$ (identified as a subgroup of $S_{2n}$ via $W_{Sp(2n)}\hookrightarrow W_{GL(2n)}$).
    \item For all $1\leq i\leq n$ the following diagram commutes:
\[
\begin{tikzcd}
\bbC_{i}\wedge \bbC_{-i} \arrow{r}{\cong}\arrow{d} &  \bbC\arrow{d}{Id}\\
\bbC_{\sigma(i)}\wedge \bbC_{\sigma(-i)} \arrow{r}{\cong} & \bbC
\end{tikzcd}
\]
\end{itemize}
We can identify these with $N_{Sp(2n)}$.

Consider the locally constant sheaf on $X^{\circ}$ associated to $\cL\in \Loc_{\bbG_{m}}\big(\overline{X}^{\circ_{P}}\big)\times_{\Loc_{\bbG_{m}}(\ \overline{X}^{\circ_{P}}/i)}\{\underline{\bbC}_{\overline{X}^{\circ_{P}}/i}\}$ denoted by
$\Hom^{Sp}(A\times X^{\circ},\cL)$, where over a contractible open set $U\subset X^{\circ}$ an element $b\in \Hom^{Sp}(A\times X^{\circ},\cL)$ consists of the data of:
\begin{itemize}
    \item A morphism $b_{1}:\{-n,...,-1,1,...,n\}\times U\rightarrow \overline{X}^{\circ_{P}}|_{U}$ of sheaves of sets over $U$ that is surjective and intertwines $i$ and the involution sending $j\mapsto -j$.  Denote by $b_{1,K}$ the map $\{1,...,n\}\times U\rightarrow (X^{\circ, P}/i)|_{U}$ induced by $b_{1}$
    \item Locally constant isomorphisms $\underline{\bbC_{j}}\xrightarrow{b_{2,j}}\cL|_{b_{1}(j\times U)}$ (considered as sheaves on $U$), such that the following diagram commutes:
\[
\begin{tikzcd}
\cL|_{b_{1}(\{j\}\times U)}\wedge \cL|_{b_{1}(\{-j\}\times U)} \arrow{r}{\cong} & \underline{\bbC}_{\overline{X}^{\circ_{P}}/i}|_{b_{1,K}\{j\}\times U}\\
\underline{\bbC_{j}}_{U}\wedge \underline{\bbC_{-j}}_{U} \arrow{r} \arrow{u}{b_{2,j}\wedge b_{2,-j}} & \underline{\bbC}_{U}\arrow{u}{\cong}
\end{tikzcd}
\]
\end{itemize}

Then $\Hom^{Sp}(A\times X^{\circ}, \cL)$ is a $N_{Sp(2n)}$ local system, where the $N_{Sp(2n)}$ action comes from precomposing with the homomorphisms $A\rightarrow A$ specified above.  Furthermore it is clear that the associated $W$ bundle is $\tilde{X}^{\circ}$, and that the map from $\cL$ to $\Hom^{Sp}(A\times X^{\circ},\cL)$ is an inverse to  that of Equation \ref{equation:  map for Sp(2n) no boundary}.





\end{proof}

We now consider the case where we impose the S-monodromy condition.

\subsubsection{Description of Branch points and associated divisors for $Sp(2n)$}
\label{subsubsection: Describing branch points sp(2n)}

Under the assumption the cameral cover is smooth there are two different types of branch points, in the sense that the local geometry of the spectral cover differs.   The first is on the locus where we have a eigenvalue $\lambda_{i}=0=-\lambda_{i}$.  The other is where we have $\lambda_{i}=\lambda_{j}\neq 0$, for $i\neq j$.  Hence we also have $-\lambda_{i}=-\lambda_{j}$.  Hence there are two ramification points in $\overline{X}$ over the corresponding point $p\in P$ where this behaviour occurs.  


Denote by $R_{\rho, 0}$ the subdivisor of the ramification divisor (note that the ramification divisor is effective) corresponding to the first type of branch point.  Denote by $R_{\rho, 1}$ a reduced subdivisor of the ramification divisor such that $R_{\rho,1}\coprod i(R_{\rho, 1})$ is the subdivisor of the ramification divisor corresponding to branch points of the second type, and such that as sets $R_{\rho,1}\cap i(R_{\rho,1})=\emptyset$.  This means that $R_{\rho}=R_{\rho,0}\coprod R_{\rho,1}\coprod i(R_{\rho,1})$.

\begin{proposition}[Spectral and Cameral Descriptions for $Sp(2n)$ with S-monodromy condition.]
\label{propL Spectral and Cameral descriptions for Sp(2n) with monodromy condition}
For $G=Sp(2n)$ there is an isomorphism of stacks, induced by Construction \ref{construction: associated local system of vector bundles on spectral}:
\[
\begin{tikzcd}
\Loc_{N_{Sp(2n)}}^{\tilde{X}, S}(X^{\circ})
\arrow{d}{\cong}
\\
\Loc_{\bbG_{m}}(\overline{X}^{\circ_{R}})\times_{\Loc_{\bbG_{m}}(\overline{X}^{\circ_{R}}/i)\times (\bbG_{m}/\bbG_{m})^{\# R_{\rho,1}}}
\left(\big\{\underline{\bbC}_{\overline{X}^{\circ_{R}}/i}\big\}\times (\{-1\}/\bbG_{m})^{\# R_{\rho,1}}\right)
\end{tikzcd}
\]
where the map to $(\bbG_{m}/\bbG_{m})^{R_{\rho,1}}$ corresponds to restricting to the preimage of the divisor $R_{\rho,1}$ under the map $\overline{X}^{\circ_{R}}\rightarrow \overline{X}$.
\end{proposition}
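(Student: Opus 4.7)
The plan is to build on Proposition \ref{prop: spectral cameral SP(2n) no alpha} (which gives the isomorphism without the S-monodromy condition) and translate the $S$-monodromy condition into the corresponding restriction on the monodromy of the line bundle on $\overline{X}^{\circ_{R}}$. The structure exactly parallels the proof of Proposition \ref{prop: alpha monodromy GL(n)}, but is more involved because there are two qualitatively different types of branch points, and one of them contributes ramification points fixed by the involution $i$.

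First, I would note that the isomorphism of Proposition \ref{prop: spectral cameral SP(2n) no alpha} extends to a morphism out of $\Loc_{N_{Sp(2n)}}^{\tilde X^\circ, S}(X^\circ)$ simply by imposing local conditions around each branch point, and that by construction such morphisms restrict to isomorphisms between fibers above each $p \in P$. Hence it suffices to verify that the two moduli functors cut out the same substack of $\Loc_{\bbG_m}(\overline{X}^{\circ_P}) \times_{\Loc_{\bbG_m}(\overline{X}^{\circ_P}/i)} \{\underline{\bbC}_{\overline{X}^{\circ_P}/i}\}$ by performing a local analysis at each point $p \in P$. There are two cases, organized by the decomposition $R_\rho = R_{\rho,0} \sqcup R_{\rho,1} \sqcup i(R_{\rho,1})$ from Section \ref{subsubsection: Describing branch points sp(2n)}.

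For the type 2 branch points (contributing $R_{\rho,1} \cup i(R_{\rho,1})$), the local picture is completely analogous to the $SL(n)$ and $GL(n)$ cases: the relevant root $\alpha$ is a short root of the form $\epsilon_i - \epsilon_j$ or $\epsilon_i + \epsilon_j$, the element $I_\alpha$ embeds an $SL(2)$ into $Sp(2n)$ which acts nontrivially on the two pairs of branches labelled by $\pm \lambda_i, \pm \lambda_j$, and the computation of Proposition \ref{prop: alpha monodromy GL(n)} shows that the S-monodromy condition translates into monodromy $-1$ of $\cL$ around the two (swapped) ramification points $r \in R_{\rho,1}$ and $i(r) \in i(R_{\rho,1})$ above $p$. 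The $\det$-triviality condition already built into Proposition \ref{prop: spectral cameral SP(2n) no alpha} gives $m_r \cdot m_{i(r)} = 1$, so specifying monodromy $-1$ at $r \in R_{\rho,1}$ alone is equivalent to specifying it at both $r$ and $i(r)$. This yields exactly the factor $(\{-1\}/\bbG_m)^{\# R_{\rho,1}}$ in the fiber product.

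The key new computation, and the main obstacle, is the type 1 branch points (contributing $R_{\rho,0}$). Here the relevant root is the long root $2\epsilon_i$, and the involution $i$ acts on $\overline{X}$ with a single fixed ramification point above $p$. I would compute directly: $I_{2\epsilon_i}(-\mathrm{Id}_{SL(2)})$ acts as $-\mathrm{Id}$ on the 2-dimensional subspace spanned by the branches $\lambda_i, -\lambda_i$, so the S-monodromy condition forces the monodromy of $\cL$ around the unique boundary circle in $\overline{X}^{\circ_R}$ above $p$ to equal $-1$. I would then show that this is automatically imposed by the triviality of $\det((p_i)_*\cL)$ near $p$, so no extra condition at $R_{\rho,0}$ appears in the statement. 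Concretely: the blowup $\overline{X}^{\circ_R}$ replaces the fixed point by a boundary circle $S^1$ on which $i$ acts freely via rotation by $\pi$. The quotient map on boundary circles is therefore a nontrivial double cover $S^1 \to S^1$, and the monodromy of $(p_i)_*\cL$ around the base circle is, for any trivialization, an off-diagonal matrix $\left(\begin{smallmatrix} 0 & T_2 \\ T_1 & 0 \end{smallmatrix}\right)$ whose determinant equals $-T_1 T_2 = -m$, where $m$ is the monodromy of $\cL$ around the boundary circle on $\overline{X}^{\circ_R}$. Requiring $\det((p_i)_*\cL)$ to be trivial here therefore forces $m = -1$, matching the S-monodromy condition. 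The finally step is to assemble these local identifications into an identification of the two fiber products as functors, which is formal once the local pictures match and the construction of Section \ref{subsubsec: The associated local system} is used to exhibit the map explicitly.
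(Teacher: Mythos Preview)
Your proposal is correct and follows the same two-case analysis as the paper's proof. The treatment of type~2 branch points (contributing $R_{\rho,1}$) is essentially identical: both invoke the $GL(n)$ computation and observe that the determinant constraint makes the condition at $i(r)$ redundant once it is imposed at $r$.

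The only real difference is in the type~1 case ($R_{\rho,0}$). The paper disposes of it in one line by invoking the accidental isomorphism $Sp(2)\cong SL(2)$ and appealing to the $SL(n)$ result (Proposition~\ref{proposition : Reason for alpha monodromy condition}), where the $\det$-condition on the pushforward to $X^\circ$ already encodes the $S$-monodromy condition. You instead carry out the direct monodromy computation on the blown-up boundary circle: the involution acts as rotation by $\pi$, the quotient map is a nontrivial double cover, and the determinant of the pushforward picks up the sign $-m$. This is more explicit and avoids leaning on the low-rank coincidence, at the cost of being longer; the paper's route is terser but leaves the reader to unpack why the $SL(2)$ case is the correct local model. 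Both arguments are valid and yield the same conclusion.
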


\begin{proof}
By considering the $SL(2)\cong Sp(2)$ case we see that the monodromy around a ramification point corresponding to the first type of branch point (corresponding to the divisor $R_{\rho,0}$) is automatically satisfied, and does not need to be imposed as is done in Proposition \ref{proposition : Reason for alpha monodromy condition}. 

In the case of branch points of the second type, restricting to the one of the two ramification points in the preimage which is in $R_{\rho,1}$ we see that we need to impose that the monodromy around this point is $-1$, for exactly the same reasons as in the $GL(n)$ case (Proposition \ref{prop: alpha monodromy GL(n)}).  It is clear that if we impose this condition as in the proposition at $r\in R_{\rho, 1}$, we automatically get the imposition of this condition at $i(r)$.  The result follows.
\end{proof}

\subsubsection{The case $G=SO(2n)$}
\label{subsubsec: SO(2n)}Recall that for $G=SO(2n)$, a maximal torus is $T\cong \bbG_{m}^{n}$, and the Weyl group is $W\cong S_{n}\ltimes H_{n}$, where $H_{n}$ is the kernel in the short exact sequence
\[1\rightarrow H_{n}\rightarrow \{\pm 1\}^{n}\xrightarrow{p} \{\pm 1\}\rightarrow 1,\]
where the map $p$ corresponds to taking the product.

The spectral cover and the embedded spectral cover are here different \cite{donagi1993decomposition} (cf. \cite{hitchin1987stable}).  The underlying reason is that the coefficients of the characteristic polynomial do not give a basis of $SO(2n)$-invariant polynomials on $\mathfrak{so}(2n)$.  One can resolve this by replacing the determinant with the Pfaffian, to get a basis of invariant polynomials.

\begin{proposition}[Non embedded and embedded spectral covers for $SO(2n)$ \cite{donagi1993decomposition}, \cite{hitchin1987stable}]
\label{prop: spectral covers for SO(2n)}
For $G=SO(2n)$ and a smooth cameral cover $\tilde{X}$, the spectral cover $\overline{X}$ associated to the same point in the Hitchin base has singularities on the intersection of the zero section of $Tot(\cL)$ and the vanishing locus of the Pfaffian.

There is a map $\overline{X}^{ne}\rightarrow \overline{X}$ which corresponds to the normalization of these singularities (at the intersection of $\overline{X}$ with the zero section of $\text{Tot}(\cL)$). 

\end{proposition}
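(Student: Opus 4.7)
The strategy is to build the morphism $\overline{X}_{\rho}^{ne} \to \overline{X}$ explicitly, verify it is finite and birational, locate the singular locus of $\overline{X}$, and then appeal to smoothness of $\overline{X}_{\rho}^{ne}$ and the universal property of normalization. Throughout, I use the $W$-equivariant identification $\ft \cong \bbC^n$ sending $W$ to $S_n \ltimes H_n$ and the weights of the defining representation to $\Omega_\rho = \{\pm e_i\}_{i=1}^n$, which forms a single $W$-orbit with $W_{e_1} = S_{n-1} \ltimes H_{n-1}$, so that $\overline{X}_\rho^{ne} = \tilde{X}/W_{e_1}$ is finite of degree $2n$ over $X$.

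First I would construct the map. The projection $e_1: \ft \to \bbC$ induces $\ft_\cL \to \mathrm{Tot}(\cL)$, and composing with $\tilde{a}: \tilde{X} \to \ft_\cL$ gives a $W_{e_1}$-invariant morphism $\tilde{X} \to \mathrm{Tot}(\cL)$, which descends to $\overline{X}_\rho^{ne} \to \mathrm{Tot}(\cL)$. Its image lies in the spectral cover because, along the fibers of $\tilde{X} \to X$, the characteristic polynomial of the Higgs field in the defining representation factors as $\prod_{i=1}^n (\lambda^2 - \lambda_i^2)$ where $\lambda_i = e_i \circ \tilde{a}$, so every $\pm \lambda_i(\tilde{x})$ is a zero of the polynomial by construction. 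Since both covers have degree $2n$ over $X$, the map has generic degree one, hence is finite and birational.

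Next I would analyze where $\overline{X}$ can fail to be smooth. Because the eigenvalues come in pairs $\pm \lambda_i$, the characteristic polynomial has the form $P(\lambda, z) = \lambda^{2n} + c_{2n-2}\lambda^{2n-2} + \cdots + c_2 \lambda^2 \pm Pf^2$ with even powers of $\lambda$ only. Away from the zero section, the distinct-roots condition on the $\pm \lambda_i$ is guaranteed by smoothness of the cameral cover (a coincidence of two nonzero eigenvalues would force a root $\pm e_i \pm e_j$ to vanish, contradicting $a \in \cA^\diamondsuit$), so $\overline{X}$ is smooth off the zero section. On the zero section, $\lambda = 0$ is a root of $P(\lambda, z)$ iff $Pf = 0$, confining the singular locus to the intersection of the zero section with the Pfaffian vanishing locus. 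At a point where $Pf$ vanishes to order one (the generic case in $\cA^\diamondsuit$), Weierstrass preparation combined with the evenness of $P$ gives $P(\lambda, z) = (\lambda^2 + a(z))\,U(\lambda, z)$ with $U(0,0) \neq 0$ and $a(z) = \pm Pf(z)^2 / U(0,z)$ vanishing to order two. This is the local equation $\lambda^2 = z^2 h(z)$ of an ordinary node.

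Third, I would check $\overline{X}_\rho^{ne}$ is smooth. Using $\overline{X}_\rho^{ne} = \tilde{X}/W_{e_1}$, smoothness reduces to the statement that each stabilizer of a point $\tilde{x} \in \tilde{X}$ under $W_{e_1}$ acts on $T_{\tilde{x}}\tilde{X}$ in such a way that the quotient is smooth. Away from ramification of $\tilde{X} \to X$, the action of $W_{e_1}$ is free, so $\overline{X}_\rho^{ne} \to X$ is étale there, hence smooth. At a ramification point $\tilde{x}$, Convention \ref{convention:cameral} gives stabilizer $\{1, s_\alpha\}$ for a unique root $\alpha = \pm e_i \pm e_j$; the intersection of this stabilizer with $W_{e_1}$ is either trivial (if $\alpha$ involves $e_1$) or again $\{1,s_\alpha\}$ (if $\alpha$ involves only $e_i, e_j$ with $i,j \geq 2$). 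In both cases the local quotient of a smooth curve by either $\{1\}$ or a single reflection $\bbZ/2$ is smooth, giving smoothness of $\overline{X}_\rho^{ne}$ everywhere.

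Finally, the morphism $\overline{X}_\rho^{ne} \to \overline{X}$ is finite, birational, and has smooth (hence normal) source. By the universal property of normalization it factors through the normalization map $\overline{X}^{\text{nor}} \to \overline{X}$, and the induced morphism $\overline{X}_\rho^{ne} \to \overline{X}^{\text{nor}}$ is a finite birational map between smooth curves, hence an isomorphism. The main obstacle I anticipate is the careful local analysis at the intersection of the zero section with the Pfaffian vanishing locus: one must verify that for $a \in \cA^\diamondsuit$ the Pfaffian indeed vanishes transversely (the genericity behind the node computation) and that the Weierstrass preparation produces the expected even factor $\lambda^2 + a(z)$; a secondary point of care is the quotient analysis at ramification points where the stabilizer is contained in $W_{e_1}$, where one must check that the single reflection quotient is smooth rather than a worse cyclic quotient.
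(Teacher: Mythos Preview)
Your approach is essentially the same as the paper's: construct the map via the weight projection $e_1: \ft \to \bbC$, factor through $\tilde{X}/W_{e_1}$, identify the singular locus via the form $P(\lambda) = \lambda^{2n} + \cdots + Pf^2$ of the characteristic polynomial, and recognize the non-embedded cover as the normalization. Your treatment is in fact more detailed than the paper's (explicit smoothness of $\overline{X}^{ne}_\rho$ via stabilizer analysis, Weierstrass preparation, invocation of the universal property of normalization); the paper largely refers to \cite{donagi1993decomposition} for the local analysis of these ``accidental singularities'' and treats the $n=1$ case separately.

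There is, however, one genuine error in your reasoning. You claim that away from the zero section ``the distinct-roots condition on the $\pm\lambda_i$ is guaranteed by smoothness of the cameral cover (a coincidence of two nonzero eigenvalues would force a root $\pm e_i \pm e_j$ to vanish, contradicting $a \in \cA^\diamondsuit$).'' This is false: the condition $a \in \cA^\diamondsuit$ does not forbid roots from vanishing, it only requires that they vanish transversely. So coincidences $\lambda_i = \pm \lambda_j$ with $i \neq j$ and both nonzero \emph{do} occur at isolated points of $X$, and you must show that $\overline{X}$ is nevertheless smooth there. The fix is a local computation: transversality means the single-valued function $(\lambda_i - \lambda_j)^2$ vanishes to order one, so after Weierstrass preparation the relevant factor of $P$ has the form $\mu^2 - z\cdot(\text{unit})$, which defines a smooth curve. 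This is exactly the mechanism by which the $GL(n)$ spectral cover is smooth when the cameral cover is. With this correction your argument is complete at that step; the paper does not spell this out either, so your version would actually be more thorough. Your acknowledged concern about the order of vanishing of the Pfaffian is legitimate and is likewise not resolved in the paper's sketch, which implicitly defers to \cite{donagi1993decomposition}.
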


\begin{proof}
We first consider the case $n>1$ where the Weyl group acts transitively on the weights.  Pick an identification $\ft\cong \bbC^{n}$ that identifies the action of the Weyl group with the action of $S_{n}\ltimes H_{n}$.  The projection $p:\ft\rightarrow \bbC$ to one copy of $\bbC$ gives a map $\ft_{\cL}\rightarrow \cL$, which maps $\tilde{X}\rightarrow \overline{X}$.  This factors as $\tilde{X}\rightarrow \tilde{X}/(S_{n-1}\ltimes H_{n-1})\rightarrow \overline{X}$.

Consider the locus where the Pfaffian vanishes.  The locus in $\ft$ where the Pfaffian vanishes corresponds to the ramification locus of the map $\ft\sslash W\rightarrow \ft\sslash (S_{n}\ltimes \{\pm 1\}^{n})$.  As the characteristic polynomial is given by $p(\lambda)=\lambda^{2n}+a_{2}\lambda^{2n-2}+....+a_{2n-2}\lambda^{2}+p_{f}^{2}$, where $p_{f}$ corresponds to the Pfaffian, we have that the spectral curve is singular on the intersection of the zero section of $Tot(\cL)$ with the vanishing locus of the Pfaffian \cite{hitchin1987stable}.

A smooth cameral cover can either intersect the locus where the Pfaffian vanishes at a point which does not lie on a reflection hyperplane $H_{\alpha}$, or can intersect the locus where the Pfaffian vanishes at a point which also lies on a single root hyperplane.  The restriction that the cameral cover is smooth means that in either case the spectral cover has a singularity at this point in $x$ (at the intersection of the spectral cover with the zero section of $\text{Tot}(\cL)$), and the map from the non-embedded spectral cover $\tilde{X}/(S_{n-1}\ltimes H_{n-1})\rightarrow \overline{X}$ is locally the resolution of this singularity.  Away from the vanishing locus of the Pfaffian we get an isomorphism $\tilde{X}/(S_{n-1}\ltimes H_{n-1})\rightarrow \overline{X}$.  See \cite{donagi1993decomposition} for more details on these accidental singularities.

For the case $n=1$ we have that the Weyl group is trivial, and $\overline{X}^{ne}=X\coprod X$.  The result follows immediately. 


\end{proof}

As in the case of $Sp(2n)$ the involution $\bbC\rightarrow \bbC$, $x\mapsto -x$ induces an involution $Tot(\cL)\rightarrow Tot(\cL)$.  This involution preserves the spectral curve and hence descends to an involution $i:\overline{X}\rightarrow \overline{X}$.  This induces involutions of  $\overline{X}^{ne}$, $\overline{X}^{ne, \circ_{R}}$, and $\overline{X}^{ne, \circ_{P}}$ that we also denote by $i$. We have a map $p_{i}: \overline{X}^{ne,\circ_{P}}\rightarrow \overline{X}^{ne,\circ_{P}}/i$, and a map $\pi_{i}:\overline{X}^{ne,\circ_{P}}/i\rightarrow X$

\begin{proposition}[Spectral and Cameral Descriptions for $SO(2n)$, $n>1$, without S-monodromy condition]
\label{proposition: Spectral Cameral SO(2n) without S monodromy condition}
For $G=SO(2n)$, $n>1$, Construction \ref{construction: associated local system of vector bundles on spectral} can be enhanced to an isomorphism of stacks
\begin{equation}
\label{equation:  map for SO(2n) no boundary}
\begin{tikzcd}
\Loc_{N}^{\tilde{X}^{\circ}}(X^{\circ})
\arrow{d}{\cong}
\\
\Loc_{\bbG_{m}}(\overline{X}^{ne, \circ_{P}})
\times_{\Loc_{\bbG_{m}}(\overline{X}^{ne,\circ_{P}}/i)}
\big\{\cM_{\overline{X}^{ne,\circ_{P}}/i}\big\},
\end{tikzcd}
\end{equation}
where the map 
\[
\Loc_{\bbG_{m}}(\overline{X}^{ne,\circ_{P}})\rightarrow
\Loc_{\bbG_{m}}(\overline{X}^{ne,\circ_{P}}/i))
\]
corresponds to the map (of local systems of one dimensional vector spaces) $\cL\mapsto \det\big((p_{i})_{*}\cL\big)$, and the local system $\cM_{\overline{X}^{ne,\circ_{P}}/i}$ on $\overline{X}^{ne,\circ_{P}}/i$ is defined by $\cM_{\overline{X}^{ne,\circ_{P}}/i}:=\tilde{X}^
{\circ}\times_{Stab(\{\alpha,-\alpha\})}(\bbC)$, where $\alpha$ is one of the projections $\alpha:\ft\rightarrow \bbC$ used in the proof of Proposition \ref{prop: spectral covers for SO(2n)}, and $Stab(\{\alpha,-\alpha\})$  acts on $\bbC$ via the map $Stab(\{\alpha,-\alpha\})\rightarrow \{\pm 1\}$ with kernel $Stab(\alpha)$.

\end{proposition}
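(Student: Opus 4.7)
The plan is to parallel the proof of Proposition \ref{prop: spectral cameral SP(2n) no alpha} for $Sp(2n)$, with modifications to account for the fact that for $SO(2n)$ the stabilizer $Stab(\{\alpha,-\alpha\}) \subset W_{SO(2n)}$ acts on $V_\alpha \wedge V_{-\alpha}$ by the nontrivial sign character rather than trivially. This will explain why the constant local system $\underline{\bbC}_{\overline{X}^{\circ_P}/i}$ in the $Sp(2n)$ statement gets replaced by $\cM_{\overline{X}^{ne,\circ_P}/i}$ in the $SO(2n)$ statement.

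First I would identify $\tilde{X}^\circ \times_{Stab(\{\alpha,-\alpha\})}(V_\alpha \wedge V_{-\alpha})$ with $\cM_{\overline{X}^{ne,\circ_P}/i}$. Elements of $Stab(\alpha) \subset Stab(\{\alpha,-\alpha\})$ act trivially on $V_\alpha$ and $V_{-\alpha}$ separately, hence fix $V_\alpha \wedge V_{-\alpha}$. Any element in $Stab(\{\alpha,-\alpha\}) \setminus Stab(\alpha)$ swaps the 1-dimensional spaces $V_\alpha$ and $V_{-\alpha}$, so it acts on $V_\alpha \wedge V_{-\alpha}$ by $-1$. This is precisely the character $Stab(\{\alpha,-\alpha\}) \to \{\pm 1\}$ whose kernel is $Stab(\alpha)$, recovering $\cM$.

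Second I would verify that Construction \ref{construction: associated local system of vector bundles on spectral}, applied to an $N_{SO(2n)}$-local system $\cE_N$, produces a $\bbG_m$-local system $\cL$ on $\overline{X}^{ne,\circ_P}$ (which is automatic since the weight spaces of the defining representation are 1-dimensional) together with a canonical isomorphism $\det((p_i)_*\cL) \cong \cM$. The point is that the symmetric bilinear form on $\bbC^{2n}$ restricts to a pairing $V_\omega \otimes V_{-\omega} \to \bbC$, which (using that $N_{SO(2n)}$ preserves the form and has determinant $+1$) yields an $N_{SO(2n)}$-equivariant trivialization of $\bigoplus_\omega (V_\omega \wedge V_{-\omega})$. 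Combining this with the identification from the previous step, and with the descent identification $\det((p_i)_*\cL) \cong (\cL \otimes i^*\cL)$-descended-to-$\overline{X}^{ne,\circ_P}/i$ (twisted by the swap sign), gives the desired isomorphism.

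Third, I would construct the inverse in a manner analogous to the $Sp(2n)$ case. Given a $\bbG_m$-local system $\cL$ on $\overline{X}^{ne,\circ_P}$ together with an isomorphism $\phi : \det((p_i)_*\cL) \xrightarrow{\cong} \cM$, I would form the sheaf $\Hom^{SO}(A \times X^\circ, \cL)$ where $A = \coprod_{-n \leq i \leq n,\, i \neq 0} \bbC_i$, consisting locally of: a lift $b_1$ of the cover $A \times U \to \overline{X}^{ne,\circ_P}|_U$ intertwining $i$ with $j \mapsto -j$; isomorphisms $b_{2,j}: \underline{\bbC_j} \to \cL|_{b_1(j\times U)}$ compatible with the pairings $\bbC_j \wedge \bbC_{-j} \cong \bbC$; and, critically, the additional constraint that the induced trivialization of $\bigotimes_{j>0}(\bbC_j \wedge \bbC_{-j})$ corresponds under $\phi$ to the canonical trivialization of $\cM$ (this is what distinguishes $N_{SO(2n)}$ from $N_{O(2n)}$). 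One then identifies the natural group of automorphisms acting on $A$ that preserve both the pairings and this global trivialization with $N_{SO(2n)}$, and checks that the construction produces an $N_{SO(2n)}$-local system with associated $W$-bundle isomorphic to $\tilde{X}^\circ$. The fact that this is inverse to the forward map follows formally from Theorem \ref{thm:flat_DG}.

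The main obstacle will be the third step, specifically the careful bookkeeping of signs: verifying that the descent identification $\det((p_i)_*\cL) \cong (\cL \otimes i^*\cL)$-modulo-$i$ picks up exactly the sign twist that matches $\cM$ (rather than $\underline{\bbC}$), and that the isomorphism $\phi$ with $\cM$ is what cuts out the $SO(2n)$-structure inside an a priori $O(2n)$-structure. Once this sign compatibility is established, verification of the equivalence amounts to an unwinding of definitions parallel to the $Sp(2n)$ and $SL(n)$ cases already handled.
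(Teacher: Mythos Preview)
Your proposal is correct and follows essentially the same approach as the paper: identify the $Stab(\{\alpha,-\alpha\})$-action on $V_\alpha\wedge V_{-\alpha}$ with the sign character defining $\cM$, deduce that Construction \ref{construction: associated local system of vector bundles on spectral} factors through the stated fiber product, and construct the inverse as a sheaf $\Hom^{SO(2n)}(B\times X^\circ,\cL)$ of framings subject to pairing compatibilities plus a global determinant-$1$ constraint distinguishing $N_{SO(2n)}$ from $N_{O(2n)}$. The paper phrases the last constraint as $\bigwedge^{2n}\big(\oplus_j b_{2,j}\big)=\mathrm{Id}$ via a preliminary identification $((\pi_i)_*\cM)^{\wedge n}\cong\underline{\bbC}_{X^\circ}$, and makes the sign map $b_{3,j}:\underline{\bbC}_U\to\cM|_{b_{1,K}(\{j\}\times U)}$ explicit; your description of this as ``the canonical trivialization of $\cM$'' is the same content, but when you write up the details be careful with your second step---the symmetric form trivializes $V_\omega\otimes V_{-\omega}$ equivariantly, while the wedge $V_\omega\wedge V_{-\omega}$ picks up the sign under the swap, so the phrase ``equivariant trivialization of $\bigoplus_\omega(V_\omega\wedge V_{-\omega})$'' should be sharpened.
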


\begin{proof}

Consider the $\bbZ/2$ bundle on $X^{\circ}$, such that it's fiber at $x\in X$ is the tensor product over $\bbZ/2$ of the bundles $\overline{X}^{ne,\circ_{P}}|_{x'}$ for each lift $x'$ of $x$ to $\overline{X}^{ne,\circ_{P}}/i$.  This is canonically trivial, as it is isomorphic to $\tilde{X}\times_{W} \{\pm 1\}$, with $W$ acting trivially.  More precisely we can pick the identity section as corresponding to the collection of all the positive weights.

This induces an isomorphism $((\pi_{i})_{*}\cM_{\overline{X}^{ne,\circ_{P}}/i})^{\wedge n}\cong \underline{\bbC}_{X^{\circ}}$.



 Secondly we note that $\tilde{X}^{\circ}\times_{Stab(\{\alpha,-\alpha\})}(V_{\alpha}\wedge V_{-\alpha})\cong \cM_{\overline{X}^{ne,\circ_{P}}/i}$, where $\alpha$ is one of the weights of the defining representation of $SO(2n)$, and we consider $\tilde{X}^{\circ}\rightarrow \overline{X}^{
ne, \circ_{P}}/i$ as a $Stab(\{\alpha,-\alpha\})$-bundle.  This is because $Stab(\{\alpha,-\alpha\})$ acts by $\pm 1$ on $V_{\alpha}\wedge V_{-\alpha}$ (with $Stab(\alpha)$ acting trivially).  Hence it is clear that the morphism of Construction \ref{construction: associated local system of vector bundles on spectral} factors through a morphism of the form of Equation \ref{equation:  map for SO(2n) no boundary}, where we pick the isomorphism $\tilde{X}\times_{Stab\{\alpha,-\alpha\}}\{\pm 1\}\cong \cM_{\overline{X}^{ne,\circ_{P}}/i}$ so that this is compatible with the chosen identification in the following sense:  Let $\cL$ be the local system of one dimensional vector spaces on $\overline{X}^{\circ}$ produced by Construction  \ref{construction: associated local system of vector bundles on spectral}.  We then can pick an isomorphism $((p_{i})_{*}\cL)^{\wedge 2}\rightarrow \cM_{\overline{X}^{\circ_{P}}/i}$, such that the following diagram commutes:
\begin{equation*}
\begin{tikzcd}
((\pi_{i})_{*}(((p_{i})_{*}\cL)^{\wedge 2}))^{\wedge n}\arrow{r}{\cong}\arrow{d}{\cong} & ((\pi_{i})_{*}\cM_{\overline{X}^{ne,\circ_{P}}/i})^{\wedge n}\arrow{d}{\cong}\\
(\pi_{*}\cL)^{\wedge 2n} \arrow{r}{\cong} & \underline{\bbC}_{X} ,
\end{tikzcd}
\end{equation*}
where the top arrow comes from the isomorphism picked, the right arrow comes from the chosen isomorphism $(\pi_{i})_{*}\cM_{\overline{X}^{ne,\circ_{P}}/i}\cong \underline{\bbC}_{X}$, and the bottom arrow is immediate as we started with an $N_{SO(2n)}$-local system.

Consider the set $B:=\coprod_{-n\leq i\leq n, i\neq 0}\bbC_{i}$ consisting of the disjoint union of $2n$ copies of $\bbC$.  Consider homomorphisms from $B\rightarrow B$ that consist of the data:
\begin{itemize}
    \item A permutation $\sigma$ of $\{-n,...,-1,1,...,n\}$. 
    \item Linear maps $\bbC_{i}\rightarrow \bbC_{\sigma(i)}$ for $i\in \{-n,...,-1,1,...,n\}$.
\end{itemize}
   We can identify these homomorphisms with $N_{GL(2n)}$ -- the normalizer of the torus in $GL(2n)$.  
   
Consider the isomorphisms $\bbC_{i}\wedge \bbC_{-i}\xrightarrow{\cong} \bbC$ that sends $1\wedge 1$ to $1$ for $i>0$ (and $1\wedge 1\mapsto -1$ for $i<0$). Consider the subset of these homomorphisms where:

\begin{itemize}
    \item The permutation $\sigma$ satisfies $\sigma\in S_{n}\ltimes H_{n}$ (identified as a subgroup of $S_{2n}$ via $W_{SO(2n)}\hookrightarrow W_{GL(2n)}$).
    \item For all $1\leq i\leq n$, the following diagram commutes;
\[
\begin{tikzcd}
\bbC_{i}\wedge \bbC_{-i} \arrow{r}{\cong}\arrow{d} &  \bbC\arrow{d}{\pm 1}\\
\bbC_{\sigma(i)}\wedge \bbC_{\sigma(-i)} \arrow{r}{\cong} & \bbC
\end{tikzcd}
\]
where the map $\bbC\rightarrow \bbC$ is $+1$ if $\sigma(i)>0$, and $-1$ otherwise. 
\end{itemize}

We can identify the homomorphisms satisfying these conditions with $N_{SO(2n)}$.

Consider the locally constant sheaf on $X^{\circ}$ associated to $\cL\in \Loc_{\bbG_{m}}(\overline{X}^{ne, \circ_{P}})
\times_{\Loc_{\bbG_{m}}(\overline{X}^{ne,\circ_{P}}/i)}
\big\{\cM_{\overline{X}^{ne,\circ_{P}}/i}\big\}$ denoted by
$\Hom^{SO(2n)}(B\times X^{\circ},\cL)$, where over a contractible open set $U\subset X^{\circ}$ an element $b\in \Hom^{SO(2n)}(B\times X^{\circ},\cL)$ consists of the data of;
\begin{itemize}
    \item A morphism $b_{1}: \{-n,...,-1,1,...,n\}\times U\rightarrow \overline{X}^{ne, \circ_{P}}|_{U}$ of sheaves of sets over $U$ that is surjective and intertwines $i$ and the involution sending $j\mapsto -j$.  Denote by $b_{1,K}$ the map $\{1,...,n\}\times U\rightarrow (X^{\circ_P}/i)|_{U}$ induced by $b_{1}$
    \item Locally constant isomorphisms $\underline{\bbC_{j}}\xrightarrow{b_{2,j}}\cL|_{b_{1}(j\times U)}$ (considered as sheaves on $U$)
\end{itemize} 
such that;
\begin{itemize}
    \item The following diagram commutes:

\[
\begin{tikzcd}
\cL|_{b_{1}(\{j\}\times U)}\wedge \cL|_{b_{1}(\{-j\}\times U)} \arrow{r} & \cM_{\overline{X}^{ne, \circ_{P}}/i}|_{b_{1,K}\{j\}\times U}\\
\underline{\bbC_{j}}_{U}\wedge \underline{\bbC_{-j}}_{U} \arrow{r} \arrow{u}{b_{2,j}\wedge b_{2,-j}}& \underline{\bbC}_{U}\arrow{u}{b_{3,j}}
\end{tikzcd}
\]
where the morphism $b_{3,j}:\underline{\bbC}_{U}\rightarrow \cM_{\overline{X}^{ne, \circ_{P}}/i}|_{b_{1,K}(\{j\}\times U)}$, is the morphism induced from the morphism of principal $\bbZ/2$-bundles on $b_{1,K}(\{j\}\times U)\subset \overline{X}^{ne,\circ_{P}}/i$ given by $\overline{X}^{ne,\circ_{P}}|_{b_{1,K}(\{j\}\times U)}\xleftarrow{\cong} (b_{1,K}(\{j\}\times U)) \times \{j,-j\}\cong (b_{1,K}^{-1}(\{j\}\times U)) \times \bbZ/2$ (where in the final map we identify $j$ with the identity $1_{\bbZ/2}$).
\item We have an isomorphism $\bigwedge^{2n}\left(\oplus_{-n<j<n, j\neq 0}b_{2,j}\right)=Id$, where we are using the identification $(\pi_{*}\cL)^{\wedge 2n}\cong ((\pi_{i})_{*}\cM_{\overline{X}^{ne,\circ_{P}}/i})^{\wedge n}\cong \bbC$.
\end{itemize}


Then $\Hom^{SO(2n)}(B\times X^{\circ},\cL)$ is an $N_{SO(2n)}$ local system, where the $N_{SO(2n)}$ action comes from precomposing with the homomorphisms $B\rightarrow B$ specified above.  The map $\cL\mapsto \Hom^{SO(2n)}(B\times X,\cL)$ is then the inverse to the map of Equation \ref{equation:  map for SO(2n) no boundary}.
\end{proof}

We now need to consider the S-monodromy condition.  The proof is the same as that for $Sp(2n)$.

We define the divisor $R_{\rho,1}\subset \overline{X}^{ne}$ as in Section \ref{subsubsection: Describing branch points sp(2n)}.  Unlike in Section \ref{subsubsection: Describing branch points sp(2n)} we do not need to define $R_{\rho,0}$, because as $\overline{X}^{ne}\rightarrow \overline{X}$ is the (algebraic) blow up $\overline{X}$ along this locus, there is not a corresponding ramification locus in $\overline{X}^{ne}$ (under the assumption of smoothness of the cameral cover).

\begin{proposition}[Spectral and Cameral Descriptions for $SO(2n)$, $n>1$, with S-monodromy condition.]
\label{propL Spectral and Cameral descriptions for SO(2n) with monodromy condition}
For $G=SO(2n)$, $n>1$, there is an isomorphism of stacks, induced by Construction \ref{construction: associated local system of vector bundles on spectral}, 
\[
\begin{tikzcd}
\Loc_{N_{SO(2n)}}^{\tilde{X}^{\circ}, S}(X^{\circ})
\arrow{d}{\cong}
\\
\Loc_{\bbG_{m}}(\overline{X}^{ne,\circ_{R}})
\times_{\Loc_{\bbG_{m}}(\overline{X}^{ne,\circ_{R}}/i))\times (\bbG_{m}/\bbG_{m})^{\# R_{\rho,1}}}\left(\big\{\cM_{\overline{X}^{ne,\circ_{R}}/i}\big\}\times (-1/\bbG_{m})^{\# R_{\rho,1}}\right)
\end{tikzcd}
\]
where the map to $(\bbG_{m}/\bbG_{m})^{R_{\rho,1}}$ corresponds to the preimage of $R_{\rho,1}$ under the morphism $\overline{X}^{ne,\circ_{R}}\rightarrow \overline{X}^{ne}$, and $\cM_{\overline{X}^{ne,\circ_{R}}/i}:=\overline{X}^{ne,\circ_{R}}\times_{\{\pm 1\}}\bbC$, where we consider $\overline{X}^{ne,\circ_{R}}\rightarrow \overline{X}^{ne,\circ_{R}}/i$ as a $\{\pm 1\}$-bundle.  Note that $\cM_{\overline{X}^{ne,\circ_{R}}/i}|_{\overline{X}^{ne,\circ_{P}}/i}\cong \cM_{\overline{X}^{ne,\circ_{P}}/i}$ (defined in Proposition \ref{proposition: Spectral Cameral SO(2n) without S monodromy condition}).
\end{proposition}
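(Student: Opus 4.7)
The plan is to extend the isomorphism of Proposition \ref{proposition: Spectral Cameral SO(2n) without S monodromy condition} by analyzing how the S-monodromy condition cuts out the fiber product on the spectral side. First I would recall that the map $\overline{X}^{ne}\to \overline{X}$ is the normalization of $\overline{X}$ along the vanishing locus of the Pfaffian, so that under the smoothness assumption on the cameral cover, branch points of the first type (eigenvalue $0 = -0$) produce no ramification in $\overline{X}^{ne}$; only branch points of the second type (where $\lambda_i = \lambda_j \neq 0$) contribute ramification points, and these come in pairs interchanged by $i$, which is why the ramification locus decomposes as $R_{\rho,1}\coprod i(R_{\rho,1})$.

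Next I would carry out a local analysis at each $p\in P$, for each of the two types. For a branch point of the first type, the local model is that of $SL(2)\cong Sp(2)$, and exactly as in the proof of Proposition \ref{propL Spectral and Cameral descriptions for Sp(2n) with monodromy condition}, the $S$-monodromy restriction is automatic: the corresponding sheet-permutation $s_\alpha$ is already enforced by the extension across the normalized preimage of $p$, so no extra condition on the $\bbG_m$-local system is required. For a branch point of the second type, the local model reduces to that of $GL(n)$ treated in Proposition \ref{prop: alpha monodromy GL(n)}: the $S$-monodromy condition at $p$ is equivalent to requiring that the extension of $\cL$ across the corresponding pair of ramification points in $\overline{X}^{ne}$ has monodromy $-1$ around each. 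Because the two ramification points above $p$ are interchanged by $i$, and the isomorphism $(p_{i})_{*}\cL \xrightarrow{\cong} \cM_{\overline{X}^{ne,\circ_R}/i}$ is $i$-equivariant by construction, imposing monodromy $-1$ at one of them forces it at the other; thus it suffices to impose the condition on $R_{\rho,1}$.

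Finally I would check that Construction \ref{construction: associated local system of vector bundles on spectral} produces precisely a $\bbG_{m}$-local system on $\overline{X}^{ne,\circ_R}$ satisfying the required monodromy and compatibility data, and that the inverse constructed in the proof of Proposition \ref{proposition: Spectral Cameral SO(2n) without S monodromy condition} via $\cL \mapsto \Hom^{SO(2n)}(B\times X^{\circ},\cL)$ extends across $R_{\rho,1}\coprod i(R_{\rho,1})$ whenever the monodromy condition holds, giving an $N_{SO(2n)}$-local system satisfying the S-monodromy condition on $X^\circ$. Since both constructions work in families, this upgrades to an isomorphism of the moduli stacks. The main obstacle will be the local verification at a branch point of the second type: one must carefully track how the $\{\pm 1\}$-twist built into the definition of $\cM_{\overline{X}^{ne,\circ_R}/i}$ interacts with the monodromy condition at $R_{\rho,1}$ to ensure that imposing monodromy $-1$ on a single preimage really does produce a well-defined extension symmetric under $i$, matching the $N_{SO(2n)}$-equivariance imposed by the S-monodromy condition.
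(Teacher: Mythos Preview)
Your proposal follows the paper's approach and is essentially correct: the paper's proof is a one-liner reducing to the $Sp(2n)$ analysis (Proposition \ref{propL Spectral and Cameral descriptions for Sp(2n) with monodromy condition}), noting only that $R_{\rho,0}$ need not be considered, and your expansion of that reduction (local analysis at each $p\in P$, monodromy $-1$ at $R_{\rho,1}$, symmetry under $i$, inverse via $\Hom^{SO(2n)}$) is the right elaboration.

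One point of confusion worth flagging: for $SO(2n)$ there are no ``first-type'' cameral branch points at all, because $2e_i$ is not a root. Points where the Pfaffian vanishes are accidental singularities of the embedded spectral cover, not branch points of $\tilde X\to X$, so there is no $S$-monodromy condition to verify there. Your paragraph invoking the $SL(2)\cong Sp(2)$ local model at first-type points is therefore superfluous and slightly misdirected --- that model is what makes the $S$-monodromy condition automatic for $Sp(2n)$ at $R_{\rho,0}$, but for $SO(2n)$ the correct statement is simply that $P$ consists only of second-type branch points, hence $R_\rho = R_{\rho,1}\coprod i(R_{\rho,1})$. Once you drop that paragraph, the rest of your argument matches the paper's proof exactly.
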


\begin{proof}
The analysis is identical to that of Proposition \ref{propL Spectral and Cameral descriptions for Sp(2n) with monodromy condition}, with the exception of the fact we do not need to consider $R_{\rho,0}$ as mentioned above.
\end{proof}

\begin{example}[$G=SO(2)$]
We now consider the case where $G=SO(2)\cong \bbG_{m}$.  Hence in this case the spectral description is simply the equivalence 
\[\Loc_{\bbG_{m}}(X)\cong (\Loc_{\bbG_{m}}(X)\times \Loc_{\bbG_{m}}(X))\times_{\Loc_{\bbG_{m}}(X)}\{\underline{\bbC}_{X}\},\]
where the map $\Loc_{\bbG_{m}}(X)\times \Loc_{\bbG_{m}}(X)\rightarrow \Loc_{\bbG_{m}}(X)$ corresponds to the map $(\cL_{1}, \cL_{2})\mapsto \cL_{1}\otimes \cL_{2}$.
\end{example}

\subsubsection{The case $G=SO(2n+1)$} Recall that for $G=SO(2n+1)$, a maximal torus is $T\cong \bbG_{m}^{n}$, and the Weyl group is $W\cong S_{n}\ltimes \{\pm 1\}^{n}$.

The weights of the defining representation of $SO(2n+1)$ form two $W$-orbits.  There is a partial normalization of the spectral cover $\overline{X}_{0}\coprod \overline{X}_{1}\rightarrow \overline{X}_{\rho}$, where $\overline{X}_{0}$ corresponds to the eigenvalue $0$ and corresponds to the zero section $X\hookrightarrow Tot(\cL)$.

We can identify the disjoint union of the two $W$-orbits of the spectral with the non-embedded spectral cover:

\begin{proposition}
There is an isomorphism
\[\overline{X}_{1} \coprod \overline{X}_{0} \xrightarrow{\cong} \overline{X}_{\rho}^{ne}.\]
\end{proposition}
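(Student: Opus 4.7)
The plan is to decompose the weight set of the defining representation into $W$-orbits, which gives a corresponding decomposition of $\overline{X}_\rho^{ne}$, and then identify each piece with the corresponding component of $\overline{X}_0 \coprod \overline{X}_1$. After fixing an identification $\ft \cong \bbC^n$ that identifies $W$ with $S_n \ltimes \{\pm 1\}^n$, the weights of the defining representation of $SO(2n+1)$ are $\Omega_\rho = \{0\} \cup \{\pm e_i : 1 \leq i \leq n\} = \Omega_0 \coprod \Omega_1$, where $\Omega_0 = \{0\}$ is a $W$-fixed point and $\Omega_1 = \{\pm e_i\}$ is a single $W$-orbit (of size $2n$). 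Then Equation \eqref{equation: decomposition into W orbits} gives $\overline{X}_\rho^{ne} = \overline{X}_\rho^{ne,0} \coprod \overline{X}_\rho^{ne,1}$.

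For the zero orbit, since $W$ acts trivially on $\Omega_0$, I would get $\overline{X}_\rho^{ne,0} = \tilde{X}\times_W \{0\} \cong \tilde{X}/W \cong X$, which matches $\overline{X}_0$ under the identification of $X$ with the zero section of $\mathrm{Tot}(\cL)$. For the nonzero orbit, I would follow the strategy used in the $Sp(2n)$ case (Proposition \ref{prop: Non embedded and embedded spectral covers for Sp2n}): the stabilizer of $e_1$ in $W$ is $S_{n-1}\ltimes \{\pm 1\}^{n-1}$, so $\overline{X}_\rho^{ne,1} \cong \tilde{X}/(S_{n-1}\ltimes \{\pm 1\}^{n-1})$, and the projection $p : \ft \to \bbC$ onto the first coordinate induces a map $\ft_\cL \to \mathrm{Tot}(\cL)$ whose composite with $\tilde a$ descends to a morphism $\overline{X}_\rho^{ne,1} \to \mathrm{Tot}(\cL)$. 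The image lies in the locus cut out by the $P(\lambda^2) = 0$ factor of the characteristic polynomial $\lambda \cdot P(\lambda^2)$, i.e.\ in $\overline{X}_1$, giving a map $\overline{X}_\rho^{ne,1} \to \overline{X}_1$.

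To verify this map is an isomorphism, I would first note that both sides are degree $2n$ covers of $X$ (since the $W$-orbit of $e_1$ has size $2n$, equal to the number of nonzero eigenvalues of a regular element of $\mathfrak{so}(2n+1)$), so it is a bijective morphism between curves over $X$. The essential local analysis must be carried out at the branch points of the cameral cover. Under the assumption $a \in \cA^\diamondsuit$ (Convention \ref{convention:cameral}), all ramification of $\tilde X \to X$ is of order two and is concentrated over a single root hyperplane at a time. The two relevant cases for branch points of $\overline{X}_\rho^{ne,1} \to X$ are: (i) $\tilde a(\tilde x)$ lies on the hyperplane $\{h_1 = 0\}$, corresponding to a nonzero eigenvalue colliding with the zero eigenvalue; and (ii) $\tilde a(\tilde x)$ lies on a hyperplane $\{h_i = \pm h_j\}$ with $i \neq j$, corresponding to two nonzero eigenvalues colliding. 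In case (ii), the local picture on both sides is simple ramification of order two, and the map is a local isomorphism. In case (i), the spectral cover $\overline{X}_\rho$ itself has a node where $\overline{X}_1$ meets the zero section, and the partial normalization $\overline{X}_0 \coprod \overline{X}_1 \to \overline{X}_\rho$ separates these two branches; after this separation, $\overline{X}_1$ is smooth at that point, and the map from $\overline{X}_\rho^{ne,1}$ is étale.

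The main obstacle is verifying case (i), where one has to reconcile the partial normalization process on $\overline{X}_\rho$ with the stabilizer quotient description of $\overline{X}_\rho^{ne,1}$. The smoothness hypothesis $a\in \cA^\diamondsuit$ prevents simultaneous vanishing of $e_1$ and $e_1 \pm e_j$, so the two types of branch points never coincide and the local analysis at each type can be carried out independently, following the template of Propositions \ref{prop: Non embedded and embedded spectral covers for Sp2n} and \ref{prop: spectral covers for SO(2n)} and the treatment of accidental singularities in \cite{donagi1993decomposition}. Combining the two pieces yields the desired isomorphism $\overline{X}_1 \coprod \overline{X}_0 \xrightarrow{\cong} \overline{X}_\rho^{ne}$.
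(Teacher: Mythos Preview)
Your proposal is correct and follows essentially the same approach as the paper: decompose $\Omega_\rho$ into the two $W$-orbits $\{0\}$ and $\{\pm e_i\}$, use Equation~\ref{equation: decomposition into W orbits}, identify the zero-orbit piece with $\overline{X}_0 \cong X$, and handle the nonzero-orbit piece exactly as in the $Sp(2n)$ case (Proposition~\ref{prop: Non embedded and embedded spectral covers for Sp2n}). The paper's proof is terser---it simply points to that proposition, since $W_{SO(2n+1)} \cong W_{Sp(2n)} \cong S_n \ltimes \{\pm 1\}^n$ acts identically on $\ft \cong \bbC^n$ and on the weight set $\{\pm e_i\}$, so $\overline{X}_\rho^{ne,1}$ and $\overline{X}_1$ are literally the same constructions as for $Sp(2n)$---whereas you spell out the local analysis at the two types of branch points, which is a helpful elaboration but not an alternative route.
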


\begin{proof}
We use the decomposition of Equation \ref{equation: decomposition into W orbits}.  Denoting the orbits by $0$, and $1$ it is immediate that $\overline{X}^{ne, 0}\cong \overline{X}_{0}$.  The identification of $\overline{X}^{ne,1}\cong \overline{X}_{1}$ is identical to the proof of Proposition \ref{prop: Non embedded and embedded spectral covers for Sp2n}.
\end{proof}

There is an involution $i:\overline{X}^{ne}_{\rho}\rightarrow \overline{X}^{ne}_{\rho}$, coming from the involution $\bbC\rightarrow \bbC$ ($x\mapsto -x$), because the eigenvalues of a matrix in $SO(2n+1)$ are always of the form $\{\lambda_{1}, -\lambda_{1}, \lambda_{2}, -\lambda_{2},...,\lambda_{n},-\lambda_{n},0\}$.  This involution fixes $\overline{X}_{0}$.  We can hence reuse the arguments of the case for $Sp(2n)$ to get the result.  

Let $R_{\rho}=R_{a}\coprod R_{b}$ where $R_{a}\subset \overline{X}_{1}$, and $R_{b}\subset \overline{X}_{0}$ be the ramification locus of the map $\overline{X}^{ne}_{\rho}\rightarrow X$.  We denote by $\overline{X}_{1}^{\circ_{R}}$ and $\overline{X}_{0}^{\circ_{R}}$ the oriented real blow ups of $\overline{X}_{1}$ and $\overline{X}_{0}$ at $R_{a}$ and $R_{b}$ respectively. As in the $Sp(2n)$ case we have two types of branch points, the first corresponding to the locus where an eigenvalue (and hence two eigenvalues) are zero $\lambda_{i}=-\lambda_{i}$ for some $i$, and the second where two eigenvalues are equal and non-zero, informally; $\lambda_{i}=\lambda_{j}\neq 0$ for $i\neq j$.  As in the $Sp(2n)$ case we decompose $R_{a}=R_{\rho,0}\coprod R_{\rho,1}\coprod i(R_{\rho,1})$.

We denote by $p_{i}$ the following map corresponding to quotienting by $i$: $p_{i}: \overline{X}^{ne,\circ_{R}}\rightarrow \overline{X}^{ne,\circ_{R}}/i=\overline{X}_{0}^{\circ}\coprod \overline{X}_{1}^{\circ}/i$.  This is a $2:1$ cover over $\overline{X}_{1}^{\circ}/i$, and an isomorphism on $\overline{X}_{0}^{\circ}$.

\begin{proposition}[Spectral and Cameral Descriptions for $SO(2n+1)$ with S-monodromy condition. Cf. \cite{hitchin1987stable, donagi1993decomposition}]
\label{prop: spectral cameral SO(2n+1) with alpha}
For $G=SO(2n+1)$ there is an isomorphism of stacks, induced by Construction \ref{construction: associated local system of vector bundles on spectral}: 
\[
\begin{tikzcd}
\Loc_{N_{SO(2n+1)}}^{\tilde{X}^{\circ}, S}(X^{\circ})
\arrow{d}{\cong}
\\
\Loc_{\bbG_{m}}(\overline{X}^{ne,\circ_{R}})\times_{Loc_{\bbG_{m}}(\overline{X}^{ne,\circ_{R}}/i)\times (\bbG_{m}/\bbG_{m})^{\# R_{\rho, 1}}}\left(\{\cM\}\times (-1/\bbG_{m})^{\# R_{\rho,1}}\right),
\end{tikzcd}
\]
where the map 

\[
\Loc_{\bbG_{m}}(\overline{X}^{ne,\circ_{R}})\rightarrow \Loc_{\bbG_{m}}(\overline{X}^{ne,\circ_{R}}/i)
\]
corresponds to the map (on local systems of one dimensional vector spaces) $\cL\mapsto \det((p_{i})_{*}\cL)$, and the map to $(\bbG_{m}/\bbG_{m})^{\# R_{\rho, 1}}$ corresponds to taking the monodromy around each point in $R_{\rho, 1}$.

The $\bbG_{m}$-local system $\cM$ is defined by $\cM|_{\overline{X}_{0}^{\circ_{R}}}:= \tilde{X}^{\circ}\times_{W}V_{0}$, where $V_{0}$ is the representation of $W$ corresponding to the zero weight space of the defining representation of $SO(2n+1)$, and $\cL|_{\overline{X}_{1}^{\circ_{R}}/i}:=\tilde{X}^
{\circ}\times_{Stab(\{\alpha,-\alpha\})}(\bbC)$, where $Stab(\{\alpha,-\alpha\})$  acts on $\bbC$ via the map $Stab(\{\alpha,-\alpha\})\rightarrow \{\pm 1\}$ with kernel $Stab(\alpha)$.
\end{proposition}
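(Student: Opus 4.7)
The proof will mirror the strategy used for Propositions \ref{propL Spectral and Cameral descriptions for Sp(2n) with monodromy condition} and \ref{propL Spectral and Cameral descriptions for SO(2n) with monodromy condition}, adapted to the fact that the defining representation of $SO(2n+1)$ has weights decomposing into two $W$-orbits: the zero weight and the paired nonzero weights. Accordingly, $\overline{X}^{ne}_\rho$ splits as $\overline{X}_0 \coprod \overline{X}_1$, with $\overline{X}_0 \cong X$ contributing an unramified factor and $\overline{X}_1$ behaving analogously to the $Sp(2n)$ spectral cover.

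First I will handle the unramified case (with $\circ_R$ replaced by $\circ_P$), constructing the isomorphism without imposing the $S$-monodromy condition. The strategy is to identify $N_{SO(2n+1)}$ with the subgroup of $N_{GL(2n+1)}$ consisting of homomorphisms $B \to B$ (where $B = \bbC_0 \coprod \coprod_{-n \le i \le n, i \neq 0} \bbC_i$) which permute the indices according to $W_{SO(2n+1)} \cong S_n \ltimes \{\pm 1\}^n$, preserve the anti-symmetric pairings $\bbC_i \wedge \bbC_{-i} \cong \bbC$ up to the appropriate sign, and act trivially with respect to the chosen identification on $\bbC_0$. Then, given $\cL$ on $\overline{X}^{ne,\circ_P}$ satisfying the stated conditions, the inverse map sends $\cL$ to the local system $\Hom^{SO(2n+1)}(B \times X^\circ, \cL)$ consisting of trivializations compatible with the fixed identifications $(p_i)_* \cL|_{\overline{X}_1/i} \cong \cM|_{\overline{X}_1^{\circ_R}/i}$ and $\cL|_{\overline{X}_0} \cong \cM|_{\overline{X}_0}$. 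That this inverts Construction \ref{construction: associated local system of vector bundles on spectral} is essentially the same calculation as in the $Sp(2n)$ case, with the zero-weight component handled by the canonical identification of $\overline{X}_0$ with $X$ and the $W$-equivariance of $V_0$.

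Next I will impose the $S$-monodromy condition and check that it precisely corresponds to restricting the monodromy of $\cL$ around $R_{\rho,1}$ to $-1$. As in the proof of Proposition \ref{propL Spectral and Cameral descriptions for Sp(2n) with monodromy condition}, there are two types of branch points under the smoothness assumption on the cameral cover:
\begin{itemize}
\item Points of $R_{\rho,0}$, where two nonzero eigenvalues collide at $0$: locally this reduces to the $SO(3)$ case, which (via the covering $SL(2) \to SO(3)$) reduces further to $SL(2) \cong Sp(2)$, where the $S$-monodromy condition is automatic and imposes no extra restriction on $\cL$.
\item Points of $R_{\rho,1}$, where two nonzero eigenvalues $\lambda_i = \lambda_j \neq 0$ collide: locally this reduces to the $GL(2)$ case treated in Proposition \ref{prop: alpha monodromy GL(n)}, forcing the monodromy of $\cL$ around each $r \in R_{\rho,1}$ to be $-1$.
\end{itemize}
As in the $Sp(2n)$ case, imposing the condition at $r \in R_{\rho,1}$ automatically imposes it at $i(r)$, so it suffices to record the condition on $R_{\rho,1}$.

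The main obstacle is checking that the restriction of the $S$-monodromy condition to the $SO(3)$ local model at points of $R_{\rho,0}$ really is automatic: this involves examining what $n_\alpha T^\alpha$ means inside $N_{SO(3)}$ when two nonzero branches of the spectral cover collide at $\overline{X}_0$, and verifying that any such element lifts consistently through the identification of $SO(3)$ with a quotient of $SL(2)$. The other key technical point is the bookkeeping of the compatibility constraints defining $\Hom^{SO(2n+1)}(B\times X^\circ, \cL)$, ensuring that the resulting object actually carries a consistent $N_{SO(2n+1)}$-action and not merely an $N_{SO(2n)}$-action together with a separate zero-weight piece; this is handled by the explicit identification above of $N_{SO(2n+1)}$ with certain homomorphisms $B \to B$.
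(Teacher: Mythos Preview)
Your overall strategy matches the paper's: reduce to the $Sp(2n)$/$SO(2n)$ arguments on $\overline{X}_1$, treat the zero-weight component $\overline{X}_0$ separately via the $W$-representation $V_0$, and handle the two branch-point types by reducing to $SO(3)$ at $R_{\rho,0}$ and to the $GL(2)$-type argument at $R_{\rho,1}$.

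There is one concrete slip and one difference in setup worth noting. The slip: you say that in your identification of $N_{SO(2n+1)}$ with homomorphisms $B\to B$, the elements should ``act trivially with respect to the chosen identification on $\bbC_0$.'' This is wrong: $N_{SO(2n+1)}$ acts on the zero-weight space $V_0$ through the sign character $W=S_n\ltimes\{\pm 1\}^n\to\{\pm 1\}$ given by the product of the $\pm 1$ factors, not trivially. Requiring triviality would cut you down to the index-$2$ subgroup with Weyl image $S_n\ltimes H_n$, and you would not recover the full $N_{SO(2n+1)}$-structure. The difference in setup: the paper sidesteps this issue entirely by working with $C=\coprod_{-n\le i\le n,\,i\neq 0}\bbC_i$ (no $\bbC_0$) and building the inverse only from $\cL|_{\overline{X}_1^{\,ne,\circ_P}}$. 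The point is that once the cameral cover $\tilde{X}^\circ$ is fixed, the zero-weight piece $\cL|_{\overline{X}_0}\cong \tilde{X}^\circ\times_W V_0$ is completely determined, so it carries no additional moduli and need not appear in the reconstruction. Your approach with $\bbC_0$ included can be made to work once you correct the action to the sign character, but the paper's version is cleaner precisely because it avoids having to track this redundant data.
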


\begin{proof}
The description of $\cM|_{\overline{X}_{1}^{\circ_{R}}/i}$ is for the exact same reason as in the $SO(2n)$ case of Propositions \ref{proposition: Spectral Cameral SO(2n) without S monodromy condition} and \ref{propL Spectral and Cameral descriptions for SO(2n) with monodromy condition}.



The analysis of $\overline{X}_{1}^{\circ_{R}}$ is now identical to that in Propositions \ref{propL Spectral and Cameral descriptions for Sp(2n) with monodromy condition} and \ref{prop: spectral cameral SP(2n) no alpha}.  The local system on $\overline{X}_{0}^{\circ_{R}}\cong X^{\circ}$ is then given by:
\[\tilde{X}^{\circ}\times_{W}V_{0},\]
where $V_{0}$ is the representation of $W$ on the zero-weight space of the defining representation of $SO(2n+1)$.  This is the representation 
\[S_{n}\ltimes \{\pm 1\}^{n}\rightarrow \{\pm 1\}\acts \bbC,\]
where the map corresponds to taking the product of the $\pm 1$ factors, and the action on $\C$ is by multiplication.

Consider the set $C:=\coprod_{-n\leq i\leq n, i\neq 0}\bbC_{i}$ consisting of the disjoint union of $2n$ copies of $\bbC$.  Consider homomorphisms from $C\rightarrow C$ that consist of the data:
\begin{itemize}
    \item A permutation $\sigma$ of $\{-n,...,-1,1,...,n\}$. 
    \item Linear maps $\bbC_{i}\rightarrow \bbC_{\sigma(i)}$ for $i\in \{-n,...,-1,1,...,n\}$.
\end{itemize}
   
Consider the isomorphisms $\bbC_{i}\wedge \bbC_{-i}\xrightarrow{\cong} \bbC$ that sends $1\wedge 1$ to $1$ for $i>0$ (and $1\wedge 1\mapsto -1$ for $i<0$).  Consider the subset of these homomorphisms where

\begin{itemize}
    \item The permutation $\sigma$ satisfies $\sigma\in S_{n}\ltimes \{\pm 1\}^{n}$ (identified as a subgroup of $S_{2n}$ via the map $W_{SO(2n+1)}\hookrightarrow W_{GL(2n+1)}$ factoring through $W_{GL(2n)}$).
    \item For all $1\leq i\leq n$ the following diagrams commutes;
\[
\begin{tikzcd}
\bbC_{i}\wedge \bbC_{-i} \arrow{r}{\cong}\arrow{d} &  \bbC\arrow{d}{\pm 1}\\
\bbC_{\sigma(i)}\wedge \bbC_{\sigma(-i)} \arrow{r}{\cong} & \bbC
\end{tikzcd}
\]
where the map $\bbC\rightarrow \bbC$ is $+1$ if $\sigma(i)>0$, and $-1$ otherwise.
\end{itemize}

We can identify the homomorphisms satisfying these conditions with $N_{SO(2n+1)}$.

We define a locally constant sheaf on $X^{\circ}$ associated to $\cL\in \Loc_{\bbG_{m}}(\overline{X}_{1}^{\circ_{P}})\times_{Loc_{\bbG_{m}}(\overline{X}^{ne,\circ_{P}}/i)}\{\cM\}$ which we denote by
$\Hom^{SO(2n+1)}(C\times X^{\circ},\cL|_{\overline{X}^{ne,\circ_{P}}_{1}})$.  Over a contractible open set $U\subset X^{\circ}$ an element $b\in \Hom^{SO(2n)}(C\times X^{\circ},\cL)$ is defined to consist of the data of;
\begin{itemize}
    \item A morphism $b_{1}:\{-n,...,-1,1,...,n\}\times U\rightarrow \overline{X}^{\circ_{P}}|_{U}$ of sheaves of sets over $U$ that is surjective and intertwines $i$ and the involution sending $j\mapsto -j$.  Denote by $b_{1,K}$ the map from $\{1,...,n\}\times U\rightarrow (X^{\circ_P}/i)|_{U}$ induced by $b_{1}$.
    \item Locally constant isomorphisms $\underline{\bbC_{j}}\xrightarrow{b_{2,j}}\cL|_{b_{1}(j\times X)}$ (considered as sheaves on $U$),
\end{itemize} 
such that;
\begin{itemize}
    \item The following diagram commutes:

\[
\begin{tikzcd}
\cL|_{b_{1}(\{j\}\times U)}\wedge \cL|_{b_{1}(\{-j\}\times U)} \arrow{r} & \cM_{\overline{X}^{\circ_{P}}/i}|_{b_{1,K}\{j\}\times U}\\
\underline{\bbC_{j}}_{U}\wedge \underline{\bbC_{-j}}_{U} \arrow{r} \arrow{u}{b_{2,j}\wedge b_{2,-j}} & \underline{\bbC}_{U}\arrow{u}{b_{3,j}}
\end{tikzcd}
\]
where the morphism $b_{3,j}:\cM_{\overline{X}^{ne, \circ_{P}}/i}|_{b_{1,K}(\{j\}\times U)}\rightarrow \underline{\bbC}_{U}$, is the morphism induced from the morphism of principal $\bbZ/2$-bundles on $b_{1,K}(\{j\}\times U)\subset \overline{X}^{ne,\circ_{P}}/i$ given by $\overline{X}^{ne,\circ_{P}}|_{b_{1,K}(\{j\}\times U)}\xleftarrow{\cong} (b_{1,K}(\{j\}\times U)) \times \{j,-j\}\cong (b_{1,K}(\{j\}\times U)) \times \bbZ/2$ (where in the final map we identify $j$ with the identity $1_{\bbZ/2}$).
\end{itemize}

Then $\Hom^{SO(2n+1)}(C\times X^{\circ}, \cL|_{\overline{X}^{ne,\circ_{R}}_{1}})$ is an $N_{SO(2n+1)}$ local system, where the $N_{SO(2n+1)}$ action comes from precomposing with the homomorphisms $C\rightarrow C$ specified above.  The map $\cL\mapsto \Hom^{SO(2n+1)}(C\times X,\cL)$ is then the inverse to the map in Proposition \ref{prop: spectral cameral SO(2n+1) with alpha}.

The consideration of the $S$-monodromy condition at $R_{\rho,1}$ is entirely analogous to the $SO(2n)$ and $Sp(2n)$ cases in the proof of Propositions \ref{propL Spectral and Cameral descriptions for Sp(2n) with monodromy condition} and \ref{propL Spectral and Cameral descriptions for SO(2n) with monodromy condition}.  The consideration at $R_{\rho,0}$ is analogous to that of $Sp(2n)$  in Proposition \ref{propL Spectral and Cameral descriptions for Sp(2n) with monodromy condition} with the minor difference that it reduces to the case of $SO(3)$ rather than $SL(2)$.  The restriction on the monodromy of the local systems on $\overline{X}_{1}$ and $\overline{X}_{0}$ ensure that the $S$-monodromy condition is satisfied.  The result follows.
\end{proof}

\begin{appendices}
\section{Explicit computations for planar root systems}
\label{sect:planar}

In this appendix we list a few explicit calculations, which exemplify the results of Section \ref{sect:prelim} in the case of planar root systems.  While we use the logarithm and the exponential we only apply these functions to unipotent and nilpotent elements respectively.  Hence all results hold for algebraic groups. Recall that there are only four planar root systems; they are depicted in Figures \ref{fig:a1root}-\ref{fig:g2root}.

\begin{lemma}
\label{lem:convex_hull_planar}
Choose a polarization of the planar root system, and let $\alpha, \beta$ be the simple roots determined by the polarization. If their lengths differ, let $\alpha$ be the shorter root. Then the restricted convex hull (see Definition \ref{def:convex_hulls}) is given by:
\begin{enumerate}
    \item $\Conv^{\bbN}_{\alpha, \beta} = \{\alpha, \beta\}$ in the $A_1 \times A_1$ case;
    \item $\Conv^{\bbN}_{\alpha, \beta} = \{\alpha, \alpha + \beta, \beta\}$ in the $A_2$ case;
    \item $\Conv^{\bbN}_{\alpha, \beta} = \{\alpha, 2\alpha + \beta, \alpha + \beta, \beta\}$ in the $B_2$ case;
    \item $\Conv^{\bbN}_{\alpha, \beta} = \{\alpha, 3\alpha + \beta, 2\alpha + \beta, 3\alpha + 2\beta, \alpha + \beta, \beta\}$ in the $G_2$ case.
\end{enumerate}
\end{lemma}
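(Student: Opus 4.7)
The plan is to observe that the restricted convex hull $\Conv^\N_{\alpha,\beta}$ coincides exactly with the set of positive roots $\Phi^+$ for the polarization determined by the simple roots $\alpha, \beta$. Indeed, by definition of a polarization in a rank-two root system, every element of $\Phi^+$ can be written uniquely as $n_1 \alpha + n_2 \beta$ with $n_1, n_2 \in \N\cup\{0\}$, and conversely any root of this form lies in $\Phi^+$. Thus the lemma reduces to enumerating $\Phi^+$ in each of the four planar root systems, which is classical (see e.g. Chapter VI of Bourbaki, \cite{bourbaki2002Lie}).

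The enumeration goes as follows. For $A_1 \times A_1$, the two simple roots are orthogonal and $\alpha+\beta \notin \Phi$, so $\Phi^+ = \{\alpha,\beta\}$. For $A_2$, all roots have the same length, the angle between $\alpha$ and $\beta$ is $2\pi/3$, and $\Phi^+ = \{\alpha, \alpha+\beta, \beta\}$. For $B_2$ with $\alpha$ the short simple root and $\beta$ the long simple root, the angle is $3\pi/4$, and by repeatedly applying reflection (or directly from the Cartan integers $\langle \alpha,\beta^\vee\rangle = -1$, $\langle \beta,\alpha^\vee\rangle = -2$) one obtains $\Phi^+ = \{\alpha, \alpha+\beta, 2\alpha+\beta, \beta\}$. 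For $G_2$ with $\alpha$ short and $\beta$ long, the angle is $5\pi/6$, and the positive roots are $\{\alpha, \alpha+\beta, 2\alpha+\beta, 3\alpha+\beta, 3\alpha+2\beta, \beta\}$.

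The only step requiring care is the normalization convention: one must check that our definition of ``short root'' in types $B_2$ and $G_2$ matches the convention under which the positive roots take the stated form. This is simply a matter of checking that $\alpha$ being shorter forces $\langle \beta, \alpha^\vee\rangle$ to be the larger (in absolute value) of the two off-diagonal Cartan integers, which is standard. There is no real obstacle here: the entire argument is a direct appeal to the classification of rank-two root systems, and no case is computationally involved.
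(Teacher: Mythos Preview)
Your proof is correct and follows essentially the same approach as the paper: both arguments amount to observing that $\Conv^\N_{\alpha,\beta}$ is exactly the set of positive roots $\Phi^+$ for the chosen polarization, and then reading these off from the classification of rank-two root systems. The paper's proof simply says ``Obvious from Figures \ref{fig:a1root}--\ref{fig:g2root}'', while you spell out the identification $\Conv^\N_{\alpha,\beta} = \Phi^+$ and the enumeration of $\Phi^+$ explicitly, but there is no substantive difference.
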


\begin{proof}
Obvious from Figures \ref{fig:a1root}-\ref{fig:g2root}; our convention on $\alpha$ and $\beta$ agrees with the notation in the figures.
\end{proof}

Consequently, when two Stokes curves labeled by $\alpha$ and $\beta$ intersect, the number of new Stokes curves produced is either 0, 1, 2, or 4 (respectively).

\begin{lemma}
\label{lem:swap2}
In the setting of Lemma \ref{lem:convex_hull_planar}, let two Stokes curves labeled by $\alpha$ and $\beta$ intersect.  Consider the morphism of Corollary \ref{cor:stokes_sector}, which maps incoming Stokes factors to outgoing Stokes factors:
\[
U_\alpha \times U_\beta 
\longrightarrow 
\prod_{\gamma \in \Conv^\N_{\alpha,\beta}} U_\gamma.
\]
Then, letting $x \in \fu_\alpha$, $y \in \fu_\beta$, and using the short-hand notation $[x,y]^{[n]}:= \Big[x, \big[x, \dots,[x,y]\big]\Big]$, and the notation $\exp(a)=e^{a}$, the morphism
has one of the following explicit forms:
\begin{enumerate}
    \item In the $A_1 \times A_1$ case:
    \begin{equation}
    \begin{tikzcd}
    (e^x, e^y)
    \arrow[mapsto]{r}
    &
    (e^y,e^x).
    \end{tikzcd}
    \end{equation}
    
    \item In the $A_2$ case:
    \begin{equation}
    \begin{tikzcd}
    (e^x, e^y)
    \arrow[mapsto]{r}
    &
    \big(e^y,
    \exp{[x,y]},
    e^x
    \big).
    \end{tikzcd}
    \end{equation}
    
    \item In the $B_2$ case:
    \begin{equation}
    \begin{tikzcd}
    (e^x, e^y)
    \arrow[mapsto]{r}
    &
    \bigg(e^y,
    \exp{[x,y]},
    \exp\left(\frac{1}{2} [x,y]^{[2]}\right),
    e^x
    \bigg).
    \end{tikzcd}
    \end{equation}
    
    \item In the $G_2$ case:
    \begin{equation}
    \begin{tikzcd}
    (e^x, e^y)
    \arrow[mapsto]{d}
    \\
    \Bigg(e^y,
    \exp{[x,y]},
    \exp\left(\frac{1}{6} \big[[x,y]^{[2]},[x,y]\big]\right),
    \exp\left(\frac{1}{2} [x,y]^{[2]}\right),
    \exp\left(\frac{1}{6} [x,y]^{[3]}\right),
    e^x
    \Bigg).
    \end{tikzcd}
    \end{equation}
\end{enumerate}

More generally if the two Stokes curves are labelled by roots $\alpha$ and $\beta$ with the property that $\Conv^{\bbN}_{\alpha, \beta} \subset \{\alpha, 3\alpha + \beta, 2\alpha + \beta, 3\alpha + 2\beta, \alpha + \beta, \beta\}$, then the image of $(e^x, e^y)$ is the subtuple of 
\[\Bigg(e^y,
    \exp{[x,y]},
    \exp\left(\frac{1}{6} \big[[x,y]^{[2]},[x,y]\big]\right),
    \exp\left(\frac{1}{2} [x,y]^{[2]}\right),
    \exp\left(\frac{1}{6} [x,y]^{[3]}\right),
    e^x
    \Bigg),\]
corresponding to the subset $\Conv^{\bbN}_{\alpha, \beta} \subset \{\alpha, 3\alpha + \beta, 2\alpha + \beta, 3\alpha + 2\beta, \alpha + \beta, \beta\}$.
\end{lemma}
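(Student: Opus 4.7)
The strategy is to invoke the uniqueness part of Theorem \ref{thm:assignment_stokes_intersection} (or equivalently Corollary \ref{cor:stokes_sector}, since incoming and outgoing rays are in opposite sectors): it suffices to exhibit a tuple $(u'_\gamma)_{\gamma \in C_{out}}$ with $u'_\gamma \in U_\gamma$ for which the cancellation identity
\[
u_\alpha^{-1} u_\beta^{-1} \overrightarrow{\prod}_{\gamma \in C_{out}} u'_\gamma = \id
\]
holds, i.e.\ such that the clockwise product of the outgoing factors equals $u_\beta u_\alpha$. Since Lemma \ref{lem:convex_hull_planar} identifies $C_{out} = \Conv^{\N}_{\alpha,\beta}$ in each planar case, the plan is to verify the explicit formulas case by case by direct application of the Baker--Campbell--Hausdorff formula, using that all iterated brackets of $x\in\fu_\alpha$ and $y\in\fu_\beta$ land in root spaces $\fu_\gamma$ with $\gamma \in \N\alpha + \N\beta$, and that by Lemma \ref{lem:roots_commutators_vanish} any such bracket vanishes unless the corresponding linear combination is a root.

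The base cases are essentially immediate. In the $A_1\times A_1$ case, $\alpha+\beta$ is not a root, so $[x,y]=0$ and $e^x$ commutes with $e^y$; the stated formula $(e^x,e^y)\mapsto (e^y,e^x)$ then trivially satisfies $e^y e^x = e^y e^x$. In the $A_2$ case, only $\alpha+\beta$ is a root in the cone, so $[x,[x,y]]$ and $[y,[x,y]]$ lie in root spaces indexed by non-roots and hence vanish; a short BCH computation shows $e^y\, e^{[x,y]}\, e^x = e^{x+y+\frac{1}{2}[x,y]} = e^x e^y$. In the $B_2$ case, the non-vanishing brackets are $[x,y]\in\fu_{\alpha+\beta}$ and $[x,y]^{[2]}:=[x,[x,y]]\in\fu_{2\alpha+\beta}$, while $[y,[x,y]]$ and all higher brackets vanish; plugging the proposed outgoing tuple into BCH and collecting terms by root height will again yield $e^x e^y$.

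The main work, and the principal obstacle, is the $G_2$ case. Here there are four outgoing factors labelled by $3\alpha+\beta$, $2\alpha+\beta$, $3\alpha+2\beta$, $\alpha+\beta$, and the non-vanishing iterated brackets are $[x,y]$, $[x,y]^{[2]}$, $[x,y]^{[3]}$, and $\big[[x,y]^{[2]},[x,y]\big]$ (all higher brackets vanishing because the corresponding combinations of $\alpha,\beta$ are not roots). I would organize the verification by inductively multiplying the outgoing factors from right to left, at each stage using BCH in the form
\[
e^{A}\, e^{B} = \exp\!\Big(A + B + \tfrac{1}{2}[A,B] + \tfrac{1}{12}[A,[A,B]] + \tfrac{1}{12}[B,[B,A]] + \cdots\Big),
\]
keeping only the iterated brackets whose root-index lies in $\Conv^{\N}_{\alpha,\beta}$, and tracking the rational coefficients carefully. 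The numerical coefficients $\tfrac{1}{2}$, $\tfrac{1}{6}$ in the statement are exactly what BCH produces; their appearance is the reason one cannot simply write each outgoing factor as $\exp$ of a single nested commutator without a prefactor. The ``more generally'' statement then follows immediately by setting to zero those $u'_\gamma$ whose index $\gamma$ does not occur in $\Conv^{\N}_{\alpha,\beta}$, since the $G_2$ computation is performed uniformly in the ambient Lie algebra and the truncated tuples still satisfy the cancellation identity.

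Once the cancellation identity is checked in each case, the uniqueness clause of Theorem \ref{thm:assignment_stokes_intersection} guarantees that the displayed formulas are \emph{the} unique solution of the 2D scattering diagram, completing the proof.
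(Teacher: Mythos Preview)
Your approach is essentially the paper's: reduce to a single product identity via Baker--Campbell--Hausdorff and then invoke uniqueness. Two remarks are worth making.

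First, your stated cancellation identity has the incoming factors in the wrong order. You write $u_\alpha^{-1}u_\beta^{-1}\,\overrightarrow{\prod} u'_\gamma = \id$, i.e.\ that the outgoing product should equal $u_\beta u_\alpha = e^y e^x$; but your own $A_2$ check (correctly) yields $e^y e^{[x,y]} e^x = e^x e^y$, not $e^y e^x$. The paper's identity is indeed
\[
e^x e^y \;=\; e^y\,\exp[x,y]\,\exp\!\tfrac{1}{6}\big[[x,y]^{[2]},[x,y]\big]\,\exp\!\tfrac{1}{2}[x,y]^{[2]}\,\exp\!\tfrac{1}{6}[x,y]^{[3]}\,e^x,
\]
so adjust the orientation convention in your first display accordingly.

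Second, rather than treating the four cases separately, the paper proves only the $G_2$ identity above, noting that the smaller cases follow by allowing some exponents to vanish. For the computation itself, instead of sequential right-to-left multiplication, the paper groups the six right-hand factors into two blocks of three, sets $z = \log(\text{first block})$ and $w = \log(\text{last block})$ via BCH, and then verifies $\log(e^z e^w) = \log(e^x e^y)$ using BCH once more (truncated to the handful of brackets that survive). This $3{+}3$ organization keeps the bookkeeping shorter than the inductive scheme you outline, though both reach the same destination.
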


\begin{proof}
Using Lemma \ref{lem:roots_commutators_vanish}, and the explicit description of $\Conv^\N_{\alpha,\beta}$ from Lemma \ref{lem:convex_hull_planar}, it's clear that the given elements live in the correct one-parameter subgroups $U_\gamma$, for $\gamma \in \Conv^{\N}_{\alpha, \beta}$. It suffices, then, to prove that:
\begin{equation}
\label{eq:ugly_formula_to_prove}
e^x e^y = e^y
    \exp{[x,y]}
    \exp\left(\frac{1}{6} \big[[x,y]^{[2]},[x,y]\big]\right)
    \exp\left(\frac{1}{2} [x,y]^{[2]}\right)
    \exp\left(\frac{1}{6} [x,y]^{[3]}\right)
    e^x,
\end{equation}
where we allow the possibility that some of the exponents are zero, in order to treat all four cases simultaneously.

Up to order 5 in $x, y\in \fg$, the Baker-Campbell-Hausdorff formula gives:
\begin{align*}
\log(e^xe^y)
&=
x + y + \frac{1}{2}[x,y] + \frac{1}{12}\left(\big[x,[x,y]\big] + \big[y,[y,x]\big] \right)
-\frac{1}{24} \Big[x,\big[y,[x,y]\big]\Big] -
\\
&- \frac{1}{720}\left( \bigg[y,\Big[y,\big[y,[y,x]\big]\Big]\bigg]
+ \bigg[x,\Big[x,\big[x,[x,y]\big]\Big]\bigg]\right) +
\\
&+ \frac{1}{360}\left( \bigg[y,\Big[x,\big[x,[x,y]\big]\Big]\bigg]
+ \bigg[x,\Big[y,\big[y,[y,x]\big]\Big]\bigg]\right) +
\\
&+ \frac{1}{720}\left( \bigg[y,\Big[x,\big[y,[x,y]\big]\Big]\bigg]
+ \bigg[x,\Big[y,\big[x,[y,x]\big]\Big]\bigg]\right) + 
\\
&+ \dots
\end{align*}
See, for example, Theorem 2 in II.6.6 of \cite{bourbakiLie1-3} for the general combinatorial formula, originally due to Dynkin.

Now let $x \in \fu_\alpha$, $y\in \fu_\beta$. Lemma \ref{lem:convex_hull_planar} ensures that the only nonzero terms are:
\begin{equation}
\label{eq:BCH0}
\log(e^x e^y) 
=
x + y + \frac{1}{2}[x,y] + \frac{1}{12}[x,y]^{[2]} + \frac{1}{360}\big[[x,y]^{[2]},[x,y] \big].
\end{equation}

Similarly:
\begin{align}
\begin{split}
\label{eq:BCH1}
\log\left( 
e^y
\exp{[x,y]}
\exp\left(\frac{1}{6} \big[[x,y]^{[2]},[x,y]\big]\right)
\right)
&=
y + [x,y] + \frac{1}{6} \big[[x,y]^{[2]},[x,y]\big],
\end{split}
\\
\begin{split}
\label{eq:BCH2}
\log \left(
\exp\left(\frac{1}{2} [x,y]^{[2]}\right)
\exp\left(\frac{1}{6} [x,y]^{[3]}\right)
e^x
\right)
&=
\frac{1}{2}[x,y]^{[2]}
+ \frac{1}{6} [x,y]^{[3]}
+ x
+ \frac{1}{4}\big[[x,y]^{[2]},x \big]
\\
&=
x + \frac{1}{2}[x,y]^{[2]}
- \frac{1}{12} [x,y]^{[3]}.
\end{split}
\end{align}

Let $z$, $w$ denote the right-hand sides of \ref{eq:BCH1} and \ref{eq:BCH2}, respectively. Then we need to prove that $e^xe^y = e^{z}e^{w}$. The nonzero iterated Lie brackets of $z$, $w$ are:
\begin{align*}
[z,w] &= -[x,y] - [x,y]^{[2]} - \frac{7}{12}\big[ [x,y]^{[2]},[x,y] \big],
\\
[z,w]^{[2]} &= \big[ [x,y]^{[2]},[x,y] \big],
\\
[w,z]^{[2]} &= [x,y]^{[2]} + [x,y]^{[3]} + \frac{1}{2} \big[ [x,y]^{[2]},[x,y] \big],
\\
\big[ [w,z]^{[2]},[w,z] \big] &= \big[ [x,y]^{[2]},[x,y] \big].
\end{align*}

To calculate $[z,w]$ we used the Jacobi identity to show $[y,[x,y]^{[3]}]=[[x,y]^{[2]},[x,y]]$.

Then:
\begin{align*}
\log(e^{z}e^{w})
&=
z + w + \frac{1}{2} [z,w] + \frac{1}{12} [z,w]^{[2]} + \frac{1}{12} [w,z]^{[2]} + \frac{1}{360} \big[ [w,z]^{[2]},[w,z] \big]
\\
&= 
x + y + \frac{1}{2}[x,y] + \frac{1}{12}[x,y]^{[2]} + \frac{1}{360}\big[[x,y]^{[2]},[x,y] \big].
\end{align*}
This agrees with $\log(e^xe^y)$, as computed in \ref{eq:BCH0}.
\end{proof}


\begin{lemma}
\label{lemma commuting alpha plus beta past alpha}
Let $\gamma$ and $\delta$ be roots such that $\Conv^{\bbN}_{\gamma, \delta}\subset \{\gamma, 2\gamma+\delta, \gamma+\delta, \gamma+2\delta, \delta\}$.  Consider the morphism of Corollary \ref{cor:stokes_sector}, which maps incoming Stokes factors to outgoing Stokes factors:
\[
U_\alpha \times U_\beta 
\longrightarrow 
\prod_{\gamma \in \Conv^\N_{\alpha,\beta}} U_\gamma.
\]
Letting $x \in \fu_\delta$, $y \in \fu_\gamma$, and using the short-hand notation from Lemma \ref{lem:swap2}, the morphism
is given by mapping $(e^{x},e^{y})$ to the subtuple of
\[\left(e^{y}, \exp{\left(\frac{1}{2}[y,x]^{[2]}\right)}, \exp{\left([x,y]\right)}, \exp{\left(\frac{1}{2}[x,y]^{[2]}\right)},e^{x}\right)\]
corresponding to the subset $\Conv^{\bbN}_{\gamma, \delta}\subset \{\gamma, 2\gamma+\delta, \gamma+\delta, \gamma+2\delta, \delta\}$.
\end{lemma}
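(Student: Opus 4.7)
The plan is to verify the identity
\[
e^x e^y = e^y \cdot \exp\bigl(\tfrac{1}{2}[y,x]^{[2]}\bigr) \cdot \exp\bigl([x,y]\bigr) \cdot \exp\bigl(\tfrac{1}{2}[x,y]^{[2]}\bigr) \cdot e^x
\]
by a direct Baker--Campbell--Hausdorff computation, and then invoke uniqueness from Corollary~\ref{cor:stokes_sector} to conclude that this is the morphism in question. The containment hypothesis $\Conv^{\bbN}_{\gamma,\delta}\subset\{\gamma,2\gamma+\delta,\gamma+\delta,\gamma+2\delta,\delta\}$ forces every element of $\fu_{a\gamma+b\delta}$ with $(a,b)\notin\{(1,0),(2,1),(1,1),(1,2),(0,1)\}$ and $a,b\geq 0$ to vanish, by Lemma~\ref{lem:roots_commutators_vanish}. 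This makes the BCH series truncate drastically on both sides: all triple brackets $[x,y]^{[3]}$ and $[y,x]^{[3]}$ die (their codomains are $\fu_{\gamma+3\delta}$ and $\fu_{3\gamma+\delta}$), and all iterated brackets of depth $\geq 4$ die as well.

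I would first check that each factor on the right lies in the claimed root subgroup: $\exp(\tfrac{1}{2}[y,x]^{[2]})\in U_{2\gamma+\delta}$, $\exp([x,y])\in U_{\gamma+\delta}$, $\exp(\tfrac{1}{2}[x,y]^{[2]})\in U_{\gamma+2\delta}$, again by Lemma~\ref{lem:roots_commutators_vanish}. If a root in the 5-element set does not lie in $\Conv^{\bbN}_{\gamma,\delta}$, the associated bracket is zero and the corresponding exponential is trivial, which matches the subtuple formulation. Crucially, the three middle factors commute with one another, since their pairwise brackets land in $\fu_{3\gamma+2\delta}$, $\fu_{2\gamma+3\delta}$, or $\fu_{2\gamma+2\delta}$, all outside the hull. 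They therefore consolidate into
\[
M:=\exp\bigl(\tfrac{1}{2}[y,x]^{[2]}+[x,y]+\tfrac{1}{2}[x,y]^{[2]}\bigr).
\]

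The core computation is then the BCH verification. On the left, the only surviving terms give
\[
\log(e^xe^y)=x+y+\tfrac{1}{2}[x,y]+\tfrac{1}{12}[x,y]^{[2]}+\tfrac{1}{12}[y,x]^{[2]}.
\]
On the right, I would first compute $\log(M\cdot e^x)$ using that the only nonvanishing brackets between the exponent of $M$ and $x$ come from $[[x,y],x]=-[x,y]^{[2]}$, which yields $\log(Me^x)=x+[x,y]+\tfrac{1}{2}[y,x]^{[2]}=:B$, with no quadratic-BCH corrections (since $[\cdot,[\cdot,\cdot]]$ terms land outside the hull). A second BCH, using $[y,x]=-[x,y]$, $[y,[x,y]]=-[y,x]^{[2]}$, and $[x,[x,y]]=[x,y]^{[2]}$, gives
\[
\log(e^ye^B)=y+B+\tfrac{1}{2}[y,B]+\tfrac{1}{12}[y,[y,B]]+\tfrac{1}{12}[B,[B,y]],
\]
and direct substitution yields precisely $x+y+\tfrac{1}{2}[x,y]+\tfrac{1}{12}[x,y]^{[2]}+\tfrac{1}{12}[y,x]^{[2]}$, matching the left-hand side.

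The main obstacle is simply the bookkeeping of the $\tfrac{1}{2}$ and $\tfrac{1}{12}$ coefficients along with the half-dozen Lie identities needed to identify $[y,[x,y]]$, $[x,[x,y]]$, etc.; all higher-order BCH contributions vanish automatically by the root-space argument, so no $-\tfrac{1}{24}$ or further terms intervene. Once the identity is established, Corollary~\ref{cor:stokes_sector} applies directly because incoming and outgoing rays are restricted to opposite sectors, which gives both existence and uniqueness of the solution and therefore identifies the morphism claimed.
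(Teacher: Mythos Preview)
Your proposal is correct and follows exactly the approach the paper intends: the paper's own proof simply reads ``This is shown by a computation that is essentially identical to that for Lemma~\ref{lem:swap2}. As such we omit the calculation.'' You have supplied precisely that omitted BCH computation, carried out in the same style as the paper's proof of Lemma~\ref{lem:swap2} (truncating via Lemma~\ref{lem:roots_commutators_vanish}, grouping factors, and matching logarithms), and your bookkeeping checks out.
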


This is shown by a computation that is essentially identical to that for Lemma \ref{lem:swap2}.  As such we omit the calculation.

\begin{remark}
An example of the situation of Lemma \ref{lemma commuting alpha plus beta past alpha} is given by the $G2$ root system with $\gamma=\alpha+\beta$ and $\delta=\alpha$, where $\alpha$ and $\beta$ are as in Lemma \ref{lem:convex_hull_planar}.
\end{remark}

Consider, now, an intersection of $k$ curves, labeled by a maximal planar, convex, set of roots, as in Figure \ref{fig:rays_reverse}.

\begin{figure}[h]
    \centering
    \includegraphics[width = 0.5 \textwidth]{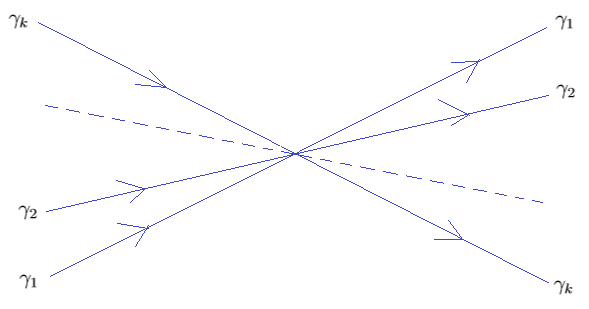}
    \caption{Intersection corresponding to maximal convex set of roots.}
    \label{fig:rays_reverse}
\end{figure}

Let $\gamma_1, \dots, \gamma_k$ denote the order on $\Conv^{\N}_{\alpha, \beta}$ induced by the order of the incoming Stokes curves. Then the outgoing curves have the reverse order. So, in this case, Corollary \ref{cor:stokes_sector} gives a morphism which commutes with the multiplication maps:
\begin{equation}
\label{diag:reverse}
\begin{tikzcd}
U_{\gamma_1} \times \dots \times U_{\gamma_k}
\arrow[swap]{d}{\text{reverse}}
\arrow{r}
&
U_{\Conv^\N_{\alpha,\beta}}
\\
U_{\gamma_k} \times \dots \times U_{\gamma_1} 
\arrow{ur}
&
\end{tikzcd}
\end{equation}

We now explain how to obtain explicit formulas for the morphism $reverse$ in Diagram \ref{diag:reverse}, by repeated application of Lemmas \ref{lem:swap2} and \ref{lemma commuting alpha plus beta past alpha}.

We use the fact that, in the planar case we are dealing with, the order on outgoing Stokes curves is convex  as shown in section \ref{subsubsec: convexity}. This means that, if there are three outgoing curves labeled by $\gamma, \delta, \gamma + \delta$, then the one labeled by $\gamma + \delta$ lies within the sector bounded by the curves labeled by $\gamma$ and $\delta$ (see Lemma \ref{lem:2d_sector}). There are only two total orders on $\Conv^\N_{\alpha,\beta}$ satisfying this convexity property: they correspond to an order $(\gamma_1, \dots, \gamma_k)$ and its reverse $(\gamma_k, \dots, \gamma_1)$, as in Diagram \ref{diag:reverse}. Since the orders on $\Conv^\N_{\alpha,\beta}$ used on the RHS of Lemmas \ref{lem:swap2} and \ref{lemma commuting alpha plus beta past alpha} are convex, the right hand side of the applicable lemma must coincide with one of these two. We assume that it is $(\gamma_k, \dots, \gamma_1)$; otherwise, we would describe $\text{reverse}^{-1}$ instead.

We build the morphism $\text{reverse}$ as a composition of ``twisted transpositions'' and ``contractions'', as defined below.

We define a twisted transposition to be an application of Lemma \ref{lem:swap2} or Lemma \ref{lemma commuting alpha plus beta past alpha}: we replace a term of the form $U_{\gamma_i} \times U_{\gamma_j}$ (with $i<j$) with the tuple provided by the RHS of Lemma \ref{lem:swap2} or Lemma \ref{lemma commuting alpha plus beta past alpha}. The requirement that $i<j$ ensures that the resulting tuple is ordered correctly. 

We define a contraction to be a multiplication map:
\[
U_{\gamma_i} \times U_{\gamma_i} \to U_{\gamma_i}.
\]

It's clear that, using finitely many twisted transpositions and contractions, we obtain a morphism which commutes with the multiplication maps as in diagram \ref{diag:reverse}. This must agree with the morphism $reverse$, due to the uniqueness statement in Corollary \ref{cor:stokes_sector}. 

\begin{example}[Cecotti--Vafa Wall Crossing Formula]
Consider a Lie algebra $\fr g$ of ADE type. The restriction of the root system to the plane spanned by $\alpha$ and $\beta$ is a root system of type $A_1 \times A_1$ or $A_2$. In the first case, $k=2$, and we have $\text{reverse}(e^{X_\alpha},e^{X_\beta}) = (e^{X_\beta},e^{X_\alpha})$, is simply the transposition. 
In the second case, $k=3$ and the commutativity of the diagram \ref{diag:reverse} is expressed by:
\begin{equation}
\label{eq:CV_A2}
\exp(X_{\alpha}) \exp(X_{\alpha + \beta}) \exp(X_{\beta})
= 
\exp\big(X_{\beta}\big) 
\exp\big(X_{\alpha + \beta} + [X_\alpha,X_\beta]\big) 
\exp\big(X_\alpha \big).
\end{equation}
\end{example}

\begin{example}[B2 analogue of Cecotti--Vafa Wall Crossing Formula]
In the B2 case, $k=4$ and we have:
\begin{align}
\begin{split}
\label{eq:CV_B2}
\exp&(X_\alpha) \exp(X_{2\alpha+\beta}) \exp(X_{\alpha + \beta}) \exp(X_\beta)
\\
=&
\exp\big(X_\beta\big) \times 
\\
&
\exp\big(X_{\alpha + \beta} + [X_\alpha, X_\beta]\big)  \times
\\
&
\exp\left(X_{2\alpha + \beta} + [X_{\alpha},X_{\alpha + \beta}] + \frac{1}{2}[X_\alpha,X_\beta]^{[2]}\right) \times
\\
&
\exp\big(X_\alpha\big).
\end{split}
\end{align}
\end{example}

\begin{example}[G2 analogue of Cecotti--Vafa Wall Crossing Formula]
In the G2 case, $k=6$ and we have:
\begin{align}
\begin{split}
\label{eq:CV_G2}
\exp&(X_\alpha) \exp(X_{3\alpha + \beta}) \exp(X_{2\alpha + \beta}) \exp(X_{3\alpha + 2\beta}) \exp(X_{\alpha + \beta}) \exp(X_{\beta})
\\
=&
\exp\big(X_{\beta}\big) \times
\\
 &
\exp\big(X_{\alpha + \beta} + [X_\alpha,X_\beta]\big)\times
\\
 &
\exp\left(
X_{3\alpha + 2\beta} + [X_{3\alpha + \beta},X_\beta] + [X_{2\alpha + \beta},X_{\alpha + \beta}] 
+ \frac{1}{2} \big[ [X_\alpha,X_\beta]^{[2]},X_{\alpha + \beta} \big]+\right.
\\
& \hspace{7mm}
\left. \frac{1}{6} \big[ [X_\alpha,X_\beta],[X_\alpha,X_\beta]^{[2]}\big]+\frac{1}{2}[X_{\alpha+\beta},X_{\alpha}]^{[2]} 
\right) \times
\\
&
\exp\left(X_{2\alpha + \beta} + [X_\alpha, X_{\alpha + \beta}]
+ \frac{1}{2} [X_\alpha,X_\beta]^{[2]}\right) \times
\\
&
\exp\left(X_{3\alpha + \beta} + [X_\alpha,X_{2\alpha + \beta}] + \frac{1}{6}[X_\alpha,X_\beta]^{[3]}+\frac{1}{2}[X_{\alpha},X_{\alpha+\beta}]^{[2]}\right) \times
\\
 &
\exp\big(X_\alpha\big).
\end{split}
\end{align}

\end{example}

\end{appendices}

\bibliographystyle{amsplain}
\bibliography{bib}

\providecommand{\bysame}{\leavevmode\hbox to3em{\hrulefill}\thinspace}
\providecommand{\MR}{\relax\ifhmode\unskip\space\fi MR }
\providecommand{\MRhref}[2]{%
  \href{http://www.ams.org/mathscinet-getitem?mr=#1}{#2}
}
\providecommand{\href}[2]{#2}
\begin{thebibliography}{10}

\bibitem{alday2010loop}
Luis~F Alday, Davide Gaiotto, Sergei Gukov, Yuji Tachikawa, and Herman
  Verlinde, \emph{Loop and surface operators in {$N=2$} gauge theory and
  {L}iouville modular geometry}, Journal of High Energy Physics \textbf{2010}
  (2010), no.~1, 113.

\bibitem{allegretti2019stability}
Dylan~G.L. Allegretti, \emph{Stability conditions, cluster varieties, and
  {R}iemann-{H}ilbert problems from surfaces}, arXiv preprint arXiv:1912.05938
  (2019).

\bibitem{alvarez1981geometrical}
Luis Alvarez-Gaume and Daniel~Z Freedman, \emph{Geometrical structure and
  ultraviolet finiteness in the supersymmetric $\sigma$-model}, Communications
  in Mathematical Physics \textbf{80} (1981), no.~3, 443--451.

\bibitem{aoki2008virtual}
Takashi Aoki, Naofumi Honda, Takahiro Kawai, Tatsuya Koike, Yukihiro Nishikawa,
  Shunsuke Sasaki, Akira Shudo, and Yoshitsugu Takei, \emph{Virtual turning
  points—a gift of microlocal analysis to the exact {WKB} analysis},
  Algebraic Analysis of Differential Equations, Springer, 2008, pp.~29--43.

\bibitem{aoki2005virtual}
Takashi Aoki, Takahiro Kawai, Shunsuke Sasaki, Akira Shudo, and Yoshitsugu
  Takei, \emph{Virtual turning points and bifurcation of {S}tokes curves for
  higher order ordinary differential equations}, Journal of Physics A:
  Mathematical and General \textbf{38} (2005), no.~15, 3317.

\bibitem{aoki2001exact}
Takashi Aoki, Takahiro Kawai, and Yoshitsugu Takei, \emph{On the exact steepest
  descent method: a new method for the description of {S}tokes curves}, Journal
  of Mathematical Physics \textbf{42} (2001), no.~8, 3691--3713.

\bibitem{aulicino2018cantor}
David Aulicino, \emph{The {C}antor--{B}endixson rank of certain
  {B}ridgeland--{S}mith stability conditions}, Communications in Mathematical
  Physics \textbf{357} (2018), no.~2, 791--809.

\bibitem{barbieri2019quantized}
Anna Barbieri, Tom Bridgeland, and Jacopo Stoppa, \emph{A quantized
  {R}iemann-{H}ilbert problem in {D}onaldson-{T}homas theory}, arXiv preprint
  arXiv:1905.00748 (2019).

\bibitem{beauville1989spectral}
Arnaud Beauville, M.S. Narasimhan, and S.~Ramanan, \emph{Spectral curves and
  the generalised theta divisor}, J. reine angew. Math \textbf{398} (1989),
  169--179.

\bibitem{beilinson1991quantization}
Alexander Beilinson and Vladimir Drinfeld, \emph{Quantization of {H}itchin’s
  integrable system and {H}ecke eigensheaves}, 1991.

\bibitem{bnr1982new}
H.~L. Berk, William~McCay Nevins, and K.~V. Roberts, \emph{New {S}tokes’ line
  in {WKB} theory}, Journal of Mathematical Physics \textbf{23} (1982), no.~6,
  988--1002.

\bibitem{bourbakiLie1-3}
Nicolas Bourbaki, \emph{Lie {G}roups and {L}ie {A}lgebras, {P}art {I}:
  {C}hapters 1-3}, Hermann, Addison-Wesley, 1975.

\bibitem{bourbaki2002Lie}
\bysame, \emph{Lie {G}roups and {L}ie {A}lgebras: {C}hapters 4-6},
  Springer-Verlag, 2002.

\bibitem{bridgeland2019geometry}
Tom Bridgeland, \emph{Geometry from {D}onaldson-{T}homas invariants}, arXiv
  preprint arXiv:1912.06504 (2019).

\bibitem{bridgeland2019riemann}
\bysame, \emph{Riemann--{H}ilbert problems from {D}onaldson--{T}homas theory},
  Inventiones mathematicae \textbf{216} (2019), no.~1, 69--124.

\bibitem{bridgeland2012stability}
Tom Bridgeland and Valerio~Toledano Laredo, \emph{Stability conditions and
  {S}tokes factors}, Inventiones mathematicae \textbf{187} (2012), no.~1,
  61--98.

\bibitem{bridgeland2020monodromy}
Tom Bridgeland and Davide Masoero, \emph{On the monodromy of the deformed cubic
  oscillator}, arXiv preprint arXiv:2006.10648 (2020).

\bibitem{bridgeland2015quadratic}
Tom Bridgeland and Ivan Smith, \emph{Quadratic differentials as stability
  conditions}, Publications math{\'e}matiques de l'IH{\'E}S \textbf{121}
  (2015), no.~1, 155--278.

\bibitem{cecotti1993classification}
Sergio Cecotti and Cumrun Vafa, \emph{On classification of {$N= 2$}
  supersymmetric theories}, Communications in Mathematical Physics \textbf{158}
  (1993), no.~3, 569--644.

\bibitem{chevalley1955}
C.~Chevalley, \emph{Sur certains groupes simples}, Tohoku Math. J. (2)
  \textbf{7} (1955), no.~1-2, 14--66.

\bibitem{curtis1974normalizers}
Morton Curtis, Alan Wiederhold, and Bruce Williams, \emph{Normalizers of
  maximal tori}, Localization in Group Theory and Homotopy Theory, Springer,
  1974, pp.~31--47.

\bibitem{donagi1993decomposition}
Ron Donagi, \emph{Decomposition of spectral covers}, In Current topics in
  complex algebraic geometry, Citeseer, 1993.

\bibitem{donagi1995spectral}
\bysame, \emph{Spectral covers}, Current Topics in Complex Algebraic Geometry
  (Berkeley, CA 1992/93), Math. Sci. Res. Inst. Publ \textbf{28} (1995),
  65--86.

\bibitem{donagi2002gerbe}
Ron Donagi and Dennis Gaitsgory, \emph{The gerbe of {H}iggs bundles},
  Transformation groups \textbf{7} (2002), no.~2, 109--153.

\bibitem{donagi1996supersymmetric}
Ron Donagi and Edward Witten, \emph{Supersymmetric {Y}ang-{M}ills theory and
  integrable systems}, Nuclear Physics B \textbf{460} (1996), no.~2, 299--334.

\bibitem{dubrovin1993geometry}
Boris Dubrovin, \emph{Geometry and integrability of topological-antitopological
  fusion}, Communications in mathematical physics \textbf{152} (1993), no.~3,
  539--564.

\bibitem{dumas2019asymptotics}
David Dumas and Andrew Neitzke, \emph{Asymptotics of {H}itchin’s metric on
  the {H}itchin section}, Communications in Mathematical Physics \textbf{367}
  (2019), no.~1, 127--150.

\bibitem{dumas2020opers}
\bysame, \emph{Opers and nonabelian {H}odge: numerical studies}, arXiv preprint
  arXiv:2007.00503 (2020).

\bibitem{faltings1993stable}
Gerd Faltings, \emph{Stable {G}-bundles and projective connections}, J.
  Algebraic Geom \textbf{2} (1993), no.~3, 507--568.

\bibitem{fenyes2015dynamical}
Aaron Fenyes, \emph{A dynamical perspective on shear-bend coordinates}, arXiv
  preprint arXiv:1510.05757 (2015).

\bibitem{filippini2017stability}
Sara~A Filippini, Mario Garcia-Fernandez, and Jacopo Stoppa, \emph{Stability
  data, irregular connections and tropical curves}, Selecta Mathematica
  \textbf{23} (2017), no.~2, 1355--1418.

\bibitem{fock2006moduli}
Vladimir Fock and Alexander Goncharov, \emph{Moduli spaces of local systems and
  higher {T}eichm{\"u}ller theory}, Publications Math{\'e}matiques de
  l'Institut des Hautes {\'E}tudes Scientifiques \textbf{103} (2006), no.~1,
  1--211.

\bibitem{fredrickson2020exponential}
Laura Fredrickson, \emph{Exponential decay for the asymptotic geometry of the
  {H}itchin metric}, Communications in Mathematical Physics \textbf{375}
  (2020), no.~2, 1393--1426.

\bibitem{fredrickson2020asymptotic}
Laura Fredrickson, Rafe Mazzeo, Jan Swoboda, and Hartmut Weiss,
  \emph{Asymptotic {G}eometry of the {M}oduli space of {P}arabolic
  {$SL(2,\mathbb{C})$}-{H}iggs {B}undles}, arXiv preprint arXiv:2001.03682
  (2020).

\bibitem{gaiotto2012n}
Davide Gaiotto, \emph{N= 2 dualities}, Journal of High Energy Physics
  \textbf{2012} (2012), no.~8, 34.

\bibitem{gaiotto2010four}
Davide Gaiotto, Gregory~W Moore, and Andrew Neitzke, \emph{Four-dimensional
  wall-crossing via three-dimensional field theory}, Communications in
  Mathematical Physics \textbf{299} (2010), no.~1, 163--224.

\bibitem{gaiotto2012wall2d4d}
\bysame, \emph{Wall-crossing in coupled 2d-4d systems}, Journal of High Energy
  Physics \textbf{2012} (2012), no.~12, 82.

\bibitem{gaiotto2013framed}
\bysame, \emph{Framed {BPS} states}, Advances in Theoretical and Mathematical
  Physics \textbf{17} (2013), no.~2, 241--397.

\bibitem{gaiotto2013spectral}
\bysame, \emph{Spectral networks}, Annales Henri Poincar{\'e}, vol.~14,
  Springer, 2013, pp.~1643--1731.

\bibitem{gaiotto2013wallHitchin}
\bysame, \emph{Wall-crossing, {H}itchin systems, and the {WKB} approximation},
  Advances in Mathematics \textbf{234} (2013), 239--403.

\bibitem{gaiotto2014spectralsnake}
\bysame, \emph{Spectral networks and snakes}, Annales Henri Poincar{\'e},
  vol.~15, Springer, 2014, pp.~61--141.

\bibitem{green2013combinatorics}
Richard~M Green, \emph{Combinatorics of minuscule representations}, Cambridge
  Tracts in Mathematics, vol. 199, Cambridge University Press, 2013.

\bibitem{hitchin1987self}
Nigel~J. Hitchin, \emph{The self-duality equations on a {R}iemann surface},
  Proceedings of the London Mathematical Society \textbf{3} (1987), no.~1,
  59--126.

\bibitem{hitchin1987stable}
\bysame, \emph{Stable bundles and integrable systems}, Duke mathematical
  journal \textbf{54} (1987), no.~1, 91--114.

\bibitem{hollands_kidwai}
Lotte Hollands and Omar Kidwai, \emph{Higher length-twist coordinates,
  generalized {H}eun’s opers, and twisted superpotentials}, Adv. Theor. Math.
  Phys. \textbf{22} (2018), no.~7, 1713--1822.

\bibitem{hollands2016spectral}
Lotte Hollands and Andrew Neitzke, \emph{Spectral networks and
  {F}enchel--{N}ielsen coordinates}, Letters in Mathematical Physics
  \textbf{106} (2016), no.~6, 811--877.

\bibitem{honda2015virtual}
Naofumi Honda, Takahiro Kawai, and Yoshitsugu Takei, \emph{Virtual turning
  points}, Springer, 2015.

\bibitem{ionita2020spectral}
Matei Ionita, \emph{Spectral networks and non-abelianization for reductive
  groups}, Ph.D. thesis, The University of Pennsylvania, 2020.

\bibitem{iwaki2014exact}
Kohei Iwaki and Tomoki Nakanishi, \emph{Exact {WKB} analysis and cluster
  algebras}, Journal of Physics A: Mathematical and Theoretical \textbf{47}
  (2014), no.~47, 474009.

\bibitem{iwaki2015exact}
\bysame, \emph{Exact {WKB} analysis and cluster algebras {II}: Simple poles,
  orbifold points, and generalized cluster algebras}, International Mathematics
  Research Notices \textbf{2016} (2015), no.~14, 4375--4417.

\bibitem{joyce2012theory}
Dominic~D. Joyce and Yinan Song, \emph{A theory of generalized
  {D}onaldson-{T}homas invariants}, American Mathematical Soc., 2012.

\bibitem{kirillov_introduction}
Alexander Kirillov, Jr., \emph{An introduction to {L}ie groups and {L}ie
  algebras}, Cambridge Studies in Advanced Mathematics, vol. 113, Cambridge
  University Press, Cambridge, 2008. \MR{2440737}

\bibitem{kontsevich2008stability}
Maxim Kontsevich and Yan Soibelman, \emph{Stability structures, motivic
  {D}onaldson-{T}homas invariants and cluster transformations}, arXiv preprint
  arXiv:0811.2435 (2008).

\bibitem{kontsevich2010cohomological}
\bysame, \emph{Cohomological {H}all algebra, exponential {H}odge structures and
  motivic {D}onaldson-{T}homas invariants}, Commun. Num. Theor. Phys.
  \textbf{5} (2010), no.~2, 231--352.

\bibitem{longhi2016ade}
Pietro Longhi and Chan~Y Park, \emph{{ADE} spectral networks}, Journal of High
  Energy Physics \textbf{2016} (2016), no.~8, 87.

\bibitem{longhi2017ade}
\bysame, \emph{A{DE} spectral networks and decoupling limits of surface
  defects}, Journal of High Energy Physics \textbf{2017} (2017), no.~2, 11.

\bibitem{lurie20spectral}
Jacob Lurie, \emph{Spectral algebraic geometry}.

\bibitem{lurie2009higher}
\bysame, \emph{Higher topos theory}, Princeton University Press, 2009.

\bibitem{mazzeo2019asymptotic}
Rafe Mazzeo, Jan Swoboda, Hartmut Weiss, and Frederik Witt, \emph{Asymptotic
  geometry of the {H}itchin metric}, Communications in Mathematical Physics
  \textbf{367} (2019), no.~1, 151--191.

\bibitem{morrissey2020nonabelianization}
Benedict Morrissey, \emph{Nonabelianization, spectral and cameral data}, Ph.D.
  thesis, The University of Pennsylvania, 2020.

\bibitem{neitzkemetric}
Andrew Neitzke, \emph{Metric on moduli of {H}iggs bundles}.

\bibitem{neitzke2014notes}
\bysame, \emph{Notes on a new construction of hyperk{{\"a}}hler metrics},
  Homological mirror symmetry and tropical geometry, Springer, 2014,
  pp.~351--375.

\bibitem{ngo2010lemme}
Bao~Ch{\^a}u Ng{\^o}, \emph{Le lemme fondamental pour les algebres de {L}ie},
  Publications math{\'e}matiques de l'IH{\'E}S \textbf{111} (2010), no.~1,
  1--169.

\bibitem{nikolaev19abelianization}
Nikita Nikolaev, \emph{Abelianisation of {L}ogarithmic
  {$\mathfrak{sl}_{2}$}-connections}, arXiv preprint alg-geom/1902.03384
  (2019).

\bibitem{ott2020higgs}
Andreas Ott, Jan Swoboda, Richard Wentworth, and Michael Wolf, \emph{Higgs
  bundles, harmonic maps, and pleated surfaces}, arXiv preprint
  arXiv:2004.06071 (2020).

\bibitem{pantev2018poisson}
Tony Pantev and Bertrand To{\"e}n, \emph{Poisson geometry of the moduli of
  local systems on smooth varieties}, arXiv preprint arXiv:1809.03536 (2018).

\bibitem{longhiloom}
Chan~Y Park, Pietro Longhi, Andrew Neitzke, and Sven M\"{a}der, \emph{Loom},
  \url{https://github.com/plonghi/loom-py3} and \url{https://loom.ethz.ch/}.

\bibitem{schaub1998courbes}
Daniel Schaub, \emph{Courbes spectrales et compactifications de jacobiennes},
  Mathematische Zeitschrift \textbf{227} (1998), no.~2, 295--312.

\bibitem{scognamillo1998elememtary}
Renata Scognamillo, \emph{An elementary approach to the abelianization of the
  {H}itchin system for arbitrary reductive groups}, Compositio Mathematica
  \textbf{110} (1998), no.~1, 17--37.

\bibitem{seiberg1994electric}
Nathan Seiberg and Edward Witten, \emph{Electric-magnetic duality, monopole
  condensation, and confinement in {$N= 2$} supersymmetric {Y}ang-{M}ills
  theory}, Nuclear Physics B \textbf{426} (1994), no.~1, 19--52.

\bibitem{seiberg1994monopoles}
\bysame, \emph{Monopoles, duality and chiral symmetry breaking in {$N= 2$}
  supersymmetric {QCD}}, Nuclear Physics B \textbf{431} (1994), no.~3,
  484--550.

\bibitem{simpson1996hodge}
Carlos Simpson, \emph{The {H}odge filtration on nonabelian cohomology},
  Algebraic geometry---Santa Cruz 1995, Proc. Sympos. Pure Math., vol.~62,
  Amer. Math. Soc., Providence, RI, pp.~217--281.

\bibitem{strebel1978density}
Kurt Strebel, \emph{On the density of quadratic differentials with closed
  trajectories}, Proceedings of the Rolf Nevanlinna Symposium on Complex
  Analysis (Math. Res. Inst., Univ. Istanbul, Silivri, 1976)(Istanbul), Publ.
  Math. Res. Inst. Istanbul, vol.~7, 1978, pp.~89--101.

\bibitem{strebel1984quadratic}
\bysame, \emph{Quadratic differentials}, vol.~5, Springer Science \& Business
  Media, 1984.

\bibitem{takei2008sato}
Yoshitsugu Takei, \emph{Sato's conjecture for the {W}eber equation and
  transformation theory for {S}chrodinger equations with a merging pair of
  turning points}, Differential equations and exact {WKB} analysis, RIMS
  K\^{o}ky\^{u}roku Bessatsu, B10, Res. Inst. Math. Sci. (RIMS), Kyoto, 2008,
  pp.~205--224.

\bibitem{takei2017wkb}
\bysame, \emph{{WKB} analysis and {S}tokes geometry of differential equations},
  Analytic, algebraic and geometric aspects of differential equations,
  Springer, 2017, pp.~263--304.

\bibitem{tao_blog_chevalley}
Terrence Tao, \emph{Notes on the classification of complex {L}ie algebras},
  Blog post
  \url{https://terrytao.wordpress.com/2013/04/27/notes-on-the-classification-of-complex-lie-algebras/},
  2018 version.

\bibitem{tits1966normalisateurs}
Jacques Tits, \emph{Normalisateurs de tores {I}. {G}roupes de {C}oxeter
  {\'e}tendus}, Journal of Algebra \textbf{4} (1966), no.~1, 96--116.

\bibitem{toen2004hag}
Bertrand To{\"e}n and Gabriele Vezzosi, \emph{From {HAG} to {DAG}: derived
  moduli stacks}, Axiomatic, enriched and motivic homotopy theory, Springer,
  2004, pp.~173--216.

\bibitem{toen2008homotopical}
\bysame, \emph{Homotopical {A}lgebraic {G}eometry {II}: {G}eometric {S}tacks
  and {A}pplications}, vol.~2, American Mathematical Soc., 2008.

\bibitem{williams2016toda}
Harold Williams, \emph{Toda systems, cluster characters, and spectral
  networks}, Communications in Mathematical Physics \textbf{348} (2016), no.~1,
  145--184.

\end{thebibliography}

\end{document}
